\theoremstyle{plain}
\newtheorem{theorem}{Theorem}[section]
\newtheorem{lemma}[theorem]{Lemma}
\newtheorem{corollary}[theorem]{Corollary}
\newtheorem{proposition}[theorem]{Proposition}
\theoremstyle{definition}
\newtheorem{remark}[theorem]{Remark}
\newtheorem{definition}[theorem]{Definition}
\newtheorem{condition}[theorem]{Condition}
\numberwithin{equation}{section}
\def\address#1#2{\begingroup
\noindent\parbox[t]{8.8cm}{%
\small{\scshape\ignorespaces#1}\par\vskip1ex
\noindent\small{\itshape E-mail address}%
\/: #2\par\vskip4ex}\hfill%
\endgroup}%
\newcommand{\floor}[1]{\lfloor #1 \rfloor}
\newcommand{\ceiling}[1]{\lceil #1 \rceil}
\newcommand{\vep}{\varepsilon}
\newcommand{\RR}{{\mathbb R}}
\newcommand{\BB}{{\mathbb B}}
\newcommand{\ZZ}{\mathbb{Z}}
\newcommand{\tal}{\tilde{\alpha}}
\newcommand{\ta}{\tilde{a}}
\newcommand{\tmkm}{\tau^m_{k-1}}
\newcommand{\tmk}{\tau^m_k}
\newcommand{\tmim}{\tau^m_{i-1}}
\newcommand{\tmi}{\tau^m_i}
\newcommand{\tmjm}{\tau^m_{j-1}}
\newcommand{\tmj}{\tau^m_j}
\newcommand{\tml}{\tau^m_l}
\newcommand{\tmlm}{\tau^m_{l-1}}
\newcommand{\IteratedIntegrals}{\mathcal{I}^\ast}
\newcommand{\spanIteratedIntegrals}{\mathcal{I}}
\newcommand{\goodClass}{\mathcal{J}}
\newcommand{\Dm}{D_m}
\newcommand{\cP}{\mathcal{P}}
\newcommand{\hyp}{\mathchar`-}
\newcommand{\tpsi}{\tilde{\psi}}
\newcommand{\trho}{\tilde{\rho}}
\newcommand{\tf}{\tilde{f}}
\title{H\"older estimates and weak convergences of\\ certain weighted sum processes}
\author{Shigeki Aida and Nobuaki Naganuma}
\date{}
\begin{document}

\maketitle

\begin{abstract}
	We study weighted sum processes associated
	to elements in a Wiener chaos with fixed order.
	More precisely, we show H\"older estimates and a functional limit theorem for them.
	Main tools we use are the integration by parts formula in Malliavin calculus,
	the fourth moment theorem, and estimates in multidimensional Young integrals.
\end{abstract}

\textbf{Keywords}:
Weighted sum processes;
Fourth moment theorem;
Malliavin calculus;
Rough differential equation;
Fractional Brownian motion;
Multidimensional Young integral.

\textbf{MSC2020 subject classifications}: 60F05; 60H35; 60G15.

\bigskip

\textbf{Funding}: This work was partially
supported by JSPS KAKENHI Grant Numbers JP20H01804 and JP22K13932.

\textbf{Statements and Declarations}: The authors declare that they have no competing interest.

\tableofcontents

\section{Introduction}
In this paper, we study weighted sum processes,
which arise naturally from study of approximation
of solutions to rough differential equations (RDEs)
driven by fractional Brownian motions (fBms). 
We can determine asymptotic errors of the approximate solutions
by showing a version of the functional central limit theorem (FCLT)
for the weighted sum processes
\cite{aida-naganuma2020, aida-naganuma2023approach, 
GradinaruNourdin2009, hu-liu-nualart2016, hu-liu-nualart2021, liu-tindel, liu-tindel2020, naganuma2015}.
First, let us recall basic known results related to the limit theorem
of weighted sum processes and its applications 
to the study of asymptotic errors of the approximate solutions.

For random variables $\{F^m_i\}_{i=1}^{2^m}$ and $\{Z^m_i\}_{i=1}^{2^m}$,
we call the following process in time parameter $t\geq 0$ a weighted sum process
\begin{align*}%\label{mainTraget}
	\frac{1}{\sqrt{2^m}}
	\sum_{i=1}^{\floor{2^mt}}
		F^m_i
		Z^m_i.
\end{align*}
Here $m$ is a positive integer, and $\floor{2^mt}$ is the integer part of $2^mt$.
Also, $F^m_i$ and $Z^m_i$ might depend not only on $i$ but also on $m$.
We refer to $\{Z^m_i\}_{i=1}^{2^m}$ as an integrator and regard $F^m_i$ as a weight for $Z^m_i$.
In the present paper, we investigate the weighted sum process
in the case where $F^m_i$ and $Z^m_i$ are given by functionals of
one/multi-dimensional fBms $B$ with Hurst parameter $0<H<1$.

The simplest case is that in which $F^m_i=1$ (the sum process is ``weight-free'')
and $\{Z^m_i\}$ are independent and identically distributed random variables 
that are independent of $m$.
Then an FCLT of the ``weight-free'' sum process is 
nothing but Donsker's invariance principle.
We next focus on the case where $\{Z^m_i\}$ are not independent.
Breuer-Major~\cite{BreuerMajor1983} showed an FCLT
in the case where $F^m_i=1$ and $Z^m_i=h(X_i)$.
Here, $\{X_i\}_{i=1}^\infty$ is a sequence of stationary Gaussian random variables
with nice covariance structure,
and $h$ is a nice real-valued function satisfying conditions stated in terms of Hermite polynomials.
Their proof of weak convergence of the finite-dimensional distributions
is based on the moment method. They calculated moments of all orders.
After decades, a new method for this proof was given by 
Nualart and Peccati \cite{NualartPeccati2005}:
it is known as the fourth moment theorem or the Nualart-Peccati criterion.
The fourth moment theorem characterizes weak convergence of random variables
by the convergence of their second and fourth moments
if they belong to a Wiener chaos with fixed order.
Using this theorem, we can treat the case where 
$F^m_i=1$ and $Z^m_i=H_q(B_{i-1,i})$ very easily.
Here $H_q$ is the $q$-th Hermite polynomial,
$B$ a one-dimensional fBm, and $B_{s,t}=B_t-B_s$.
In fact, this is a special case of Breuer and Major's result.
A multidimensional version of the fourth moment theorem \cite{PeccatiTudor2005}
provides a criterion for independence of limit random variables.
As an application, 
Nourdin-Nualart-Tudor \cite{NourdinNualartTudor2010} showed central and non-central
limit theorems
for the case where $F^m_i=f(B_{\tmim})$ and $Z^m_i=H_q(2^{mH}B_{\tmim,\tmi})$.
Here $f$ is a real-valued function, and $\tmi=i2^{-m}$ for $0\leq i\leq 2^m$.
It is worth noting that these results provide 
an extension of the result for the case where $F^m_i=1$ and $Z^m_i=H_q(B_{i-1,i})$
because $\{2^{mH}B_{\tmim,\tmi}\}_{i=1}^{2m}$ and $\{B_{i-1,i}\}_{i=1}^{2^m}$
have the same distribution.
More information about the fourth moment theorem is available in the relevant literature \cite{nourdin-peccati}.

We now describe further applications of the fourth moment theorem.
By using the results by Nourdin-Nualart-Tudor and their variants,
we can identify the limit error distribution of an approximate solution
to RDE driven by one-dimensional fBm 
\cite{aida-naganuma2020, GradinaruNourdin2009, naganuma2015}.
This is attributable to the fact that the main terms of errors of approximate solutions
can be expressed as weighted sum processes.
Similarly to the one-dimensional case,
the main terms of errors of 
approximate solution can be expressed as weighted sum processes
in a multidimensional case.
However, the weighted sum processes are more complicated
and contain iterated integrals of multidimensional fBm as $Z^m_i$
(note that iterated integrals with respect to one-dimensional fBm
are nothing but powers of the increments of the fBm).
See \cite{aida-naganuma2023approach, liu-tindel} for example.
Additionally, weights $F^m_i$'s
are expressed as a functional of solution $Y_t$ of RDE,
the Jacobian $J_t$ and its inverse $J_t^{-1}$.
Because the iterated integrals belong to a Wiener chaos with fixed order,
the fourth moment theorem may be applicable to them.
Weak convergence of these weighted sum processes was also studied
in earlier works \cite{hu-liu-nualart2016, hu-liu-nualart2021, liu-tindel2020}
in the context of approximation theory.

We now provide an overview of our main results and compare them 
with the most closely related work \cite{liu-tindel}.
Our main results are summarized as presented below:
\begin{enumerate}
	\item	moment and discrete H\"older estimates of weighted sum processes
of the Wiener chaos of order $2$ (Theorem~\ref{moment estimate}, and
Corollary~\ref{cor to moment estimate}),
	\item	a limit theorem of weighted sum processes of 
the Wiener chaos of order $2$ (Theorem~\ref{limit theorem of weighted Hermite variation processes}).
\end{enumerate}
These subjects have already been studied by Liu-Tindel in \cite{liu-tindel}.
In addition, the method established in \cite{liu-tindel}
was generalized and refined by the same authors in \cite{liu-tindel2020}.
Here we focus on the original method introduced in \cite{liu-tindel}
and compare their results with ours.
Roughly speaking, Theorem~\ref{moment estimate} and Corollary~\ref{cor to moment estimate}
correspond to Corollary 4.9 in \cite{liu-tindel},
while Theorem~\ref{limit theorem of weighted Hermite variation processes} corresponds to 
Proposition~9.5 in \cite{liu-tindel}.
Theorem~\ref{limit theorem of weighted Hermite variation processes}
is proved similarly to Proposition~9.5 in \cite{liu-tindel}
after showing an FCLT of the ``weight-free'' sum process
(Theorem~\ref{Levy area variation}) with the help of the fourth moment theorem.
However assumption and proof of Theorem~\ref{moment estimate} and Corollary~\ref{cor to moment estimate}
are very different from the one of Corollary 4.9 in \cite{liu-tindel}.
% Roughly speaking, we proceed with the investigation
% under mild conditions for the weights $F^m_i$
% and strong conditions for the integrators $Z^m_i$ in the context of Malliavin calculus,
% whereas the authors of \cite{liu-tindel2020} proceed under strong conditions for the weights $F^m_i$ in \eqref{mainTraget}
% and mild conditions for the integrator $Z^m_i$ in the context of rough path analysis.

We continue to compare Corollary~4.9 in \cite{liu-tindel} with Theorem~\ref{moment estimate} and Corollary~\ref{cor to moment estimate}.
The assumption and proof of Corollary~4.9 in \cite{liu-tindel}
are derived from rough path analysis.
More precisely, the authors assume that the weights are 
``paths controlled by fBm'', whereas the integrators $Z^m_i$ can take a relatively general form.
Under these conditions, they prove the result using the discrete sewing lemma.
In this framework, the weights are required to be H\"older continuous
with respect to the time parameter.
In contrast to them, our assumption and proof are derived from Malliavin calculus.
More precisely, we assume that the weights belong to a good class $\goodClass(\RR)$
in the sense of Malliavin calculus.
It is noteworthy that 
$\goodClass(\RR)$ requires no H\"older continuity of the weights.
Most typical examples are
$F=(F_t=\varphi(Y_t,J_t,J^{-1}_t))_{t\in[0,1]}$
and $G=(F_{\theta(t)})_{t\in[0,1]}$,
where $\varphi$ is a smooth function that satisfies some growth condition,
and where $\theta$ is a Borel measurable mapping from $[0,1]$ to $[0,1]$.
Other examples are given in Section~\ref{weighted hermite variation}.
On the other hand, we can handle only second order iterated integrals as integrators $Z^m_i$.
We use the
integration by parts formula in Malliavin calculus 
to prove moment estimates of weighted sum processes in
Theorems~\ref{moment estimate}.
Although this technique is an extension of that used
in \cite{NourdinNualartTudor2010, naganuma2015, aida-naganuma2020},
it is necessary to estimate complicated (discrete) multidimensional Young integrals.
This difficulty arises from the fact that 
$Z^m_i$ contains iterated integrals of multidimensional processes,
and it is overcome in Lemmas~\ref{expansion formula of product} 
and \ref{estimate of integration by parts}.
If we obtain Lemmas~\ref{expansion formula of product} and 
\ref{estimate of integration by parts} for higher order iterated integrals, then we may
extend Theorem~\ref{moment estimate} to the case of higher order iterated integrals
under the same assumption, which includes $\frac{1}{3}<H\leq \frac{1}{2}$.
However, this expansion is beyond the scope of the present paper because it is necessary to develop a unified
approach to treat higher order iterated integrals.

As stated above, the weights in \cite{liu-tindel} are paths controlled by fBm,
whereas those in our results are elements of $\goodClass(\RR)$.
Of course, certain paths controlled by fBm are elements of $\goodClass(\RR)$,
and conversely, some elements of $\goodClass(\RR)$ are paths controlled by fBm.
Although we cannot say more about their relationship,
$\goodClass(\RR)$ is a natural class in the following sense.
First, $\goodClass(\RR)$ appears in the multidimensional extension of \cite{aida-naganuma2020, naganuma2015}.
Second, $\goodClass(\RR)$ does not require
its elements to be continuous in the time parameter
or adapted to the filtration with respect to fBm.
Recall that, in general theory of It{\^o}'s stochastic integration,
integrands are adapted to the filtration generated by the integrators.
In our case, this corresponds to 
that the weights $\{F^m_i\}_{i=1}^{2^m}$ are adapted to
the filtration generated by the integrators $\{Z^m_i\}_{i=1}^{2^m}$,
namely the filtration generated by the fBm.
However the limit process of the integrators
is a standard Brownian motion independent of fBms.
This independence implies that the stochastic integral appearing as the limit
is well-defined without the adaptedness of the integrands to the fBm.
Furthermore, since the limit is a stochastic integral with respect to
the standard Brownian motion, the integrands need not be continuous.
A typical example of such a process in $\goodClass(\RR)$
is $G=(F_{\theta(t)})_{t\in[0,1]}$.
It is worth mentioning that the assumption imposed on $\goodClass(\RR)$
can be relaxed for our purpose.
For this, see Remark~\ref{rem47312897319} and Remark~\ref{rem3892803123} (1).

Finally, we mention the potential extension to the case  $\frac{1}{4}<H\leq \frac{1}{3}$.
In the context of rough path analysis,
it is natural to treat the case where $\frac{1}{4}<H\leq \frac{1}{2}$,
while it is still unclear whether such an extension is possible.
One of the key ingredients is Proposition~\ref{prop490801},
which implies $F=(F_t=\varphi(Y_t,J_t,J^{-1}_t))_{t\in[0,1]}$
belongs to $\goodClass(\RR)$ 
and follows from results in Section~\ref{malliavin}.
Therefore Proposition~\ref{prop490801} might be extendable 
if the results in Section~\ref{malliavin} can be extended for $\frac{1}{4}<H\leq \frac{1}{2}$;
however these results are shown in the case $\frac{1}{3}<H\leq \frac{1}{2}$.
In other words, the third-level rough paths are not taken into account,
even though they are required for the case where $\frac{1}{4}<H\leq \frac{1}{3}$.
In particular, Lemma~\ref{derivative of F1} is nontrivial.
This situation also appears in Remark~\ref{Remark on summands}~(2).
Hence we consider the case where $\frac{1}{3}<H\leq \frac{1}{2}$
for simplicity and leave the extension to $\frac{1}{4}<H\leq \frac{1}{2}$ as a topic for future work.
See also Remark~\ref{rem3892803123}~(2).

This paper is organized as follows.
We state our main results in Section~\ref{main results}.
In Section~\ref{Wiener chaos}, 
we summarize basic results related to fBm, Cameron-Martin space 
and Wiener chaos.
Additionally, we recall a condition on Gaussian processes 
under which rough path analysis works well.
In Section~\ref{df app of iterated integrals},
	we present finite dimensional approximations of certain multiple Wiener integrals
	(Corollary~\ref{finite dimensional approximation}).
	This is crucially important for estimates of the Malliavin derivatives of
	the functionals of $Y_t,J_t, J_t^{-1}$.
	To this end, we explore the relation between the multiple Wiener integrals
	and elements in the symmetric tensor products of the Cameron-Martin space.
	In these calculations, we use results in multidimensional Young integrals.
	We refer the readers for several definitions and results of multidimensional
	Young integrals to Section~\ref{appendix I}.
In Section~\ref{malliavin}, we demonstrate that the higher order
Malliavin derivative $(D^rF_t)[h_1,\ldots,h_r]$ can be represented
as iterated integrals of $h_1,\ldots,h_r$,
where $F_t$ is a functional of $Y_t, J_t, J_t^{-1}$
and where $\{h_i\}_{i=1}^r$ are elements of the Cameron-Martin space.
In these calculations, we provide a self-contained proof of the 
higher order Malliavin differentiability
of the functionals of $Y_t, J_t, J_t^{-1}$ (Theorem~\ref{Malliavin differentiability}).
We prove Theorems~\ref{moment estimate} 
and~\ref{limit theorem of weighted Hermite variation processes}
in Sections~\ref{weighted hermite variation} and~\ref{weak convergence},
respectively.
In the proof of Theorem~\ref{limit theorem of weighted Hermite variation processes},
we use FCLT for L\'evy area variation processes (Theorem~\ref{Levy area variation}).
Although the proof is quite standard,
we present the proof herein for the sake of completeness.
In Section~\ref{Holder esitmates of sum processes of Wiener chaos of order 3},
we present discrete H{\"o}lder estimates of weighted sum processes of
the Wiener chaos of orders $3$ (Proposition~\ref{estimate of third order}).
This proposition is shown similarly to Theorem~\ref{moment estimate}
with the help of results in Section~\ref{appendix I}.

\section{Main results}\label{main results}
In this section, we state our main results and present some remarks on them.
Before the main results, we introduce notation.
Let $m$ be a natural number. 
Set $\tmi=i2^{-m}$ for $0\leq i\leq 2^m$ and $D_m=\{\tmi\}_{i=0}^{2^m}$.
For every partition $\cP=\{u_i\}_{i=0}^n$ of $[s,t]$, that is,
$s=u_0<\cdots<u_n=t$, we define $|\cP|=\max_{1\leq i\leq n}|u_i-u_{i-1}|$.
For a constant $0<\lambda<1$, we define the H{\"o}lder norm
for discrete process $F=(F_t)_{t\in D_m}$ by
\begin{align*}
	\|F\|_{\lambda}
	=
		\sup_{s,t\in D_m,s\neq t}
			\frac{|F_t-F_s|}{|t-s|^\lambda}.
\end{align*}
The standard basis of $\RR^d$ is denoted by $\{e_\alpha\}_{\alpha=1}^d$.
For a real-valued function $\phi(u_1,\dots,u_r)$ and $s_i<t_i$ ($1\leq i\leq r$),
we denote by $\phi([s_1,t_1]\times\cdots\times[s_r,t_r])$ the 
rectangular increment,
for example $\phi([s_1,t_1]\times[s_2,t_2])=
\phi(t_1,t_2)-\phi(s_1,t_2)-\phi(t_1,s_2)+\phi(s_1,s_2)$.
We refer to Section~\ref{appendix I}
for precise definition.

Let $B=(B^1,\dots,B^d)$ be a $d$-dimensional fBm 
with the Hurst parameter $\frac{1}{3}<H\leq\frac{1}{2}$.
Section~\ref{Wiener chaos} presents a summary of the property of fBm.
Because of the property, we can construct the rough path lift $(B,\BB)$ of $B$.
The basic references of rough path analysis are, {\it e.g.},
\cite{friz-hairer,friz-victoir,gubinelli,lyons98,lq}.
We write 
\begin{align*}
	B_t
	&=
		\sum_{\alpha=1}^d B^\alpha_t e_\alpha,
	&
	B^\alpha_{s,t}
	&=
		B^\alpha_t-B^\alpha_s,
	&
	B_{s,t}
	&=
		B_t-B_s
	=
		\sum_{\alpha=1}^d B^\alpha_{s,t}e_\alpha,
	&
	\BB_{s,t}
	&=
		\sum_{\alpha,\beta=1}^d B^{\alpha,\beta}_{s,t}e_\alpha\otimes e_\beta.
\end{align*}
Here we recall that $B^{\alpha,\beta}_{s,t}$ is given as follows:
$B^{\alpha,\alpha}_{s,t}=\frac{1}{2}(B^{\alpha}_{s,t})^2$ and, for $\alpha\neq \beta$,
\begin{align*}
	B^{\alpha,\beta}_{s,t}
	=
		\lim_{|\cP|\to 0}
		\sum_{i=1}^{n}
		B^{\alpha}_{s,u_{i-1}}B^{\beta}_{u_{i-1},u_i}
	\quad 
	\text{in $L^2$},
\end{align*}
where $\cP=\{u_i\}_{i=0}^n$ is a partition of $[s,t]$.

Consider the solution $Y$ to the following RDE driven by $B$ on $\RR^n$:
\begin{align}
	Y_t=\xi+\int_0^t\sigma(Y_u)\,dB_u+\int_0^t b(Y_u)\,du,\label{rde}
\end{align}
where $\xi\in\RR^n$ is a deterministic initial point, $\sigma\in C^\infty_b(\RR^n,\mathcal{L}(\RR^d,\RR^n))$
and $b\in C^\infty_b(\RR^n,\RR^n)$.
We denote by $J_t$ the derivative process $J_t=\partial_\xi Y_t(\xi)$.
It is well-known that $J_t$ is invertible; also, $J_t$ and $J_t^{-1}$ are solutions to
\begin{align}
	J_t
	&=
		I
		+
		\int_0^t D\sigma(Y_u)[J_u]\,dB_u
		+
		\int_0^t Db(Y_u)[J_u]\,du,\label{jacobian}\\
	J_t^{-1}
	&=
		I
		-
		\int_0^t J_u^{-1}D\sigma(Y_u)\,dB_u
		-
		\int_0^t J_u^{-1} Db(Y_u)\,du.\label{inverse of jabobian}
\end{align}

We now introduce function spaces of 
smooth functionals in the sense of Malliavin calculus.
The Malliavin derivative is the derivative in the direction to
the Cameron-Martin space.
Here let $\mathcal{H}^d$ denote the Cameron-Martin space 
associated with the fBm $B$.
For a non-negative integer $k$ and $p\ge 1$, let 
$\mathbb{D}^{k,p}(\RR^N)$ denote the set of all $\RR^N$-valued measurable
functions on $\Omega$ which are $k$-times differentiable in the sense of
Malliavin and all its derivatives and itself belong to $L^p$.
Also let $\mathbb{D}^{\infty}(\RR^N)
=\cap_{k\in \ZZ^{+},p\ge 1}\mathbb{D}^{k,p}(\RR^N)$.
We refer the readers for the basic results for these notions to
\cite{nourdin-peccati, nualart, shigekawa}.
Further we write $L^{\infty-}=\cap_{p\ge 1}L^p$ for notational simplicity.
To state our main results, we introduce the following good class of stochastic 
processes.

\begin{definition}\label{def489140919490}
Let $\psi_{s,t}\in \mathcal{H}^1$ be the corresponding element to
the increment of one-dimensional fBm $B^1_{s,t}$; also,
set $\psi^\alpha_{s,t}=\psi_{s,t}\otimes e_{\alpha}\in \mathcal{H}^d$.
Let $\goodClass(\RR^N)$ denote
	the set of all families of random variables 
$(F_t)_{t\in [0,1]}$
	satisfying the following condition.
\begin{enumerate}
	\item	$F_t\in \mathbb{D}^{\infty}(\RR^N)$ for all $0\le t\le 1$.
 \item There exists a random variable $C\in L^{\infty-}$ such that
\begin{align}
\label{Lq estimate for F} |F_t|\le C \quad \text{for all $0\le t\le 1$}.
\end{align}
\item 	For $r\geq 1$ and $\alpha_1,\ldots,\alpha_r\in \{1,\ldots,d\}$,
	set 
	\begin{align}
		\label{eq4i319014}
		\phi^{\alpha_1,\ldots,\alpha_r}_t(u_1,\ldots,u_r)
		=
			(
				D^rF_t,
				\psi^{\alpha_1}_{u_1}\odot\cdots\odot\psi^{\alpha_r}_{u_r}
			)_{(\mathcal{H}^d)^{\odot r}}.
	\end{align}
	Then there exists a continuous modification of
	$\phi^{\alpha_1,\ldots,\alpha_r}_t$ with respect to 
	$(u_1,\ldots,u_r)\in [0,1]^{r}$
	which satisfies
	\begin{align}
\label{estimate of phi alpha_r}
		|
			\phi^{\alpha_1,\ldots,\alpha_r}_t
				([s_1,t_1]\times\cdots\times [s_r,t_r])
		|
		\le
			C'
			\prod_{i=1}^r
			(t_i-s_i)^{2H}.
	\end{align}
	Here $C'$ is a random variable in $L^{\infty-}$,
	which may depend on $r$ and $\alpha_1,\ldots,\alpha_r\in \{1,\ldots,d\}$.
\end{enumerate}
\end{definition}
As stated in the Introduction, the assumption imposed on $\goodClass(\RR^N)$ is
stronger than what is required for our main theorems.
For this, see Remark~\ref{rem47312897319} and Remark~\ref{rem3892803123} (1).
We note that $\goodClass(\RR^N)$ contains 
$\left(\varphi(Y_t,J_t,J_t^{-1})\right)_{t\in [0,1]}$,
where $\varphi$ is an $\RR^N$-valued $C^{\infty}$ mapping 
such that $\varphi$ itself and all the derivatives are at most polynomial growth.
Furthermore, we show the more general result
$\spanIteratedIntegrals(\RR^N)\subset\goodClass(\RR^N)$ in Proposition~\ref{prop490801}.
Here $\spanIteratedIntegrals(\RR)$ is the set of all linear spans of 
iterated integrals. The definition is given as Definition~\ref{def tilde I(R)}.
We do not assume continuity of the mapping
$t\mapsto F_t$ in any sense in the definition above.
Such examples are given in Remark~\ref{Remark on summands}.

Now we state our main results.

\begin{theorem}\label{moment estimate}
Let $(F_t)\in \goodClass(\RR)$ and $1\leq \alpha,\beta\leq d$ be distinct.
	Let $0\le s\le t\le 1$.
	Let
	\begin{align*}
		I^m_{s,t}(F)
		&=
			\sum_{i=\floor{2^ms}+1}^{\floor{2^mt}}
				F_{\tmim}
				B^{\alpha,\beta}_{\tmim,\tmi},
		&
		\tilde{I}^m_{s,t}(F)
		&=
			\sum_{i=\floor{2^ms}+1}^{\floor{2^mt}}
				F_{\tmkm}
				B^{\alpha}_{\tmim,\tmi}
				B^{\beta}_{\tmim,\tmi}.
	\end{align*}
	For any positive integer $p$, 
	there exists a positive constant $C_p$ that is independent
	of $m$ such that
	\begin{align*}
		\Big|
			E
				\Big[
					\Big(
						(2^m)^{2H-\frac{1}{2}}
						I^m_{s,t}(F)
					\Big)^{p}
				\Big]
		\Big|
		+
		\Big|
			E
				\Big[
					\Big(
						(2^m)^{2H-\frac{1}{2}}
						\tilde{I}^m_{s,t}(F)
					\Big)^p
				\Big]
		\Big|
	&\le
		C_p
		\left(
			\frac{\floor{2^mt}-\floor{2^ms}}{2^m}
		\right)^{\frac{p}{2}}.
	\end{align*}
\end{theorem}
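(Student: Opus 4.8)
The plan is to write the random factor in each summand as a second-order Skorokhod divergence, expand the $p$-th moment by iterated integration by parts, and reduce everything to scalar contractions of Cameron--Martin elements that are estimated as discrete Young integrals. Since $\alpha\neq\beta$, the components $B^\alpha$ and $B^\beta$ are independent, so $(\psi^\alpha_{\tmim,\tmi},\psi^\beta_{\tmim,\tmi})_{\mathcal{H}^d}=0$; the product formula for multiple integrals then yields $B^\alpha_{\tmim,\tmi}B^\beta_{\tmim,\tmi}=\delta^2(\psi^\alpha_{\tmim,\tmi}\odot\psi^\beta_{\tmim,\tmi})$, a pure element of the second Wiener chaos with no zeroth-order part. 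The L\'evy-area increment $B^{\alpha,\beta}_{\tmim,\tmi}$ likewise lies in the second chaos and can be represented as $\delta^2(\Theta_i)$ for an explicit symmetric kernel $\Theta_i\in(\mathcal{H}^d)^{\odot 2}$ built from $\psi^\alpha,\psi^\beta$ and supported on $[\tmim,\tmi]$, with $\|\Theta_i\|_{(\mathcal{H}^d)^{\odot 2}}$ of order $(2^{-m})^{2H}$. It thus suffices to bound, in both cases simultaneously, the moments of $\sum_i F_{\tmim}\delta^2(\Theta_i)$.

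Assume first that $p$ is even. Expanding the $p$-th power gives a $p$-fold sum over indices $i_1,\dots,i_p$ of $E[\prod_{k=1}^p F_{\tau^m_{i_k-1}}\delta^2(\Theta_{i_k})]$. I would evaluate each expectation using the expansion of Lemma~\ref{expansion formula of product}, which rewrites a product of divergences and smooth weights as a sum of divergences plus fully contracted remainders; upon taking expectation only the fully contracted scalar terms survive, since divergences have mean zero. In each surviving term the $2p$ legs carried by the kernels $\Theta_{i_1},\dots,\Theta_{i_p}$ are either paired with one another, producing inner products of the form $(\psi^\gamma_\cdot,\psi^{\gamma'}_\cdot)_{\mathcal{H}^d}$, or contracted against Malliavin derivatives of the weights, producing factors of $\phi^{\gamma_1,\dots,\gamma_r}_{\tau^m_{i_k-1}}$ as in \eqref{eq4i319014}.

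The weight contributions are controlled by the good-class hypotheses: \eqref{Lq estimate for F} bounds the undifferentiated factors in every $L^q$, while \eqref{estimate of phi alpha_r} shows that each block of legs hitting a derivative $D^rF$ contributes a rectangular increment bounded by $C'\prod_i(t_i-s_i)^{2H}$, that is, the same $(2^{-m})^{2H}$ decay per interval as a kernel leg. The remaining inner-product sums are precisely the discrete Young integrals estimated in Lemma~\ref{estimate of integration by parts}; here the short-range dependence of fBm increments for $H<\frac12$ makes these sums summable and keeps off-diagonal configurations no larger than the diagonal ones. Counting scales, the dominant contribution comes from the $\sim(\floor{2^mt}-\floor{2^ms})^{p/2}$ configurations whose indices coincide in pairs, each contributing a factor $(2^{-m})^{2Hp}$; multiplied by the normalization $((2^m)^{2H-\frac12})^p$ this yields exactly $C_p\big((\floor{2^mt}-\floor{2^ms})/2^m\big)^{p/2}$, all weight expectations being absorbed into $C_p$ through H\"older's inequality. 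For odd $p$ the bound follows from the even case by Cauchy--Schwarz, $|E[X^p]|\le E[X^{p-1}]^{1/2}E[X^{p+1}]^{1/2}$.

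The main obstacle is this last estimate: organizing the combinatorics of the $2p$ legs and bounding every configuration of contracted inner products by the correct power of $(\floor{2^mt}-\floor{2^ms})/2^m$. Because the L\'evy-area kernel mixes the $\alpha$ and $\beta$ components, these are genuinely multidimensional discrete Young integrals, and one must rule out off-diagonal configurations that would produce a larger power of $2^m$ than the diagonal one. This is exactly the difficulty isolated in Lemmas~\ref{expansion formula of product} and \ref{estimate of integration by parts}, whose proofs are substantially heavier than in the one-dimensional setting; once they are in hand, everything else reduces to bookkeeping and H\"older's inequality.
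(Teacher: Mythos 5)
Your proposal follows essentially the same route as the paper: represent $B^{\alpha}B^{\beta}$ and $B^{\alpha,\beta}$ as second-chaos elements, expand the product of these via Lemma~\ref{expansion formula of product}, move the resulting multiple integrals onto the weights by Malliavin integration by parts, and control the contracted terms by the good-class bounds \eqref{Lq estimate for F}, \eqref{estimate of phi alpha_r} together with the $\rho_H$-summability and discrete Young estimates of Lemma~\ref{estimate of integration by parts}. The only cosmetic deviations are your even/odd split with Cauchy--Schwarz (the paper bounds $|E[\cdot]|$ for all $p$ uniformly by summing absolute values of the integrated-by-parts terms) and your working directly with the limit kernel $\tilde{\psi}^{\alpha,\beta}$ where the paper passes through the Riemann-sum approximations $\tilde{B}^{\alpha,\beta}_{s,t}(n)$ and takes $n\to\infty$ at the end.
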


Below, we write $I^m_t(F)=I^m_{0,t}(F)$ and so on.

\begin{corollary}\label{cor to moment estimate}
	Let $I^m_t(F)$ and $\tilde{I}^m_t(F)$ $(t\in D_m)$ be 
the processes defined in 
	Theorem~$\ref{moment estimate}$.
	For all $0<\theta<\frac{1}{2}$ and positive integers $p$, we have
	\begin{align*}
		\sup_m
			\left\{
				\|\|(2^m)^{2H-\frac{1}{2}}I^m(F)\|_\theta\|_{L^p}
				+
				\|\|(2^m)^{2H-\frac{1}{2}}\tilde{I}^m(F)\|_\theta\|_{L^p}
			\right\}
		<
			\infty.
	\end{align*}
\end{corollary}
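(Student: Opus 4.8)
The plan is to deduce the corollary from Theorem~\ref{moment estimate} by a Kolmogorov--Chentsov type (dyadic chaining) argument carried out directly on the grid $D_m$, so that every constant is manifestly independent of $m$. Write $X^m_t=(2^m)^{2H-\frac12}I^m_t(F)$ (the argument for $\tilde I^m$ is identical). For grid points $s,t\in D_m$ one has $\floor{2^mt}=2^mt$, hence $I^m_t(F)-I^m_s(F)=I^m_{s,t}(F)$ and $\frac{\floor{2^mt}-\floor{2^ms}}{2^m}=|t-s|$. Given $0<\theta<\frac12$ and a positive integer $p$, fix an \emph{even} integer $q$ with $q\ge p$ and $\frac12-\frac1q>\theta$; this is possible since $\frac12-\frac1q\uparrow\frac12$. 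As $q$ is even, $E[|X^m_t-X^m_s|^q]=|E[(X^m_t-X^m_s)^q]|$, so Theorem~\ref{moment estimate} (applied with exponent $q$) furnishes a constant $C_q$, independent of $m$, with
\[
	E\big[|X^m_t-X^m_s|^q\big]\le C_q\,|t-s|^{q/2}
	\qquad(s,t\in D_m).
\]

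Next comes the chaining. For $0\le n\le m$ set $\Delta^m_n=\max_{0\le k<2^n}|X^m_{(k+1)2^{-n}}-X^m_{k2^{-n}}|$ (the endpoints lie in $D_m$ since $n\le m$). Summing the moment bound over the $2^n$ consecutive level-$n$ increments gives
\[
	E\big[(\Delta^m_n)^q\big]
	\le\sum_{k=0}^{2^n-1}E\big[|X^m_{(k+1)2^{-n}}-X^m_{k2^{-n}}|^q\big]
	\le 2^n C_q\,2^{-nq/2}=C_q\,2^{n(1-q/2)}.
\]
A standard dyadic telescoping shows that any grid points $s,t$ with $2^{-(n+1)}<|t-s|\le 2^{-n}$ satisfy $|X^m_t-X^m_s|\le C\sum_{j=n}^{m}\Delta^m_j$ (connect $s$ and $t$ to the endpoints of the level-$n$ dyadic interval containing them, refining one level at a time). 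Dividing by $|t-s|^\theta$ and using $|t-s|^{-\theta}\le 2^{(n+1)\theta}\le 2^\theta 2^{j\theta}$ for $j\ge n$, I obtain the pathwise bound $\|X^m\|_\theta\le C\sum_{j=0}^{m}2^{j\theta}\Delta^m_j$.

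Finally, take $L^q$ norms and sum a geometric series. By Minkowski's inequality and the estimate on $E[(\Delta^m_n)^q]$,
\[
	\big\|\,\|X^m\|_\theta\,\big\|_{L^q}
	\le C\sum_{j=0}^{m}2^{j\theta}\big\|\Delta^m_j\big\|_{L^q}
	\le C\,C_q^{1/q}\sum_{j=0}^{m}2^{j(\theta-(1/2-1/q))}.
\]
Since $\theta-(\tfrac12-\tfrac1q)<0$ by the choice of $q$, the last sum is dominated by the convergent series $\sum_{j\ge0}2^{j(\theta-1/2+1/q)}$, a constant independent of $m$. As $q\ge p$ gives $\|\cdot\|_{L^p}\le\|\cdot\|_{L^q}$, this yields $\sup_m\|\,\|X^m\|_\theta\,\|_{L^p}<\infty$, and the same argument applied to $\tilde I^m$ completes the proof.

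The step I expect to require the most care is the chaining inequality $|X^m_t-X^m_s|\le C\sum_{j\ge n}\Delta^m_j$ with a constant $C$ independent of $m$ -- the only genuinely combinatorial point -- together with the bookkeeping ensuring that every dyadic level invoked consists of points of $D_m$. An alternative that hides this combinatorics is to apply the Garsia--Rodemich--Rumsey inequality to the piecewise-linear interpolant $\bar X^m$ of $X^m$: there the extra work is instead to propagate the grid moment bound to all $s,t\in[0,1]$, which for the linear interpolant follows from Minkowski's inequality and the within-cell identity $\bar X^m_t-\bar X^m_s=2^m(t-s)(X^m_{t^m_i}-X^m_{t^m_{i-1}})$, and then the continuous H\"older seminorm of $\bar X^m$ dominates the discrete one over $D_m$.
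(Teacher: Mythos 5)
Your proof is correct, and it takes a different route from the paper's. The paper passes to the piecewise linear extension of $I^m(F)$ and applies the Garsia--Rodemich--Rumsey inequality, using the moment bound from Theorem~\ref{moment estimate} (with the increment over a general interval $[s,t]$ split into a boundary cell, the full cells, and another boundary cell, which is where the factor $3^{2p-1}$ comes from); you instead run a Kolmogorov--Chentsov dyadic chaining argument entirely on the grid $D_m$. Both are sound. Your key steps check out: for $s,t\in D_m$ one indeed has $I^m_t(F)-I^m_s(F)=I^m_{s,t}(F)$ and $(\floor{2^mt}-\floor{2^ms})/2^m=|t-s|$, so an even exponent $q$ with $\tfrac12-\tfrac1q>\theta$ turns Theorem~\ref{moment estimate} into $E[|X^m_t-X^m_s|^q]\le C_q|t-s|^{q/2}$; the chaining inequality holds with the absolute constant $2$ (connect $s$ and $t$ to the nearest level-$n$ points below them and telescope down to level $m$, noting that every level-$j$ dyadic point with $j\le m$ lies in $D_m$); and the geometric series $\sum_j 2^{j(\theta-1/2+1/q)}$ converges by the choice of $q$. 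What your approach buys is that every constant is visibly independent of $m$ and no extension off the grid is needed; what the paper's approach buys is brevity, at the cost of the interpolation step you correctly identify in your closing remark. Either argument is acceptable here.
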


To prove weak convergence of weighted sum processes, 
it is necessary to assume some continuity property
of $(F_t)$ as follows.
This continuity property holds for the elements in $\mathcal{I}(\RR^N)$.

\begin{theorem}\label{limit theorem of weighted Hermite variation processes}
Let $(F_t)=(F^{\alpha,\beta}_t ; 1\le \alpha,\beta\le d)$, where
$(F^{\alpha,\beta}_t)\in \goodClass(\RR^N)$.
Suppose that $(F^{\alpha,\beta}_t)$ is a measurable function on 
	the product space $[0,1]\times \Omega$ and 
stochastically continuous, that is, 	
$\lim_{t\to u}F^{\alpha,\beta}_t=F^{\alpha,\beta}_u$ in probability
	for all $u\in [0,1]$.
	Let
	\begin{align*}
		I^m_{t}(F)
		&=
			\sum_{i=1}^{\floor{2^mt}}
			\sum_{\alpha,\beta=1}^d
				F^{\alpha,\beta}_{\tmim}
				d^{m,\alpha,\beta}_{\tmim,\tmi},
	\end{align*}
	where
	$
		d^{m,\alpha,\beta}_{\tmim,\tmi}
		=
			\frac{1}{2}
			B^{\alpha}_{\tmim,\tmi}
			B^{\beta}_{\tmim,\tmi}
			-
			B^{\alpha,\beta}_{\tmim,\tmi}
	$.
	Then
	\begin{align*}
		\left\{(2^m)^{2H-\frac{1}{2}}I^m_t(F)\right\}_{0\le t\le 1}
		\Longrightarrow
		\left\{
			C
			\int_0^t
				\sum_{\alpha,\beta=1}^d
					F^{\alpha,\beta}_s
					dW^{\alpha,\beta}_s
						\right\}_{0\le t\le 1}
		\text{weakly in $D([0,1], \RR^{N})$},
	\end{align*}
	where $W_t=(W^{\alpha,\beta}_t)$ is an independent process of $B$ 
such that
$(W^{\alpha,\beta}_t ; 1\le \alpha<\beta<d)$ is 
the $\frac{d(d-1)}{2}$-dimensional standard Brownian motion and such that
$W^{\alpha,\beta}_t=-W^{\beta,\alpha}_t$ hold for all $\alpha,\beta$,
and $C$ is a constant given by
	\begin{align*}
		C
		=
		  \Bigg\{
			  E[(B^{\alpha,\beta}_{0,1})^2]
			  +2\sum_{k=1}^{\infty}E[B^{\alpha,\beta}_{0,1}B^{\alpha,\beta}_{k,k+1}]
			-\frac{1}{4}(E[(B^{\alpha}_{0,1})^2])^2
			-\frac{1}{2}
			\sum_{k=1}^{\infty}E[B^{\alpha}_{0,1}B^{\alpha}_{k,k+1}]^2
		\Bigg\}^{1/2},
	\end{align*}
	where $\alpha\ne \beta$.
\end{theorem}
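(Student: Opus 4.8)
The plan is to follow the classical two-step scheme for a functional limit theorem: first establish \emph{tightness} of the normalized processes $X^m:=(2^m)^{2H-\frac{1}{2}}I^m(F)$ in $D([0,1],\RR^N)$, and then identify all \emph{finite-dimensional limits} by reducing the weighted sum to the weight-free L\'evy area variation of Theorem~\ref{Levy area variation}. A preliminary observation I would use throughout is the identity $d^{m,\alpha,\beta}_{\tmim,\tmi}=-\frac{1}{2}(B^{\alpha,\beta}_{\tmim,\tmi}-B^{\beta,\alpha}_{\tmim,\tmi})$, which follows from $B^{\alpha,\beta}+B^{\beta,\alpha}=B^{\alpha}B^{\beta}$; in particular $d^{m,\alpha,\alpha}_{\tmim,\tmi}=0$ and $d^{m,\alpha,\beta}_{\tmim,\tmi}=-d^{m,\beta,\alpha}_{\tmim,\tmi}$, so only indices $\alpha<\beta$ genuinely contribute and the antisymmetry already matches $W^{\alpha,\beta}=-W^{\beta,\alpha}$ in the limit.

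For tightness, I would invoke Corollary~\ref{cor to moment estimate}: the uniform-in-$m$ bound on the Hölder norm of $X^m$ on $D_m$ yields, for every $p$, increment estimates $E[|X^m_t-X^m_s|^p]\le C_p|t-s|^{\theta p}$ for $s,t\in D_m$. Taking $p$ with $\theta p>1$, and using that $X^m$ is piecewise constant with jumps only at dyadic points whose maximal size is of order $(2^m)^{-1/2}\to 0$, the standard càdlàg tightness criterion applies and forces every limit point to be concentrated on $C([0,1],\RR^N)$.

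The heart of the proof is the finite-dimensional convergence. I would fix a coarse dyadic partition $0=u_0<\cdots<u_L=1$ containing the prescribed times and replace, inside each block, the weight $F^{\alpha,\beta}_{\tmim}$ by its left endpoint value $F^{\alpha,\beta}_{u_{\ell-1}}$. The block-constant part then factors as $\sum_\ell F^{\alpha,\beta}_{u_{\ell-1}}S^{m,\alpha,\beta}_\ell$ with weight-free block sums $S^{m,\alpha,\beta}_\ell=(2^m)^{2H-\frac{1}{2}}\sum_{\tmim\in[u_{\ell-1},u_\ell)}d^{m,\alpha,\beta}_{\tmim,\tmi}$. By Theorem~\ref{Levy area variation} the family $(S^{m,\alpha,\beta}_\ell)_{\ell,\,\alpha<\beta}$ converges in law to independent centered Gaussians $C(W^{\alpha,\beta}_{u_\ell}-W^{\alpha,\beta}_{u_{\ell-1}})$, and crucially the limit $W$ is independent of $B$. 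Since each $F^{\alpha,\beta}_{u_{\ell-1}}$ is $\sigma(B)$-measurable, this convergence is in fact stable with respect to $\sigma(B)$, so it holds jointly with the weights and the two factors decouple in the limit. Consequently the block-constant sums converge to the Riemann sums $C\sum_\ell F^{\alpha,\beta}_{u_{\ell-1}}(W^{\alpha,\beta}_{u_\ell\wedge t}-W^{\alpha,\beta}_{u_{\ell-1}\wedge t})$, which converge in $L^2$ to $C\int_0^t\sum_{\alpha,\beta}F^{\alpha,\beta}_s\,dW^{\alpha,\beta}_s$ as the mesh tends to $0$; here I would use that stochastic continuity together with the uniform $L^q$ bound of Definition~\ref{def489140919490}(1) makes $s\mapsto F^{\alpha,\beta}_s$ continuous into $L^2$, so the conditional Itô isometry given $B$ applies.

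I expect two steps to be the real obstacles. The first is controlling the replacement error $(2^m)^{2H-\frac{1}{2}}\sum_\ell\sum_{\tmim\in[u_{\ell-1},u_\ell)}(F^{\alpha,\beta}_{\tmim}-F^{\alpha,\beta}_{u_{\ell-1}})d^{m,\alpha,\beta}_{\tmim,\tmi}$ \emph{uniformly in} $m$: applying Theorem~\ref{moment estimate} to the difference processes shows they belong to $\goodClass(\RR)$, but the bare bound is not small, since its constant depends on the good-class data rather than on the oscillation of $F$. The difficulty is therefore to make the moment estimate quantitative in a local modulus of continuity of $F$, so that summing the (block-disjoint, hence essentially variance-additive) errors and letting first $m\to\infty$ and then the mesh $\to 0$ gives a vanishing contribution. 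The second obstacle is upgrading Theorem~\ref{Levy area variation} to its stable form against $\sigma(B)$-measurable weights; this rests on the asymptotic independence of the second-chaos block sums from $B$ delivered by the fourth moment theorem, which is precisely what forces $W\perp B$, and on approximating a general $\sigma(B)$-measurable weight by functionals of finitely many increments of $B$. Once these are settled, together with the identification of $C$ as the asymptotic standard deviation of the L\'evy area variation computed in Theorem~\ref{Levy area variation}, the claimed weak convergence in $D([0,1],\RR^N)$ follows.
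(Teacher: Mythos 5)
Your overall strategy coincides with the paper's: tightness from the moment estimates (Theorem~\ref{moment estimate} with $p=4$, resp.\ Corollary~\ref{cor to moment estimate}), freezing of the weight at the left endpoints of a coarse dyadic grid (the paper's $F^{m'}_t=F_{[t]^-_{m'}}$), application of the weight-free FCLT of Theorem~\ref{Levy area variation} jointly with $B$ so that the $\sigma(B)$-measurable frozen weights decouple from the independent limit $W$, and finally an $L^2$ passage from Riemann sums to the It\^o integral using stochastic continuity. So the route is the right one.

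However, you leave the decisive step as an acknowledged ``obstacle'' rather than proving it: the uniform-in-$m$ smallness of the replacement error $(2^m)^{2H-\frac12}I^m(F-F^{m'})$. You correctly observe that a naive application of Theorem~\ref{moment estimate} to the difference weight only produces a constant depending on the good-class data of $F-F^{m'}$, which is not small, and you ask for a version of the moment estimate that is quantitative in the oscillation of $F$. The paper already contains exactly this refinement, and it is the reason the proof works: by Lemma~\ref{expansion formula of product} and Lemma~\ref{estimate of integration by parts}~(2), the second moment of $I^m_{s,t}(G)$ with $G=F-F^{m'}$ expands into chaos-order terms indexed by $r=0,1,\dots$, where every term with $r\ge 1$ carries the extra prefactor $(2^{-m})^{\frac r2(4H-1)}$; since $4H-1>0$ these terms tend to $0$ as $m\ge m'\to\infty$ regardless of the (non-small) good-class constants of $G$, while the remaining $r=0$ term involves only $E[G_{t_i}G_{t_j}]$ times summable covariances and is therefore controlled by $\sup_t\|F_t-F^{m'}_t\|_{L^2}^2$, which vanishes as $m'\to\infty$ by stochastic continuity and the uniform $L^q$ bound. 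Without this splitting (small zeroth-order term plus higher-order terms killed by a power of $2^{-m}$), your argument does not close; with it, no new ``local modulus'' version of Theorem~\ref{moment estimate} is needed. Your second flagged obstacle (stable convergence against $\sigma(B)$-measurable weights) is handled in the paper simply because Theorem~\ref{Levy area variation} is stated as joint weak convergence of $(B,\hat Q^m,\check Q^m,\tilde Q^m,Q^m)$ with the limit $W$ independent of $B$, so no separate stability upgrade is required.
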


Note that the limit process 
$
	\int_0^t
		\sum_{\alpha,\beta=1}^d
			F^{\alpha,\beta}_s
			dW^{\alpha,\beta}_s
$
in Theorem~\ref{limit theorem of weighted Hermite variation processes}
is well-defined because $F^{\alpha,\beta}_s$ and $W^{\alpha,\beta}_s$
are independent.
We add some remarks about the theorems presented above.

\begin{remark}\label{rem47312897319}
	\begin{enumerate}
		\item	By checking the proof of Theorem~\ref{moment estimate} and Corollary~\ref{cor to moment estimate},
				it is easy to see that the conclusions hold under the following weaker
				assumptions on $(F_t)$:
				\begin{enumerate}
					\item	$F_t$ is $k$-times stochastic G\^ateaux differentiable
							in the directions $\psi^{\alpha}_{u,u'}$ for sufficiently large $k$.
					\item	For $F_t$ and for all $\alpha_1.\ldots,\alpha_r$ $(1\le r\le k)$, 
							the estimates (\ref{Lq estimate for F}) and
							(\ref{estimate of phi alpha_r}) hold with $C, C'\in L^q$ for sufficiently large $q$.
				\end{enumerate}
				Here $k$ and $q$ should be chosen according to 
				$p$, $\theta$ in the statements.
				We apply Theorem~\ref{moment estimate} to 
				Theorem~\ref{limit theorem of weighted Hermite variation processes}
				in the case where $p=4$.
				Therefore, the assumption on $F$ in Theorem~\ref{limit theorem of weighted Hermite variation processes}
				can also be relaxed.
				Consequently, the assumption $\sigma, b\in C^{\infty}_b$ can also be relaxed.

				Note that under the assumption in 
				Theorem~\ref{limit theorem of weighted Hermite variation processes},
				$\lim_{t\to u}\|F^{\alpha,\beta}_t-F^{\alpha,\beta}_u\|_{L^p}=0$ hold for all $u$
				and $p\ge 1$.
				In the proof of Theorem~\ref{limit theorem of weighted Hermite variation processes},
				we use $\lim_{t\to u}\|F^{\alpha,\beta}_t-F^{\alpha,\beta}_u\|_{L^2}=0$ for any $u$.
				Therefore, if we relax the assumption of the integrability of $C$ in (\ref{Lq estimate for F}), then
				it may be necessary to assume this $L^2$ continuity.
		\item	In the calculation of the moment of 
				$I^m_{s,t}(F)$ and $\tilde{I}^m_{s,t}(F)$ in Theorems~\ref{moment estimate},
				finite products of elements in the Wiener chaos appear.
				To estimate the finite products,
				one must obtain each term of the Wiener chaos expansion
				of the products.
				Moreover, we estimate the moments using the 
				integration by parts formula 
				in the Malliavin calculus.
				This is an extension of the method used
				in \cite{NourdinNualartTudor2010, naganuma2015, aida-naganuma2020}
				to multidimensional case.
				In this step, when we apply these theorems to $(F_t)\in \mathcal{I}(\RR^N)$, 
				we need estimates of higher order Malliavin derivatives
				of the functionals of $Y_t, J_t, J_t^{-1}$.
				To obtain the estimates of the Malliavin derivatives, one must
				assume that $\sigma, b$ are sufficiently smooth.
				This assumption is very strong, which
				seems to be shortcoming of our approach compared to
				earlier work \cite{liu-tindel, liu-tindel2020}.
				However our proof does not use regularity of the time variable of $(F_{t})$.
				This point is an advantage of our approach.
				See also Remarks~\ref{Remark on summands} and~\ref{rem9410u09u13}.
	\end{enumerate}
\end{remark}

\section{Preliminaries}
\label{Wiener chaos}

First, we summarize basic notation.
For an $\RR^N$-valued continuous function $f=(f_t)_{t\in I}$ 
defined on an interval
 $I\subset [0,\infty)$, $\|f\|_{p\hyp var, [s,t]}$ denotes the 
$p$-variation norm of $f$ on $[s,t]\subset I$.
Next, let us consider a two-variable continuous function
$f:[0,1]^2\to\RR$.
Write $f([s,s']\times [t,t'])=f(s',t')-f(s,t')-f(s',t)+f(s,t)$.
We may denote $f([s,s']\times [0,t])=f([s,s'],t)$ loosely.
For $f$ and $p\ge 1$, 
the notation $V_p(f ; [s,t]\times[s',t'])$ denotes
the $p$-variation norm of $f$ on $[s,t]\times[s',t']\subset [0,1]^2$.
Several definitions and results
of multidimensional Young integrals are presented in Section~\ref{appendix I}.

Next we summarize the basic facts related to fBm.
Let $R(s,t)$ be the covariance function of the one-dimensional fBm $B$ starting at $0$
with Hurst parameter $H\in(0,1)$,
namely $R(s,t)=E[B_sB_t]=\frac{1}{2}\{s^{2H}+t^{2H}-|s-t|^{2H}\}$.
Let
\begin{align}
	\rho_H(v)=\frac{1}{2}\left(|v+1|^{2H}+|v-1|^{2H}-2|v|^{2H}\right),
	\qquad
	v\in \RR.
	\label{rho}
\end{align}
Note $\rho_H(v)=E[B_1B_{v,v+1}]=R([0,v]\times [v,v+1])$ if $v\ge 0$.
For $0<H\leq \frac{1}{2}$, we have $\sum_{k=0}^{\infty}|\rho_H(k)|<\infty$.
This follows from $\rho_{\frac{1}{2}}(v)=0$ for $v\geq 1$,
and $\rho_H(v)\sim -H(1-2H)v^{-2(1-H)}$ as $v\to\infty$ for $0<H<\frac{1}{2}$.

\begin{lemma}\label{properties of R}
Let $R(s,t)$ be the covariance function of the one-dimensional fBm $(B_t)_{t\ge 0}$
with Hurst parameter $0<H\le \frac{1}{2}$.
\begin{enumerate}
	\item	Let $0\le s<s'$, $0\le t<t'$, $u\ge 0$ and $a\ge 0$.
			We have
			$R([s,s']\times [t,t'])=R([s+u,s'+u]\times [t+u,t'+u])$
			and
			$R([as,as']\times [at,at'])=a^{2H}R([s,s']\times [t,t'])$. 
    \item	The function $[0,\infty)\ni u\mapsto R([s,t]\times [0,u])$ is
			decreasing on $[0,s]\cup [t,\infty)$ and 
			increasing on $[s,t]$.
			Furthermore, the following estimates hold.
			\begin{gather*}
				R([s,t]\times [u,v])>0\quad \text{for all $u<s<t<v$,}\\
				\begin{aligned}
		\|R([s,t],\cdot)\|_{1\hyp var, [0,\infty)} &\le 3|t-s|^{2H}, & 	
        	\|R([s,t],\cdot)\|_{\infty, [0,\infty)}    &\le 3 |t-s|^{2H}.
				\end{aligned}
			\end{gather*}
	\item	For any $k,l\ge 1$, we have
			\begin{align*}
			V_{(2H)^{-1}}(R ; [k-1,k]\times [l-1,l])
			&\le
			C|\rho_H(k-l)|,
			\end{align*}
			where $C$ is a constant depending only on $H$.
	\item	It holds that
			\begin{align*}
			V_{(2H)^{-1}}(R ; [\tmkm,\tmk]\times [\tmlm,\tml])&\le C
			\frac{|\rho_H(k-l)|}{2^{2Hm}},\\
			V_{(2H)^{-1}}(R ; [s,t]\times [0,1])&\le
			3|t-s|^{2H}.
			\end{align*}
\end{enumerate}
\end{lemma}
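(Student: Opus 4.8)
The plan is to prove Lemma~\ref{properties of R} in order, exploiting the scaling and translation invariance of the covariance rectangular increment as much as possible so that the heart of the work reduces to a single normalized estimate.

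\textbf{Parts (1) and (2).} First I would establish the translation and scaling identities in (1) by direct computation from $R(s,t)=\frac12\{s^{2H}+t^{2H}-|s-t|^{2H}\}$. The key observation is that in the rectangular increment $R([s,s']\times[t,t'])$ the pure $s^{2H}$ and $t^{2H}$ terms cancel under the alternating signs, leaving only a combination of $|s-t|^{2H}$-type terms; translating all four arguments by $u$ leaves $|s-t|$ differences unchanged, and scaling by $a$ factors out $a^{2H}$. For the monotonicity in (2), I would differentiate $u\mapsto R([s,t]\times[0,u])$ in $u$ (or use a discrete difference argument), reducing to the sign of a difference of $|\cdot|^{2H}$ terms; since $x\mapsto x^{2H}$ is concave for $0<H\le\frac12$, the sign is controlled by the position of $u$ relative to $[s,t]$. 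The strict positivity $R([s,t]\times[u,v])>0$ for $u<s<t<v$ follows from strict concavity of $x\mapsto x^{2H}$ on $(0,\infty)$ when $H<\frac12$. The two norm bounds $\|R([s,t],\cdot)\|_{1\hyp var}\le 3|t-s|^{2H}$ and $\|R([s,t],\cdot)\|_{\infty}\le 3|t-s|^{2H}$ I would obtain from the monotonicity: since the map is piecewise monotone on the three regions $[0,s]$, $[s,t]$, $[t,\infty)$, its total variation is the sum of the (at most three) monotone pieces, each bounded by $|t-s|^{2H}$ up to constants via scaling to the unit case; the sup bound follows similarly.

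\textbf{Part (3).} This is the main obstacle. The goal is to bound the two-dimensional $(2H)^{-1}$-variation $V_{(2H)^{-1}}(R;[k-1,k]\times[l-1,l])$ by $C|\rho_H(k-l)|$. Here I would invoke the definition of the multidimensional $p$-variation from Section~\ref{appendix I}, which I am allowed to assume. The strategy is to control the $p$-variation over the unit square $[k-1,k]\times[l-1,l]$ by the value of $R$ on that whole square together with the one-dimensional variations of its marginal increments, using a multidimensional Young-type estimate. By translation invariance from part (1), $R([k-1,k]\times[l-1,l])=R([0,1]\times[l-k,l-k+1])=\rho_H(k-l)$ (reading off the definition of $\rho_H$), so the full rectangular increment is exactly $\pm\rho_H(k-l)$. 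The remaining task is to show that all the finer sub-rectangle increments inside the unit square are dominated by this same quantity. I expect this to require a careful estimate showing that when $|k-l|$ is large the mixed increment behaves like $\rho_H(k-l)$ uniformly over the square, presumably by reducing to derivatives of $|v|^{2H}$ and using the asymptotic $\rho_H(v)\sim -H(1-2H)v^{-2(1-H)}$; the diagonal and near-diagonal cases $|k-l|\le 1$ must be handled separately and directly.

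\textbf{Part (4).} Both estimates follow from (3) together with scaling. For the first, apply the scaling identity of part (1) with $a=2^{-m}$ to write $R([\tmkm,\tmk]\times[\tmlm,\tml])=R([2^{-m}(k-1),2^{-m}k]\times\cdots)=2^{-2Hm}R([k-1,k]\times[l-1,l])$; since $p$-variation is homogeneous of the same degree under this scaling, $V_{(2H)^{-1}}$ picks up the factor $2^{-2Hm}$, and then (3) supplies the $C|\rho_H(k-l)|$ factor, giving $C|\rho_H(k-l)|2^{-2Hm}$. For the second estimate, $V_{(2H)^{-1}}(R;[s,t]\times[0,1])\le 3|t-s|^{2H}$, I would use the $\|\cdot\|_{1\hyp var}$ bound from part (2): the $(2H)^{-1}$-variation in the second variable over $[0,1]$ is controlled by the one-variable $1$-variation of $R([s,t],\cdot)$ on $[0,1]\subset[0,\infty)$, which part (2) already bounds by $3|t-s|^{2H}$. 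I would be careful to check that the relevant monotonicity and Young estimates from Section~\ref{appendix I} justify replacing the genuine two-dimensional variation by these one-dimensional quantities.
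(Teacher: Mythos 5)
Your treatment of parts (1), (2), and the first estimate in (4) matches the paper's: (1) and (2) are direct computations (the paper derives (1) from the stationarity and self-similarity of fBm, which amounts to the same thing), and the scaling reduction of the first estimate in (4) to (3) is exactly what the paper does. The problem is part (3), where your plan has a genuine gap.

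You propose to bound $V_{(2H)^{-1}}(R;[k-1,k]\times[l-1,l])$ by showing that every sub-rectangle increment inside the unit square is dominated by $|\rho_H(k-l)|$, via asymptotics of $\rho_H$ and derivatives of $|v|^{2H}$. Even if you proved such a pointwise domination, it would not control the $p$-variation: a grid partition into $N\times M$ rectangles would only give $\sum_{i,j}|R([s_{i-1},s_i]\times[t_{j-1},t_j])|^{p}\le NM\,C^{p}|\rho_H(k-l)|^{p}$ with $p=(2H)^{-1}$, which diverges as the partition is refined. The mechanism that actually works, and that the paper uses, is different: for $k<l$ the intervals $[s_{i-1},s_i]\subset[k-1,k]$ and $[t_{j-1},t_j]\subset[l-1,l]$ are disjoint and ordered, so by the monotonicity in (2) all increments $R([s_{i-1},s_i]\times[t_{j-1},t_j])$ have the same sign; hence $\sum_{i,j}|R(\cdots)|$ telescopes exactly to $|R([k-1,k]\times[l-1,l])|=|\rho_H(l-k)|$ by additivity, and the elementary inequality $\sum_i|a_i|^{p}\le\left(\sum_i|a_i|\right)^{p}$, valid since $p=(2H)^{-1}\ge 1$, converts this $\ell^1$ identity into the required $\ell^p$ bound. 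No asymptotic analysis of $\rho_H$ is needed. For the diagonal case $k=l$, which you leave entirely open, the paper applies the same elementary inequality in the $j$-variable only, bounds the inner sum $\sum_j|R([s_{i-1},s_i]\times[t_{j-1},t_j])|$ by the total variation $\|R([s_{i-1},s_i],\cdot)\|_{1\hyp var}\le 3|s_i-s_{i-1}|^{2H}$ from (2), and then sums $3^{1/(2H)}|s_i-s_{i-1}|$ over $i$ to obtain a constant, namely $3^{1/(2H)}$. The same device (a double sum handled by nesting the elementary inequality, not a reduction to a one-variable variation in the second argument alone, as you suggest) gives the second estimate in (4). Without the sign-definiteness/telescoping idea and the $\ell^1\hookrightarrow\ell^p$ step, part (3) does not go through.
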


\begin{proof}
	(1) follows from the stationarity and the scaling property of the 
   fBm
   $B_t$ starting at $0$:
   \begin{align*}
	\{B_{t+u}-B_u\}_{t\ge 0}\overset{\text{d}}{=}\{B_t\}_{t\ge 0},\qquad
   \{B_{at}\}_{t\ge 0}\overset{\text{d}}{=}\{a^{H}B_t\}_{t\ge 0}.
   \end{align*}
   (2) follows from an elementary calculation.

   We prove (3).
   Let $k-1=s_0<\cdots<s_N=k$ and $l-1=t_0<\ldots<t_M=l$ be partitions
   of $[k-1,k]$ and $[l-1,l]$, respectively.
   First, we consider the case $k<l$.
   In this case, we have $s_{i-1}<s_i<t_{j-1}<t_j$ for all $1\le i\le N$ and $1\le j\le M$.
   By the property of (2) and using an elementary inequality,
   $\sum_{i=1}^n|a_i|^p\le (\sum_{i=1}^n|a_i|)^p$ with $p=\frac{1}{2H}$,
   we obtain
	\begin{multline*}
		\sum_{i=1}^N
		\sum_{j=1}^M 
			|R([s_{i-1},s_i]\times[t_{j-1},t_j])|^{\frac{1}{2H}}
		\le
			\left|
				\sum_{i=1}^N
				\sum_{j=1}^M
					|R([s_{i-1},s_i]\times [t_{j-1}\times t_j])|
			\right|^{\frac{1}{2H}}\\
		=
   			|R([k-1,k]\times [l-1,l])|^{\frac{1}{2H}}
		=
			|R([0,1]\times [l-k-1,l-k])|^{\frac{1}{2H}}
		=
			|\rho_H(l-k)|^{\frac{1}{2H}}.
   \end{multline*}
   Next we consider the case $k=l$.
   Using the elementary inequality which we have used and
   the estimate of the total variation 
   of the function $t\mapsto R([s_{i-1},s_i],t)$ in (2),
   we have
   	\begin{multline*}
		\sum_{i=1}^N
		\sum_{j=1}^M
   			|R([s_{i-1},s_i]\times[t_{j-1},t_j])|^{\frac{1}{2H}}
		\le
		   \sum_{i=1}^N
			   	\left(\sum_{j=1}^M
				   |R([s_{i-1},s_i]\times[t_{j-1},t_j])|
				\right)^{\frac{1}{2H}}\\
		\le
			\sum_{i=1}^N
				\|R([s_{i-1},s_i],\cdot)\|_{1\hyp var, [0,\infty)}^{\frac{1}{2H}}
   		\le
			\sum_{i=1}^N
				3^{\frac{1}{2H}}
				|s_{i}-s_{i-1}|
		\le
			3^{\frac{1}{2H}}
		=
			3^{\frac{1}{2H}}
			\rho_H(0).
   \end{multline*}
   This completes the proof of (3).

   Actually, the first estimate in (4) follows from (1) and (3).
   The second one can be deduced by a similar argument for the case $k=l$ in (3).
\end{proof}

Next we introduce a class of Gaussian processes under which we work in 
Sections~\ref{df app of iterated integrals} and \ref{malliavin}.
The condition is given as the following.

\begin{condition}\label{condition on R}
We consider the following conditions on canonically defined Gaussian process 
$(B_t(\omega))=(B^{\alpha}_t(\omega))_{\alpha=1}^d$ 
$(\omega\in \Omega=C([0,1],\RR^d))$ starting at $0$.
\begin{enumerate}
\item $E[B^{\alpha}_t]=0$ for all $\alpha$ and $0\le t\le 1$.
 \item $B^1_t,\ldots,B^d_t$ are independent and identically distributed.
\item $R(s,t)=E[B^\alpha_s B^\alpha_t]$ satisfies 
that there exists $\frac{1}{3}<H\le \frac{1}{2}$ and $C>0$
such that $V_{(2H)^{-1}}(R ; [s,t]^2)\leq C|t-s|^{2H}$ for $0\leq s<t\leq 1$.
\end{enumerate}
\end{condition}

This condition holds for fBm with 
the Hurst parameter $\frac{1}{3}<H\le \frac{1}{2}$.
We have the following result for
Gaussian processes satisfying the condition presented above.
We refer the readers to \cite{friz-hairer} for these results
which are very useful for the study of the Malliavin derivatives of
elements in $\mathcal{I}(\RR)$.

\begin{theorem}\label{friz-hairer a theorem}
\begin{enumerate}
 \item Any Cameron-Martin path $h\in \mathcal{H}^d$ is finite $(2H)^{-1}$-variation.
\item Let $\frac{1}{3}<H^-<H$. There exists a full measure subset 
$\Omega'\subset \Omega$ which satisfies
$\Omega'+\mathcal{H}^d\subset \Omega'$ and
for any $\omega\in \Omega'$, 
$B(\omega)$ can be lifted to an $H^-$-H\"older geometric rough path
$\mathbf{B}(\omega)=(B(\omega),\mathbb{B}(\omega))$.
Let $C(B)=\|B\|_{H^-}+\|\BB\|_{2H^-}$, where $\|~\|_{H^-}, \|~\|_{2H^-}$ denote the H\"older norms.
Then $C(B)\in L^{\infty-}(\Omega)$.
\item $\mathbf{B}(\omega+h)=T_h\mathbf{B}(\omega)$ $(\omega\in \Omega')$ holds, where
$T_h\mathbf{B}(\omega)$ is the translated rough path of $\mathbf{B}(\omega)$.
\end{enumerate}
\end{theorem}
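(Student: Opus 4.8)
The plan is to verify that Condition~\ref{condition on R} places us squarely in the setting of the Gaussian rough path theory developed in \cite{friz-hairer}, and then to read off each of the three assertions from the corresponding result there; the only genuine work is checking the hypotheses and tracking exponents. Throughout I set $\rho=(2H)^{-1}$ and note that $\frac{1}{3}<H\le\frac{1}{2}$ forces $\rho\in[1,\frac{3}{2})$, so in particular $\rho<2$, which is the critical threshold allowing construction of the second level as an $L^2$-limit.

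For (1), I would invoke the continuous embedding of the Cameron-Martin space into paths of finite $\rho$-variation that holds whenever the covariance has finite two-dimensional $\rho$-variation. Taking $s=0$, $t=1$ in item (3) of Condition~\ref{condition on R} gives $V_{(2H)^{-1}}(R;[0,1]^2)<\infty$; since each component of $h\in\mathcal{H}^d$ lies in the $\mathcal{H}^1$-closure of the span of $\{R(t,\cdot)\}$, the standard Cauchy-Schwarz/Young estimate yields $\|h\|_{\rho\hyp var,[0,1]}\le\sqrt{V_{(2H)^{-1}}(R;[0,1]^2)}\,|h|_{\mathcal{H}^d}$, and (1) follows componentwise.

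The heart of the matter is (2). First I would record moment bounds on the two levels. For the first level the trivial partition gives $E[(B^\alpha_{s,t})^2]=R([s,t]^2)\le V_{(2H)^{-1}}(R;[s,t]^2)\le C|t-s|^{2H}$. For the second level, $\BB^{\alpha,\beta}_{s,t}$ with $\alpha\ne\beta$ is an element of the second Wiener chaos whose construction as an $L^2$-limit of Riemann-Stieltjes sums is legitimate precisely because $\rho<2$ (a two-dimensional Young argument), and by independence of the components its variance is controlled by $V_{(2H)^{-1}}(R;[s,t]^2)^2\le C^2|t-s|^{4H}$; the diagonal terms are handled by the geometric relation $\BB^{\alpha,\alpha}_{s,t}=\frac{1}{2}(B^\alpha_{s,t})^2$. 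Gaussian hypercontractivity upgrades these $L^2$ bounds to $L^p$ bounds for every $p$, uniformly in $s,t$, giving $\|B_{s,t}\|_{L^p}\lesssim_p|t-s|^H$ and $\|\BB_{s,t}\|_{L^p}\lesssim_p|t-s|^{2H}$. A Kolmogorov-type continuity theorem for rough paths then produces, for each $H^-<H$, a modification that is an $H^-$-H\"older geometric rough path, and the Garsia-Rodemich-Rumsey inequality together with the uniform moment bounds yields $C(B)=\|B\|_{H^-}+\|\BB\|_{2H^-}\in L^{\infty-}(\Omega)$. The shift-invariant full-measure set $\Omega'$ with $\Omega'+\mathcal{H}^d\subset\Omega'$ is taken to be the set on which the defining $L^2$-limits converge; I expect the main obstacle to lie exactly here and in (3), namely in arranging that a \emph{single} set $\Omega'$ is simultaneously full-measure, invariant under Cameron-Martin translations, and supports the H\"older lift.

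For (3), the identity $\mathbf{B}(\omega+h)=T_h\mathbf{B}(\omega)$ follows by approximation. Replacing $B(\omega)$ by a smooth (piecewise-linear or mollified) approximation $B^n(\omega)$, for which $B^n(\omega+h)=B^n(\omega)+h^n$ with $h^n$ smooth, the lift of the perturbed path is given exactly by the translation operator $T_{h^n}$, which is expressed through the Young integrals of $h$ against $B$ and of $B$ against $h$; these are well-defined because $h$ has finite $\rho$-variation with $\rho<2$ by (1), while $B$ is $H^-$-H\"older by (2). Passing to the limit along $\Omega'$ and using continuity of the Young pairing in the relevant variation/H\"older topologies identifies the limit with $T_h\mathbf{B}(\omega)$, completing the proof once (2) is in hand.
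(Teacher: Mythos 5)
The paper does not prove this theorem at all: it is stated as a quoted result, with the reader referred to \cite{friz-hairer}, so there is no ``paper proof'' to match. Your sketch is a correct reconstruction of the standard argument from the Gaussian rough path literature, and the key checkpoints are all in place: $\rho=(2H)^{-1}<2$ is what makes the second level constructible as an $L^2$-limit via two-dimensional Young estimates; hypercontractivity upgrades the second-chaos $L^2$ bounds to all moments; Kolmogorov plus Garsia--Rodemich--Rumsey gives the $H^-$-H\"older lift with $C(B)\in L^{\infty-}$; and complementary Young regularity $2H+H^->1$ (valid since $H,H^->\frac13$) makes the cross-integrals in $T_h\mathbf{B}$ well defined. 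Two points where your sketch is looser than the actual argument in \cite{friz-hairer}: first, the embedding in (1) is not a plain Cauchy--Schwarz estimate on increments (that only bounds each $|h_{t_i,t_{i+1}}|$ individually and does not sum correctly with exponent $\rho\ge 1$); one proves $\bigl(\sum_i|h_{t_i,t_{i+1}}|^{\rho}\bigr)^{1/\rho}\le\|h\|_{\mathcal{H}}V_{\rho}(R;[0,1]^2)^{1/2}$ by pairing with an $\ell^{\rho'}$-unit sequence and applying H\"older to the double sum $\sum_{i,j}\beta_i\beta_jR([t_i,t_{i+1}]\times[t_j,t_{j+1}])$. Second, the issue you correctly flag as the main obstacle --- producing a \emph{single} full-measure $\Omega'$ with $\Omega'+\mathcal{H}^d\subset\Omega'$ carrying the lift --- is resolved in \cite{friz-hairer} by taking $\Omega'$ to be the set where the dyadic piecewise-linear approximations converge in rough path topology; this set is Cameron--Martin stable because piecewise-linear approximation commutes with translation by $h$ and the approximations of $h$ converge in the complementary Young sense, which simultaneously yields (3) in the limit. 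With those two clarifications your outline is a faithful account of the cited proof.
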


Here it is helpful to recall the definitions of the Cameron-Martin space and 
Wiener chaos.
Let $\mathcal{H}^d$ be the Cameron-Martin subspace of $\Omega$. 
Let $L^2(\Omega,\mu)=\oplus_{n=0}^{\infty}
\mathscr{H}_n$ be the Wiener chaos decomposition,
where $\mathscr{H}_n$ is the $n$-th Wiener chaos.
Then there exists an isomorphism 
between two Hilbert spaces $\mathcal{H}^d$ and $\mathscr{H}_1$ by 
\begin{align*}
	\mathscr{H}_1 \ni X\mapsto h_X\in \mathcal{H}^d,
	\qquad
	\text{where}
	\qquad
	h_X(t)=\left(E[XB^1_t],\ldots,E[XB^d_t]\right).
\end{align*}
When $d=1$,
we denote the corresponding element to
the random variable $B_u(\omega)\in \mathscr{H}_1$ by $\psi_u\in \mathcal{H}^1$.
By definition, $\psi_u(t)$ is equal to the covariance function
$R(u,t)=E[B^{\alpha}_uB^{\alpha}_t]$ as a continuous function.

Let $\{h_i\}_{i=1}^{\infty}$ be a complete orthonormal system of $\mathcal{H}^1$.
Let $\{e_\alpha\}_{\alpha=1}^d$ be the standard orthonormal base of $\RR^d$.
Then $\mathcal{H}^d\equiv \mathcal{H}^1\otimes \RR^d$ and $\{h^{\alpha}_i\}_{i,\alpha}$ is
a complete orthonormal system of $\mathcal{H}^d$, where $h^{\alpha}_i=h_i\otimes e_{\alpha}$.
One of the orthonormal basis of $(\mathcal{H}^d)^{\otimes r}$ is
$
	\{
		h^{\alpha_1}_{i_r}\otimes\cdots\otimes
		h^{\alpha_r}_{i_r}~|~i_1,\ldots, i_r\ge 1,
		1\le \alpha_1,\ldots,\alpha_r\le d
	\}
$.
Two Hilbert spaces $(\mathcal{H}^d)^{\otimes r}$ and
$\left((\mathcal{H}^1)^{\otimes r}\right)\otimes (\RR^d)^{\otimes r}$
are isomorphic to each other by the map
$h^{\alpha_1}_{i_r}\otimes\cdots\otimes
	       	h^{\alpha_r}_{i_r}\mapsto 
(h_{i_1}\otimes\cdots\otimes h_{i_r})\otimes 
(e_{\alpha_1}\otimes\cdots\otimes e_{\alpha_r})$.
Below, $\mathcal{S}$ denotes the symmetrization operator
on the space of tensor products $(\mathcal{H}^d)^{\otimes r}$.
That is, it is defined by
\begin{align*}
	\mathcal{S}(h^{\alpha_1}_{i_1}\otimes\cdots \otimes h^{\alpha_r}_{i_r})
=\frac{1}{r!}
\sum_{\sigma\in \mathfrak{G}_r}
h^{\alpha_{\sigma(1)}}_{i_{\sigma(1)}}\otimes\cdots
\otimes h^{\alpha_{\sigma(r)}}_{i_{\sigma(r)}},
\end{align*}
where $\mathfrak{G}_r$ denotes the permutation group of $(1,\ldots,r)$.

There exists a one-to-one correspondence
between the set of $p$-th Wiener chaos $\mathscr{H}_p$
and the symmetric tensor product of the Cameron-Martin space
$(\mathcal{H}^d)^{\odot p}$.
Actually they are isomorphism between two Hilbert spaces.
Let us recall product formula for Wiener chaos.
We denote the $p$-th It{\^o}-Wiener integral by $I_p$,
which is a map from $(\mathcal{H}^d)^{\odot p}$ to $\mathscr{H}_p$.

     \begin{proposition}\label{product formula}
Let $p, q$ be positive integers and
let $f\in (\mathcal{H}^d)^{\odot p}$
 and $g\in (\mathcal{H}^d)^{\odot q}$.

\begin{enumerate}
	\item	Let $0\le r\le \min(p,q)$.
			Let $f\underset{r}{\tilde{\otimes}} g$
			be the symmetrization of the $r$-th contraction of $f$ and $g$.
			Then the mapping
			\begin{align*}
				(\mathcal{H}^d)^{\odot p}
				\times
				(\mathcal{H}^d)^{\odot q}
				\ni
					(f,g)
				\mapsto
					f\underset{r}{\tilde{\otimes}}g
				\in
					(\mathcal{H}^d)^{\odot p+q-2r}
			\end{align*}
			is continuous linear.
	\item	It holds that
			\begin{align*}
				I_p(f)I_q(g)
				=
					\sum_{r=0}^{p\wedge q}
						r!\binom{p}{r}
						\binom{q}{r}I_{p+q-2r}
						(f\underset{r}{\tilde{\otimes}}g).
			\end{align*}
	\item	Let $f=x_1\odot\cdots\odot x_p\in (\mathcal{H}^d)^{\odot p}$
			and $g=y_1\odot\cdots\odot y_q\in (\mathcal{H}^d)^{\odot q}$,
			where $x_i,y_j\in \mathcal{H}^d$.
			Then,
			\begin{align*}
				I_p(f)I_q(g)
				&=
				\sum_{r=0}^{p\wedge q}
				\sum_{\substack{
						I=\{i_1,\ldots,i_r\}\subset \{1,\ldots,p\},\\
						J=\{j_1,\ldots,j_r\}\subset \{1,\ldots,q\}}}
				\sum_{\sigma\in\mathfrak{G}_r}
					\prod_{k=1}^r
					\left(x_{i_k},y_{j_{\sigma(k)}}\right)_{\mathcal{H}^d}
					Z_{p,q,r,I,J}\\
				&=
					\sum_{r=0}^{p\wedge q}
					\sum_{\substack{
							I\subset \{1,\ldots,p\}, J\subset \{1,\ldots,q\}\\
							\text{with $|I|=|J|=r$}}}
						r!
						\left(
							\underset{i\in I}{\odot}x_i,
							\underset{j\in J}{\odot}y_j
						\right)_{(\mathcal{H}^d)^{\odot r}}
						Z_{p,q,r,I,J},
			\end{align*}
			where
			\begin{align*}
				Z_{p,q,r,I,J}
				=
				I_{p+q-2r}
					\left(
						\underset{i\in \{1,\ldots,p\}\setminus I}{\odot} x_i
						\odot
						\underset{j\in \{1,\ldots,q\}\setminus J}{\odot}y_j
					\right).
			\end{align*}
		\end{enumerate}
\end{proposition}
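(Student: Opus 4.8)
The plan is to recognize this as the classical multiplication formula for multiple Wiener--It\^o integrals (see, e.g., \cite{nualart, nourdin-peccati}) and to establish the three assertions in order, the only genuinely new content being the fully polarized form in~(3). First I would dispose of~(1): the $r$-th contraction $f\otimes_r g\in(\mathcal{H}^d)^{\otimes(p+q-2r)}$ is obtained by pairing $r$ slots of $f$ with $r$ slots of $g$, and the Cauchy--Schwarz inequality in $(\mathcal{H}^d)^{\otimes r}$ gives the bound $\|f\otimes_r g\|\le\|f\|_{(\mathcal{H}^d)^{\odot p}}\|g\|_{(\mathcal{H}^d)^{\odot q}}$. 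Since the symmetrization operator $\mathcal{S}$ has norm one, the symmetrized contraction $f\underset{r}{\tilde{\otimes}}g=\mathcal{S}(f\otimes_r g)$ obeys the same bound; bilinearity is immediate, so the map is continuous linear in each variable.

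For~(2) the strategy is reduction to rank-one symmetric tensors together with a density argument. By~(1) and the hypercontractivity of $I_p$, both sides of the asserted identity are continuous and bilinear as maps $(\mathcal{H}^d)^{\odot p}\times(\mathcal{H}^d)^{\odot q}\to L^2(\Omega,\mu)$; since the linear span of $\{a^{\odot p}:a\in\mathcal{H}^d\}$ is dense in $(\mathcal{H}^d)^{\odot p}$, it suffices to treat $f=a^{\odot p}$, $g=b^{\odot q}$. For these I would use the exponential vectors $\mathcal{E}(h)=\sum_{n\ge0}\frac{1}{n!}I_n(h^{\odot n})=\exp(I_1(h)-\tfrac12|h|^2)$ and the elementary relation $\mathcal{E}(a)\mathcal{E}(b)=e^{(a,b)_{\mathcal{H}^d}}\mathcal{E}(a+b)$. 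Expanding both sides in power series, inserting $(a+b)^{\odot n}=\sum_{j}\binom{n}{j}a^{\odot j}\odot b^{\odot(n-j)}$, and matching the part homogeneous of degree $p$ in $a$ and of degree $q$ in $b$ produces $I_p(a^{\odot p})I_q(b^{\odot q})=\sum_{r}r!\binom{p}{r}\binom{q}{r}(a,b)_{\mathcal{H}^d}^{r}I_{p+q-2r}(a^{\odot(p-r)}\odot b^{\odot(q-r)})$; since $a^{\odot p}\underset{r}{\tilde{\otimes}}b^{\odot q}=(a,b)_{\mathcal{H}^d}^{r}\,a^{\odot(p-r)}\odot b^{\odot(q-r)}$, this is exactly~(2) in the rank-one case.

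For~(3) I would apply~(2) with $f=x_1\odot\cdots\odot x_p$ and $g=y_1\odot\cdots\odot y_q$ and then expand the symmetrized contraction explicitly. Each way of selecting an $r$-element set $I=\{i_1,\dots,i_r\}\subset\{1,\dots,p\}$ and $J=\{j_1,\dots,j_r\}\subset\{1,\dots,q\}$ together with a bijection $\sigma\in\mathfrak{G}_r$ contributes the scalar $\prod_{k=1}^r(x_{i_k},y_{j_{\sigma(k)}})_{\mathcal{H}^d}$ multiplied by the surviving symmetric tensor $\underset{i\notin I}{\odot}x_i\odot\underset{j\notin J}{\odot}y_j$, and the symmetrization and binomial factors combine to give $r!\binom{p}{r}\binom{q}{r}\,f\underset{r}{\tilde{\otimes}}g=\sum_{|I|=|J|=r}\sum_{\sigma\in\mathfrak{G}_r}\prod_{k=1}^r(x_{i_k},y_{j_{\sigma(k)}})_{\mathcal{H}^d}\,\underset{i\notin I}{\odot}x_i\odot\underset{j\notin J}{\odot}y_j$. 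Substituting this into~(2) yields the first displayed formula with $Z_{p,q,r,I,J}$ as stated, and the second follows from the identity $\sum_{\sigma\in\mathfrak{G}_r}\prod_{k=1}^r(x_{i_k},y_{j_{\sigma(k)}})_{\mathcal{H}^d}=r!\,\big(\underset{i\in I}{\odot}x_i,\underset{j\in J}{\odot}y_j\big)_{(\mathcal{H}^d)^{\odot r}}$, which is merely the expression of the symmetric inner product as a permanent.

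The hard part is the combinatorial accounting in~(3): one must verify that the $\binom{p}{r}\binom{q}{r}$ subset choices, the $r!$ pairings, and the symmetrization normalizations of $f$, $g$ and of $f\underset{r}{\tilde{\otimes}}g$ combine with neither over- nor under-counting. To avoid miscounting factors I would first check the contraction identity of the previous paragraph at the generating-function level, by specializing $x_i=s_i a$ and $y_j=t_j b$ and comparing with the rank-one computation from~(2); this reduces the whole verification to matching the coefficient of $s_1\cdots s_p\,t_1\cdots t_q$, where the bookkeeping is transparent.
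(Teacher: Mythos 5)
Your proposal is correct, and for part (3) it follows essentially the same route as the paper: reduce to part (2), expand $r!\binom{p}{r}\binom{q}{r}\,f\underset{r}{\tilde{\otimes}}g$ as the sum over subsets $I,J$ and bijections $\sigma\in\mathfrak{G}_r$, and then pass to the second displayed formula via the permanent identity $\sum_{\sigma}\prod_k(x_{i_k},y_{j_{\sigma(k)}})_{\mathcal{H}^d}=r!\,(\odot_{i\in I}x_i,\odot_{j\in J}y_j)_{(\mathcal{H}^d)^{\odot r}}$, which is exactly the paper's pair of identities \eqref{eq90411133} and \eqref{eq45819011}. The difference lies in how the key combinatorial identity is verified: the paper computes $f\otimes_r g=\frac{1}{p!q!}\sum_{\sigma\in\mathfrak{G}_p,\tau\in\mathfrak{G}_q}(\cdots)$ directly and regroups the permutations by the ordered contraction sets, whereas you check the normalization on the diagonal $x_i=s_ia$, $y_j=t_jb$ against the rank-one case from (2). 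That check is legitimate, but note that with all $x_i$ proportional to a single $a$ every term carries the full monomial $s_1\cdots s_p t_1\cdots t_q$, so it only establishes the identity on the diagonal; you must then invoke that both sides are multilinear and separately symmetric in the $x$-block and the $y$-block, so that polarization (e.g.\ substituting $a=\sum_i s_ix_i$, $b=\sum_j t_jy_j$ and extracting the multilinear coefficient) extends the identity to general arguments. Your treatment of (1) and of (2) via exponential vectors, density of $\{a^{\odot p}\}$, and hypercontractivity is standard and fine; the paper simply omits these as known facts. Each approach to (3) has a modest advantage: the paper's direct permutation computation is self-contained and produces the identity for arbitrary (not necessarily decomposable) tensors along the way, while your diagonal-plus-polarization check makes the counting of the factor $r!\binom{p}{r}\binom{q}{r}$ transparent and harder to get wrong.
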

\begin{proof}
	Statements (1) and (2) are standard facts and we omit the proof.
	We prove (3). The first identity in (3) follows from (2) and
	\begin{align}
		\label{eq90411133}
		r!\binom{p}{r}\binom{q}{r}
		f\underset{r}{\tilde{\otimes}}g
	   	=
		   	\sum_{\substack{
				I=\{i_1,\ldots,i_r\}\subset \{1,\ldots,p\},\\
				J=\{j_1,\ldots,j_r\}\subset \{1,\ldots,q\}}}
			   	\sum_{\sigma\in\mathfrak{G}_r}
					\prod_{k=1}^r
					\left(x_{i_k},y_{j_{\sigma(k)}}\right)_{\mathcal{H}^d}
				\underset{i\in \{1,\ldots,p\}\setminus I}{\odot} x_i
				\odot
				\underset{j\in \{1,\ldots,q\}\setminus J}{\odot}y_j.
	\end{align}
	The second first identity in (3) follows from the first one and
	\begin{align}
		\label{eq45819011}
		\left(
			\underset{i\in I}{\odot}x_i,
			\underset{j\in J}{\odot}y_j
		\right)_{(\mathcal{H}^d)^{\odot r}}
		=
			\frac{1}{(r!)^2}
			\sum_{\sigma,\tau\in\mathfrak{S}_r}
			\prod_{k=1}^r
				\left(x_{i_{\sigma(k)}},y_{j_{\tau(k)}}\right)_{\mathcal{H}^d}
		=
			\frac{1}{r!}
			\sum_{\sigma\in\mathfrak{S}_r}
			\prod_{k=1}^r
				\left(x_{i_k},y_{j_{\sigma(k)}}\right)_{\mathcal{H}^d}.
	\end{align}
	This identity follows from the definition.
	In what follows, we show \eqref{eq90411133}.
	Below $I=(i_1,\ldots,i_r)$ and $J=(j_1,\ldots,j_r)$ 
	respectively denote subsets with the order of $\{1,\ldots,p\}$ and
	$\{1,\ldots,q\}$.
	As usual, we use the notation $I=\{i_1,\ldots,i_r\}$
	and $J=\{j_1,\ldots,j_r\}$ to denote subsets.
	Let $\tilde{\sigma}$ and $\tilde{\tau}$ move in the set of
the bijective mappings between
$\{r+1,\ldots,p\}\to \{1,\ldots,p\}\setminus I$
and $\{r+1,\ldots,q\}\to \{1,\ldots,q\}\setminus J$.
Using these notations, we have
\begin{align*}
	f\otimes_r g
	&=
		\frac{1}{p!q!}
		\sum_{\sigma\in\mathfrak{G}_p,\tau\in \mathfrak{G}_q}
			\prod_{i=1}^r
			(x_{\sigma(i)},y_{\tau(i)})_{\mathcal{H}^d}
			x_{\sigma(r+1)}\otimes\cdots\otimes x_{\sigma(p)}
			\otimes
			y_{\tau(i+1)}\otimes\cdots\otimes y_{\tau(q)}\\
	&=
		\frac{1}{p!q!}
		\sum_{\substack{I=(i_1,\ldots,i_r),\\ J=(j_1,\ldots,j_r)}}
		\sum_{
			\substack{\tilde{\sigma}: \{r+1,\ldots,p\}\to \{1,\ldots,p\}\setminus I,\\
			\tilde{\tau} :\{r+1,\ldots,q\}\to \{1,\ldots, q\}
			\setminus J}}
			\prod_{k=1}^r(x_{i_k},y_{j_k})_{\mathcal{H}^d}\\
	&\phantom{=}\qquad\qquad\qquad\qquad\qquad\qquad\qquad
			\times
			x_{\tilde{\sigma}(r+1)}\otimes\cdots\otimes x_{\tilde{\sigma}(p)}
			\otimes
			y_{\tilde{\tau}(r+1)}\otimes\cdots\otimes y_{\tilde{\tau}(q)}.
\end{align*}
Applying the symmetrization operator $\mathcal{S}$, we obtain
\begin{align*}
	f\underset{r}{\tilde{\otimes}}g
	&=
		\frac{(p-r)!(q-r)!}{p!q!}
		\sum_{\substack{I=(i_1,\ldots,i_r),\\ J=(j_1,\ldots,j_r)}}
			\prod_{k=1}^r(x_{i_k},y_{j_k})_{\mathcal{H}^d}
				\underset{i\in \{1,\ldots,p\}\setminus I}{\odot} x_i
				\odot
				\underset{j\in \{1,\ldots,q\}\setminus J}{\odot}y_j\\
	&=
		\frac{(p-r)!(q-r)!}{p!q!}
		\sum_{\substack{I=\{i_1,\ldots,i_r\},\\ J=\{j_1,\ldots,j_r\}}}
			\sum_{\sigma,\tau\in \mathfrak{G}_r}
				\prod_{k=1}^r(x_{i_{\sigma(k)}},y_{j_{\tau(k)}})_{\mathcal{H}^d}
				\underset{i\in \{1,\ldots,p\}\setminus I}{\odot} x_i
				\odot
				\underset{j\in \{1,\ldots,q\}\setminus J}{\odot}y_j.
\end{align*}
Combining this with \eqref{eq45819011}, we arrived at \eqref{eq90411133}.
\end{proof}

\section{Finite dimensional approximations of iterated integrals}
\label{df app of iterated integrals}

Throughout this section, we assume that Condition~\ref{condition on R} holds for
$d$-dimensional Gaussian process $(B_t)$.
Recall that $\frac{1}{3}<H\leq\frac{1}{2}$ is assumed in Condition~\ref{condition on R}.
We denote by $\psi_{s,t}\in \mathcal{H}^1$ the corresponding element to
the increment of one-dimensional fBm $B^1_{s,t}$; also,
set $\psi^\alpha_{s,t}=\psi_{s,t}\otimes e_{\alpha}\in \mathcal{H}^d$.
In this section, we will identify the elements in the tensor product of the Cameron-Martin space
corresponding to multiple Wiener integrals. 
Finally, we give a finite dimensional approximation of multiple Wiener integrals.

First, we will find the element in $(\mathcal{H}^d)^{\otimes 2}$
corresponding to $B^{\alpha,\beta}_{s,t}$.
When $\alpha\ne \beta$, the iterated integral $B^{\alpha,\beta}_{s,t}$
is defined as
\begin{align*}
	B^{\alpha,\beta}_{s,t}
	&=
		\lim_{|\cP|\to 0}
			\sum_{i=1}^{n}
			B^{\alpha}_{s,u_{i-1}}B^{\beta}_{u_{i-1},u_i}
			\quad \textrm{in $L^2$},
\end{align*}
where $\cP=\{u_i\}_{i=0}^n$ is a partition of $[s,t]$.
	       Then, we identify the corresponding element
	       in $(\mathcal{H}^d)^{\odot 2}$ to
	       $B^{\alpha,\beta}_{s,t}$.
	       Because
 	       $(\psi^{\alpha}_u,\psi^{\beta}_v)_{\mathcal{H}^d}=0$,
	       using Proposition~\ref{product formula},
	       we have
	    \begin{align*}
			\sum_{i=1}^{n}
		 		B^{\alpha}_{s,u_{i-1}}B^{\beta}_{u_{i-1},u_i}
		 	=
			\sum_{i=1}^n
				I_1(\psi_{s,u_{i-1}}^{\alpha})
				I_1(\psi_{u_{i-1},u_i}^{\beta})
			=
				I_2
					\left(
					\sum_{i=1}^n
						\psi_{s,u_{i-1}}^{\alpha}
						\odot
						\psi_{u_{i-1},u_i}^{\beta}
					\right).
		\end{align*}
	       			Because we know that the sum of
				random variables on the
				left-hand side converges in $L^2$,
				$
\sum_{i=1}^n
		 \psi_{s,u_{i-1}}^{\alpha}
		 \odot \psi_{u_{i-1},u_i}^{\beta}$
		 converges in $(\mathcal{H}^d)^{\odot 2}$.
We denote the limit by
\begin{align*}
 \tpsi^{\alpha,\beta}_{s,t}:=\int_s^t\psi^{\alpha}_{s,u}\odot d\psi^{\beta}_u.
\end{align*}
   To be explicit, we have
		 \begin{align*}
		  \sum_{i=1}^n
		 \psi_{s,u_{i-1}}^{\alpha}
		  \odot
		  \psi_{u_{i-1},u_i}^{\beta}
		  		  &=
		  \frac{1}{2}
		  \sum_{i=1}^n
		  \left(\psi_{s,u_{i-1}}e_{\alpha}\otimes
		  \psi_{u_{i-1},u_i}e_{\beta}
		  +
\psi_{u_{i-1},u_i}e_{\beta}\otimes
\psi_{s,u_{i-1}}e_{\alpha}\right).
		  		 \end{align*}
Furthermore, because $\sum_{i=1}^n
		 \psi_{s,u_{i-1}}e_{\alpha}
		  \otimes
		  \psi_{u_{i-1},u_i}e_{\beta}$
and
$\sum_{i=1}^n
\psi_{u_{i-1},u_i}e_{\beta}\otimes
\psi_{s,u_{i-1}}e_{\alpha}
$
are orthogonal,
we see that
$\lim_{|\cP|\to 0}
\sum_{i=1}^n\psi_{s,u_{i-1}}\otimes \psi_{u_{i-1},u_i}$
converges in $(\mathcal{H}^1)^{\otimes 2}$.

For the case in which $\alpha=\beta$,
because we consider geometric rough paths,
we have $B^{\alpha,\alpha}_{s,t}=\frac{1}{2}(B^{\alpha}_{s,t})^2$.
Using Proposition~\ref{product formula} and 
$\left(\psi^{\alpha}_{s,t},\psi^{\alpha}_{s,t}\right)_{\mathcal{H}}
=E[(B^{\alpha}_{s,t})^2]=R([s,t]\times [s,t])$,
we obtain
\begin{align*}
	B^{\alpha,\alpha}_{s,t}
	=
		\frac{1}{2}
		I_1(\psi^{\alpha}_{s,t})^2
	=
		\frac{1}{2}
		\left\{
			I_2\left(\psi^{\alpha}_{s,t}\odot\psi^{\alpha}_{s,t}\right)
			+\left(\psi^{\alpha}_{s,t},\psi^{\alpha}_{s,t}\right)_{\mathcal{H}}
		\right\}
	=
		\frac{1}{2}
		I_2\left(\psi^{\alpha}_{s,t}\odot\psi^{\alpha}_{s,t}\right)
		+
		\frac{1}{2}
		R([s,t]\times [s,t]).
\end{align*}

Let $2\le l\le d$.
We next define general $l$-th iterated integral of
$\psi_u$ and $\psi^{\alpha_1}_{u_1},\ldots,\psi^{\alpha_l}_{u_l}$ 
$(1\le \alpha_i\le d)$ for $l\ge 3$
as elements of
$(\mathcal{H}^1)^{\otimes l}$ and
$(\mathcal{H}^d)^{\otimes l}$ respectively.
To this end, similarly to the case $l=2$,
we consider an $l$-dimensional Gaussian process
$B_t=(B^1_t,\ldots,B^l_t)$ $(0\le t\le 1)$ which satisfies Condition~\ref{condition on R}.
Let $(B_{s,t},\BB_{s,t})$ be the corresponding
rough path.
Next we consider a consecutive sequence $\{1,\ldots,l\}$ and
the $l$-th iterated integral $B^{1,\ldots,l}_{s,t}$,
which is defined as
an integral of controlled paths inductively after we obtain iterated
integrals $B^{\alpha,\beta}_{s,t}$ $(1\le \alpha,\beta\le l)$.
That is, suppose we have defined the iterated integral
$B^{1,\ldots,r-1}$ ($3\le r\le l$).
Then $l$-th iterated integrals can be defined as the pointwise limit
\begin{align*}
 B^{1,\ldots,l}_{s,t}&=\int_s^tB^{1,\ldots,l-1}_{s,u}dB_u^l
 =\lim_{|\cP|\to 0}\sum_{i=1}^{n}
 \left\{
 B^{1,\ldots,l-1}_{s,u_{i-1}}B^l_{u_{i-1},u_i}+
 B^{1,\ldots,l-2}_{s,u_{i-1}}B^{l-1,l}_{u_{i-1},u_i}
 \right\},
\end{align*}
where $\cP=\{u_i\}_{i=0}^n$ is a partition of $[s,t]$.
Here we estimate
\begin{align*}
	A(\cP)
	=
		E
		\Bigg[
			\left(
				\sum_{i=1}^{n}
					B^{1,\ldots,l-2}_{s,u_{i-1}}B^{l-1,l}_{u_{i-1},u_i}
			\right)^2
		\Bigg]
	=
		\sum_{i,j=1}^n
			E
				\left[
					B^{1,\ldots,l-2}_{s,u_{i-1}}B^{1,\ldots,l-2}_{s,u_{j-1}}
				\right]
			E
				\left[
					B^{l-1,l}_{u_{i-1},u_i}B^{l-1,l}_{u_{j-1},u_j}
				\right].
\end{align*}
Here let $\vep$ be a positive number such that $2(2H-\vep)>1$.
Note $p=(2H-\vep)^{-1}$ satisfies $1<p<2$.
Using the moment estimate of $B^{1,\ldots,l-2}_{u,v}$
and the results of multidimensional Young integrals
(multidimensional Young integrals are explained further in Section~\ref{appendix I}),
we have
\begin{align*}
	A(\cP)
	&\leq
		C_{s,t,l}
		\sum_{i,j=1}^n
			\left|
				\int_{[u_{i-1},u_i]\times[u_{j-1},u_j]}R([u_{i-1},u]\times [u_{j-1},v])dR(u,v)
			\right|\\
	&\leq
		C_{s,t,l}
		C_{\vep,H}
		\sum_{i,j=1}^n
			V_p(R ; [u_{i-1},u_i]\times [u_{j-1},u_j])^2\\
&\le C_{s,t,l}
		C_{\vep,H}
\max_{i,j}V_p(R ; [u_{i-1},u_i]\times [u_{j-1},u_j])^{2-p}
\sum_{i,j=1}^n V_p(R ; [u_{i-1},u_i]\times [u_{j-1},u_j])^p.
\end{align*}
Because $V_{(2H)^{-1}}(R ; [0,1]^2)<\infty$,
we have $\lim_{|\mathcal{P}|\to 0}\max_{i,j}V_p(R ; [u_{i-1},u_i]\times [u_{j-1},u_j])=0$.
Combining Theorem~\ref{FV} and the superadditivity of the $p$-variation norm,
we have $\lim_{|\mathcal{P}|\to 0}A(\mathcal{P})=0$.
Therefore, we obtain the following.

\begin{lemma}
We consider $d$-dimensional Gaussian process $(B_t)$ satisfying 
Condition~$\ref{condition on R}$.
Let $2\le l\le d$
and $\cP=\{u_i\}_{i=0}^n$ be a partition of $[s,t]$.
Then we have
 \begin{align}
 B^{1,\ldots,l}_{s,t}=\lim_{|\cP|\to 0}\sum_{i=1}^{n}
B^{1,\ldots,l-1}_{s,u_{i-1}}
 B^l_{u_{i-1},u_i}\qquad
 \mbox{in $L^2$}.\label{L2limit}
\end{align}
\end{lemma}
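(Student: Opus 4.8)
The plan is to isolate the ``simple'' sum $S(\cP)=\sum_{i=1}^n B^{1,\ldots,l-1}_{s,u_{i-1}}B^l_{u_{i-1},u_i}$ appearing in \eqref{L2limit} from the compensated sum $T(\cP)=\sum_{i=1}^n\bigl\{B^{1,\ldots,l-1}_{s,u_{i-1}}B^l_{u_{i-1},u_i}+B^{1,\ldots,l-2}_{s,u_{i-1}}B^{l-1,l}_{u_{i-1},u_i}\bigr\}$ that defines $B^{1,\ldots,l}_{s,t}$ as a pointwise limit. Setting $R(\cP)=T(\cP)-S(\cP)=\sum_{i=1}^n B^{1,\ldots,l-2}_{s,u_{i-1}}B^{l-1,l}_{u_{i-1},u_i}$, the triangle inequality $\|S(\cP)-B^{1,\ldots,l}_{s,t}\|_{L^2}\le\|T(\cP)-B^{1,\ldots,l}_{s,t}\|_{L^2}+\|R(\cP)\|_{L^2}$ reduces the lemma to showing that both terms on the right vanish as $|\cP|\to 0$.

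The compensator term is exactly the quantity estimated above: $\|R(\cP)\|_{L^2}^2=A(\cP)\to 0$, the essential inputs being the moment estimate for $B^{1,\ldots,l-2}_{u,v}$, the mutual independence of the components $B^1,\ldots,B^d$ (Condition~\ref{condition on R}(2)), which makes $A(\cP)$ factor into products of covariances, and the $(2H)^{-1}$-variation bounds of $R$ together with Theorem~\ref{FV} and superadditivity of the $p$-variation norm. For the first term I would upgrade the almost sure convergence $T(\cP)\to B^{1,\ldots,l}_{s,t}$ built into the definition to convergence in $L^2$. Since every $T(\cP)$ is a polynomial in the Gaussian increments, it lies, together with its limit, in the fixed finite sum of Wiener chaoses $\bigoplus_{k=0}^{l}\mathscr{H}_k$; once a uniform bound $\sup_\cP\|T(\cP)\|_{L^2}<\infty$ is available, hypercontractivity on $\bigoplus_{k=0}^{l}\mathscr{H}_k$ yields $\sup_\cP\|T(\cP)\|_{L^{2+\delta}}<\infty$ for some $\delta>0$, and the resulting uniform integrability of $\{T(\cP)^2\}$ promotes the almost sure convergence to convergence in $L^2$. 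Alternatively, one may invoke the $L^2$ convergence of the Gaussian controlled rough integral directly, using the moment bounds of Theorem~\ref{friz-hairer a theorem}.

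The step I expect to be the main obstacle is the uniform bound $\sup_\cP\|T(\cP)\|_{L^2}<\infty$. Replacing each covariance by its absolute value is not permitted here, because $R$ has only finite $(2H)^{-1}$-variation with $(2H)^{-1}>1$, so naive absolute summation over the grid diverges as the mesh shrinks; the signed cancellation must be retained. I would therefore bound $\|T(\cP)\|_{L^2}$ (equivalently $\|S(\cP)\|_{L^2}$, since $\|R(\cP)\|_{L^2}$ is already controlled) by the same multidimensional Young-integral and $p$-variation machinery used for $A(\cP)$, again exploiting the factorization afforded by the independence of the components, rather than by summing absolute values. Combining the two estimates in the triangle inequality then gives \eqref{L2limit}.
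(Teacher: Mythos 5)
Your proposal is correct and follows essentially the same route as the paper: the paper defines $B^{1,\ldots,l}_{s,t}$ as the limit of the compensated sums $T(\cP)$ and then shows, immediately before the lemma, that the compensator $R(\cP)=\sum_i B^{1,\ldots,l-2}_{s,u_{i-1}}B^{l-1,l}_{u_{i-1},u_i}$ satisfies $A(\cP)=\|R(\cP)\|_{L^2}^2\to 0$ via exactly the independence/factorization and multidimensional Young-integral ($V_p$, Theorem~\ref{FV}, superadditivity) arguments you describe. The only difference is that you spell out the upgrade of the convergence $T(\cP)\to B^{1,\ldots,l}_{s,t}$ to $L^2$ (via uniform $L^2$ bounds plus hypercontractivity on a finite sum of chaoses, or via the $L^{\infty-}$ moment bounds for the rough-path lift), a step the paper leaves implicit; your warning that absolute summation of the covariances would fail for $H<\tfrac12$ is also well taken and consistent with how the paper handles the analogous sums.
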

Using this result, we prove the following lemma.

\begin{lemma}\label{psi^{(l)}}
We consider $d$-dimensional Gaussian process $(B_t)$ satisfying 
Condition~$\ref{condition on R}$.
In this lemma, we set $\mathcal{H}=\mathcal{H}^1$.
 Let $l\ge 2$ be a positive integer.
 \begin{enumerate}
  \item[$(1)$]
	    Let $0\le s\le t\le 1$ and
	    $\cP=\{u_i\}_{i=0}^n$ be a partition of $[s,t]$.
Let $\psi^{(1)}_{s,t}=\psi_{s,t}$.
	    The following inductive definition of
	    $\psi^{(l)}_{s,t}\in \mathcal{H}^{\otimes l}$
	    is well-defined and the
	    sequence converges in $\mathcal{H}^{\otimes l}$.
	     \begin{align}
	      \psi^{(l)}_{s,t}=
	      \lim_{|\cP|\to 0}\sum_{i=1}^{n}
	      \psi^{(l-1)}_{s,u_{i-1}}\otimes
	      \psi_{u_{i-1},u_i}.\label{vepl convergence}
	     \end{align}
Moreover, it holds that
\begin{align}
 \|\psi^{(l)}_{s,t}\|_{\mathcal{H}^{\otimes l}}^2&=
R^{l}_s(t,t),\label{covariance of epl}
\end{align}
where
$R^l_s(u,v)$ $(s\le u,v\le t)$ is defined by the well-defined Young integrals
\begin{align*}
	R^1_s(u,v)
	&=
		R([s,u]\times [s,v]),
	&
	R^l_s(u,v)
	&=
		\int_{[s,u]\times[s,v]}	
			R^{l-1}_s(u',v')
			dR(u',v').
\end{align*}
  \item[$(2)$] 
Let $1\le \alpha_1,\ldots,\alpha_l\le d$ be mutually different integers.
Then we have
	\begin{align}
		\label{eq489301433}
	B^{\alpha_1,\ldots,\alpha_l}_{s,t}
	=
		I_l
			\big(
				\mathcal{S}
					\big(
						\psi_{s,t}^{(l)}
						e_{\alpha_{1}}
						\otimes
						\cdots
						\otimes 
						e_{\alpha_l}
					\big)
			\big).
	\end{align}
  \item[$(3)$]	      For a partition
		      $
		       \cP=\{u_i\}_{i=0}^n
		      $ of $[s,t]$,
		   we define inductively by 
$\psi^{(1),\cP}_{s,u_j}=\psi_{s,u_j}$ for $1\leq j\leq n$ and 
$\psi^{(1),\cP}_{s,s}=0$.
For $l\ge 2$, we define $\psi^{(l),\cP}_{s,u_{j}}\in 
\mathcal{H}^{\otimes l}$
$(1\le j\le n)$ inductively by
		      \begin{align*}
		       \psi^{(l),\cP}_{s,u_j}
		       &=
\begin{cases}
		       \sum_{i=1}^{j}
		       \psi^{(l-1),\cP}_{s,u_{i-1}}
		      \otimes\psi_{u_{i-1},u_i},
		       & 1\le j\le n,
\\
0, & j=0.
\end{cases}
			      \end{align*}
		      Then it holds that for all $l\ge 1$,
		      \begin{align}
		       \lim_{|\cP|\to 0}
		       \|\psi^{(l),\cP}_{s,t}-\psi^{(l)}_{s,t}\|
		       _{\mathcal{H}^{\otimes l}}=0.\label{eplP convergence}
		      \end{align}
		      
 \end{enumerate}
\end{lemma}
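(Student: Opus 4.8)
I would prove the three assertions simultaneously by induction on $l$, carrying along a strengthened hypothesis that describes all inner products rather than only the square norm. Concretely, alongside \eqref{vepl convergence} and \eqref{covariance of epl} I would establish at each level the polarized identity
\begin{align*}
	(\psi^{(l)}_{s,u}, \psi^{(l)}_{s,v})_{\mathcal{H}^{\otimes l}} = R^l_s(u,v)
	\qquad (s\le u,v\le t),
\end{align*}
together with a finite joint $p$-variation bound on the kernel $R^l_s$ (with $p=(2H)^{-1}<2$, so that $\tfrac1p+\tfrac1p>1$) guaranteeing that the Young integral defining $R^{l+1}_s$ is meaningful. These two facts are exactly what is needed to close the induction, and they are furnished by the multidimensional Young integral results of Section~\ref{appendix I}, in particular Theorem~\ref{FV}. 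The base case $l=1$ is immediate, since $\psi^{(1)}_{s,t}=\psi_{s,t}$ and $(\psi_{s,u},\psi_{s,v})_{\mathcal{H}}=R([s,u]\times[s,v])=R^1_s(u,v)$.

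\textbf{Part (1).} Writing $\Psi^{\cP}_{s,u}=\sum_i \psi^{(l-1)}_{s,u_{i-1}}\otimes\psi_{u_{i-1},u_i}$, I would show convergence by the Cauchy criterion in the Hilbert space $\mathcal{H}^{\otimes l}$. Using the tensor factorization of the inner product together with the inductive identity $(\psi^{(l-1)}_{s,\cdot},\psi^{(l-1)}_{s,\cdot})=R^{l-1}_s$, for two partitions $\cP=\{u_i\}$ and $\cP'=\{v_j\}$ of $[s,t]$ one obtains
\begin{align*}
	(\Psi^{\cP}_{s,t},\Psi^{\cP'}_{s,t})_{\mathcal{H}^{\otimes l}}
	=\sum_{i,j} R^{l-1}_s(u_{i-1},v_{j-1})\,R([u_{i-1},u_i]\times[v_{j-1},v_j]),
\end{align*}
which is precisely a two-dimensional Riemann--Stieltjes sum for $\int_{[s,t]^2}R^{l-1}_s\,dR=R^l_s(t,t)$ on the grid $\cP\times\cP'$. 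Since $R^{l-1}_s$ and $R$ have complementary finite joint variations, this converges to $R^l_s(t,t)$ as $\max(|\cP|,|\cP'|)\to 0$. Taking $\cP=\cP'$ and expanding $\|\Psi^{\cP}_{s,t}-\Psi^{\cP'}_{s,t}\|^2$ by polarization shows the sums are Cauchy, hence convergent; the same computation simultaneously yields \eqref{covariance of epl} and, for general endpoints, the strengthened inner-product identity.

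\textbf{Part (2).} I would induct on $l$ using the $L^2$-limit \eqref{L2limit} and the product formula (Proposition~\ref{product formula}). Writing $B^{\alpha_1,\dots,\alpha_{l-1}}_{s,u_{i-1}}=I_{l-1}(\mathcal{S}(\psi^{(l-1)}_{s,u_{i-1}}e_{\alpha_1}\otimes\cdots\otimes e_{\alpha_{l-1}}))$ and $B^{\alpha_l}_{u_{i-1},u_i}=I_1(\psi^{\alpha_l}_{u_{i-1},u_i})$, the decisive observation is that $\alpha_1,\dots,\alpha_l$ are distinct, so every contraction $(\,\cdot\,,\psi^{\alpha_l})_{\mathcal{H}^d}$ against a factor carrying $e_{\alpha_k}$ with $k<l$ vanishes, because $(e_{\alpha_k},e_{\alpha_l})_{\RR^d}=0$. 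Hence only the $r=0$ term of the product formula survives and each summand equals $I_l(\mathcal{S}(\psi^{(l-1)}_{s,u_{i-1}}e_{\alpha_1}\otimes\cdots\otimes e_{\alpha_{l-1}})\odot\psi^{\alpha_l}_{u_{i-1},u_i})$. Summing over $i$, using that $\mathcal{S}(f)\odot g=\mathcal{S}(f\otimes g)$ so the sum equals $\mathcal{S}$ applied to $\Psi^{\cP}_{s,t}$ with the fixed vector $e_{\alpha_1}\otimes\cdots\otimes e_{\alpha_l}$ attached, and passing to the limit via the isometry property of $I_l$ together with Part (1), gives \eqref{eq489301433}.

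\textbf{Part (3) and the main difficulty.} Here I would write
\begin{align*}
	\psi^{(l),\cP}_{s,t}-\psi^{(l)}_{s,t}
	=\sum_{i}\big(\psi^{(l-1),\cP}_{s,u_{i-1}}-\psi^{(l-1)}_{s,u_{i-1}}\big)\otimes\psi_{u_{i-1},u_i}
	+\Big(\Psi^{\cP}_{s,t}-\psi^{(l)}_{s,t}\Big),
\end{align*}
where the second term tends to $0$ by Part (1). Setting $\delta_{i-1}=\psi^{(l-1),\cP}_{s,u_{i-1}}-\psi^{(l-1)}_{s,u_{i-1}}$, the square norm of the first term is $\sum_{i,j}(\delta_{i-1},\delta_{j-1})_{\mathcal{H}^{\otimes(l-1)}}R([u_{i-1},u_i]\times[u_{j-1},u_j])$, which I would control by a two-dimensional Young estimate of the same type used for $A(\cP)$ earlier in this section: factoring out $\max_{i,j}V_p(R;[u_{i-1},u_i]\times[u_{j-1},u_j])^{2-p}\to 0$ against the finite sum $\sum_{i,j}V_p(R;\cdot)^p$, combined with $|(\delta_{i-1},\delta_{j-1})|\le\|\delta_{i-1}\|\,\|\delta_{j-1}\|$ and the inductive control of the errors $\|\delta_{\cdot}\|$. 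The main obstacle is exactly this last step: one must upgrade the pointwise convergence coming from the inductive hypothesis to \emph{uniform} bounds, namely that the fully discretized kernel $(\psi^{(l-1),\cP}_{s,\cdot},\psi^{(l-1),\cP}_{s,\cdot})$ stays bounded in joint $p$-variation as $|\cP|\to 0$ and that $\|\delta_{i-1}\|$ is small uniformly in $i$, so that the double sum vanishes. Securing these uniform Young-type estimates is where the results of Section~\ref{appendix I} do the real work.
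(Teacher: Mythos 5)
Your overall strategy coincides with the paper's: a simultaneous induction on $l$, with the two-dimensional Young estimates of Section~\ref{appendix I} supplying the convergence of the double Riemann sums, the distinctness of $\alpha_1,\dots,\alpha_l$ killing all nonzero contractions in the product formula for part (2), and the uniform variation control of the discretized kernels doing the work in part (3) — you correctly identify that last point as the crux, and it is exactly what the paper's Lemma~\ref{for psi^{(l)}} provides via the kernels $R^{\cP\times\cP',l}_s$. The one place where you genuinely diverge is part (1): the paper does not prove the convergence \eqref{vepl convergence} intrinsically in $\mathcal{H}^{\otimes l}$, but instead deduces it from the $L^2$ convergence of the rough-integral Riemann sums \eqref{L2limit} via the chaos isometry, after the combinatorial identity $\mathcal{S}(\mathcal{S}(f)\otimes g)=\mathcal{S}(f\otimes g)$ and the distinctness of indices; your direct Cauchy argument, resting on the polarized identity $(\psi^{(l)}_{s,u},\psi^{(l)}_{s,v})=R^l_s(u,v)$ (which the paper uses only implicitly when it writes $R^{l-1}_s(u_{i-1},u_{j-1})$ in the inductive norm computation), is cleaner in that it decouples (1) from the probabilistic construction and from the artificial restriction $l\le d$ that the paper's route passes through. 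Two small cautions: the kernel $R^l_s$ does not retain finite $(2H)^{-1}$-variation exactly — each application of Lemma~\ref{cor to towghi} degrades the exponent slightly, so you should carry a decreasing sequence $H>H_1>\cdots>H_l>\tfrac13$ as the paper does (harmless since $2H_l+2H>1$ persists); and in part (3) the Cauchy--Schwarz bound $|(\delta_{i-1},\delta_{j-1})|\le\|\delta_{i-1}\|\,\|\delta_{j-1}\|$ discards the cancellation structure, so to make the double sum against $R$ small you really do need the variation-norm (not merely pointwise) smallness of the difference kernels, which is the formulation the paper adopts.
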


\begin{proof}
It seems clear that it is sufficient to prove statement (2) in the case in which
 $\alpha_1=1, \ldots, \alpha_l=l$ and $l\le d$.
 We prove (1) and (2) simultaneously by an induction on $l$.
 The statement holds for $l=2$.
 Suppose (1) and (2) holds up to $l-1$.
 Then,
 by the observation (\ref{L2limit}), the following convergence holds in
 $L^2$ sense,
 \begin{align}
  B^{1,\ldots,l}_{s,t}&=\lim_{|\cP|\to 0}\sum_{i=1}^{N}
  I_{l-1}\left(\mathcal{S}
\left(\psi^{l-1}_{s,u_{i-1}}e_1\otimes\cdots\otimes e_{l-1}
  \right)\right)
  I_1(\psi_{u_{i-1},u_i}e_l)
  \nonumber\\
  &=
  I_l\left(\lim_{|\cP|\to 0}
  \sum_{i=1}^{n}
  \mathcal{S}\left(\mathcal{S}\left(
  \psi^{l-1}_{s,u_{i-1}}e_1\otimes\cdots\otimes e_{l-1}\right)
  \otimes \psi_{u_{i-1},u_i}e_l\right)
  \right).\label{L2limit2}
 \end{align}
Note that
\begin{multline*}
	\mathcal{S}
		(
			\mathcal{S}
			(
				(h_1\otimes\cdots\otimes h_{l-1})
				(e_1\otimes\cdots\otimes e_{l-1})
			)
			\otimes
			h_le_l
		)\\
	\begin{aligned}
		&=
			\mathcal{S}
				\bigg(
					\bigg(
						\frac{1}{(l-1)!}
						\sum_{\sigma\in \mathfrak{G}_{l-1}}
							(h_{\sigma(1)}\otimes\cdots\otimes h_{\sigma(l-1)})
							(e_{\sigma(1)}\otimes\cdots \otimes e_{\sigma(l-1)})
					\bigg)
					h_le_l
				\bigg)\\
		&=
			\frac{1}{l!(l-1)!}
			\sum_{\sigma\in \mathfrak{G}_{l-1},\tau\in \mathfrak{G}_l}
				(h_{\tau(\sigma(1))}\otimes\cdots\otimes h_{\tau(\sigma(l-1))}
				\otimes h_{\tau(l)})\\
		&\qquad\qquad\qquad\qquad\qquad\qquad\qquad\qquad
				\times
				(e_{\tau(\sigma(1))}\otimes\cdots\otimes e_{\tau(\sigma(l-1))}\otimes 
				e_{\tau(l)})\\
&=\frac{1}{l!(l-1)!}
\sum_{\substack{\sigma,\tau\in \mathfrak{G}_l \\
\text{with $\sigma(l)=l$}}}
(h_{\tau(\sigma(1))}\otimes\cdots\otimes h_{\tau(\sigma(l-1))}
\otimes h_{\tau(\sigma(l))})\\
&\qquad\qquad\qquad\qquad\qquad\qquad\qquad\qquad
\times
(e_{\tau(\sigma(1))}\otimes\cdots\otimes e_{\tau(\sigma(l-1))}\otimes 
e_{\tau(\sigma(l))})\\
&=\frac{1}{(l-1)!}
\sum_{\substack{\sigma\in \mathfrak{G}_l\\ \text{with $\sigma(l)=l$}}}
\mathcal{S}
\left((h_1\otimes\cdots\otimes h_l) (e_1\otimes\cdots\otimes e_l)\right)\\
&=\mathcal{S}
\left((h_1\otimes\cdots\otimes h_l) (e_1\otimes\cdots\otimes e_l)\right).
	\end{aligned}
\end{multline*}
 Therefore,
 \begin{align*}
  \sum_{i=1}^{n}\mathcal{S}\left(\mathcal{S}\left(
  \psi^{(l-1)}_{s,u_{i-1}}e_1\otimes\cdots\otimes e_{l-1}\right)
  \otimes \psi_{u_{i-1},u_i}e_l\right)
  =
\mathcal{S}
\left(\sum_{i=1}^{n}
  (\psi^{(l-1)}_{s,u_{i-1}}\otimes
  \psi_{u_{i-1},u_i})(e_{1}\otimes \cdots\otimes
  e_{l})\right).
 \end{align*}
 Because the indices $1,\ldots, l$ differ, the convergence 
(\ref{L2limit2}) 
 implies (\ref{vepl convergence}) and (\ref{eq489301433}).
We prove (\ref{covariance of epl}) by an induction on $l$.
The case $l=1$ holds because
\begin{align*}
 \|\psi^{(1)}_{s,t}\|_{\mathcal{H}}^2&=
E[(B_t-B_s)^2]=R([s,t]\times[s,t])=R^1_s(t,t)
\end{align*}
Suppose the case of $l-1$ holds.
Let $\cP=\{u_i\}_{i=0}^n$ be a partition of
$[s,t]$.
Then
\begin{align*}
	\left\|
		\sum_{i=1}^n\psi^{(l-1)}_{s,u_{i-1}}\otimes\psi_{u_{i-1},u_i}
	\right\|_{\mathcal{H}^{\otimes l}}^2
	&=
		\sum_{i=1}^n
		\sum_{j=1}^n
			\big(\psi^{(l-1)}_{s,u_{i-1}},\psi^{(l-1)}_{s,u_{j-1}}\big)_{\mathcal{H}^{\otimes (l-1)}}
			\big(\psi_{u_{i-1},u_i},\psi_{u_{j-1},u_j}\big)_{\mathcal{H}}\\
	&=
		\sum_{i=1}^n
		\sum_{j=1}^n
			R^{l-1}_s(u_{i-1},u_{j-1})
			R([u_{i-1},u_i]\times[u_{j-1},u_{j}])\\
	&\to
		R^l_s(t,t) \quad \text{as $|\cP|\to 0$}.
\end{align*}

 We prove (3).
Let us consider another partition
$\cP'=\{v_i\}_{i=0}^{n'}$ of $[s,t]$.
Inductively, we define
\begin{align*}
	R^{\cP\times \cP',1}_s(u_i,v_{j})
	&=
		R([s,u_i]\times [s,v_j]),\\
	R^{\cP\times \cP',l}_s(u_i,v_{j})
  	&=
		\sum_{k=l-1}^i
		\sum_{k'=l-1}^j
			R^{\cP\times \cP',l-1}_s(u_{k-1},v_{k'-1})
			R([u_{k-1},u_k]\times [v_{k'-1},v_{k'}]).
\end{align*}
We use the convention that if the set 
$\{(k,k')~|~l-1\le k\le i, l-1\le k'\le j\}$ is empty,
we set $R^{\cP\times \cP',l}_s(u_i,v_{j})=0$.
That is, $R_s^{\cP\times \cP',l}(u_i,v_j)=0$
if $i\le l-2$ or $j\le l-2$.
Also by the definition, we note that
$R^{\cP\times \cP',l}(u_{l-1},\cdot)=
R^{\cP\times \cP',l}(\cdot,v_{l-1})=0$.
 Note that
\begin{align*}
	\left(
		\psi^{(2),\cP}_{s,u_i},
		\psi^{(2),\cP'}_{s,v_j}
	\right)_{\mathcal{H}^{\otimes 2}}
	&=
		\sum_{k=1}^i
		\sum_{k'=1}^j
			\big(\psi_{s,u_{k-1}},\psi_{s,v_{k'-1}}\big)_{\mathcal{H}}
			\big(\psi_{u_{k-1},u_k},\psi_{v_{k'-1},v_{k'}}\big)_{\mathcal{H}}\\
	&=
		\sum_{k=1}^i
		\sum_{k'=1}^j
			R([s,u_{k-1}]\times [s,v_{k'-1}])
			R([u_{k-1},u_k]\times [v_{k'-1},v_{k'}]) \\
	&=
		R^{\cP\times\cP',2}_s(u_i,v_j).
\end{align*}
The identity holds for $i=0$ or $j=0$ also.
It is therefore easy to obtain the following identity by induction.
For all $0\le i\le n,\, 0\le j\le n'$,
\begin{align}
	\left(
		\psi^{(l),\cP}_{s,u_i},
		\psi^{(l),\cP'}_{s,v_j}
	\right)_{\mathcal{H}^{\otimes l}}
	&=
		\sum_{k=1}^i
		\sum_{k'=1}^j
			R^{\cP\times \cP',l-1}_s(u_{k-1},v_{k'-1})
			R([u_{k-1},u_k]\times [v_{k'-1},v_{k'}]).
\end{align}
By using Lemma~\ref{for psi^{(l)}} inductively on $l$, we see that,
for any $\vep>0$, there exists an $\delta>0$ such that,
for $\cP$ and $\cP'$ which
satisfy $\max(|\cP|, |\cP'|)\le \delta$,
it holds that
\begin{align}
	\label{eq492823902}
	&
	\max
		\Big\{
			V_{(2H^{-})^{-1}}\Big(R^{\cP\times\cP,l}_s-R^l_s; I_{\cP^2}\Big),
			V_{(2H^{-})^{-1}}\Big(R^{\cP'\times\cP',l}_s-R^l_s; I_{(\cP')^2}\Big),\\ \notag
	&
	\qquad
	\qquad
	\qquad
	\qquad
	\qquad
			V_{(2H^{-})^{-1}}\Big(R^{\cP\times\cP',l}_s-R^l_s; I_{\cP\times \cP'}\Big)
		\Big\}
	\le
		\vep,
\end{align}
where $\frac{1}{3}<H^-<H$.

We next prove (\ref{eplP convergence}) by induction on $l$.
Clearly, the case where $l=2$ holds.
Suppose the case of $l-1$ holds.
Let $\cP=\{u_i\}_{i=1}^n$ be a partition of 
$[s,t]$
and let $\cP'$ be a refinement of the partition of $\cP$.
Then, we have
\begin{multline*}
	\left\|
		\psi^{(l),\cP}_{s,t}-
		\sum_{i=1}^n\psi^{(l-1)}_{s,u_{i-1}}\otimes \psi_{u_{i-1},u_i}
	\right\|_{\mathcal{H}^{\otimes l}}^2
	=
		\left\|
			\sum_{i=1}^n
				\left(
				\psi^{(l-1),\cP}_{s,u_{i-1}}-
				\psi^{(l-1)}_{s,u_{i-1}}\right)\otimes \psi_{u_{i-1},u_i}
		\right\|_{\mathcal{H}^{\otimes l}}^2\\
	\begin{aligned}
		&=
			\lim_{|\cP'|\to 0}
				\left\|
					\sum_{i=1}^n
						\left(
							\psi^{(l-1),\cP}_{s,u_{i-1}}
							-
							\psi^{(l-1),\cP'}_{s,u_{i-1}}
						\right)
						\otimes
						\psi_{u_{i-1},u_i}
				\right\|_{\mathcal{H}^{\otimes l}}^2\\
			&=
				\lim_{|\cP'|\to 0}
					\sum_{i,j=1}^n
						\big\{
								R^{\cP\times\cP,l-1}_s(u_{i-1},u_{j-1})
								+R^{\cP'\times\cP',l-1}_s(u_{i-1},u_{j-1})\\
			&\qquad\qquad\qquad\qquad\qquad\qquad\qquad
								-2R^{\cP\times\cP',l-1}_s(u_{i-1},u_{j-1})
						\big\}
						R([u_{i-1},u_i]\times[u_{j-1},u_j]).
	\end{aligned}
\end{multline*}
Combining the above with \eqref{eq492823902} and \eqref{vepl convergence},
we arrive at the desired convergence.
\end{proof}

\begin{corollary}\label{finite dimensional approximation}
We consider $d$-dimensional Gaussian processes $(B_t)$ satisfying 
Condition~$\ref{condition on R}$.
 Let $1\le \alpha_1,\ldots,\alpha_l\le d$.
Let $\cP=\{u_i\}_{i=0}^n$ be a partition of
$[s,t]$.
\begin{enumerate}
 \item[$(1)$] Let
\begin{align*}
 \psi^{\alpha_1,\ldots,\alpha_l,\cP}_{s,t}
=\sum_{1\le j_1<\cdots<j_l\le n}
\psi^{\alpha_1}_{s,u_{j_1}}\otimes\cdots\otimes
 \psi^{\alpha_k}_{u_{j_k-1},u_{j_k}}\otimes
\cdots \otimes\psi^{\alpha_l}_{u_{j_l-1},u_{j_l}}.
\end{align*}
Then 
$\lim_{|\cP|\to 0}\psi^{\alpha_1,\ldots,\alpha_l,\cP}_{s,t}$
converges in $(\mathcal{H}^d)^{\otimes l}$.
\item[$(2)$] 
Suppose the indices $\alpha_1,\ldots,\alpha_l$ mutually differ.
Let
\begin{align*}
 \tilde{\psi}^{\alpha_1,\ldots,\alpha_l,\cP}_{s,t}
=\sum_{1\le j_1<\cdots<j_l\le n}
\psi^{\alpha_1}_{s,u_{j_1}}\odot\cdots\odot
 \psi^{\alpha_k}_{u_{j_k-1},u_{j_k}}\odot
\cdots \odot\psi^{\alpha_l}_{u_{j_l-1},u_{j_l}}.
\end{align*}
Then 
$\lim_{|\cP|\to 0}\tilde{\psi}
^{\alpha_1,\ldots,\alpha_l,\cP}_{s,t}$
converges in $(\mathcal{H}^d)^{\odot l}$ and
\begin{align*}
 B^{\alpha_1,\ldots,\alpha_l}_{s,t}&=
\lim_{|\cP|\to 0}I_{l}
\left(\tilde{\psi}^{\alpha_1,\ldots,\alpha_l,\cP}_{s,t}\right)
\quad \text{in $L^p$ for all $p\ge 1$.}
\end{align*}
\end{enumerate}
\end{corollary}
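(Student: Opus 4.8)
The plan is to reduce both statements to the single-index results of Lemma~\ref{psi^{(l)}} by separating the $\mathcal{H}^1$- and $\RR^d$-components. Recall the canonical isomorphism $(\mathcal{H}^d)^{\otimes l}\cong(\mathcal{H}^1)^{\otimes l}\otimes(\RR^d)^{\otimes l}$ under which $\psi^{\alpha_1}_{a_1,b_1}\otimes\cdots\otimes\psi^{\alpha_l}_{a_l,b_l}$ corresponds to $(\psi_{a_1,b_1}\otimes\cdots\otimes\psi_{a_l,b_l})\otimes(e_{\alpha_1}\otimes\cdots\otimes e_{\alpha_l})$. Since the upper indices $\alpha_1,\ldots,\alpha_l$ occur in the same slots in every summand of $\psi^{\alpha_1,\ldots,\alpha_l,\cP}_{s,t}$, the vector $e_{\alpha_1}\otimes\cdots\otimes e_{\alpha_l}$ factors out of the sum, and first I would verify the identity
\begin{align*}
\psi^{\alpha_1,\ldots,\alpha_l,\cP}_{s,t}=\psi^{(l),\cP}_{s,t}\otimes(e_{\alpha_1}\otimes\cdots\otimes e_{\alpha_l}),
\end{align*}
where $\psi^{(l),\cP}_{s,t}$ is the scalar approximation of Lemma~\ref{psi^{(l)}}(3). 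This is done by unwinding the recursion defining $\psi^{(l),\cP}_{s,t}$: a short induction on $l$ shows $\psi^{(l),\cP}_{s,t}=\sum_{1\le j_1<\cdots<j_l\le n}\psi_{u_{j_1-1},u_{j_1}}\otimes\cdots\otimes\psi_{u_{j_l-1},u_{j_l}}$, which is exactly the scalar component of $\psi^{\alpha_1,\ldots,\alpha_l,\cP}_{s,t}$ obtained by dropping the $\RR^d$-tensor.

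Granting this identity, statement (1) follows at once. By Lemma~\ref{psi^{(l)}}(3) we have $\psi^{(l),\cP}_{s,t}\to\psi^{(l)}_{s,t}$ in $(\mathcal{H}^1)^{\otimes l}$ as $|\cP|\to0$. Tensoring with the fixed unit vector $e_{\alpha_1}\otimes\cdots\otimes e_{\alpha_l}$ is an isometric embedding of $(\mathcal{H}^1)^{\otimes l}$ into $(\mathcal{H}^d)^{\otimes l}$, so
\begin{align*}
\psi^{\alpha_1,\ldots,\alpha_l,\cP}_{s,t}\longrightarrow\psi^{(l)}_{s,t}\otimes(e_{\alpha_1}\otimes\cdots\otimes e_{\alpha_l})=\psi^{(l)}_{s,t}e_{\alpha_1}\otimes\cdots\otimes e_{\alpha_l}
\end{align*}
in $(\mathcal{H}^d)^{\otimes l}$.

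For statement (2), I would note that $\tilde{\psi}^{\alpha_1,\ldots,\alpha_l,\cP}_{s,t}=\mathcal{S}(\psi^{\alpha_1,\ldots,\alpha_l,\cP}_{s,t})$, since $\odot=\mathcal{S}\circ\otimes$ and $\mathcal{S}$ is linear. As $\mathcal{S}$ is an orthogonal projection, hence a contraction, on $(\mathcal{H}^d)^{\otimes l}$, the convergence from statement (1) yields $\tilde{\psi}^{\alpha_1,\ldots,\alpha_l,\cP}_{s,t}\to\mathcal{S}(\psi^{(l)}_{s,t}e_{\alpha_1}\otimes\cdots\otimes e_{\alpha_l})$ in $(\mathcal{H}^d)^{\odot l}$. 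Applying $I_l$, which satisfies $E[I_l(f)^2]=l!\,\|f\|^2_{(\mathcal{H}^d)^{\odot l}}$ and is therefore $L^2$-continuous, gives convergence of $I_l(\tilde{\psi}^{\alpha_1,\ldots,\alpha_l,\cP}_{s,t})$ in $L^2$ to $I_l(\mathcal{S}(\psi^{(l)}_{s,t}e_{\alpha_1}\otimes\cdots\otimes e_{\alpha_l}))$, which equals $B^{\alpha_1,\ldots,\alpha_l}_{s,t}$ by Lemma~\ref{psi^{(l)}}(2) (applicable precisely because the $\alpha_i$ are mutually distinct).

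It remains to promote $L^2$ convergence to $L^p$ convergence for every $p\ge1$. Each difference $I_l(\tilde{\psi}^{\alpha_1,\ldots,\alpha_l,\cP}_{s,t})-B^{\alpha_1,\ldots,\alpha_l}_{s,t}$ lies in the fixed Wiener chaos $\mathscr{H}_l$, on which hypercontractivity gives $\|X\|_{L^p}\le(p-1)^{l/2}\|X\|_{L^2}$ for $p\ge2$; hence $L^2$-convergence forces $L^p$-convergence for all $p\ge1$. The only genuinely delicate step is the combinatorial bookkeeping of the first paragraph, namely checking that the strictly ordered sum collapses to the recursively defined $\psi^{(l),\cP}_{s,t}$ and that the $\RR^d$-tensor factors out uniformly across summands; once this identification is secured, everything else is a formal consequence of the continuity of $\mathcal{S}$ and $I_l$ together with the single-index statements of Lemma~\ref{psi^{(l)}}.
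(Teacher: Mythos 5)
Your proof is correct and follows exactly the route the paper intends: the corollary is stated as an immediate consequence of Lemma~\ref{psi^{(l)}}, obtained by factoring the $(\RR^d)^{\otimes l}$-component out of every summand, identifying the ordered-increment sum with the recursively defined $\psi^{(l),\cP}_{s,t}$ by telescoping, and then applying the contraction $\mathcal{S}$, the ($L^2$-continuous) map $I_l$, formula \eqref{eq489301433}, and hypercontractivity on the fixed chaos $\mathscr{H}_l$. The only caveat is that your identification requires reading the first factor in the displayed definition of $\psi^{\alpha_1,\ldots,\alpha_l,\cP}_{s,t}$ as the increment $\psi^{\alpha_1}_{u_{j_1-1},u_{j_1}}$ (as you implicitly do); taken literally as $\psi^{\alpha_1}_{s,u_{j_1}}$ the sum would not converge, so this is a typo in the statement rather than a gap in your argument.
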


\begin{remark}
Let
$\psi^{\alpha_1,\ldots,\alpha_l}_{s,t}=
\lim_{|\cP|\to 0}\psi^{\alpha_1,\ldots,\alpha_l,\cP}_{s,t}$.
If the same index appears in $\alpha_1,\ldots,\alpha_l$,
then $B^{\alpha_1,\ldots,\alpha_l}_{s,t}\ne
I_{l}\left(\psi^{\alpha_1,\ldots,\alpha_l}_{s,t}\right)$ in general.
\end{remark}

\section{Malliavin derivatives of iterated rough integrals}
\label{malliavin}

Throughout this section, we always assume that the 
driving Gaussian process $(B_t(\omega))$ $(\omega\in \Omega=C([0,1],\RR^d))$
satisfies Condition~\ref{condition on R}.
Recall that $\frac{1}{3}<H\leq\frac{1}{2}$ is assumed in Condition~\ref{condition on R}.
This section introduces a class of Wiener functionals
$\mathcal{I}(\RR)$ and presents calculation of the Malliavin derivatives.
Hereinafter, as in the Introduction, $Y_t$ and $J_t$ respectively denote the solutions to (\ref{rde}) and
(\ref{jacobian}).
It is known that $\sup_{0\le t\le 1}(|J_t|+|J^{-1}_t|)\in L^{\infty-}$ holds \cite{cll2013}.

\begin{definition}\phantomsection\label{def tilde I(R)}
\begin{enumerate}
 \item 
	We define $\IteratedIntegrals_l(\RR)$ inductively as presented below.
	\begin{enumerate}
		\item[(i)]	Let $\varphi$ denote a $C^{\infty}$ function on
				$\RR^n\times \mathcal{L}(\RR^n)\times 
				\mathcal{L}(\RR^n)$ with values in $\RR$
				such that all the derivatives and itself are 
				at most polynomial order growth.
				We denote the total set of functions given as
				$a=(a(t))=\left(\varphi(Y_t,J_t,J^{-1}_t)\right)$ for all
				such $\varphi$
				by $\IteratedIntegrals_0(\RR)$.
		\item[(ii)]	For $\alpha_1,\ldots,\alpha_l\in \{0,1,\ldots,d\}$
		and $(a_1(t)),\ldots,(a_l(t))\in \IteratedIntegrals_0(\RR)$, define
			\begin{align*}
		I^{\alpha_1}_{a_1}(t)
			&=\int_0^ta_1(s)\,dB^{\alpha_1}_s,\quad
		I^{\alpha_1,\ldots,\alpha_l}_{a_1,\ldots,a_l}(t)
		=\int_0^tI^{\alpha_1,\ldots,\alpha_{l-1}}
		_{a_1,\ldots,a_{l-1}}(s)a_l(s)\,
		dB^{\alpha_l}_s,\quad l\ge 2,
			\end{align*}
		where
		$B^{\alpha}_t=(B_t,e_{\alpha})$ and
		$B^0_t=t$.
		We call $I^{\alpha_1,\ldots,\alpha_l}_{a_1,\ldots,a_l}$
		an $l$-iterated integral
		and denote the sets of $l$-iterated integrals by
		$\IteratedIntegrals_l(\RR)$.
\end{enumerate}
\item 	Let $\spanIteratedIntegrals_l(\RR)$ denote the set of
linear span of $\IteratedIntegrals_l(\RR)$
and set $\spanIteratedIntegrals(\RR)=\cup_{l\ge 0}\spanIteratedIntegrals_l(\RR)$.
\item Let $\spanIteratedIntegrals(\RR^N)$ denote the set of all 
$\RR^N$-valued stochastic processes $F(t)=(F_1(t),\ldots,F_N(t))$ $(0\le t\le 1)$,
where $(F_i(t))\in \spanIteratedIntegrals(\RR)$.
	\end{enumerate}
\end{definition}

Note that $(F_t)\in \mathcal{I}(\RR^N)$ satisfies the property 
$\sup_{t\in [0,1]}|F_t|\in L^{\infty-}$, which can be checked by estimate of the rough integrals.

The integration by parts formula for rough integrals implies the following lemma.

\begin{lemma}\label{product of iterated integrals}
    \begin{enumerate}
    	\item	We have
				\begin{multline*}
					I^{\alpha_1,\ldots,\alpha_{l_1}}_{a_1,\ldots,a_{l_1}}(t)
					I^{\tal_1,\ldots,\tal_{l_2}}_{\ta_1,\ldots,\ta_{l_2}}(t)
					=
						\int_0^t
							I^{\alpha_1,\ldots,\alpha_{l_1-1}}_{a_1,\ldots,a_{l_1-1}}(s)
							I^{\tal_1,\ldots,\tal_{l_2}}_{\ta_1,\ldots,\ta_{l_2}}(s)
							a_{l_1}(s)\,
							dB^{\alpha_{l_1}}_s\\
						+
						\int_0^t
							I^{\alpha_1,\ldots,\alpha_{l_1}}_{a_1,\ldots,a_{l_1}}(s)
							I^{\tal_1,\ldots,\tal_{l_2-1}}_{\ta_1,\ldots,\ta_{l_2-1}}(s)
							\ta_{l_2}(s)\,
							dB^{\tal_{l_2}}_s.
		       	\end{multline*}
		       	The relation still holds for $l_i=1$
				if we use the convention $I^{\alpha_1,\ldots,\alpha_{l_{i-1}}}_{a_1,\ldots,a_{l_{i-1}}}(t)=1$.
    	\item	We see that 
				$
					I^{\alpha_1,\ldots,\alpha_{l_1}}_{a_1,\ldots,a_{l_1}}(t)
		 			I^{\tal_1,\ldots,\tal_{l_2}}_{\ta_1,\ldots,\ta_{l_2}}(t)
				$
				is a finite sum of $l_1+l_2$ iterated integrals of
				$I^{\gamma_1,\ldots,\gamma_{l_1+l_2}}$.
				Here $(\gamma_1,\ldots,\gamma_{l_1+l_2})$ is
				a permutation of $\alpha_1,\ldots,\alpha_{l_1},\tal_1,\ldots,\tal_{l_2}$
				and the defining functions depend on $a_1,\ldots,a_{l_1},\ta_1,\ldots,\ta_{l_2}$.
	\end{enumerate}
\end{lemma}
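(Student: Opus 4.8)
The idea is to deduce statement (1) directly from the Leibniz (integration by parts) rule for rough integrals, and then to obtain statement (2) by bootstrapping from (1) through an induction on the total order $l_1+l_2$.

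For statement (1), set $X_t=I^{\alpha_1,\ldots,\alpha_{l_1}}_{a_1,\ldots,a_{l_1}}(t)$ and $Y_t=I^{\tal_1,\ldots,\tal_{l_2}}_{\ta_1,\ldots,\ta_{l_2}}(t)$. Each $a_j,\ta_j\in\IteratedIntegrals_0(\RR)$ is of the form $\varphi(Y_s,J_s,J^{-1}_s)$ and is therefore a controlled rough path with respect to $B$; since rough integration preserves this property, $X$ and $Y$ are themselves controlled, and by the defining recursion
\begin{align*}
	dX_t &= I^{\alpha_1,\ldots,\alpha_{l_1-1}}_{a_1,\ldots,a_{l_1-1}}(t)\,a_{l_1}(t)\,dB^{\alpha_{l_1}}_t,
	&
	dY_t &= I^{\tal_1,\ldots,\tal_{l_2-1}}_{\ta_1,\ldots,\ta_{l_2-1}}(t)\,\ta_{l_2}(t)\,dB^{\tal_{l_2}}_t.
\end{align*}
Because the lift $(B,\BB)$ is a geometric rough path (Theorem~\ref{friz-hairer a theorem}), the pointwise product $XY$ is again controlled and the Leibniz rule $d(X_tY_t)=Y_t\,dX_t+X_t\,dY_t$ holds with no second-order correction. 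Integrating from $0$ to $t$ and using $X_0=Y_0=0$ yields
\begin{align*}
	X_tY_t=\int_0^t Y_s\,dX_s+\int_0^t X_s\,dY_s,
\end{align*}
which is exactly the asserted identity after substituting the two differentials above; the convention $I^{\cdots}=1$ for $l_i=1$ simply records that in that case $dX$ (resp. $dY$) reduces to $a_1\,dB^{\alpha_1}$ (resp. $\ta_1\,dB^{\tal_1}$). The case $\alpha_i=0$ is covered since then $dB^{0}=dt$ and the product rule persists for the resulting time integral.

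For statement (2) I would induct on $N=l_1+l_2$. When $N=2$, statement (1) gives $X_tY_t=I^{\tal_1,\alpha_1}_{\ta_1,a_1}(t)+I^{\alpha_1,\tal_1}_{a_1,\ta_1}(t)$, a sum of two $2$-iterated integrals whose index words are the two shuffles of $(\alpha_1)$ and $(\tal_1)$. For the inductive step, apply (1): each of the two resulting integrands is, up to the multiplier $a_{l_1}$ or $\ta_{l_2}$, a product of two iterated integrals of total order $N-1$ (with the convention that an order-$0$ factor equals the constant $1$, in which case the "product" is the single remaining iterated integral). By the induction hypothesis the first such product equals a finite sum $\sum_\gamma I^{\gamma_1,\ldots,\gamma_{N-1}}_{c_1,\ldots,c_{N-1}}$ over index words $\gamma$ shuffling $\alpha_1,\ldots,\alpha_{l_1-1}$ with $\tal_1,\ldots,\tal_{l_2}$, the $c_k$ being among the original $a_i,\ta_j$. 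Feeding this into the defining recursion gives $\int_0^t\big(\sum_\gamma I^{\gamma_1,\ldots,\gamma_{N-1}}_{c_1,\ldots,c_{N-1}}(s)\big)a_{l_1}(s)\,dB^{\alpha_{l_1}}_s=\sum_\gamma I^{\gamma_1,\ldots,\gamma_{N-1},\alpha_{l_1}}_{c_1,\ldots,c_{N-1},a_{l_1}}(t)$, a finite sum of $N$-iterated integrals whose index words remain shuffles of $\alpha_1,\ldots,\alpha_{l_1},\tal_1,\ldots,\tal_{l_2}$ (appending $\alpha_{l_1}$ at the end preserves the internal orders). Treating the second term symmetrically and adding completes the induction.

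\textbf{Main obstacle.} The crux is statement (1): one must justify the Leibniz rule in the rough setting, i.e. that the pointwise product of two controlled rough paths is again controlled and that its rough differential carries no bracket correction. This is precisely where the geometricity of $(B,\BB)$ from Theorem~\ref{friz-hairer a theorem} enters. Once (1) is established, statement (2) is a purely combinatorial unwinding of the recursive definition of $\IteratedIntegrals_l(\RR)$ and presents no analytic difficulty.
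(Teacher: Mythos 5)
Your argument is correct and is essentially the route the paper takes: the paper gives no written proof beyond the remark that ``Integration by parts formula for rough integrals implies the following lemma,'' which is exactly your use of the Leibniz rule for products of controlled paths against the geometric lift $(B,\BB)$ for part (1), followed by the evident induction on $l_1+l_2$ for part (2). Your write-up simply supplies the details the paper leaves implicit, and both the base case and the inductive step check out.
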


Next, iterated integrals with respect to $\mathcal{H}^1$-path are introduced.
They are used to express Malliavin derivatives of $(F_t)\in\spanIteratedIntegrals(\RR)$.
          Let
   $h_i\in \mathcal{H}^1$ and $a_i\in \spanIteratedIntegrals(\RR)$ ($i=1,2,\ldots$).
Then 
we are able to define an iterated integral as described below
in the sense of the Young integral.
   That is, inductively, we define the following:
\begin{align*}
	\mathscr{A}_{a_1}[h_1](t)
	&=
		\int_0^t
			a_1(s)\,
			dh_1(s),\\
	\mathscr{A}_{a_1,\ldots,a_l}[h_1,\ldots,h_l](t)
	&=
		\int_0^t
			\mathscr{A}_{a_1,\ldots,a_{l-1}}[h_1,\ldots,h_{l-1}]_s
			a_l(s)\,
			dh_l(s),\quad l\geq 2.
\end{align*}
We may omit denoting the functions $a_1,\ldots,a_l$.
   A similar lemma to Lemma~\ref{product of iterated integrals}
   for these integrals holds true.
   Additionally, we have the following estimates.
\begin{lemma}\label{lem490313333}
	Let $a_i\in \spanIteratedIntegrals(\RR)$ and $0\le s_i<t_i\le 1$ $(1\le i\le r)$.
	For $0\le t,t_1,\ldots,t_r\le 1$, let
	\begin{align*}
	\phi_t(t_1,\ldots,t_r)
	&=
		\mathscr{A}_{a_1,\ldots,a_r}[\psi_{t_1},\ldots,\psi_{t_r}](t).
	\end{align*}
	Then, we have
	\begin{align}
		\label{eq843901890284}
		\phi_t([s_1,t_1]\times\cdots\times [s_r,t_r])
		&=
			\mathscr{A}_{a_1,\ldots,a_r}
				[\psi_{s_1,t_1},\ldots, \psi_{s_r,t_r}](t)
	\end{align}
	and
	\begin{align}
		\label{eq418908410941}
		\max_{0\le t\le 1}
			\left|
				\mathscr{A}_{a_1,\ldots,a_r}
					[\psi_{s_1,t_1},\ldots,\psi_{s_r,t_r}](t)
			\right|
		&\le
			3^r
				\left(\prod_{k=1}^r\|a_i\|_{\infty}\right)
				\left(\prod_{i=1}^r(t_i-s_i)^{2H}\right).
	\end{align}
Particularly, $\phi_t(t_1,\ldots,t_r)$ is a continuous function of
the variable $(t_1,\ldots,t_r)\in [0,1]^r$.
\end{lemma}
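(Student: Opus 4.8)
The plan is to prove all three assertions by induction on $r$, the crucial structural fact being that $\mathscr{A}_{a_1,\ldots,a_r}[h_1,\ldots,h_r](t)$ is \emph{multilinear} in the integrator slots $(h_1,\ldots,h_r)$. This multilinearity is immediate from the inductive definition: $\mathscr{A}_{a_1}[h_1]$ is linear in $h_1$, and since $\mathscr{A}_{a_1,\ldots,a_r}[h_1,\ldots,h_r](\cdot)=\int_0^\cdot \mathscr{A}_{a_1,\ldots,a_{r-1}}[h_1,\ldots,h_{r-1}]_s\,a_r(s)\,dh_r(s)$ is manifestly linear in $h_r$ and, by the inductive hypothesis, linear in each of $h_1,\ldots,h_{r-1}$ through the integrand, the claim follows at once.

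For the increment identity \eqref{eq843901890284}, I would observe that the rectangular increment operator is the composition $\Delta_1\cdots\Delta_r$ of the one-variable difference operators $\Delta_i$, where $\Delta_i$ replaces the $i$-th argument $t_i$ by $s_i$ and subtracts. Because $\psi_0=R(0,\cdot)=0$, the map $u\mapsto\psi_u$ is genuinely linear, so applying $\Delta_i$ to $\phi_t$ turns the $i$-th slot $\psi_{t_i}$ into $\psi_{t_i}-\psi_{s_i}=\psi_{s_i,t_i}$; applying all the $\Delta_i$ and invoking multilinearity yields \eqref{eq843901890284} directly.

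For the estimate \eqref{eq418908410941}, the key input is that $\psi_{s,t}(\cdot)=R(t,\cdot)-R(s,\cdot)=R([s,t],\cdot)$ is a continuous path of finite $1$-variation with $\|\psi_{s,t}\|_{1\hyp var,[0,1]}\le 3(t-s)^{2H}$, which is exactly Lemma~\ref{properties of R}(2). Since each $a_i\in\spanIteratedIntegrals(\RR)$ is continuous with $\|a_i\|_\infty<\infty$ and each integrator $\psi_{s_i,t_i}$ is continuous of bounded variation, every integral in the definition of $\mathscr{A}$ reduces to a genuine Riemann--Stieltjes integral, for which the elementary bound $\max_{0\le t\le 1}|\int_0^t g\,dh|\le\|g\|_\infty\|h\|_{1\hyp var,[0,1]}$ holds with no Young correction term. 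I would then prove by induction the more general estimate $\max_{0\le t\le 1}|\mathscr{A}_{a_1,\ldots,a_r}[g_1,\ldots,g_r](t)|\le(\prod_{k=1}^r\|a_k\|_\infty)(\prod_{k=1}^r\|g_k\|_{1\hyp var,[0,1]})$ for arbitrary continuous bounded-variation integrators $g_k$: at step $r$ the integrand $\mathscr{A}_{a_1,\ldots,a_{r-1}}[g_1,\ldots,g_{r-1}]_s\,a_r(s)$ is bounded in sup-norm by the inductive hypothesis, and one further Stieltjes estimate against $g_r$ closes the induction. Specializing $g_k=\psi_{s_k,t_k}$ and inserting $\|\psi_{s_k,t_k}\|_{1\hyp var}\le 3(t_k-s_k)^{2H}$ reproduces precisely the factor $3^r\prod_k\|a_k\|_\infty\prod_k(t_k-s_k)^{2H}$.

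Finally, joint continuity of $(t_1,\ldots,t_r)\mapsto\phi_t(t_1,\ldots,t_r)$ follows from the same general estimate together with multilinearity by a telescoping argument. Writing $\phi_t(t_1',\ldots,t_r')-\phi_t(t_1,\ldots,t_r)$ as a sum of $r$ single-variable changes and using linearity in the altered slot, the $i$-th term equals $\mathscr{A}$ evaluated at integrators that are all full paths $\psi_{t_j}=\psi_{0,t_j}$ (of $1$-variation at most $3t_j^{2H}\le 3$) except in the $i$-th slot, which is the increment $\psi_{t_i,t_i'}$ of $1$-variation at most $3|t_i-t_i'|^{2H}$; hence that term is $O(|t_i-t_i'|^{2H})$, so $|\phi_t(t_1',\ldots,t_r')-\phi_t(t_1,\ldots,t_r)|\le C\sum_{i=1}^r|t_i-t_i'|^{2H}\to 0$, yielding continuity (in fact $2H$-H\"older regularity in each variable). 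I expect the main technical point to be not the inductions themselves but the careful justification that all the integrals are bona fide Riemann--Stieltjes integrals — that the iterated integrands are continuous and the integrators of bounded variation — which is exactly where the variation bounds of Lemma~\ref{properties of R}(2) and the continuity of elements of $\spanIteratedIntegrals(\RR)$ are needed so that the crude sup-norm times total-variation estimate is legitimate.
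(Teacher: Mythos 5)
Your proposal is correct and follows essentially the same route as the paper's own (very terse) proof: multilinearity of $(h_1,\ldots,h_r)\mapsto\mathscr{A}_{a_1,\ldots,a_r}[h_1,\ldots,h_r]$ gives the rectangular-increment identity, and the bound $\|\psi_{s,t}\|_{1\hyp var,[0,1]}\le 3(t-s)^{2H}$ from Lemma~\ref{properties of R}(2) combined with the elementary sup-norm times total-variation estimate and induction on $r$ gives \eqref{eq418908410941}. Your telescoping argument for the joint continuity is a welcome explicit justification of a step the paper leaves implicit (the rectangular-increment bound alone does not control single-variable increments), but it does not change the method.
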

   
\begin{proof}
	The multi-linearity of the mapping 
	$
		(h_1,\ldots,h_r)
		\mapsto 
	   \mathscr{A}[h_1,\ldots, h_r]
	$
	implies \eqref{eq843901890284}.

	Also, $\|\psi_{s,t}\|_{1\hyp var, [0,1]}\le 3(t-s)^{2H}$ from Lemma~\ref{properties of R}.
	Therefore,
	\begin{align*}
		|\mathscr{A}_a(\psi_{s_1,t_1})(t)|
		=
			\left|\int_0^ta(u)d\psi_{s_1,t_1}(u)\right|
		\le
			\|a\|_{\infty}\|\psi_{s_1,t_1}\|_{1\hyp var}
		\le
			3\|a\|_{\infty}(t_1-s_1)^{2H}.
	\end{align*}
	Therefore, the estimate 
	\eqref{eq418908410941} is easily obtained using induction on $r$.
\end{proof}

Let $(F_t)\in\spanIteratedIntegrals(\RR)$.
First, we give a representation of the $r$-times stochastic G\^ateaux derivative of $F_t$
in Lemma~\ref{derivative of F}.
By using Lemma~\ref{derivative of F},
we can show $F_t\in \mathbb{D}^{\infty}(\RR)$
in Theorem~\ref{Malliavin differentiability}.
However, we will not use $F_t\in \mathbb{D}^{\infty}(\RR)$
in our proof of the main results.

Let us explain the definition of the stochastic G\^ateaux derivative in this paper.
Let $h\in \mathcal{H}^d$
and consider the full measure subset $\Omega'$ in Theorem~\ref{friz-hairer a theorem}.
Let $L^{\infty -}C^1_h(\Omega'\to \RR^N)$ 
be the set of all $\RR^N$-valued $L^{\infty -}$ functions $F$ on
$\Omega'$ such that
$u(\in \RR)\mapsto F(\omega+uh)$ is $C^1$
and satisfying that
the G\^ateaux derivative
$D_hF(\omega):=\lim_{u\to 0}u^{-1}(F(\omega+uh)-F(\omega))$ belongs to
$L^{\infty -}(\Omega')$.
We extend the domain of $D_h$ as described hereinafter.
Let $\mathcal{D}(D_h,\RR^N)$ be the set of all 
$F\in L^{\infty-}(\Omega,\RR^N)$ such that
for any $p>1$ there exist $F_{p,n}\in L^{\infty -}C^1_h(\Omega',\RR^N)$ $(n=1,2,\ldots)$
satisfying $\lim_{n\to\infty}F_{p,n}=F$ in $L^p$ and
$\lim_{n\to\infty}D_hF_{p,n}$ converges in $L^{p}(\Omega,\RR^N)$.
Then we define $D_hF:=\lim_{n\to\infty}D_hF_{p,n}$.
The limit is independent of the choice of the sequence and $p$.
Furthermore, it holds that $D_hF\in L^{\infty -}$.
In this paper, we call this derivative a stochastic G\^ateaux derivative in the direction
$h$.
For this derivative, we have the following.

\begin{lemma}\label{gateaux derivative}
 \begin{enumerate}
\item Let $\{F_n\}_{n=1}^{\infty}\subset \mathcal{D}(D_h,\RR^N)$ and
suppose $\lim_{n\to\infty}F_n=F$ in $L^{\infty-}$ and
$\lim_{n\to\infty}D_hF_n=G$ in $L^{\infty-}$.
Then $F\in \mathcal{D}(D_h,\RR^N)$ and $D_hF=G$ holds.
\item 
Let $F_i\in \mathcal{D}(D_h,\RR^{d_i})$ $(1\le i\le n)$.
Let $\varphi$ be a $C^{1}$ function on $\RR^{d_1}\times\cdots\times\RR^{d_n}$
such that $\varphi$ itself and its derivatives are at most polynomial growth.
Then $\varphi(F_1,\ldots,F_n)\in \mathcal{D}(D_h,\RR)$ and
\[
 D_h\varphi(F_1,\ldots,F_n)=\sum_{i=1}^n(\partial_{x_i}\varphi)(F_1,\ldots,F_n)[D_hF_i].
\]
 \end{enumerate}
\end{lemma}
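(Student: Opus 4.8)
The plan is to handle the two assertions separately: (1) is a closability/completeness statement proved by a diagonal extraction, while (2) is the chain rule, where the genuine work lies.

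For (1), I would argue as follows. Since each $F_n\in\mathcal{D}(D_h,\RR^N)$, the definition supplies, for every $p$, approximants in $L^{\infty-}C^1_h(\Omega',\RR^N)$ converging to $F_n$ in $L^p$ whose Gâteaux derivatives converge to $D_hF_n$ in $L^p$. Fix $p$. For each $n$ I pick a single element $G_{p,n}\in L^{\infty-}C^1_h$ from the sequence for $F_n$ with $\|G_{p,n}-F_n\|_{L^p}<1/n$ and $\|D_hG_{p,n}-D_hF_n\|_{L^p}<1/n$. Because $F_n\to F$ and $D_hF_n\to G$ in $L^{\infty-}$, hence in $L^p$, the triangle inequality gives $G_{p,n}\to F$ and $D_hG_{p,n}\to G$ in $L^p$. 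As $p$ is arbitrary, this exhibits $F$ as an element of $\mathcal{D}(D_h,\RR^N)$ with $D_hF=G$.

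For (2), I would first treat smooth inputs. If each $F_i\in L^{\infty-}C^1_h(\Omega',\RR^{d_i})$, then $u\mapsto F_i(\omega+uh)$ is $C^1$, so $u\mapsto\varphi(F_1(\omega+uh),\dots,F_n(\omega+uh))$ is $C^1$ by composition, and the classical chain rule yields $D_h\varphi(F_1,\dots,F_n)=\sum_i(\partial_{x_i}\varphi)(F_1,\dots,F_n)[D_hF_i]$ pointwise on $\Omega'$. The polynomial growth of $\varphi$ and its derivatives, the $L^{\infty-}$ membership of the $F_i$ and $D_hF_i$, and H\"older's inequality together show that $\varphi(F_1,\dots,F_n)$ and this derivative lie in $L^{\infty-}$; hence $\varphi(F_1,\dots,F_n)\in L^{\infty-}C^1_h$. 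For general $F_i\in\mathcal{D}(D_h,\RR^{d_i})$, I fix $p$ and choose $q$ very large, then invoke the definition with exponent $q$ to obtain approximants $F_{i,n}\in L^{\infty-}C^1_h$ with $F_{i,n}\to F_i$ and $D_hF_{i,n}\to D_hF_i$ in $L^q$; in particular the $L^q$-norms of $F_{i,n}$ and $D_hF_{i,n}$ are uniformly bounded in $n$. By the smooth case, $\varphi(F_{1,n},\dots,F_{n,n})\in L^{\infty-}C^1_h$ with derivative $\sum_i(\partial_{x_i}\varphi)(F_{1,n},\dots,F_{n,n})[D_hF_{i,n}]$. That $\varphi(F_{1,n},\dots,F_{n,n})\to\varphi(F_1,\dots,F_n)$ in $L^p$ follows from the bound $|\varphi(x)-\varphi(y)|\le C(1+|x|^k+|y|^k)|x-y|$ (mean value theorem, using polynomial growth of $D\varphi$) combined with H\"older's inequality and the uniform $L^q$-bounds, provided $q$ was taken large relative to $p$ and $k$.

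The crux is the convergence of the derivatives, and this is the step I expect to be the main obstacle. I would split each summand as $(\partial_{x_i}\varphi)(F_{1,n},\dots,F_{n,n})[D_hF_{i,n}]-(\partial_{x_i}\varphi)(F_1,\dots,F_n)[D_hF_i]$ into the two pieces $(\partial_{x_i}\varphi)(F_1,\dots,F_n)[D_hF_{i,n}-D_hF_i]$ and $\bigl[(\partial_{x_i}\varphi)(F_{1,n},\dots,F_{n,n})-(\partial_{x_i}\varphi)(F_1,\dots,F_n)\bigr][D_hF_{i,n}]$. The first piece tends to $0$ in $L^p$ by H\"older, since $(\partial_{x_i}\varphi)(F_1,\dots,F_n)\in L^{\infty-}$ and $D_hF_{i,n}\to D_hF_i$ in $L^q$. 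The second piece is the difficulty: because $\varphi$ is merely $C^1$, the factor $\partial_{x_i}\varphi$ is only continuous and no Lipschitz estimate is available. Here I would use that $L^q$-convergence gives convergence in probability, hence almost-sure convergence along a subsequence, so by continuity $(\partial_{x_i}\varphi)(F_{1,n},\dots,F_{n,n})\to(\partial_{x_i}\varphi)(F_1,\dots,F_n)$ almost surely; the polynomial growth together with the uniform $L^q$-bounds yields uniform integrability of a suitable power, so Vitali's theorem upgrades this to $L^r$-convergence for some $r$ below $q$ divided by the growth order. A final H\"older estimate against the $L^q$-bounded factor $D_hF_{i,n}$ sends the second piece to $0$ in $L^p$; passing to the subsequence is harmless, as it still furnishes a valid approximating sequence. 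This verifies $\varphi(F_1,\dots,F_n)\in\mathcal{D}(D_h,\RR)$ and identifies its derivative with the stated sum.
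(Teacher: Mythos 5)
Your proof is correct; the paper itself offers no argument beyond the remark that these assertions ``follow from standard calculations,'' and your diagonal extraction for (1) together with the approximation/chain-rule argument for (2) --- including the Vitali/uniform-integrability step needed because $\partial_{x_i}\varphi$ is merely continuous, and the harmless passage to a subsequence --- is precisely the standard calculation being alluded to. The only thing you use implicitly is the well-definedness of $D_h$ on $\mathcal{D}(D_h)$ (independence of the limit from the choice of approximating sequence and of $p$), which the paper asserts as part of the definition, so relying on it is legitimate.
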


\begin{proof}
These follow from standard calculations.
\end{proof}

\begin{lemma}\label{derivative of Y and J}
	For any $h\in \mathcal{H}^d$, we have $Y_t\in L^{\infty -}C^1_h(\Omega'\to \RR^n)$,
$J_t, J_t^{-1}\in L^{\infty -}C^1_h(\Omega'\to \mathcal{L}(\RR^n,\RR^n))$
and
\begin{align}
\label{derivative of Y}	D_{h}Y_t
	&=
		J_t\int_0^tJ_s^{-1}\sigma(Y_s)dh_s,\\
\label{derivative of J}	D_{h}J_t
	&=
		J_t\int_0^tJ_s^{-1}(D\sigma)(Y_s)[J_s]dh_s
		+J_t\int_0^tJ_s^{-1}(D^2\sigma)(Y_s)[D_{h}Y_s,J_s]dB_s \\
	&\quad
		+J_t\int_0^tJ_s^{-1}(D^2b)(Y_s)[D_{h}Y_s,J_s]ds.\nonumber
\end{align}
\end{lemma}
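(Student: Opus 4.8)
The plan is to exploit the rough-path translation formula of Theorem~\ref{friz-hairer a theorem}~(3), which identifies the perturbed solution with the solution of a mixed rough/Young equation, and then to solve the resulting linearized equation by variation of constants using $J$ and $J^{-1}$. First I would fix $h\in\mathcal{H}^d$ and a perturbation parameter $\vep\in\RR$, and set $Y^\vep_t=Y_t(\omega+\vep h)$. By Theorem~\ref{friz-hairer a theorem}~(3), $\mathbf{B}(\omega+\vep h)=T_{\vep h}\mathbf{B}(\omega)$, so $Y^\vep$ is the solution of the RDE driven by the translated rough path $T_{\vep h}\mathbf{B}(\omega)$. The translation formula for RDEs rewrites this as the mixed equation
\begin{align*}
	Y^\vep_t
	=
		\xi
		+\int_0^t\sigma(Y^\vep_s)\,dB_s
		+\vep\int_0^t\sigma(Y^\vep_s)\,dh_s
		+\int_0^t b(Y^\vep_s)\,ds,
\end{align*}
where the middle term is a Young integral. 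By Theorem~\ref{friz-hairer a theorem}~(1) the path $h$ has finite $(2H)^{-1}$-variation with $(2H)^{-1}<\tfrac{3}{2}$, while $s\mapsto\sigma(Y^\vep_s)$ has finite $(H^-)^{-1}$-variation; since $2H+H^->1$, the complementary Young condition holds and all the Young integrals appearing below are well defined.

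Next I would identify the derivative by differentiating the mixed equation at $\vep=0$. Writing $V_t=\partial_\vep Y^\vep_t|_{\vep=0}$, one obtains the linear inhomogeneous RDE
\begin{align*}
	V_t
	=
		\int_0^t D\sigma(Y_s)[V_s]\,dB_s
		+\int_0^t Db(Y_s)[V_s]\,ds
		+\int_0^t\sigma(Y_s)\,dh_s,
		\qquad V_0=0,
\end{align*}
whose homogeneous part is exactly equation~\eqref{jacobian} for $J$. Solving by variation of constants, the candidate is $V_t=J_t\int_0^t J_s^{-1}\sigma(Y_s)\,dh_s$, which I would verify directly: setting $K_t=\int_0^t J_s^{-1}\sigma(Y_s)\,dh_s$ and applying the Leibniz rule to $V_t=J_tK_t$ (there is no cross-bracket, as $K$ is a pure Young path of regularity $2H$ and $H^-+2H>1$), the term $(dJ_t)K_t$ reproduces $D\sigma(Y_t)[V_t]\,dB_t+Db(Y_t)[V_t]\,dt$ by \eqref{jacobian}, while $J_t\,dK_t=\sigma(Y_t)\,dh_t$ by \eqref{inverse of jabobian}. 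This gives \eqref{derivative of Y}.

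To make Gâteaux differentiability rigorous, I would set $R^\vep_t=\vep^{-1}(Y^\vep_t-Y_t)-V_t$. Subtracting the equations for $Y^\vep$, $Y$ and $V$ and Taylor-expanding $\sigma,b$ to first order around $Y$, the remainder $R^\vep$ satisfies a linear RDE driven by $B$ with the bounded coefficients $D\sigma(Y)[\cdot]$ and $Db(Y)[\cdot]$ (bounded since $\sigma,b\in C^\infty_b$), plus source terms of the second-order Taylor form $\int[\sigma(Y^\vep)-\sigma(Y)-D\sigma(Y)[Y^\vep-Y]]\,dB$ together with their $h$- and Lebesgue-analogues. Continuity of the solution map yields $Y^\vep\to Y$ in the $H^-$-Hölder topology, so these remainders tend to $0$, and a Gronwall/stability estimate for linear RDEs gives $\sup_t|R^\vep_t|\to0$ pathwise. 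Uniform-in-$\vep$ moment bounds, available from $C(B)\in L^{\infty-}$ (Theorem~\ref{friz-hairer a theorem}~(2)) and $\sup_t(|J_t|+|J_t^{-1}|)\in L^{\infty-}$, upgrade this to convergence in $L^{\infty-}$ and give $D_hY_t\in L^{\infty-}$, hence $Y_t\in L^{\infty-}C^1_h$. This third step is the main obstacle: controlling the second-order remainder $\int[\sigma(Y^\vep)-\sigma(Y)-D\sigma(Y)[Y^\vep-Y]]\,dB$ in the rough-path/$p$-variation topology, uniformly in small $\vep$, and propagating this through the linear RDE for $R^\vep$ to obtain both pathwise convergence and the uniform moment bounds.

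For $J$, I would apply the same scheme to \eqref{jacobian}: differentiating $J^\vep_t=J_t(\omega+\vep h)$ at $\vep=0$ and writing $W_t=D_hJ_t$ produces the linear inhomogeneous RDE
\begin{align*}
	W_t
	&=
		\int_0^t D\sigma(Y_s)[W_s]\,dB_s
		+\int_0^t Db(Y_s)[W_s]\,ds\\
	&\quad
		+\int_0^t D\sigma(Y_s)[J_s]\,dh_s
		+\int_0^t D^2\sigma(Y_s)[D_hY_s,J_s]\,dB_s
		+\int_0^t D^2b(Y_s)[D_hY_s,J_s]\,ds,
\end{align*}
where the three new forcing terms come respectively from the Young drift created by the translation and from the dependence of the coefficients on $Y$ through $D_hY$. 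Since the homogeneous part is again \eqref{jacobian}, Duhamel's formula yields precisely the displayed expression \eqref{derivative of J} for $D_hJ_t$, and the justification of differentiability is identical to the $Y$-case. Finally, for $J_t^{-1}$ I would use that matrix inversion $A\mapsto A^{-1}$ is smooth on invertible matrices and that $\sup_t(|J_t|+|J_t^{-1}|)\in L^{\infty-}$; Lemma~\ref{gateaux derivative}~(2) then gives $J_t^{-1}\in L^{\infty-}C^1_h$ with $D_hJ_t^{-1}=-J_t^{-1}(D_hJ_t)J_t^{-1}$, whose integrability follows from the same bounds.
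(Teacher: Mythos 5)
Your proposal is correct and follows essentially the same route as the paper: the paper's proof consists of invoking the constant-variation (variation of constants) method together with the translation formula $\mathbf{B}(\omega+uh)=T_{uh}\mathbf{B}(\omega)$ and the continuity of the rough integral in the driving rough path (citing Proposition 11.19 of \cite{friz-hairer}), with integrability obtained from $C(B)\in L^{\infty-}$ and $\sup_t(|J_t|+|J_t^{-1}|)\in L^{\infty-}$, exactly as you do. Your write-up is simply a detailed expansion of the argument the paper delegates to the reference, and the step you flag as the main obstacle (the uniform control of the second-order Taylor remainder through the linear RDE) is precisely what that cited proposition supplies.
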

\begin{proof}
	$C^1$ property of $u(\in \RR)\mapsto Y_t(\omega+uh), 
J_t(\omega+uh), J^{-1}_t(\omega+uh)$
follows from the constant variation method and 
the continuity of rough integral with respect to
the driving rough path.
This calculation can be done by pathwise.
See Proposition 11.19 in \cite{friz-hairer}.
The integrability follows from Theorem~\ref{friz-hairer a theorem} (2) and
$\sup_{0\le t\le 1}(|J_t|+|J^{-1}_t|)\in L^{\infty-}$.
\end{proof}

Below we use the following notation.
Let $h_1,\ldots ,h_n\in \mathcal{H}^d$.
If $F\in L^{\infty-}(\Omega,\RR^N)$ satisfies 
\begin{itemize}
 \item[(i)] $F\in \mathcal{D}(D_{h_1},\RR^N)$ and set $F_1=D_{h_1}F$,
\item[(ii)] $\{F_i\}_{i=1}^n\subset L^{\infty-}(\Omega,\RR^N)$ can be defined inductively as
$F_{i-1}\in \mathcal{D}(D_{h_i},\RR^N)$ holds and $F_i=D_{h_{i}}F_{i-1}$ $(2\le i\le n)$,
\end{itemize}
then we write
$F\in \mathcal{D}(D_{h_n}\cdots D_{h_1},\RR^N)$ and 
$D_{h_1,\ldots,h_i}F:=F_i$.

Note that $F\in \mathcal{D}(D_{h_n}\cdots D_{h_1},\RR)$ for all 
$h_i\in \mathcal{H}^d$ $(1\le i\le n)$ 
is not sufficient to conclude that $F\in \mathbb{D}^{\infty}(\RR)$.
To prove $F\in \mathbb{D}^{n,\infty-}(\RR)$, we need to
prove that there exists $\Xi_r\in L^{\infty-}(\Omega,(\mathcal{H}^d)^{\odot r})$ such that
$(\Xi_r,h_1\otimes\cdots \otimes h_r)_{(\mathcal{H}^d)^{\otimes r}}
=D_{h_1,\ldots,h_r}F$ for all $1\le r\le n$.
In this paper, we prove this by using Lemma~\ref{derivative of F} for
$(F_t)\in\mathcal{I}(\RR)$.
The higher order Malliavin differentiability of
$Y_t, J_t, J^{-1}_t$ has already been studied in
\cite{inahama, hairer-pillai}.

We now give the representation formula for 
the $r$-times stochastic G\^ateaux differential of elements in $\mathcal{I}(\RR)$
using the iterated integral $\mathscr{A}$.

\begin{lemma}\label{derivative of F}
 Let $h_i\in \mathcal{H}^1$ and
 $v_i=(v_i^j)_{j=1}^d\in \RR^d$ $(1\le i\le r)$.
Let $\boldsymbol{J}=\{\boldsymbol{j}=(j_1,\ldots,j_r)~|~1\le j_1,\ldots,j_r\le d\}$ and write
$\boldsymbol{v}^{\boldsymbol{j}}=\prod_{i=1}^rv_i^{j_i}$.
 Let $(F_t)\in \spanIteratedIntegrals(\RR)$.
Then $F_t\in \mathcal{D}\left(D_{h_rv_r}\cdots D_{h_1v_1},\RR\right)$ $(0\le t\le 1)$ 
and the following holds:
 there exist $N\in \mathbb{N}$, 
$F_{i,\sigma,\boldsymbol{j}},
a_{k,i,\sigma,\boldsymbol{j}}\in \spanIteratedIntegrals(\RR)$
~$(1\le i\le N, 1\le k\le r, \boldsymbol{j}\in \boldsymbol{J})$ such that
 \begin{align}
  D_{h_1v_1,\ldots,h_rv_r}F_t
  &=\sum_{1\le i\le N, \sigma\in \mathfrak{G}_r,\boldsymbol{j}\in \boldsymbol{J}}
  \boldsymbol{v}^{\boldsymbol{j}} F_{i,\sigma,\boldsymbol{j}}(t)
  \mathscr{A}_{a_{1,i,\sigma,\boldsymbol{j}},\ldots,
a_{r,i,\sigma,\boldsymbol{j}}}
[h_{\sigma(1)},\ldots, h_{\sigma(r)}](t).
\label{Malliavin derivative and iterated integrals}
 \end{align}
\end{lemma}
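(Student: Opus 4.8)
The plan is to establish the representation by reducing to a single iterated integral and then running a double induction, the outer one on the number of derivatives $r$ and the inner one on the integration level $l$. Since each $D_{h_iv_i}$ is linear and $\spanIteratedIntegrals(\RR)$ is a linear space, it suffices to prove the statement when $F_t=I^{\alpha_1,\ldots,\alpha_l}_{a_1,\ldots,a_l}(t)$ with $a_i\in\IteratedIntegrals_0(\RR)$. The differentiability claim $F_t\in\mathcal{D}(D_{h_rv_r}\cdots D_{h_1v_1})$ is obtained simultaneously, since at every step the quantities are built from rough and Young integrals, products, and compositions with smooth $\varphi$, each of which preserves the relevant domains by Lemma~\ref{gateaux derivative}.

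First I would treat the single derivative $r=1$ by induction on $l$. The crucial input is the Cameron--Martin derivative of a rough integral: using the translation property $\mathbf{B}(\omega+uhv)=T_{uhv}\mathbf{B}(\omega)$ of Theorem~\ref{friz-hairer a theorem}(3) together with the continuity of the rough integral in the driving rough path (as in the proof of Lemma~\ref{derivative of Y and J}), one has for $g\in\spanIteratedIntegrals(\RR)$
\begin{align*}
	D_{hv}\int_0^t g(s)\,dB^\alpha_s
	&=
		\int_0^t (D_{hv}g)(s)\,dB^\alpha_s
		+
		v^\alpha\,\mathscr{A}_g[h](t),
\end{align*}
the second term arising because the $\alpha$-component of the shift direction $hv$ is $v^\alpha h$. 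For $l=0$ the derivative of $\varphi(Y_t,J_t,J^{-1}_t)$ is computed by the chain rule (Lemma~\ref{gateaux derivative}(2)) and the formulas for $D_hY_t,D_hJ_t$ in Lemma~\ref{derivative of Y and J} (and hence $D_hJ^{-1}_t=-J^{-1}_t(D_hJ_t)J^{-1}_t$); note that $D_hJ_t$ already contains a $dB$-integral with a buried factor $D_hY_s$, that is, a buried $\mathscr{A}[h]$. The inductive step in $l$ likewise produces, via the displayed formula and the inductive hypothesis applied to the integrand, terms of the shape $\int_0^t G(s)\,\mathscr{A}_c[h](s)\,dB^\alpha_s$ with $G,c\in\spanIteratedIntegrals(\RR)$. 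Writing $I^\alpha_G(s)=\int_0^s G(u)\,dB^\alpha_u$, integration by parts gives
\begin{align*}
	\int_0^t G(s)\,\mathscr{A}_c[h](s)\,dB^\alpha_s
	&=
		I^\alpha_G(t)\,\mathscr{A}_c[h](t)
		-
		\mathscr{A}_{I^\alpha_G c}[h](t),
\end{align*}
and since $I^\alpha_G\in\spanIteratedIntegrals(\RR)$ and $I^\alpha_G c\in\spanIteratedIntegrals(\RR)$ by Lemma~\ref{product of iterated integrals}, both summands have the required form: a pure $dB$-integral times a single-slot $\mathscr{A}$-integral. This settles the case $r=1$.

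For general $r$ I would induct, applying $D_{h_rv_r}$ to the representation for $r-1$. By Leibniz (Lemma~\ref{gateaux derivative}(2)) the derivative splits into two contributions. Differentiating the factor $F_{i,\sigma,\boldsymbol{j}}(t)\in\spanIteratedIntegrals(\RR)$ produces, by the $r=1$ case, a term of the form (pure $dB$-integral)$\times\mathscr{A}_c[h_r](t)$; multiplying by the surviving $(r-1)$-slot integral $\mathscr{A}[h_{\sigma(1)},\ldots,h_{\sigma(r-1)}](t)$ yields a product of two $\mathscr{A}$-integrals, which the shuffle (product) formula for Young iterated integrals, namely the $\mathscr{A}$-analogue of Lemma~\ref{product of iterated integrals}, rewrites as a sum of single $r$-slot integrals over the interleavings, generating the new permutations in $\mathfrak{G}_r$. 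Differentiating the factor $\mathscr{A}_{a_1,\ldots,a_{r-1}}[h_{\sigma(1)},\ldots,h_{\sigma(r-1)}](t)$ hits only the coefficients $a_k$, the $h_{\sigma(k)}$ being deterministic, and each $D_{h_rv_r}a_k$ is again of the form (pure $dB$-integral)$\times\mathscr{A}_c[h_r]$; this buries an $\mathscr{A}[h_r]$ inside the $(r-1)$-slot integral, which is flattened into a single $r$-slot integral by the same integration-by-parts and shuffle device. At each occurrence the new coefficient functions remain in $\spanIteratedIntegrals(\RR)$, and tracking the factors $v_k^{j_k}$ produced by each derivative reassembles $\boldsymbol{v}^{\boldsymbol{j}}$.

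The main obstacle is the combinatorial bookkeeping in this last step: systematically converting every buried $\mathscr{A}[h]$-integral, whether it appears inside a $dB$-integral after differentiating an integrand or inside an $\mathscr{A}$-integral after differentiating a coefficient, into the clean product form, and verifying that the shuffles assemble exactly the sum over $\mathfrak{G}_r$ while all coefficients stay in $\spanIteratedIntegrals(\RR)$. A subsidiary technical point is the rigorous justification of the Cameron--Martin derivative formula for the rough integral, that is, the appearance of the $v^\alpha\mathscr{A}_g[h]$ term, from the translation property of Theorem~\ref{friz-hairer a theorem}(3) and the continuity of the rough integral in the driving rough path.
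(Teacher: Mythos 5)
Your proposal is correct and follows essentially the same route as the paper: the case $r=1$ is handled by induction on the level of the iterated integral via the key formula $D_{hv}\int_0^t g\,dB^\alpha=\int_0^t D_{hv}g\,dB^\alpha+v^\alpha\mathscr{A}_g[h](t)$ (the paper's Lemma~\ref{derivative of F1}, proved there by Riemann-sum approximation of the rough integral), and the general case by induction on $r$ with Leibniz, integration by parts to flatten nested $\mathscr{A}$-integrals, and the shuffle product, exactly as in the paper's proof of Lemma~\ref{derivative of F}. The only cosmetic difference is that you peel off the outermost derivative $D_{h_rv_r}$ while the paper peels off the innermost $D_{h_1v_1}$ and distributes the remaining derivatives over subsets; the two technical points you flag (justifying the derivative of the rough integral, and the combinatorial flattening) are precisely where the paper invests its effort.
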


\begin{remark}\label{rem3892803123}
	\begin{enumerate}
		\item	Lemma~\textup{\ref{derivative of F}} above
				is a stronger result for our purpose.
				Actually, such a strong result is not needed
				to prove our main theorems.
				As stated in Remark~\ref{rem47312897319},
				it is sufficient to show higher order stochastic 
				G\^ateaux differentiability in the directions
				$\psi^{\alpha}_{s,t}$ of $F_t$ and
				the estimates \eqref{eq4i319014} and \eqref{estimate of phi alpha_r}.
				In this sense, giving Lemma~\textup{\ref{derivative of F}}
				for general $h_1,\dots,h_r$
				is more than what is needed.
				We will use Lemma~\textup{\ref{derivative of F}}
				to show $F_t\in \mathbb{D}^{\infty}(\RR)$.
		\item	As stated in the Introduction,
				when we try to extend Lemma~\ref{derivative of F1} to the case where $\frac{1}{4}<H\leq \frac{1}{2}$,
				we need to include the third-level rough paths in \eqref{eq532094829018} and so on.
				If we can extend Lemma~\ref{derivative of F1}, then 
				we might not need to change the following proof of Lemma~$\ref{derivative of F}$.
				Of course, we assume $\frac{1}{3}<H\leq \frac{1}{2}$ in the following proof.
	\end{enumerate}
\end{remark}

We start by proving the case $r=1$ in the following form.
We now recall the definition of the Gubinelli derivative.
Let $V$ be a finite dimensional vector space.
For a $V$-valued controlled path $Z_t$,
the Gubinelli derivative $Z^{(\beta)}_t$ 
is defined to be a $V$-valued continuous path
such that there exists $C>0$ satisfying
\begin{align*}
 |Z_t-Z_s-\sum_{\beta=1}^dZ^{(\beta)}_sB^\beta_{s,t}|
\le C\left(|t-s|+\|h\|_{(2H^-)^{-1}\hyp var,[s,t]}^{(2H^-)^{-1}}\right)^{2H^-},
\quad 0\le s<t\le 1.
\end{align*}
Here, note that the right-hand side contains the $(2H^-)^{-1}$-variation norm of
$h$ because the integrals which we are dealing with contain 
the integrals with respect to
$h$.
In general, $Z^{(\beta)}_t$ is not uniquely defined, but in the present case
they are defined by their definitions of the processes.
Indeed,
we can calculate the Gubinelli derivatives of $Y_t$, $J_t$, $J_t^{-1}$,
$D_{h v}Y_t$, $D_{h v}J_t$, $D_{hv}J_t^{-1}$ as presented below.
\begin{align*}
Y_t^{(\beta)}&=\sigma(Y_t)e_{\beta}, \qquad
J_t^{(\beta)}=(D\sigma)(Y_t)[J_t]e_{\beta}, \qquad
(J_t^{-1})^{(\beta)}=-J_t^{-1}(D\sigma)(Y_t)[\cdot]e_{\beta}, \\
 (D_{ hv}Y)^{(\beta)}_t&=(D\sigma)(Y_t)[D_{h v}Y_t]e_{\beta},\\
(D_{h v}J)^{(\beta)}_t&=(D\sigma)(Y_t)[D_{h v}J_t]e_{\beta}+
(D^2\sigma)(Y_t)[D_{h v}Y_t,J_t]e_{\beta},\\
(D_{h v}J^{-1}_t)^{(\beta)}&=
J_t^{-1}(D_{h v}J_t)J_t^{-1}(D\sigma)(Y_t)[\cdot]e_{\beta}-
J_t^{-1}(D^2\sigma)(Y_t)[D_{h v}Y_t,\cdot]e_{\beta}.
\end{align*}
Using this, we obtain ``the commutativity of the two derivatives'' for
$(a_t)\in \IteratedIntegrals_0(\RR)$, that is, 
$D_{h v}(a^{(\beta)}_t)=(D_{h v}a_t)^{(\beta)}$.
We use these results in the following proof.

\begin{lemma}\label{derivative of F1}
For $\{a_i\}_{i=0}^{\infty}\subset \IteratedIntegrals_0(\RR)$
and $\{\alpha_i\}_{i=1}^{\infty}\subset \{0,1,\ldots,d\}$ define inductively
iterated integrals belonging to
$\IteratedIntegrals_k(\RR)$ $(k\ge 0)$ by
\begin{align}
	I_0(t)
	&=
		a_0(t),
	&
	I_k(t)
	&=
		\int_0^t 
			a_k(s)
			I_{k-1}(s)
			dB^{\alpha_k}_s,
\quad k\ge 1.\label{Ik+1}
\end{align}
Let $h\in \mathcal{H}^1$ and $v=(v^j)_{j=1}^d\in \RR^d$.
Then, the following hold.
\begin{enumerate}
 \item $I_k(t)\in \mathcal{D}(D_{hv},\RR)$ $(0\le t\le 1)$ hold for all $k$.
Moreover $D_{hv}I_k(t)$ are controlled paths for all $k$.
\item There exist $N\in \mathbb{N}$ which may depend on $k$,
$\hat{I}^k_{j,i}, \check{I}^k_{j,i}\in
\spanIteratedIntegrals(\RR)$ $(1\le i\le N, 1\le j\le d)$
such that
\begin{align}
D_{h v}I_k(t)&=\sum_{i=1}^N\sum_{j=1}^dv^j\check{I}^k_{j,i}(t)\int_0^t
\hat{I}^k_{j,i}(s)dh_s,
\quad k\ge 0,\label{integral1}\\
D_{h v}I_k(t)&=
\int_0^tD_{h v}\left(a_k(s)I_{k-1}(s)\right)dB^{\alpha_k}_s+
\int_0^ta_k(s)I_{k-1}(s)(v,e_{\alpha_k})(1-\delta_{\alpha_k,0})
dh_s,\quad k\ge 1.\label{integral2}
\end{align}
\end{enumerate}
\end{lemma}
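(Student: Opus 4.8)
The plan is to prove assertions (1), \eqref{integral1} and \eqref{integral2} simultaneously by induction on $k$, carrying along two different descriptions of $D_{hv}I_k$: the ``nested'' form that arises naturally from differentiation, and the ``factored'' form \eqref{integral1} in which the single Young integral against $h$ sits outermost. The key device, used at every level, is an integration-by-parts normalization: any term of the shape $\int_0^t\widetilde{I}(s)\bigl(\int_0^s\widehat{I}(r)\,dh_r\bigr)\,dB^{\alpha}_s$ with $\widetilde I,\widehat I\in\spanIteratedIntegrals(\RR)$ can be rewritten as a finite sum of factored terms. Indeed, writing $P(s)=\int_0^s\widehat{I}(r)\,dh_r$ and $Q(s)=\int_0^s\widetilde{I}(r)\,dB^{\alpha}_r$, so that $P(0)=Q(0)=0$ and the term equals $\int_0^tP(s)\,dQ(s)$, and noting that $P$ has finite $(2H)^{-1}$-variation while $Q$ has finite $(H^-)^{-1}$-variation with $2H+H^->1$ for $H^-$ close to $H>\frac13$, the integral is a genuine Young integral and integration by parts holds without a bracket correction:
\begin{align*}
\int_0^tP(s)\,dQ(s)=P(t)Q(t)-\int_0^tQ(s)\,\widehat{I}(s)\,dh_s.
\end{align*}
Here $P(t)Q(t)=Q(t)\int_0^t\widehat I(r)\,dh_r$ and the remaining integral are both factored as in \eqref{integral1}, with $Q,\ Q\widehat I\in\spanIteratedIntegrals(\RR)$ by the product rule for iterated integrals (Lemma~\ref{product of iterated integrals}).

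For the base case $k=0$ we have $I_0=a_0=\varphi(Y_t,J_t,J_t^{-1})\in\IteratedIntegrals_0(\RR)$. By Lemma~\ref{derivative of Y and J} the processes $Y_t,J_t,J_t^{-1}$ lie in $L^{\infty-}C^1_{hv}$, so Lemma~\ref{gateaux derivative}(2) gives $I_0(t)\in\mathcal{D}(D_{hv})$ together with the chain-rule expression for $D_{hv}I_0(t)$ in terms of $D_{hv}Y_t,D_{hv}J_t,D_{hv}J_t^{-1}$. Substituting \eqref{derivative of Y}, \eqref{derivative of J} and the analogous formula for $J^{-1}$, and writing $hv=\sum_j v^j(h\otimes e_j)$, the contribution of $D_{hv}Y_t$ is already factored, whereas those of $D_{hv}J_t$ and $D_{hv}J_t^{-1}$ contain $h$-integrals nested inside $dB$-integrals; applying the normalization above converts them to the factored form, yielding \eqref{integral1} for $k=0$. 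That $D_{hv}I_0$ is a controlled path follows because $J_t$, the $\IteratedIntegrals_0(\RR)$ coefficients and the factor $\int_0^t(\cdots)dh_s$ (of finite $(2H)^{-1}$-variation, hence more regular) are all controlled by $B$, with Gubinelli derivatives computed via the commutativity $D_{hv}(a^{(\beta)}_t)=(D_{hv}a_t)^{(\beta)}$ recorded before the lemma.

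For the inductive step, assume the statement holds up to $k-1$. Using the Gubinelli derivatives listed before the lemma together with the inductive hypothesis, $a_k(s)I_{k-1}(s)$ is a controlled path and $D_{hv}(a_kI_{k-1})=(D_{hv}a_k)I_{k-1}+a_k(D_{hv}I_{k-1})$ is, by the base case and \eqref{integral1} for $k-1$, a finite sum of factored terms. The heart of the step is \eqref{integral2}. Mirroring the proof of Lemma~\ref{derivative of Y and J}, one uses the translation identity $\mathbf{B}(\omega+u\,hv)=T_{u\,hv}\mathbf{B}(\omega)$ from Theorem~\ref{friz-hairer a theorem}(3): the map $u\mapsto I_k(\omega+u\,hv)$ is the rough integral of the translated controlled path against $T_{u\,hv}\mathbf{B}$, whose first level in direction $\alpha_k$ is shifted by $u(v,e_{\alpha_k})h$ while $B^0_t=t$ is unaffected (explaining the factor $1-\delta_{\alpha_k,0}$). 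Differentiating at $u=0$ and invoking continuity of the rough integral in both integrand and driving rough path, the integrand yields $\int_0^tD_{hv}(a_kI_{k-1})\,dB^{\alpha_k}_s$ and the integrator yields $(v,e_{\alpha_k})\int_0^ta_kI_{k-1}\,dh_s$; the condition $H>\frac13$ is what forces the second-level corrections of $T_{u\,hv}\mathbf{B}$ to collapse to this single Young term. Keeping all remainders in $L^{\infty-}$ via $C(B)\in L^{\infty-}$ and $\sup_t(|J_t|+|J_t^{-1}|)\in L^{\infty-}$ gives $I_k\in\mathcal{D}(D_{hv})$ and \eqref{integral2}, and the first summand, being a rough integral of a controlled path, shows $D_{hv}I_k$ is a controlled path. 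Finally the second term of \eqref{integral2} is already factored since $a_kI_{k-1}\in\spanIteratedIntegrals(\RR)$, and inserting the factored expansion of $D_{hv}(a_kI_{k-1})$ into the first term produces nested terms to which the integration-by-parts normalization applies, giving \eqref{integral1} for $I_k$ and closing the induction.

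The main obstacle is the differentiation of the rough integral under the Cameron--Martin shift that produces \eqref{integral2}: one must control the dependence of the rough integral on the perturbed integrand and on the translated rough path $T_{u\,hv}\mathbf{B}$ quantitatively enough to differentiate in $u$ and to keep every remainder in $L^{\infty-}$, and one must verify that the would-be iterated correction coming from the second level of $T_{u\,hv}\mathbf{B}$ genuinely vanishes in the limit, leaving only the Young integral against $h$. This last point, and the validity of the integration-by-parts normalization without a bracket term, both rest squarely on the hypothesis $H>\frac13$.
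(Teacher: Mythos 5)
Your overall architecture coincides with the paper's: induction on $k$; the base case handled by Lemma~\ref{derivative of Y and J} together with the integration-by-parts formula for controlled paths to pull the $h$-integral outside; and \eqref{integral1} deduced from \eqref{integral2} by the same integration-by-parts normalization (your Young-regime justification of that normalization, resting on $2H+H^->1$, is exactly what the paper uses). Where you genuinely diverge is in how \eqref{integral2} and the membership $I_k(t)\in\mathcal{D}(D_{hv})$ are established. You propose to differentiate the rough integral directly in the Cameron--Martin direction, viewing $u\mapsto I_k(\omega+u\,hv)$ as the rough integral of a perturbed controlled path against the translated rough path $T_{u\,hv}\mathbf{B}$ and invoking continuity/differentiability of the It\^o--Lyons map jointly in the integrand and the driver. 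The paper instead replaces $I_{k+1}(t)$ by its compensated Riemann sums $I^{\cP}_{k+1}(t)$, differentiates these term by term into three pieces $I^{\cP,1}+I^{\cP,2}+I^{\cP,3}$ (derivative of the integrand, of the first level $B^{\alpha_{k+1}}$, and of the second level $B^{\beta,\alpha_{k+1}}$ respectively), proves by an explicit Young-integral estimate that $I^{\cP,3}\to 0$ in $L^{\infty-}$, identifies the limits of the other two pieces using the commutativity $(D_{hv}a^{(\beta)}_t)=(D_{hv}a_t)^{(\beta)}$, and then concludes via the closability statement Lemma~\ref{gateaux derivative}~(1). The paper's $I^{\cP,3}\to 0$ computation is precisely the rigorous content of your assertion that ``the second-level corrections of $T_{u\,hv}\mathbf{B}$ collapse to the single Young term,'' and you correctly identify it as the main obstacle; but in your write-up it remains an assertion. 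To complete your route you would either have to carry out essentially that same estimate, or prove a quantitative joint differentiability of the rough integral in (controlled path, rough path) strong enough to differentiate under the integral and keep remainders in $L^{\infty-}$ --- neither of which is supplied. The paper's discrete-approximation route buys a self-contained argument in which the only nontrivial analysis is the vanishing of $I^{\cP,3}$; your route, once that gap is filled, is more conceptual and closer in spirit to the treatment of $Y_t,J_t,J_t^{-1}$ in Lemma~\ref{derivative of Y and J}.
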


\begin{remark}
All elements in $\IteratedIntegrals_k$ are obtained by \eqref{Ik+1}.
See Definition~\ref{def tilde I(R)}.
\end{remark}

 \begin{proof}[Proof of Lemma~$\ref{derivative of F1}$]
We prove (1) and (2) by induction.
We consider the case $k=0$.
By Lemma~\ref{derivative of Y and J}, we see that
$D_{hv}Y_t$ and the first term of $D_{hv}J_t$ are 
of the forms of \eqref{integral1}.
We consider the second term in $D_{hv}J_t$.
Let $e_1,\dots,e_n$ be the standard basis of $\RR^n$.
Putting $D_{h v}Y_t=\sum_{\alpha=1}^n J_te_\alpha \int_0^t (J_s^{-1}\sigma(Y_s)v,e_\alpha) dh_s$ into
$A_t=\int_0^tJ_s^{-1}(D^2\sigma)(Y_s)[D_{h v}Y_s,J_s]dB_s$, we have
\begin{align*}
	A_t
	&=
		\sum_{\alpha}
			\int_0^tJ_s^{-1}(D^2\sigma)(Y_s)[J_se_{\alpha},J_s]
				\left(\int_0^s\left(J_u^{-1}\sigma(Y_u)v,e_{\alpha}\right)dh_u\right)
				dB_s.
\end{align*}
Using the integration by parts formula
$\int_0^tV_udU_u=V_tU_t-V_0U_0-\int_0^tU_udV_u$ for
controlled rough paths $V,U$,
we have
\begin{align*}
	A_t
&=\sum_{\alpha}\int_0^t(J_u^{-1}\sigma(Y_u)v,e_{\alpha})dh_u
\int_0^tJ_s^{-1}(D^2\sigma)(Y_s)[J_se_{\alpha},J_s]dB_s\\
&\quad -\sum_{\alpha}
\int_0^t\left(\int_0^uJ_s^{-1}(D^2\sigma)(Y_s)[J_se_{\alpha},J_s]dB_s\right)
\left(J_u^{-1}\sigma(Y_u)v,e_{\alpha}\right)dh_u.
\end{align*}
Therefore, this term is also of the form of (\ref{integral1}).
As for $J_t^{-1}$, we can calculate 
the derivative as $D_{h v}(J_t^{-1})=-J_t^{-1}D_{h v}J_t J_t^{-1}$.
Using this derivative and also by using the integration by parts formula,
we see that the case $k=0$ holds.

Assume that (1) and (2) hold up to $k$.
We prove the case of $k+1$.
By the assumption of the induction, we see that the rough integral
in (\ref{integral2}) is well-defined.
When $\alpha_{k+1}=0$, the proof is easy. We consider the case
$\alpha_{k+1}\ge 1$.
In order to prove $I_{k+1}(t)\in \mathcal{D}(D_{hv},\RR)$ and
calculate $D_{hv}I_{k+1}(t)$, we consider approximation processes
of $I_{k+1}(t)$.
Let $\cP=\{t_i\}_{i=0}^n$ be a partition of $[0,t]$ and set
\begin{align}\label{eq532094829018}
 I_{k+1}^{\cP}(t)&=
\sum_{i=1}^n\left(a_{k+1}(t_{i-1})I_k(t_{i-1})B^{\alpha_{k+1}}_{t_{i-1},t_i}
+\sum_{\beta=1}^d\left(a_{k+1}I_k\right)^{(\beta)}_{t_{i-1}}
B^{\beta,\alpha_{k+1}}_{t_{i-1},t_i}\right).
\end{align}
Then $\lim_{|\cP|\to 0}I^{\cP}_{k+1}(t)=I_{k+1}(t)$ in
$L^{\infty-}$.
Also we have
$
	D_{h v}I^{\cP}_{k+1}(t)
	=
		I^{\cP,1}(t)+I^{\cP,2}(t)+I^{\cP,3}(t)
$,		
where
\begin{align*}
	I^{\cP,1}(t)
	&=
		\sum_{i=1}^nD_{h v}\left(a_{k+1}(t_{i-1})I_k(t_{i-1})\right)
		B^{\alpha_{k+1}}_{t_{i-1},t_i}
		+\sum_{i=1}^n
		\sum_{\beta=1}^dD_{h v}
		\left\{\left(a_{k+1}I_k\right)^{(\beta)}_{t_{i-1}}\right\}
		B^{\beta,\alpha_{k+1}}_{t_{i-1},t_i},\\
	I^{\cP,2}(t)
	&=
		\sum_{i=1}^n
			a_{k+1}(t_{i-1})I_k(t_{i-1})(v,e_{\alpha_{k+1}})h_{t_{i-1},t_i},
	\quad 
	I^{\cP,3}(t)
	=
		\sum_{i=1}^n\sum_{\beta=1}^d\left(a_{k+1}I_k\right)^{(\beta)}_{t_{i-1}}
			D_{h v}B^{\beta,\alpha_{k+1}}_{t_{i-1},t_i}.
\end{align*}
It is easy to see that
$\lim_{|\cP|\to 0}I^{\cP,2}(t)
=\int_0^ta_{k+1}(s)I_k(s)(v,e_{\alpha_{k+1}})dh(s)$ in $L^{\infty-}$.
Next we will show
\begin{align}
	\label{conv 0 IP3}
	\lim_{|\cP|\to 0}I^{\cP,3}(t)=0
	\qquad
	\text{in $L^{\infty-}$.}
\end{align}
We have
\begin{align*}
 D_{h v}B^{\beta,\alpha_{k+1}}_{t_{i-1},t_i}&=
(v,e_{\beta})\int_{t_{i-1}}^{t_i}h_{t_{i-1},s}dB^{\alpha_{k+1}}_s
+(v,e_{\alpha_{k+1}})
\int_{t_{i-1}}^{t_i}B^{\beta}_{t_{i-1},s}dh_s.
\end{align*}
Note that
$\theta:=2H+H^->1$.
By the estimate of Young integral, we obtain
\begin{align*}
J_i&:=\Bigg|\int_{t_{i-1}}^{t_i}h_{t_{i-1},s}dB^{\alpha_{k+1}}_s\Bigg|+
\left|\int_{t_{i-1}}^{t_i}B^{\beta}_{t_{i-1},s}dh_s\right|\\
&\le
C\bigg\{
\|B^{\alpha_{k+1}}\|
_{(H^-)^{-1}\hyp var, [t_{i-1},t_i]}
+\|B^{\beta}\|
_{(H^-)^{-1}\hyp var, [t_{i-1},t_i]}
\bigg\}\|h\|_{(2H)^{-1}\hyp var, [t_{i-1},t_i]}\\
&\le
C\bigg\{\|B^{\alpha_{k+1}}\|
_{(H^-)^{-1}\hyp var, [t_{i-1},t_i]}^{\theta(H^-)^{-1}}+
\|B^{\beta}\|
_{(H^-)^{-1}\hyp var, [t_{i-1},t_i]}^{\theta(H^-)^{-1}}+
\|h\|_{(2H)^{-1}\hyp var, [t_{i-1},t_i]}
^{(2H)^{-1}}\bigg\}
\|h\|_{(2H)^{-1}\hyp var, [t_{i-1},t_i]}^{1-\frac{1}{\theta}},
\end{align*}
where we have used an elementary inequality 
$ab\le \frac{a^p}{p}+\frac{b^q}{q}$ $(a,b \ge 0, \frac{1}{p}+\frac{1}{q}=1)$
in the third inequality.
Because $\|B\|_{(H^-)^{-1}\hyp var, [0,1]}<\infty$
and $\|h\|_{(2H)^{-1}\hyp var, [0,1]}<\infty$,
this implies
\begin{align*}
 \sum_{i=1}^nJ_i
\le
C
\bigg\{
\|B^{\alpha_{k+1}}\|_{(H^-)^{-1}\hyp var, [0,1]}^{\theta(H^-)^{-1}}
+\|B^{\beta}\|_{(H^-)^{-1}\hyp var, [0,1]}^{\theta(H^-)^{-1}}
+\|h\|_{(2H)^{-1}\hyp var, [0,1]}
^{(2H)^{-1}}
\bigg\}
\max_i
\|h\|_{(2H)^{-1}\hyp var, [t_{i-1},t_i]}^{1-\frac{1}{\theta}}.
\end{align*}
Because for any $F=(F_t)\in
\spanIteratedIntegrals(\RR)$, $\max_t|F_t|\in L^{\infty-}$ holds,
this estimate 
implies \eqref{conv 0 IP3}.

We consider $I^{\cP,1}(t)$.
We show
$
(D_{h v}\{a_{k+1}(t)I_k(t)\})^{(\beta)}
=D_{h v}
\{(a_{k+1}I_k)^{(\beta)}_t\}
$.
Here we write $t=t_{i-1}$ for notational simplicity.
By the definition of $I_k(t)$,
$I_k^{(\beta)}(t)=a_k(t)I_{k-1}(t)\delta_{\beta,\alpha_k}$.
Therefore
\begin{align*}
	D_{h v}\left\{\left(a_{k+1}I_k\right)^{(\beta)}_t\right\}
	&=
		D_{h v}
			\left\{
				a_{k+1}^{(\beta)}(t)I_k(t)
				+a_{k+1}(t)a_{k}(t)I_{k-1}(t)\delta_{\beta,\alpha_k}
			\right\}\\
	&=
		\{D_{h v}a_{k+1}^{(\beta)}(t)\}I_k(t)
		+a_{k+1}^{(\beta)}(t)D_{h v}I_{k}(t) \\
	&\phantom{=}\qquad
		+
		\{D_{h v}a_{k+1}(t)\}
		a_k(t)
		I_{k-1}(t)\delta_{\beta,\alpha_k}
		+
		a_{k+1}(t)
		D_{h v}\{a_k(t)I_{k-1}(t)\}
		\delta_{\beta,\alpha_k}.
\end{align*}
On the other hand,
\begin{align*}
	(D_{h v}\{a_{k+1}(t)I_k(t)\})^{(\beta)}
	&=
		\left(
			D_{h v}a_{k+1}(t)I_k(t)
			+
			a_{k+1}(t)D_{h v}I_k(t)
		\right)^{(\beta)}\\
	&=
		(D_{h v}a_{k+1}(t))^{(\beta)}I_k(t)
		+D_{h v}a_{k+1}(t)I_k^{(\beta)}(t)  \\
	&\phantom{=}\qquad
		+a_{k+1}^{(\beta)}(t)D_{h v}I_k(t)
		+a_{k+1}(t)\left(D_{h v}I_k(t)\right)^{(\beta)}\\
	&=
		\{D_{h v}a_{k+1}^{(\beta)}(t)\}I_k(t)
		+D_{h v}a_{k+1}(t)a_k(t)I_{k-1}(t)\delta_{\beta,\alpha_k} \\
	&\phantom{=}\qquad
		+a_{k+1}^{(\beta)}(t)D_{h v}I_k(t)
		+a_{k+1}(t)D_{h v}\{a_k(t)I_{k-1}(t)\}\delta_{\beta,\alpha_k}.
\end{align*}
In the last line above, we used ``the commutativity of the two derivatives'' for
$(a_{k+1}(t))\in \IteratedIntegrals_0(\RR)$.
That is, $(D_{h v}a_{k+1}(t))^{(\beta)}=D_{h v}a_{k+1}^{(\beta)}(t)$, and 
$\left(D_{h v}I_k(t)\right)^{(\beta)}=D_{hv}\{a_k(t)I_{k-1}(t)\}\delta_{\beta,\alpha_k}$,
which follows from the assumption of
the induction.
Consequently, we obtain
\begin{align*}
 \lim_{|\cP|\to 0}I^{\cP,1}(t)&=
\int_0^tD_{h v}\left(a_{k+1}(s)I_k(s)\right)dB^{\alpha_{k+1}}_s\quad 
\text{in $L^{\infty-}$}.
\end{align*}
Therefore, we have proved (\ref{integral2}) in the case of $k+1$.
The representation (\ref{integral1}) for $D_{hv}I_{k+1}(t)$ follows from 
the representation (\ref{integral1}) of $D_{hv}I_k(t)$
and the integration by parts formula for controlled paths.
 \end{proof}

\begin{proof}[Proof of Lemma~$\ref{derivative of F}$]
We can prove the assertion by induction by 
using Lemma~\ref{derivative of F1},
integration by parts formula of rough integrals.
By (\ref{integral1}), we see that the statement holds in the case where
$r=1$.
We assume the assertion holds in the case of $r$.
We denote by $S=\{i_1<\cdots<i_{r'}\}$ a subset of
$\{2,\ldots,r+1\}$ and let 
$S^c=\{j_1<\cdots<j_{r-r'}\}$.
We allow $S=\emptyset$.
By (\ref{integral1}), we have
\begin{align*}
 D_{h_1v_1}F_t=
\sum_{i=1}^N\sum_{j=1}^dv_1^j\check{I}_{j,i}(t)\int_0^t
\hat{I}_{j,i}(s)dh_1(s).
\end{align*}
Using the approximation,
$\int_0^t\hat{I}_{j,i}(s)dh_1(s)=
\lim_{|\mathcal{P}|\to 0}\sum_{l}\hat{I}_{j,i}(s_{l-1})(h_1(s_l)-h_1(s_{l-1}))$
and Lemma~\ref{gateaux derivative},
\begin{align*}
&D_{h_1v_1,\ldots,h_{r+1}v_{r+1}}F_t\\
&\quad=
\sum_{i,j}v_1^j\sum_{S\subset \{2,\ldots,r+1\}}
D_{h_{i_1}v_{i_1},\ldots,h_{i_{r'}}v_{i_{r'}}}\check{I}_{j,i}(t)
\int_0^t
(D_{h_{j_1}v_{j_1},\ldots,h_{j_{r-r'}}v_{j_{r-r'}}}
\hat{I}_{j,i}(s))dh_1(s).
\end{align*}
By the assumption of the induction,
\[
 D_{h_{i_1}v_{i_1},\ldots,h_{i_{r'}}v_{i_{r'}}}\check{I}_{j,i}(t)
\quad
\text{and}\quad
D_{h_{j_1}v_{j_1},\ldots,h_{j_{r-r'}}v_{j_{r-r'}}}
\hat{I}_{j,i}(s)
\]
can be written as in (\ref{Malliavin derivative and iterated integrals}).
Then, applying the integration by parts formula to this identity, we complete
the proof.
\end{proof}

We will show $F_t\in \mathbb{D}^{\infty}(\RR)$
in the following (Lemma~\ref{iterated integral for CM},
Remark~\ref{Hilbert-Schmidt property}, and Theorem~\ref{Malliavin differentiability}).

\begin{lemma}\label{iterated integral for CM}
We consider one-dimensional Gaussian process $B_t$ satisfying 
Condition~$\ref{condition on R}$.
  Let $\mathcal{H}$ be the Cameron-Martin space of $B$.
  Let $a_i$ be a finite $(H^-)^{-1}\hyp$ variation path
   $(1\le i\le r)$.
Let $\{h_i\}_{i=1}^{\infty}$ be an orthonormal basis of $\mathcal{H}$.
   For
$\{c_{i_1,\ldots,i_r}\}\in l^2$, define
\begin{align*}
	f_N=f_N(t_1,\ldots,t_r)=
	\sum_{1\le i_1,\ldots,i_r\le N}c_{i_1,\ldots,i_r}h_{i_1}(t_1)
	 \cdots h_{i_r}(t_r).	
\end{align*}
We set
  \begin{align}
   \mathscr{A}[f_N](t)&=\sum_{1\le i_1,\ldots,i_r\le N}
   c_{i_1,\ldots,i_r}\mathscr{A}_{a_1,\ldots,a_r}[h_{i_1},\ldots,h_{i_r}],
  \end{align}
where $\mathscr{A}_{a_1,\ldots,a_n}$ is the iterated integral which we already defined.
  Then we have the following estimate
  \begin{align}
   \max_{0\le t\le 1}\left|\mathscr{A}[f_N](t)\right|\le C\,
\left(\prod_{i=1}^r\left(\|a_i\|_{(H^{-})^{-1}\hyp var}+
\|a_i\|_{\infty}\right)\right)\,
   \|f_N\|_{\mathcal{H}^{\otimes r}},\label{estimate of A}
  \end{align}
   where $C$ is independent of $N$.
   In particular, the linear map $f_N\mapsto \mathscr{A}[f_N](t)\in \RR$
   can be extended to a uniquely determined continuous linear functional
   from $\mathcal{H}^{\otimes r}$ to $\RR$
   and the operator norm can be estimated in the same way.
We denote the continuous linear functional by the same notation $\mathscr{A}[f](t)$
$(f\in \mathcal{H}^{\otimes r})$.
Furthermore, it holds that, for any $g_i\in \mathcal{H}$ $(1\le i\le r)$
\begin{align}
 \mathscr{A}[g_1\otimes\cdots\otimes g_r](t)=
\mathscr{A}_{a_1,\ldots,a_r}[g_1,\ldots,g_r](t).\label{consistent}
\end{align}
 \end{lemma}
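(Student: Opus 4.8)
The plan is to prove the estimate \eqref{estimate of A} by reinterpreting its left-hand side through the reproducing-kernel structure of $\mathcal{H}$ and reducing the bound to a multidimensional Young integral of the covariance $R$. First I would use that $\{h_{i_1}\otimes\cdots\otimes h_{i_r}\}$ is an orthonormal basis of $\mathcal{H}^{\otimes r}$, so that $\|f_N\|_{\mathcal{H}^{\otimes r}}^2=\sum_{i_1,\ldots,i_r\le N}c_{i_1,\ldots,i_r}^2$. Setting $S(t)=\sum_{i_1,\ldots,i_r\ge 1}\mathscr{A}_{a_1,\ldots,a_r}[h_{i_1},\ldots,h_{i_r}](t)^2$, the Cauchy--Schwarz inequality gives, for each fixed $t$,
\[
|\mathscr{A}[f_N](t)|\le\Big(\sum_{i_1,\ldots,i_r\le N}c_{i_1,\ldots,i_r}^2\Big)^{1/2}\Big(\sum_{i_1,\ldots,i_r\le N}\mathscr{A}_{a_1,\ldots,a_r}[h_{i_1},\ldots,h_{i_r}](t)^2\Big)^{1/2}\le\|f_N\|_{\mathcal{H}^{\otimes r}}\,S(t)^{1/2}.
\]
Taking the maximum over $t$, it therefore suffices to bound $\sup_{0\le t\le 1}S(t)$ by $C\prod_{k=1}^r(\|a_k\|_{(H^-)^{-1}\hyp var}+\|a_k\|_\infty)^2$ with $C$ independent of $N$ and $t$.

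The key step is to identify $S(t)$ with the squared norm of a single element of $\mathcal{H}^{\otimes r}$. Using the reproducing property $h(u)=(h,\psi_u)_{\mathcal{H}}$ (so that $(h,d\psi_u)_{\mathcal{H}}=dh(u)$), each weighted iterated Young integral can be written as $\mathscr{A}_{a_1,\ldots,a_r}[h_{i_1},\ldots,h_{i_r}](t)=(h_{i_1}\otimes\cdots\otimes h_{i_r},\Phi_t)_{\mathcal{H}^{\otimes r}}$, where
\[
\Phi_t=\int_{0<s_1<\cdots<s_r<t}\Big(\prod_{k=1}^r a_k(s_k)\Big)\,d\psi_{s_1}\otimes\cdots\otimes d\psi_{s_r}
\]
is an $\mathcal{H}^{\otimes r}$-valued iterated Young integral whose convergence in $\mathcal{H}^{\otimes r}$ is established exactly as in Lemma~\ref{psi^{(l)}}, the only change being the presence of the bounded weights $a_k$. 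By Parseval's identity $S(t)=\|\Phi_t\|_{\mathcal{H}^{\otimes r}}^2$, and since $(d\psi_{s_k},d\psi_{s_k'})_{\mathcal{H}}$ is the mixed increment of $R$, expanding the norm gives
\[
S(t)=\int_{(\Delta_t)^2}\Big(\prod_{k=1}^r a_k(s_k)\,a_k(s_k')\Big)\,\prod_{k=1}^r dR(s_k,s_k'),\qquad \Delta_t=\{0<s_1<\cdots<s_r<t\},
\]
a $2r$-dimensional Young integral whose integrand and integrator both have product structure.

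The main obstacle, and the technical heart of the proof, is to bound this $2r$-dimensional integral. Because $(\Delta_t)^2\subset[0,1]^{2r}$ and the integrator $\prod_k dR(s_k,s_k')$ is a tensor product, the multidimensional Young estimates of Section~\ref{appendix I} (Theorem~\ref{FV} together with superadditivity of the $p$-variation) reduce the bound to a product of the two-dimensional factors $\int_{[0,1]^2}a_k(s)\,a_k(s')\,dR(s,s')$. Each such factor is a genuine two-dimensional Young integral: the integrand $a_k\otimes a_k$ has finite $(H^-)^{-1}\hyp var$ and $R$ has finite $(2H)^{-1}\hyp var$ on $[0,1]^2$ by Condition~\ref{condition on R}, while the complementary Young condition $H^-+2H>1$ holds because $H>\frac13$. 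This yields
\[
\sup_{0\le t\le1}S(t)\le C\,\big(V_{(2H)^{-1}}(R;[0,1]^2)\big)^r\prod_{k=1}^r\big(\|a_k\|_{(H^-)^{-1}\hyp var}+\|a_k\|_\infty\big)^2,
\]
which is precisely the required bound; the passage from the simplex $(\Delta_t)^2$ to the full cube only enlarges the domain and is absorbed into $C$. Care is needed here to track that the supremum over $t$ is dominated by the integral over $[0,1]^{2r}$, and to invoke the appendix estimates in their precise multidimensional form.

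Finally, the estimate \eqref{estimate of A} shows that $f_N\mapsto\mathscr{A}[f_N](t)$ is a bounded linear functional on the dense span of the basis tensors $h_{i_1}\otimes\cdots\otimes h_{i_r}$, so it extends uniquely and continuously to $\mathcal{H}^{\otimes r}$ with the same operator-norm bound, giving $\mathscr{A}[f](t)=(f,\Phi_t)_{\mathcal{H}^{\otimes r}}$. The consistency relation \eqref{consistent} then follows immediately: for $f=g_1\otimes\cdots\otimes g_r$ with $g_i\in\mathcal{H}$, the reproducing property yields $(g_1\otimes\cdots\otimes g_r,\Phi_t)_{\mathcal{H}^{\otimes r}}=\mathscr{A}_{a_1,\ldots,a_r}[g_1,\ldots,g_r](t)$ directly from the definition of $\Phi_t$, which one verifies by approximating the defining integral of $\Phi_t$ along partitions and using the multilinearity and continuity of the iterated Young integral in each argument.
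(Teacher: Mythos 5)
Your overall architecture coincides with the paper's: a Cauchy--Schwarz step against $\|f_N\|_{\mathcal{H}^{\otimes r}}$, followed by a bound on the second factor $S(t)$, which is the squared $\mathcal{H}^{\otimes r}$-norm of the weighted iterated-integral element. (The paper packages this differently, realizing $\|f_N\|_{\mathcal{H}^{\otimes r}}$ as the $L^2$-norm of $\sum c_{i_1,\ldots,i_r}Z^1_{i_1}\cdots Z^r_{i_r}$ for independent copies $B^1,\ldots,B^r$ and $S(t)$ as the second moment $F_r(t,t)$ of a discrete iterated sum, but this is the same quantity.) Your treatment of the extension and of the consistency identity \eqref{consistent} is also fine. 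The problem is the justification of the bound on $S(t)$, which is where all the work lies.

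You assert that, because integrand and integrator have product structure, the $2r$-dimensional Young integral over $(\Delta_t)^2$ ``reduces to a product of the two-dimensional factors $\int_{[0,1]^2}a_k(s)a_k(s')\,dR(s,s')$'', and that passing from the simplex to the full cube ``only enlarges the domain and is absorbed into $C$''. Neither step is valid. The rectangular increments of $R$ change sign (see Lemma~\ref{properties of R}), so a Young integral against $dR$ is not monotone in the domain of integration: enlarging $(\Delta_t)^2$ to $[0,1]^{2r}$ does not produce an upper bound. And on the product of simplices the variables are coupled by the ordering constraints, so the integral does not factor into two-dimensional pieces. The correct argument, which is the technical heart of the paper's proof and for which your proposal offers no substitute, is inductive: one controls the nested two-dimensional integrals $F_l(u,v)=\int_{[0,u]\times[0,v]}a_l(u')a_l(v')F_{l-1}(u',v')\,dR(u',v')$ and proves $V_{(2H_l)^{-1}}(F_l)\le C\prod_{i\le l}(\|a_i\|_{(H^-)^{-1}\hyp var}+\|a_i\|_{\infty})^2$ by induction via Lemma~\ref{cor to towghi} and Lemma~\ref{p-variation norm for product}; each integration degrades the variation exponent (Theorem~\ref{FV}), which forces a strictly decreasing chain $H^->H_1>\cdots>H_r>1/3$ and uses $H>1/3$ to keep the Young pairing admissible at every stage. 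A secondary issue: your $S(t)$ sums over all indices $i_1,\ldots,i_r\ge 1$, i.e.\ it presupposes that $\Phi_t$ exists in $\mathcal{H}^{\otimes r}$; in the paper this Hilbert--Schmidt property is a consequence of the lemma (Remark~\ref{Hilbert-Schmidt property}), not an input, and establishing it independently again requires the same inductive variation estimate. You should either carry out that induction or restrict the Cauchy--Schwarz step to finite partial sums and bound them uniformly in $N$, as the paper does.
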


 \begin{proof}
Let $\{B^l_t\}_{l=1}^r$ be independent copies of $(B_t)$.
There exists an orthonormal basis of the Wiener chaos of order 1, $\{Z^l_i\}_{i=1}^{\infty}$,
such that
   $h_i(t)=E[Z^l_iB^l_t]$ $(i=1,2,\ldots)$.
Let $t^m_k=kt2^{-m}$ $(0\le k\le 2^m)$
  and we consider the dyadic partition $\mathcal{D}_m=\{(u,v)~|~u, v\in \{t^m_k\}\}$ 
of $[0,t]^2$.
By applying the estimate in one-dimensional Young integral successively, we have
\begin{align*}
&\mathscr{A}_{a_1,\ldots,a_r}[h_{i_1},\ldots,h_{i_r}](t)\\
&=
\lim_{m\to\infty} 
\sum_{0\le k_1<\cdots<k_r\le 2^m}
a_1(t_{k_1}^m)\cdots a_r(t^m_{k_r})
\left(h_{i_1}(t^m_{k_1+1})-h_{i_1}(t^m_{k_1})\right)
\cdots \left(h_{i_r}(t^m_{k_{i_r}+1})-h_i(t^m_{k_{i_r}})\right)\\
&=\lim_{m\to\infty} 
\sum_{0\le k_1<\cdots<k_r\le 2^m}
a_1(t_{k_1}^m)\cdots a_r(t^m_{k_r})
E\left[Z^1_{i_1}B^1_{t^m_{k_1},t^m_{k_1+1}}
\cdots Z^r_{i_r}B^r_{t^m_{k_r},t^m_{k_r+1}}\right].
\end{align*}
Therefore,
  \begin{align}
&  \label{representation of AtfN} \mathscr{A}[f_N](t)=
   \sum_{1\le i_1,\ldots,i_r\le N}c_{i_1,\ldots,i_r}
   \mathscr{A}_{a_1,\ldots,a_r}[h_{i_1},\ldots,h_{i_r}](t)    \\
   &=\lim_{m\to\infty}
E\left[\left(\sum_{1\le i_1,\ldots,i_r\le N}
   c_{i_1,\ldots,i_r}Z^1_{i_1}\cdots Z^r_{i_r}\right)
\sum_{0\le k_1<\cdots<k_r\le 2^m}
a_1(t_{k_1}^m)\cdots a_r(t^m_{k_r})
B^1_{t^m_{k_1},t^m_{k_1+1}}
   \cdots B^r_{t^m_{k_r},t^m_{k_r+1}}\right].\nonumber
  \end{align}
Let $m$ be a sufficiently large number such that $2^m>r$
and let
$\{F_l(u,v)\}_{l=0}^r\subset C([0,t]^2_{\mathcal{D}_m})$ 
be functions defined on partition points $\mathcal{D}_m$
(See Section~\ref{appendix I} for this notion):
  \begin{align*}
   F_0(u,v)&=1,\\
   F_l(t^m_i,t^m_j)&=
   \sum_{0\le p\le i-1, 0\le q\le j-1}
   a_l(t^m_{p})a_l(t^m_{q})
   F_{l-1}(t^m_{p},t^m_{q})R([t^m_{p},t^m_{p+1}]\times 
   [t^m_{q},t^m_{q+1}])\quad i,j\ge 1,\quad l\ge 1,\\
   F_l(\tmi,\tmj)&=0\quad 0\le i\le l-1~\text{or}~0\le j\le l-1, \quad l\ge 1.
  \end{align*}
  Then
  \begin{align*}
&   E\left[
\left(
\sum_{0\le k_1<\cdots<k_r\le 2^m}
a_1(t_{k_1}^m)\cdots a_r(t^m_{k_r})
B^1_{t^m_{k_1},t^m_{k_1+1}}
\cdots B^r_{t^m_{k_r},t^m_{k_r+1}}
\right)^2
   \right]\\
   &=
\sum_{0\le k_1<k_2<\cdots<k_r\le 2^m-1,
0\le k_1'<k_2'<\cdots<k'_r\le 2^m-1}
a_1(t^m_{k_1})a_1(t^m_{k_1'})
\cdots a_r(t^m_{k_r})a_r(t^m_{k_r'})\\
&\qquad \times R([t^m_{k_1},t^m_{k_1+1}]\times
[t^m_{k'_1},t^m_{k_1'+1}])
\cdots
R([t^m_{k_r},t^m_{k_r+1}]\times
[t^m_{k'_r},t^m_{k_r'+1}])\\
   &=F_r(t,t).
  \end{align*}
Let us choose a strictly decreasing sequence
$
 H^->H_1>\cdots>H_r>1/3.
$
By induction, we prove
\begin{align}
 V_{(2H_l)^{-1}}(F_l ; [0,t]^2_{\mathcal{D}_m})
 \le C(\{H_k\}_{k=1}^l)\prod_{i=1}^l\left(\|a_i\|_{(H^{-})^{-1}\hyp var}+
\|a_i\|_{\infty}\right)^2,\quad 1\le l\le r.
\end{align}
For consideration of the case $l=1$,
by application of Lemma~\ref{cor to towghi} and
 Lemma~\ref{p-variation norm for product} (1),
  we have
  \begin{align}
   V_{(2H_1)^{-1}}(F_1; [0,t]^2_{\mathcal{D}_m})\le
   C(H_1)\left(\|a_1\|_{\infty}+\|a_1\|_{(H^-)^{-1}\hyp var}\right)^2
   V_{(2H)^{-1}}(R; [0,t]^2),
  \end{align}
  where $C(H_1)$ is independent of $m$.
  Suppose the assertion holds up to $l-1$.
  By the definition of $F_l$ and the assumption of
  induction, by applying
Lemma~\ref{cor to towghi} and
  Lemma~\ref{p-variation norm for product} (2),
we obtain
  \begin{align}
   V_{(2H_l)^{-1}}(F_l; [0,t]^2_{\mathcal{D}_m})
   &\le C(\{H_k\}_{k=1}^l)\left(\prod_{i=1}^{l}
   \left(\|a_i\|_{1/H^{-}\hyp var}+\|a_i\|_{\infty}\right)^2
   \right)
   V_{(2H)^{-1}}(R; [0,t]^2),
  \end{align}
  which completes the proof of induction.
  We return to the equation (\ref{representation of AtfN}).

  Note that
\begin{align}
 \|f_N\|_{\mathcal{H}^r}^2=\sum_{1\le i_1,\ldots,i_r\le N}
 |c_{i_1,\ldots,i_r}|^2
 =E\left[\left(\sum_{1\le i_1,\ldots,i_r\le N}
   c_{i_1,\ldots,i_r}Z^1_{i_1}\cdots Z^r_{i_r}\right)^2\right].
\end{align}
Therefore, the above and the Schwarz inequality imply (\ref{estimate of A}).

Finally, we prove (\ref{consistent}).
Let $g_k=\sum_{i=1}^{\infty}\alpha^i_kh_i$ be the orthogonal expansion
of $g_k$ $(1\le k\le r)$.
Then, by the continuity property of the Young integral and the definition of
$\mathscr{A}$, we have
\begin{align*}
\mathscr{A}_{a_1,\ldots,a_r}[g_1,\ldots,g_r] 
&=
\mathscr{A}_{a_1,\ldots,a_r}
\left[\sum_{i=1}^{\infty}\alpha^i_1h_i,\ldots, \sum_{i=1}^\infty\alpha^i_kh_i\right]\\
&=\lim_{N\to\infty}\sum_{1\le i_1,\ldots,i_r\le N}
\alpha^{i_1}_1\cdots\alpha^{i_r}_r
\mathscr{A}_{a_1,\ldots,a_r}[h_{i_1},\ldots,h_{i_r}]
\nonumber\\
&=\mathscr{A}[g_1\otimes\cdots\otimes g_r](t).
\end{align*}
This completes the proof.
 \end{proof}

\begin{remark}\label{Hilbert-Schmidt property}
	The lemma presented above shows that, for any $a_1,\ldots,a_r$ with
$\|a_i\|_{(H^-)^{-1}}<\infty$ $(1\le i\le r)$,
there exists a unique $\Xi_{a_1,\ldots,a_r}\in \mathcal{H}^{\otimes r}$
such that
\begin{align}
  \mathscr{A}_{a_1,\ldots,a_r}[h_1,\ldots,h_r]&=
\left(\Xi_{a_1,\ldots,a_r},h_1\otimes\cdots\otimes h_r\right)_{\mathcal{H}^{\otimes r}},\\
\|\Xi_{a_1,\ldots,a_r}\|_{\mathcal{H}^{\otimes r}}&\le
C\prod_{i=1}^{l}
   \left(\|a_i\|_{1/H^{-}\hyp var}+\|a_i\|_{\infty}\right).
 \end{align}
\end{remark}

Next, we prove the higher order Malliavin differentiability of
$(F_t)\in \mathcal{I}(\RR)$.

\begin{theorem}\label{Malliavin differentiability}
 Let $(F_t)\in \mathcal{I}(\RR)$.
Then $F_t\in \mathbb{D}^{\infty}(\RR)$ and
there exists a random variable $G\in L^{\infty-}(\Omega)$ which depends only on
 $r$ and $F$ such that, for all $t$,
 \begin{align}
  \|(D^rF_t)(\omega)\|_{\mathcal{H}^{\otimes r}}\le
  G(\omega).
 \end{align}
\end{theorem}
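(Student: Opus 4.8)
The plan is to verify the criterion recalled just before Lemma~\ref{derivative of F}: to conclude $F_t\in\mathbb{D}^{r,\infty-}(\RR)$ for every $r$, it suffices to exhibit an element $\Xi_r(t)\in L^{\infty-}(\Omega,(\mathcal{H}^d)^{\odot r})$ with
\[
	\big(\Xi_r(t),g_1\otimes\cdots\otimes g_r\big)_{(\mathcal{H}^d)^{\otimes r}}
	=D_{g_1,\ldots,g_r}F_t
	\qquad\text{for all }g_1,\ldots,g_r\in\mathcal{H}^d,
\]
and then to dominate $\|\Xi_r(t)\|_{(\mathcal{H}^d)^{\otimes r}}$ by a single $G\in L^{\infty-}$ independent of $t$. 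Since $D^rF_t$ is then forced to coincide with $\Xi_r(t)$, this simultaneously yields the asserted estimate. I would build $\Xi_r(t)$ directly from the representation in Lemma~\ref{derivative of F}, combined with the Riesz-type element $\Xi_{a_1,\ldots,a_r}\in(\mathcal{H}^1)^{\otimes r}$ furnished by Remark~\ref{Hilbert-Schmidt property}.

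It is enough to test against the orthonormal basis $h_i^\alpha=h_i\otimes e_\alpha$ of $\mathcal{H}^d$. Choosing $g_k=h_{i_k}^{\alpha_k}$ (that is, $h_k=h_{i_k}$ and $v_k=e_{\alpha_k}$) in Lemma~\ref{derivative of F} collapses the sum over $\boldsymbol{j}$ to the single term $\boldsymbol{j}=\boldsymbol{\alpha}=(\alpha_1,\ldots,\alpha_r)$, because $\boldsymbol{v}^{\boldsymbol{j}}=\prod_{k}\delta_{\alpha_k,j_k}$. Using $\mathscr{A}_{a_1,\ldots,a_r}[h_{i_{\sigma(1)}},\ldots,h_{i_{\sigma(r)}}]=(\Xi_{a_1,\ldots,a_r},h_{i_{\sigma(1)}}\otimes\cdots\otimes h_{i_{\sigma(r)}})_{(\mathcal{H}^1)^{\otimes r}}$ from Remark~\ref{Hilbert-Schmidt property} and the isomorphism $(\mathcal{H}^d)^{\otimes r}\cong(\mathcal{H}^1)^{\otimes r}\otimes(\RR^d)^{\otimes r}$ recorded in Section~\ref{Wiener chaos}, I would set
\[
	\Xi_r(t)=\mathcal{S}\!\left(
		\sum_{i,\sigma,\boldsymbol{j}}
			F_{i,\sigma,\boldsymbol{j}}(t)\,
			\big(P_\sigma\Xi_{a_{1,i,\sigma,\boldsymbol{j}},\ldots,a_{r,i,\sigma,\boldsymbol{j}}}\big)
			\otimes\big(e_{j_1}\otimes\cdots\otimes e_{j_r}\big)
	\right),
\]
where $P_\sigma$ permutes the $(\mathcal{H}^1)^{\otimes r}$-factors so that $(P_\sigma\Xi,h_{i_1}\otimes\cdots\otimes h_{i_r})=(\Xi,h_{i_{\sigma(1)}}\otimes\cdots\otimes h_{i_{\sigma(r)}})$. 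Pairing the unsymmetrized bracket above with $h_{i_1}^{\alpha_1}\otimes\cdots\otimes h_{i_r}^{\alpha_r}$ reproduces exactly the right-hand side of \eqref{Malliavin derivative and iterated integrals}; symmetrization is harmless because the iterated directional derivatives commute, so $D_{g_1,\ldots,g_r}F_t$ is symmetric in the $g_k$ (this follows from the pathwise $C^1$-calculus behind Lemmas~\ref{gateaux derivative}--\ref{derivative of F}), and consequently $\mathcal{S}\Xi_r(t)\in(\mathcal{H}^d)^{\odot r}$ still represents the functional.

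For the uniform bound I would estimate termwise,
\[
	\|\Xi_r(t)\|_{(\mathcal{H}^d)^{\otimes r}}
	\le\sum_{i,\sigma,\boldsymbol{j}}
		|F_{i,\sigma,\boldsymbol{j}}(t)|\,
		\big\|\Xi_{a_{1,i,\sigma,\boldsymbol{j}},\ldots,a_{r,i,\sigma,\boldsymbol{j}}}\big\|_{(\mathcal{H}^1)^{\otimes r}},
\]
since permuting factors and tensoring with the unit vectors $e_{j_1}\otimes\cdots\otimes e_{j_r}$ preserves norms, and $\|\mathcal{S}\cdot\|\le\|\cdot\|$. Remark~\ref{Hilbert-Schmidt property} bounds each Hilbert norm on the right by $C\prod_{k=1}^r\big(\|a_{k,i,\sigma,\boldsymbol{j}}\|_{(H^-)^{-1}\hyp var}+\|a_{k,i,\sigma,\boldsymbol{j}}\|_\infty\big)$. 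The remaining point is that every process in $\mathcal{I}(\RR)$ — in particular the finitely many $F_{i,\sigma,\boldsymbol{j}}$ and $a_{k,i,\sigma,\boldsymbol{j}}$ — satisfies $\sup_{0\le t\le1}|\cdot|\in L^{\infty-}$ and $\|\cdot\|_{(H^-)^{-1}\hyp var,[0,1]}\in L^{\infty-}$, which follows from the rough-integral estimates together with $C(B)\in L^{\infty-}$ in Theorem~\ref{friz-hairer a theorem}. Taking the supremum over $t$ of each factor then dominates the right-hand side by a finite sum of products of $L^{\infty-}$ random variables, so $G:=\sup_{0\le t\le1}\|\Xi_r(t)\|_{(\mathcal{H}^d)^{\otimes r}}\in L^{\infty-}$ depends only on $r$ and $F$; applying the criterion for each $r$ gives $F_t\in\mathbb{D}^\infty(\RR)$ with $D^rF_t=\mathcal{S}\Xi_r(t)$ and the stated estimate. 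The only genuinely delicate parts are the tensorial bookkeeping matching the ordered iterated integral $\mathscr{A}$ with a symmetric element of $(\mathcal{H}^d)^{\odot r}$ (keeping the permutations $\sigma$ and the $\RR^d$-indices $\boldsymbol{j}$ consistent) and the $L^{\infty-}$-control of the $(H^-)^{-1}$-variation norms of elements of $\mathcal{I}(\RR)$, the analytic heart of the estimate being already supplied by Lemma~\ref{iterated integral for CM} and Remark~\ref{Hilbert-Schmidt property}.
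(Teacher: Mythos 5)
Your proposal follows essentially the same route as the paper: both build the representing element $\Xi_{r,F_t}\in L^{\infty-}(\Omega,(\mathcal{H}^d)^{\otimes r})$ from the iterated-integral representation of Lemma~\ref{derivative of F} together with the Riesz element and operator-norm bound of Lemma~\ref{iterated integral for CM} and Remark~\ref{Hilbert-Schmidt property}, and both obtain the uniform $L^{\infty-}$ bound by taking suprema over $t$ of the finitely many coefficients and defining paths in $\mathcal{I}(\RR)$. The only divergence is in the last step: where you symmetrize $\Xi_r(t)$ by hand and appeal to commutativity of the mixed G\^ateaux derivatives, the paper instead applies the integration-by-parts identity $E[D_{h_1v_1,\ldots,h_rv_r}F_t\,G]=E[F_t(D^r)^{\ast}(G\,h_1v_1\odot\cdots\odot h_rv_r)]$ to deduce $\Xi_{r,F_t}\in(\mathcal{H}^d)^{\odot r}$ automatically and then invokes the alternative characterization of Sobolev spaces in Shigekawa to identify $D^rF_t=\Xi_{r,F_t}$; your version is fine, but you should be aware that this duality step is what actually converts the G\^ateaux-derivative representation into membership in $\mathbb{D}^{\infty}(\RR)$, so "the criterion recalled before Lemma~\ref{derivative of F}" is not quite a black box and your commutativity claim would need the joint $C^2$-regularity of $(u_1,u_2)\mapsto F_t(\omega+u_1h_1+u_2h_2)$ to be spelled out.
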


  \begin{proof}
By Lemma~\ref{derivative of F}, Lemma~\ref{iterated integral for CM}, and
Remark~\ref{Hilbert-Schmidt property}, we see that
there exists $\Xi_{r,F_t}\in L^{\infty-}(\Omega,(\mathcal{H}^d)^{\otimes r})$ such that
for any $h_i\in \mathcal{H}$ and $v_i\in\RR^d$ $(1\le i\le r)$, it holds that
\begin{align}
 D_{h_1v_1,\ldots,h_rv_r}F_t=\left(\Xi_{r,F_t},h_1v_1\otimes\cdots\otimes h_rv_r
\right)_{(\mathcal{H}^d)^{\otimes r}}.\label{XiFt}
\end{align}
Let $h\in \mathcal{H}^d$.
By the definition of $D_h$, it is easy to see that for $F\in \mathcal{D}(D_h,\RR)$
and smooth cylindrical function $G$,
$
 E[(D_hF)G]=E[F(D_h)^{\ast}G]
$
holds,
where $(D_h)^{\ast}G=-D_hG+(h,w)G$ and
$(h,w)$ denotes the Wiener integral.
Successively applying this integration by parts formula, one obtains
\begin{align}
 E\left[D_{h_1v_1,\ldots,h_rv_r}F_t\,G\right]&=
E\left[F_t(D_{h_1v_1})^{\ast}\cdots (D_{h_rv_r})^{\ast}G\right]\label{integration by parts}\\
&=E\left[F_t(D^r)^{\ast}(G h_1v_1\odot\cdots \odot h_rv_r)\right].\nonumber
\end{align}
Combining (\ref{XiFt}) and (\ref{integration by parts}),
we get $\Xi_{r,F_t}\in (\mathcal{H}^d)^{\odot r}$ almost surely.
This shows that $E[F_t(D^r)^{\ast}(G h_1v_1\odot\cdots \odot h_rv_r)]=
E[\left(\Xi_{r,F_t},Gh_1v_1\odot\cdots \odot h_rv_r\right)_{(\mathcal{H}^d)^{\odot r}}]$.
This implies that $D^rF_t=\Xi_{r,F_t}$ in weak sense.
By the alternative definition of
Sobolev spaces in \cite{shigekawa}
(see Section~4.2.7), we see that
$F_t\in \mathbb{D}^{\infty}(\RR)$ and
$D^rF_t=\Xi_{r,F_{t}}$, which
completes the proof.
\end{proof}

\section{Moment estimates of weighted sum processes of Wiener chaos
of order 2} \label{weighted hermite variation}

In this section, $(B_t)$ stands for the $d$-dimensional fBm
with the Hurst parameter $\frac{1}{3}<H\leq\frac{1}{2}$
and we show Theorem~\ref{moment estimate}.
This discussion begins with the following proposition on $\goodClass(\RR)$.
Other examples of $\goodClass(\RR)$ are presented in Remark~\ref{Remark on summands}.
\begin{proposition}\label{prop490801}
	We have $\spanIteratedIntegrals(\RR)\subset \goodClass(\RR)$.
\end{proposition}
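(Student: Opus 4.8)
The plan is to verify the two conditions of Definition~\ref{def489140919490} for an arbitrary $(F_t)\in\spanIteratedIntegrals(\RR)$. Condition (1) is immediate from the remark following Definition~\ref{def tilde I(R)}: every $(F_t)\in\spanIteratedIntegrals(\RR)$ satisfies $\sup_{t\in[0,1]}|F_t|\in L^{\infty-}$, so we may take $C=\sup_{t\in[0,1]}|F_t|\in\cap_{q\ge1}L^q$. The substance of the proposition is condition (2), and the strategy is to identify $\phi^{\alpha_1,\ldots,\alpha_r}_t$ explicitly with a finite sum of iterated Young integrals of $\mathscr{A}$-type and then invoke the increment identity and estimate of Lemma~\ref{lem490313333}.

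First I would rewrite the inner product defining $\phi^{\alpha_1,\ldots,\alpha_r}_t$ as a stochastic G\^ateaux derivative. By Theorem~\ref{Malliavin differentiability} we have $D^rF_t=\Xi_{r,F_t}\in(\mathcal{H}^d)^{\odot r}$, and since $\Xi_{r,F_t}$ is symmetric while $\mathcal{S}$ is the orthogonal projection onto the symmetric subspace, the symmetric inner product against $\psi^{\alpha_1}_{u_1}\odot\cdots\odot\psi^{\alpha_r}_{u_r}$ coincides with the full tensor inner product against $\psi^{\alpha_1}_{u_1}\otimes\cdots\otimes\psi^{\alpha_r}_{u_r}$. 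Applying \eqref{XiFt} with $h_i=\psi_{u_i}$ and $v_i=e_{\alpha_i}$, so that $h_iv_i=\psi^{\alpha_i}_{u_i}$, I obtain
\[
\phi^{\alpha_1,\ldots,\alpha_r}_t(u_1,\ldots,u_r)=D_{\psi_{u_1}e_{\alpha_1},\ldots,\psi_{u_r}e_{\alpha_r}}F_t.
\]
Next I would feed this into Lemma~\ref{derivative of F}. With $v_i=e_{\alpha_i}$ the weight $\boldsymbol{v}^{\boldsymbol{j}}=\prod_{i=1}^r(e_{\alpha_i})^{j_i}=\prod_{i=1}^r\delta_{j_i,\alpha_i}$ collapses the sum over $\boldsymbol{j}\in\boldsymbol{J}$ to the single index $\boldsymbol{\alpha}=(\alpha_1,\ldots,\alpha_r)$, and \eqref{Malliavin derivative and iterated integrals} yields the finite sum
\[
\phi^{\alpha_1,\ldots,\alpha_r}_t(u_1,\ldots,u_r)=\sum_{1\le i\le N,\,\sigma\in\mathfrak{G}_r}F_{i,\sigma,\boldsymbol{\alpha}}(t)\,\mathscr{A}_{a_{1,i,\sigma,\boldsymbol{\alpha}},\ldots,a_{r,i,\sigma,\boldsymbol{\alpha}}}[\psi_{u_{\sigma(1)}},\ldots,\psi_{u_{\sigma(r)}}](t),
\]
with all $F_{i,\sigma,\boldsymbol{\alpha}},a_{k,i,\sigma,\boldsymbol{\alpha}}\in\spanIteratedIntegrals(\RR)$. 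By the final continuity statement of Lemma~\ref{lem490313333}, each summand is continuous in $(u_1,\ldots,u_r)$, so this expression serves as the required continuous modification of $\phi^{\alpha_1,\ldots,\alpha_r}_t$.

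It remains to bound the rectangular increment, and here the only bookkeeping point is the permutation $\sigma$. Since $(h_1,\ldots,h_r)\mapsto\mathscr{A}[h_1,\ldots,h_r]$ is multilinear, taking the increment of the $(i,\sigma)$-summand in the variable $u_j$ over $[s_j,t_j]$ replaces $\psi_{u_j}$ by $\psi_{s_j,t_j}$ in slot $\sigma^{-1}(j)$; carrying this out coordinatewise and using \eqref{eq843901890284} gives the increment $F_{i,\sigma,\boldsymbol{\alpha}}(t)\,\mathscr{A}_{a_{1,i,\sigma,\boldsymbol{\alpha}},\ldots,a_{r,i,\sigma,\boldsymbol{\alpha}}}[\psi_{s_{\sigma(1)},t_{\sigma(1)}},\ldots,\psi_{s_{\sigma(r)},t_{\sigma(r)}}](t)$. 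Then the estimate \eqref{eq418908410941}, together with $\prod_{k}(t_{\sigma(k)}-s_{\sigma(k)})^{2H}=\prod_i(t_i-s_i)^{2H}$, produces
\[
\big|\phi^{\alpha_1,\ldots,\alpha_r}_t([s_1,t_1]\times\cdots\times[s_r,t_r])\big|\le C'\prod_{i=1}^r(t_i-s_i)^{2H},\qquad C'=3^r\sum_{i,\sigma}\Big(\sup_t|F_{i,\sigma,\boldsymbol{\alpha}}(t)|\Big)\prod_{k=1}^r\|a_{k,i,\sigma,\boldsymbol{\alpha}}\|_\infty.
\]
Each $\sup_t|F_{i,\sigma,\boldsymbol{\alpha}}(t)|$ and each $\|a_{k,i,\sigma,\boldsymbol{\alpha}}\|_\infty$ lies in $L^{\infty-}$ by the remark after Definition~\ref{def tilde I(R)}, and a finite sum of finite products of $L^{\infty-}$ variables is $L^{\infty-}$ by H\"older's inequality, so $C'\in\cap_{q\ge1}L^q$. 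The main obstacle, such as it is, is purely organizational: correctly tracking how the symmetrization in the inner product and the permutation $\sigma$ in Lemma~\ref{derivative of F} interact with the coordinatewise rectangular increments. Once this is pinned down, the two quantitative inputs \eqref{eq843901890284} and \eqref{eq418908410941} close the estimate at once.
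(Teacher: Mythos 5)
Your proposal is correct and follows essentially the same route as the paper: both identify $\phi^{\alpha_1,\ldots,\alpha_r}_t$ with the representation of the iterated stochastic G\^ateaux derivative from Lemma~\ref{derivative of F} (via Theorem~\ref{Malliavin differentiability}) and then close the increment estimate with \eqref{eq843901890284} and \eqref{eq418908410941} of Lemma~\ref{lem490313333}. You merely spell out the symmetrization and permutation bookkeeping that the paper leaves implicit.
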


\begin{proof}
	Let $(F_t)\in \spanIteratedIntegrals(\RR)$ and fix $t\in[0,1]$.
	From Theorem~\ref{Malliavin differentiability}, we have $F_t\in \mathbb{D}^{\infty}(\RR)$.
	It deduced from estimates of the rough integrals
	that $\sup_{t\in [0,1]}|F_t|\in L^{\infty-}$ holds.
	From Lemma~\ref{derivative of F},
	we see that the right-hand side of \eqref{eq4i319014}
	is expressed as a summation of the form
	\begin{align*}
		G(t)
		\mathscr{A}_{a_1,\ldots,a_r}
			[h_1,\ldots,h_r](t),
	\end{align*}
	where $G(t), a_1,\dots,a_r\in \spanIteratedIntegrals(\RR)$,
	and $h_1,\dots,h_r$ are a permutation 
	of $\psi_{u_1},\ldots,\psi_{u_r}$.

	We consider $\mathscr{A}_{a_1,\ldots,a_r} [\psi_{u_1},\ldots,\psi_{u_r}](t)$ as an example.
	From Lemma~\ref{lem490313333}, we see
	\begin{align*}
		\max_{0\le t\le 1}
			\left|
				\mathscr{A}_{a_1,\ldots,a_r}
					[\psi_{s_1,t_1},\ldots,\psi_{s_r,t_r}](t)
			\right|
		&\le
			C
			\prod_{i=1}^r(t_i-s_i)^{2H},
	\end{align*}
	where $C\in L^{\infty-}$.
	The proof is completed.
\end{proof}

\begin{remark}\label{Remark on summands}
	There are more examples of elements of $\goodClass(\RR)$.
	\begin{enumerate}
		\item	Let $\theta : [0,1]\to [0,1]$ be a Borel measurable mapping
				and let $(F_t)\in\spanIteratedIntegrals(\RR)$.
				Then $\{F_{\theta(t)}\}_{t\in[0,1]}\in \goodClass(\RR)$.
				We do not require regularity of the time variable for $\goodClass(\RR)$.
		\item	Let $F\in \mathcal{I}(\RR^N), G\in \spanIteratedIntegrals(\RR)$.
 Let $K(t,x)$ be a real-valued measurable function on $[0,1]\times \RR^N$.
Moreover, we assume that the function $x\mapsto K(t,x)$ is smooth and
$K$ itself and its all derivatives satisfy polynomial growth condition 
uniformly in $t\in [0,1]$.
				Then we see Volterra integral type processes
				$\int_0^tK(t,F_s)dG_s$
				and 
				$\int_0^1K(t,F_s)dG_s$
				belong to $\goodClass(\RR)$.
The proof is similar to the case of iterated integrals.
We give a sketch of the proof.
Because $G$ is a linear combination of the iterated integrals,
it is sufficient to consider the integral 
$I(t)=\int_0^t\varphi(F_s)dB^{\alpha}_s$, where
$\varphi$ is a smooth function on $\RR^N$ which
satisfies similar polynomial growth condition and
$F=(F^k)_{k=1}^N$ $(F^k\in \mathcal{I}(\RR), 1\le k\le N)$.
Because
$I(t)=\lim_{|\mathcal{P}|\to 0}I^{\mathcal{P}}(t)$,
where
\[
 I^{\mathcal{P}}(t)=\sum_{i=1}^n\varphi(F_{t_{i-1}})B^{\alpha}_{t_{i-1},t_i}+
\sum_{i=1}^n\sum_{k}\sum_{\beta}(\partial_{x_k}\varphi)(F_{t_{i-1}})
(F^k)^{(\beta)}_{t_{i-1}}B^{\beta,\alpha}_{t_{i-1},t_i},
\]
by Lemma~\ref{gateaux derivative}, it is sufficient to prove that
$\lim_{|\mathcal{P}|\to 0}D_{hv}I^{\mathcal{P}}(t)$ converges in 
$L^{\infty-}$ to show $I(t)\in \mathcal{D}(D_{hv},\RR)$.
This convergence can be checked by noting 
\[
 (D_{hv}\varphi(F_{t_{i-1}}))^{(\beta)}=
D_{hv}\Bigl(\sum_{k}\sum_{\beta}(\partial_{x_k}\varphi)(F_{t_{i-1}})
(F^k)^{(\beta)}_{t_{i-1}}\Bigr)
\]
as in the proof
of Lemma~\ref{derivative of F1}.
After establishing $I(t)\in \mathcal{D}(D_h,\RR)$
and
\begin{align*}
D_{hv}I(t)=
\int_0^t(\partial\varphi)(F_s)[D_{hv}F_s]
dB^{\alpha}_s+v^{\alpha}\int_0^t\varphi(F_s)dh_s,	
\end{align*}
one can obtain higher order differentiability of $I(t)$
by using induction
argument, which also shows the representation of the derivatives of $I(t)$
as in Lemma~\ref{derivative of F}.
This implies the desired result.
	\end{enumerate}
\end{remark}

Next we consider 
$B^{\alpha,\beta}_{s,t}$ and $B^{\alpha}_{s,t}B^{\beta}_{s,t}$.
Let $\alpha\neq\beta$ and
$0\leq s<t\leq 1$.
We consider finite dimensional approximation of
$B^{\alpha,\beta}_{s,t}$ as in Section~\ref{df app of iterated integrals}
using the equipartition of
$[s,t]$.
That is, we define
\begin{align}
 \tilde{B}^{\alpha,\beta}_{s,t}(n)&=\sum_{k=1}^{n}
B^{\alpha}_{s,s+\frac{k-1}{n}(t-s)}
B^{\beta}_{s+\frac{k-1}{n}(t-s),s+\frac{k}{n}(t-s)},\label{tBabstn}\\
\tpsi^{\alpha,\beta}_{s,t}(n)&=\sum_{k=1}^n
\psi^{\alpha}_{s,s+\frac{k-1}{n}(t-s)}\odot
\psi^{\beta}_{s+\frac{k-1}{n}(t-s),s+\frac{k}{n}(t-s)}.\label{tpsin}
\end{align}
Then
$
 \tilde{B}^{\alpha,\beta}_{s,t}(n)
=I_2\left(\tpsi^{\alpha,\beta}_{s,t}(n)\right)
$
and
\begin{align}
	\lim_{n\to\infty}\tilde{B}^{\alpha,\beta}_{s,t}(n)
	&=
		B^{\alpha,\beta}_{s,t}
	\quad 
	\text{in $L^p$ for all $p\ge 1$},\\
	\label{lim tpsin}
	\lim_{n\to\infty}\tpsi^{\alpha,\beta}_{s,t}(n)
	&
	=
		\tpsi^{\alpha,\beta}_{s,t}
	\quad
	\text{in $\mathcal{H}^{\odot 2}$}.
\end{align}

Let $1\le \alpha(\ne)\beta\le d$.
  Let $p(\ge 2)$ be a positive integer.
Let $0\le s_i<t_i\le 1$ $(1\le i\le p)$.
We calculate the Wiener chaos expansion of
$
	\prod_{i=1}^p
		B^{\alpha}_{s_i,t_i}B^{\beta}_{s_i,t_i}
$
and
$
	\prod_{i=1}^p
		\tilde{B}^{\alpha,\beta}_{s_i,t_i}(n)
$.
To this end, we introduce several notations.
First, we introduce an $\mathcal{H}^{\odot 2r}$-valued
$2p$ variables function
$g^{p,r}(u_1,\ldots,u_p,v_1,\ldots,v_p)$ $(u_i, v_j\in [0,1])$
$(1\le i,j\le p,\,\, 0\le r\le p)$ by
\begin{align*}
& g^{p,r}(u_1,\ldots,u_p,v_1\ldots,v_p)\nonumber\\
&\qquad=
\sum_{(a,b)\in S_r}\sum_{\{\{I_i\}_{i=1}^a, K\}, \{\{J_j\}_{j=1}^b, L\}}
\prod_{i=1}^aR(u_{I_i^{-}},u_{I_i^{+}})\prod_{j=1}^b
R(v_{J^{-}_j},v_{J^{+}_j})
\left(\underset{k\in K}{\odot}
\psi^{\alpha}_{u_k}\odot\underset{l\in L}{\odot}\psi^{\beta}_{v_l}
\right),
\end{align*}
where 
$S_r=\{(a,b)~|~a+b=p-r, 2a\leq p, 2b\leq p, a,b\in \ZZ_{\ge 0}\}$ and
$\{\{I_i\}_{i=1}^a, K\}, \{\{J_j\}_{j=1}^b, L\}$ move in the set of
the disjoint partition of $\{1,\ldots,p\}$ satisfying the following
rule:
\begin{itemize}
	\item	$
				\big(\cup_{i=1}^aI_i\big)\cup K=
				\big(\cup_{j=1}^bJ_j\big)\cup L=\{1,\ldots,p\},
			$
	\item	$I_i$ and $J_j$ $(1\le i\le a, 1\le j\le b)$ consists of two distinct
			elements of $\{1,\ldots,p\}$.
\end{itemize}
Here we denoted the smaller number
and the larger number in $I_i$ by
$I_i^-$ and $I_i^+$, respectively and so on.
Additionally, we used the convention that $\prod_{i=1}^aR(u_{I_i^{-}},u_{I_i^{+}})=1$
when $a=0$ and so on.
For example, 
$g^{p,p}(u_1,\ldots,u_p,v_1,\ldots,v_p)=\odot_{i=1}^p\psi^{\alpha}_{u_i}\odot
\odot_{j=1}^p\psi^{\beta}_{v_j}$.
Here we give more concrete examples in the case $p=6$ and $r=2$.
In this case $g^{6,2}(u_1,\dots,u_6,v_1,\dots,v_6)$ contains terms
\begin{align}
	\label{eqExample001}
	&
	R(u_1,u_2)R(u_3,u_4)R(u_5,u_6)
	\cdot
	R(v_3,v_6)\,\,
	\psi^{\beta}_{v_1}
	\odot
	\psi^{\beta}_{v_2}
	\odot
	\psi^{\beta}_{v_4}
	\odot
	\psi^{\beta}_{v_5},\\
	\label{eqExample002}
	&
	R(u_1,u_2)R(u_3,u_4)
	\cdot
	R(v_1,v_4)R(v_3,v_6)\,\,
	\psi^{\alpha}_{u_5}
	\odot
	\psi^{\alpha}_{u_6}
	\odot
	\psi^{\beta}_{v_2}
	\odot
	\psi^{\beta}_{v_5},\\
	\label{eqExample003}
	&
	R(u_1,u_2)
	\cdot
	R(v_1,v_4)
	R(v_2,v_5)
	R(v_3,v_6)\,\,
	\psi^{\alpha}_{u_3}
	\odot
	\psi^{\alpha}_{u_4}
	\odot
	\psi^{\alpha}_{u_5}
	\odot
	\psi^{\alpha}_{u_6}.
\end{align}

Using $g^{p,r}$, we define $\mathcal{H}^{\odot 2r}$-valued
functions
$f(n)^{p,r}_{(s_1,t_1),\ldots,(s_p,t_p)}$ by
\begin{align*}
 f(n)_{(s_1,t_1),\ldots,(s_p,t_p)}^{p,r}
=\sum_{l_1,\ldots,l_p=1}^n
g^{p,r}([s_1,t^1_{l_1-1}]\times\cdots\times [s_p,t^p_{l_{p-1}}]
\times [t^1_{l_1-1},t^1_{l_1}]\times\cdots\times [t^p_{l_p-1},t^p_{l_p}]),
\end{align*}
where 
$t^i_l=s_i+\frac{l}{n}(t_i-s_i)$ $(0\le l\le n)$.
Now, we are ready to state the expansion formula for
the products of the quadratic Wiener functionals.

\begin{lemma}\label{expansion formula of product}
Let $1\le \alpha(\ne)\beta\le d$.
  Let $p(\ge 2)$ be a positive integer.
Let $0\le s_i<t_i\le 1$ $(1\le i\le p)$.
Then we have
\begin{align}
	\prod_{i=1}^p
		B^{\alpha}_{s_i,t_i}B^{\beta}_{s_i,t_i}
	&=
		\sum_{r=0}^p
			I_{2r}\left(g^{p,r}(
[s_1,t_1]\times\cdots\times[s_p,t_p]\times [s_1,t_1]\times\cdots\times[s_p,t_p])
\right),\label{expansion formula of product 1}\\
\prod_{i=1}^p
		\tilde{B}^{\alpha,\beta}_{s_i,t_i}(n)
	&=
		\sum_{r=0}^p
			I_{2r}(f(n)_{(s_1,t_1),\ldots,(s_p,t_p)}^{p,r}).
\label{expansion formula of product 2}
\end{align}

\end{lemma}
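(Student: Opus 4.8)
The plan is to write every factor as a first-order Wiener integral, $B^{\gamma}_{a,b}=I_1(\psi^{\gamma}_{a,b})$, and to expand the resulting product of first-chaos elements by the product formula of Proposition~\ref{product formula}. The essential simplification is that, since $\psi^{\gamma}_{a,b}=\psi_{a,b}\otimes e_{\gamma}$ and $\alpha\ne\beta$, we have $(\psi^{\alpha}_{u},\psi^{\beta}_{v})_{\mathcal{H}^d}=0$; hence in the expansion only contractions pairing an $\alpha$-leg with an $\alpha$-leg, or a $\beta$-leg with a $\beta$-leg, survive. This is exactly the combinatorial pattern built into $g^{p,r}$.

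Concretely, I would first prove the following single identity for arbitrary subintervals $[a_i,b_i],[c_i,d_i]\subset[0,1]$:
\[
	\prod_{i=1}^p I_1(\psi^{\alpha}_{a_i,b_i})\,I_1(\psi^{\beta}_{c_i,d_i})
	=\sum_{r=0}^p I_{2r}\big(g^{p,r}([a_1,b_1]\times\cdots\times[a_p,b_p]\times[c_1,d_1]\times\cdots\times[c_p,d_p])\big).
\]
Both assertions of the lemma are instances of it. Identity \eqref{expansion formula of product 1} is the case $(a_i,b_i)=(c_i,d_i)=(s_i,t_i)$. For \eqref{expansion formula of product 2} I would expand each factor by \eqref{tBabstn}, multiply out to get $\sum_{l_1,\ldots,l_p=1}^n\prod_{i=1}^p B^{\alpha}_{s_i,t^i_{l_i-1}}B^{\beta}_{t^i_{l_i-1},t^i_{l_i}}$, apply the identity to each summand with $(a_i,b_i)=(s_i,t^i_{l_i-1})$ and $(c_i,d_i)=(t^i_{l_i-1},t^i_{l_i})$, and recognize the sum over $(l_1,\ldots,l_p)$ as $f(n)^{p,r}_{(s_1,t_1),\ldots,(s_p,t_p)}$ by its very definition.

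To establish the displayed identity I would use the standard multivariate product formula for a product of $2p$ first-order integrals, obtained by iterating Proposition~\ref{product formula}(3) (or by induction on $p$): the product equals the sum over all partial matchings of the $2p$ legs, where each matched pair contributes the $\mathcal{H}^d$-inner product of its two legs and the unmatched legs enter one It\^o--Wiener integral through their symmetric product, each matching carrying coefficient one. Imposing $(\psi^{\alpha}_{u},\psi^{\beta}_{v})_{\mathcal{H}^d}=0$ restricts the surviving matchings to those internal to the $p$ $\alpha$-legs and internal to the $p$ $\beta$-legs. Denoting by $\{I_i\}_{i=1}^a$ and $K$ the matched pairs and unmatched indices among the $\alpha$-legs, and by $\{J_j\}_{j=1}^b$ and $L$ their $\beta$-counterparts, reproduces exactly the partitions summed over in $g^{p,r}$, with $a+b=p-r$ so that the surviving integral has order $2r=|K|+|L|$; this is the constraint defining $S_r$. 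It remains to match the analytic factors: the rectangular increment of $R(u_{I_i^{-}},u_{I_i^{+}})$ over the intervals attached to the indices $I_i^{-},I_i^{+}$ equals $(\psi^{\alpha}_{a_{I_i^{-}},b_{I_i^{-}}},\psi^{\alpha}_{a_{I_i^{+}},b_{I_i^{+}}})_{\mathcal{H}^d}$ by the bilinear identity $(\psi_{s,t},\psi_{s',t'})_{\mathcal{H}^1}=R([s,t]\times[s',t'])$, the increment of $R(v_{J_j^{-}},v_{J_j^{+}})$ gives the corresponding $\beta$-inner product, and the increments of $\psi^{\alpha}_{u_k}$ and $\psi^{\beta}_{v_l}$ give $\psi^{\alpha}_{a_k,b_k}$ and $\psi^{\beta}_{c_l,d_l}$. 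Summing over $r$ reassembles the right-hand side.

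The main obstacle is the combinatorial bookkeeping: showing that, after the orthogonality constraint is imposed, the partial-matching sum of the multivariate product formula is in bijection with the data $(a,b)\in S_r$ together with the disjoint partitions $\{\{I_i\}_{i=1}^a,K\}$ and $\{\{J_j\}_{j=1}^b,L\}$, and that no spurious multiplicities arise under symmetrization---each matching contributes with coefficient one, as already visible in the base case $I_1(f)I_1(g)=I_2(f\odot g)+(f,g)_{\mathcal{H}^d}$. Assigning the $\alpha$-legs and $\beta$-legs to the disjoint variable blocks $u_1,\ldots,u_p$ and $v_1,\ldots,v_p$ is precisely what keeps this separation transparent and lets the single function $g^{p,r}$ serve both identities through its rectangular increments.
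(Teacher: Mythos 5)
Your proposal is correct, and its outer scaffolding coincides with the paper's: both reduce the lemma to a single multilinear identity in the $\psi$'s (so that \eqref{expansion formula of product 1} follows by taking rectangular increments and \eqref{expansion formula of product 2} by applying it to the Riemann-sum factors of \eqref{tBabstn} and using linearity of $I_{2r}$ to recognize $f(n)^{p,r}$). Where you differ is in how the core identity is proved. The paper keeps the factors as second-chaos elements $I_2(\psi^{\alpha}_{u_i}\odot\psi^{\beta}_{v_i})$ and inducts on $p$, multiplying in one more $I_2$ at each step via Proposition~\ref{product formula}~(3) and checking that the four resulting contraction types reassemble into $g^{p+1,r}$. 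You instead flatten the product into $2p$ first-order integrals and invoke the complete diagram (partial-matching) expansion in one shot, with the orthogonality $(\psi^{\alpha}_u,\psi^{\beta}_v)_{\mathcal{H}^d}=0$ eliminating all cross-pairings; the surviving matchings are then in visible bijection with the data $(a,b)\in S_r$ and the partitions $\{\{I_i\},K\}$, $\{\{J_j\},L\}$, each with coefficient one. Your route makes the combinatorial content of $g^{p,r}$ more transparent and avoids the step-by-step bookkeeping of the paper's induction, but it relies on the full multivariate product formula for first-order integrals, which the paper never states (it only records the binary case). To stay self-contained you would need to establish that diagram formula, e.g.\ by iterating Proposition~\ref{product formula}~(3) and verifying the unit coefficients — which is, in essence, the same induction the paper performs, merely organized at the level of first-chaos legs rather than second-chaos blocks. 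Your reduction of \eqref{expansion formula of product 2} and the identification of rectangular increments of $R$ with the $\mathcal{H}^d$-inner products of increments of $\psi$ are both handled correctly.
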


\begin{proof}
It is sufficient to prove the following identity.
For any $0\le u_i,v_j\le 1$ $(1\le i,j\le p)$, 
it holds that
\begin{align}
 I_2(\psi^{\alpha}_{u_1}\odot\psi^{\beta}_{v_1})\cdots 
I_2(\psi^{\alpha}_{u_p}\odot \psi^{\beta}_{v_p})&=
\sum_{r=0}^pI_{2r}(g^{p,r}(u_1,\ldots,u_p,v_1,\ldots,v_p)).\label{gpr 2}
\end{align}
In fact, (\ref{expansion formula of product 1}) clearly follows from this identity.
Let us consider (\ref{expansion formula of product 2}).
Once this identity has been proved, then using the linearity
of the mapping $I_{2r}$, we obtain
\begin{align*}
\prod_{i=1}^pI_2(\psi^{\alpha}_{u_i,u'_i}\odot\psi^{\beta}_{v_i,v'_i})
&=\sum_{r=0}^pI_{2r}\left(g^{p,r}([u_1,u'_1]
\times\cdots\times[u_p,u'_p]\times[v_1,v'_1]\times\cdots\times 
[v_p,v'_p])\right)
\end{align*}
and
\begin{multline*}
	\prod_{i=1}^p I_{2}
		\Big(
			\psi^{\alpha}_{s_i,t^{i}_{l_i}-1}
			\odot
			\psi^{\beta}_{t^i_{l_i-1},t^i_{l_i}}
		\Big)\\
	=
		\sum_{r=0}^p
			I_{2r}
				\Big(
					g^{p,r}
						\Big(
							[s_1,t^1_{l_1-1}]\times\cdots \times [s_p,t^p_{l_p-1}]
							\times
							[t^1_{l_1-1},t^1_{l_1}]\times\cdots\times[t^p_{l_p-1},t^p_{l_p}]
						\Big)
				\Big),
\end{multline*}
which implies the desired identity.
We consider the case where $p=2$ of (\ref{gpr 2}).
Because $(\psi^{\alpha}_{u},\psi^{\beta}_v)_{\mathcal{H}^d}=0$,
we have
\begin{multline*}
	I_2(\psi^{\alpha}_{u_1}\odot\psi^{\beta}_{v_1})
	I_2(\psi^{\alpha}_{u_2}\odot \psi^{\beta}_{v_2})
	=
		(\psi^{\alpha}_{u_1},\psi^{\alpha}_{u_2})
		(\psi^{\beta}_{v_1},\psi^{\beta}_{v_2})
		+
		(\psi^{\alpha}_{u_1},\psi^{\alpha}_{u_2})
		I_2\left(\psi^{\beta}_{v_1}\odot\psi^{\beta}_{v_2}\right)\\
		+
		(\psi^{\beta}_{v_1},\psi^{\beta}_{v_2})
		I_2\left(\psi^{\alpha}_{u_1}\odot\psi^{\alpha}_{u_2}\right)
		+
		I_4(\psi^{\alpha_{u_1}}\odot\psi^{\beta}_{v_1}\odot
		\psi^{\alpha}_{u_2}\odot \psi^{\beta}_{v_2}),
\end{multline*}
which shows that the identity (\ref{gpr 2}) holds in the case of $p=2$.
Suppose (\ref{gpr 2}) holds for $p$.
Then 
\begin{align*}
	\prod_{i=1}^{p+1}I_2(\psi^{\alpha}_{u_i}\odot\psi^{\beta}_{v_i})
	&=
		\sum_{r=0}^p
			I_{2r}(g^{p,r})
			I_2(\psi^{\alpha}_{u_{p+1}}\odot\psi^{\beta}_{v_{p+1}})\\
	&=
		\sum_{\{\{I_i\}_{i=1}^a, K\}, \{\{J_j\}_{j=1}^b, L\}}
			\prod_{i=1}^a
				R(u_{I_i^{-}},u_{I_i^{+}})
			\prod_{j=1}^b
				R(v_{J^{-}_j},v_{J^{+}_j})\\
	&\qquad\qquad\qquad\qquad
			\times
			I_{2r}
				\left(
					\underset{k\in K}{\odot}\psi^{\alpha}_{u_k}
					\odot
					\underset{l\in L}{\odot}\psi^{\beta}_{v_l}
				\right)
			I_2(\psi^{\alpha}_{u_{p+1}}\odot\psi^{\beta}_{v_{p+1}}).
\end{align*}
By the formula in Proposition~\ref{product formula} (3),
we obtain
\begin{multline*}
	I_{2r}\left(
	\underset{k\in K}{\odot}
	\psi^{\alpha}_{u_k}\odot\underset{l\in L}{\odot}\psi^{\beta}_{v_l}\right)
	I_2(\psi^{\alpha}_{u_{p+1}}\odot\psi^{\beta}_{v_{p+1}})
	=
		I_{2r+2}\left(\underset{k\in K}{\odot}\psi^{\alpha}_{u_k}\odot
		\underset{l\in L}{\odot}\psi^{\beta}_{v_l}\odot \psi^{\alpha}_{u_{p+1}}\odot
	\psi^{\beta}_{v_{p+1}}\right)\\
	\begin{aligned}
		&
		+	
		\sum_{k'\in K}R(u_{k'},u_{p+1})I_{2r}
			\left(\underset{k\in K\setminus \{k'\}}{\odot}
			\psi^{\alpha}_{u_k}\odot
			\underset{l\in L}{\odot}\psi^{\beta}_{v_l}\odot 
			\psi^{\beta}_{v_{p+1}}\right)\\
		&
		+
		\sum_{l'\in L}R(v_{l'},v_{p+1})I_{2r}
			\left(\underset{k\in K}{\odot}
			\psi^{\alpha}_{u_k}\odot \psi^{\alpha}_{u_{p+1}}
			\odot\underset{l\in L\setminus\{l'\}}{\odot}\psi^{\beta}_{v_l}\right)\\
		&+
			\sum_{k'\in K, l'\in L}R(u_{k'},u_{p+1})
				R(v_{l'},v_{p+1})I_{2r-2}
				\left(\underset{k\in K\setminus \{k'\}}{\odot}
				\psi^{\alpha}_{u_k}\odot
				\underset{l\in L\setminus \{l'\}}{\odot}
				\psi^{\beta}_{v_l}\right).
	\end{aligned}
\end{multline*}
From these two identities above, we see that the case of $p+1$ holds.
This completes the proof.
\end{proof}

\begin{lemma}\label{estimate of sum of rho}
 Let $p$ and $q$ be positive integers with $p\ge 2$ and $q\ge 1$.
Suppose that we are given non-negative integers
$\{a(\{i,j\})\}$ for each pair $\{i,j\}$ $(i\ne j, 1\le i,j\le p)$
satisfying that $\sum_{1\le j\le p, j\ne i}a(\{i,j\})\le q$
for any $i$.
We write $\sum_{\{i,j\}}a(\{i,j\})=N$.
Let $\{\rho(n)\}_{l=0}^{\infty}$ be a sequence of non-negative numbers
with $0\le \rho(n)\le 1$ for all $n$ and $\sum_{n=0}^{\infty}\rho(n)\le C$,
where $C\ge 1$.
Then,
for any $0\le s<t\le 1$, we have
\begin{align}
	\sum_{k_1,\dots,k_p=\floor{2^m s}+1}^{\floor{2^mt}}
		\prod_{\{i,j\}}\rho(|k_i-k_j|)^{a(\{i,j\})}
	\le
		C^N
		(\floor{2^mt}-\floor{2^ms})^{p-\ceiling{\frac{N}{q}}}.
\label{estimate of sum of rho 2}
\end{align}
\end{lemma}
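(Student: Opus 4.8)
The plan is to prove the estimate by induction on the number $p$ of summation indices, peeling off one index at a time. Throughout write $M=\floor{2^mt}-\floor{2^ms}$ for the common length of the summation range; if $M=0$ the left-hand side is an empty sum and there is nothing to prove, so I assume $M\ge 1$. The only two features of the weights $\rho$ that enter are: (i) $0\le\rho(n)\le 1$, which gives $\rho(n)^{a}\le\rho(n)$ whenever $a\ge 1$; and (ii) the summability $\sum_{n\ge 0}\rho(n)\le C$, which after accounting for the two possible signs of a difference yields $\sum_{k}\rho(|k-k'|)\le C$ for every fixed $k'$ (here I fold the harmless factor coming from $\pm$ into $C$, which is permitted since any larger constant only weakens the bound). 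I organize the data as a weighted graph on the vertices $\{1,\dots,p\}$ whose edge $\{i,j\}$ carries weight $a(\{i,j\})$; the hypothesis says every vertex has weighted degree $d_i=\sum_{j\ne i}a(\{i,j\})\le q$, and $N=\sum_{\{i,j\}}a(\{i,j\})$ is the total weight.

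For the inductive step I single out the last index and perform the inner sum $\sum_{k_p}\prod_{j<p}\rho(|k_p-k_j|)^{a(\{j,p\})}$ with all other indices frozen, so that the remaining product factors out. If $d_p=0$ this inner sum is exactly $M$, and what remains is a sum of the same type over $p-1$ indices with the same total weight $N$ and the same degree bound $q$; the inductive hypothesis gives $\le C^{N}M^{(p-1)-\ceiling{N/q}}$, and multiplying by the factor $M$ reproduces $C^{N}M^{p-\ceiling{N/q}}$. If $d_p\ge 1$, I pick one neighbour $j_0$ with $a(\{j_0,p\})\ge 1$, bound every other factor by $1$, and use $\rho^{a}\le\rho$ to get $\sum_{k_p}\rho(|k_p-k_{j_0}|)^{a(\{j_0,p\})}\le\sum_{k_p}\rho(|k_p-k_{j_0}|)\le C$. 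Deleting the vertex $p$ removes exactly the weight $d_p$, leaving a sum over $p-1$ indices with total weight $N-d_p$ and (still) degree bound $q$; the inductive hypothesis gives $\le C^{N-d_p}M^{(p-1)-\ceiling{(N-d_p)/q}}$, and the extra factor $C$ upgrades $C^{N-d_p}$ to $C^{N-d_p+1}\le C^{N}$.

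It remains to check that the exponent of $M$ produced in the second case does not exceed the target. Since $M\ge 1$, it suffices to verify $(p-1)-\ceiling{(N-d_p)/q}\le p-\ceiling{N/q}$, that is, $\ceiling{N/q}-\ceiling{(N-d_p)/q}\le 1$. This is precisely where the degree constraint enters: from $1\le d_p\le q$ one has $\ceiling{(N-d_p)/q}\ge\ceiling{(N-q)/q}=\ceiling{N/q}-1$, which gives the required inequality. Thus in both cases the $(p-1)$-index bound is promoted to $C^{N}M^{p-\ceiling{N/q}}$, and the induction is complete once the base case $p=1$ is noted, where the left-hand side is the trivial identity $\sum_{k_1}1=M=M^{1-\ceiling{0/q}}$.

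I expect the genuine content — and the main thing to get right — to be the interplay between the total weight $N$ and the per-vertex cap $q$: each elimination of a vertex of positive degree can retire as many as $q$ units of weight while costing only one free power of $M$, and the ceiling inequality $\ceiling{N/q}-\ceiling{(N-d_p)/q}\le 1$ is the quantitative form of the statement that it takes at least $\ceiling{N/q}$ such eliminations to exhaust the weight $N$. The routine-but-careful points are the constant bookkeeping (keeping track that only the single retained $\rho$-factor is summed, and folding the two signs of $k_p-k_{j_0}$ into $C$) and the observation that the degree bound $q$ is preserved when a vertex is deleted, so that the inductive hypothesis genuinely applies at each step.
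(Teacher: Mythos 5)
Your proof is correct, and it streamlines the paper's argument. Both proofs rest on the same mechanism: peel off one summation index at a time, paying one factor of $C$ (via $\sum_{k}\rho(|k-k'|)\le C$) for a vertex of positive degree and one factor of $\floor{2^mt}-\floor{2^ms}$ for an isolated vertex, with the per-vertex cap $q$ guaranteeing that each paid factor of $C$ retires at most $q$ units of weight. The paper implements this as a double induction --- outer on $p$, inner on $q$ --- eliminating a vertex only when its degree is \emph{exactly} $q$ (so that the retired weight is exactly $q$ and $\ceiling{(N-q)/q}=\ceiling{N/q}-1$), handling the case where all degrees are at most $q-1$ by decreasing $q$, and treating $q=1$ (a matching) as a separate base case. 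You instead eliminate an arbitrary vertex and observe that $\ceiling{N/q}-\ceiling{(N-d_p)/q}\le 1$ already holds for every $1\le d_p\le q$, not only for $d_p=q$; this single ceiling inequality absorbs both the inner induction on $q$ and the $q=1$ case, reducing the whole argument to one induction on $p$ with the trivial base $p=1$. One shared cosmetic point: for fixed $k'$ one actually has $\sum_{k}\rho(|k-k'|)\le\rho(0)+2\sum_{n\ge1}\rho(n)\le 2C$, so the literal constant in the conclusion should be $(2C)^N$; the paper's own proof elides the same factor of $2$, and it is irrelevant for the applications, but you are right to flag explicitly that you are folding it into $C$.
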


\begin{remark}
We have used the notation 
$\ceiling{x}=\min\{n\in \mathbb{Z}~|~n\ge x\}$.
It is clear that $2N\le pq$ holds.
Therefore $p-\ceiling{\frac{N}{q}}>0$.
\end{remark}

\begin{proof}
We denote the quantity on the left-hand side of 
(\ref{estimate of sum of rho 2})
by $I$.
 We prove this using induction on $p$.
Let $p=2$.
Write $a(\{1,2\})=q'$.
Then $N=q'$ and $q'\leq q$,
which imply $\ceiling{\frac{N}{q}}=1$ if $q'\ge 1$ and
$\ceiling{\frac{N}{q}}=0$ if $q'=0$.
When $q'=0$,
$I=(\floor{2^m t}-\floor{2^m s})^p$ and $N=0$ hold. Therefore, the inequality
clearly holds.
If $q'\ge 1$, then
\begin{multline*}
	I
	=
		\sum_{k_1,k_2=\floor{2^m s}+1}^{\floor{2^mt}}
			\rho(|k_1-k_2|)^{q'}
	\le
		C^{q'-1}
		\sum_{k_1,k_2=\floor{2^m s}+1}^{\floor{2^mt}}
			\rho(|k_1-k_2|)\\
	\le
		C^{N-1}
		\cdot 
		C(\floor{2^mt}-\floor{2^ms})
	=
		C^N
		(\floor{2^mt}-\floor{2^ms})^{p-1},
\end{multline*}
which proves the case $p=2$.

Suppose the case $p$ with any $q$ holds true and we prove
the case $p+1$ with any $q$.
We prove this by induction on $q$.
Let $q=1$.
In this case, there exist distinct natural numbers
$i_1,\ldots, i_{2N}$ such that
$a\left(\{i_{2l-1}, i_{2l}\}\right)=1$ $(1\le l\le N)$
and $a(\{i,j\})=0$ for other pairs $\{i,j\}$.
Consequently,
\begin{align*}
	I
	&=
		\sum_{k_1,\dots,k_p=\floor{2^m s}+1}^{\floor{2^mt}}
			\prod_{l=1}^N
				\rho(|k_{i_{2l-1}}-k_{i_{2l}}|)\\
	&=
		\sum_{\substack{\floor{2^ms}+1\le k_j\le \floor{2^mt},\\  1\le j\le p+1, j\ne i_1,\ldots,i_{2N}}}
			\left(
				\sum_{\substack{\floor{2^ms}+1\le k_{i_l}\le \floor{2^mt},\\ 1\le l\le 2N}}
					\prod_{l=1}^N
					\rho(|k_{i_{2l-1}}-k_{i_{2l}}|)
			\right)\\
	&\le
		\sum_{\substack{\floor{2^ms}+1\le k_j\le \floor{2^mt},\\  1\le j\le p+1, j\ne i_1,\ldots,i_{2N}}}
			C^N
			(\floor{2^mt}-\floor{2^ms})^{N}\\
	&\le
		(\floor{2^mt}-\floor{2^ms})^{p+1-N}C^N,
\end{align*}
which implies the case $q=1$ holds.
Suppose the case of $p+1$ until $q-1$ holds.
If $\sum_{j\ne i}a(\{i,j\})\le q-1$ for all $1\le i\le p+1$, then
by the assumption of the induction, we have the following desired estimate:
\begin{align*}
	\sum_{k_1,\dots,k_p=\floor{2^m s}+1}^{\floor{2^mt}}
		\prod_{\{i,j\}}\rho(|k_i-k_j|)^{a(\{i,j\})}
	&\le
		C^N
		\left(\floor{2^mt}-\floor{2^ms}\right)^{p+1-\ceiling{\frac{N}{q-1}}}\\
	&\le
		C^N\left(\floor{2^mt}-\floor{2^ms}\right)^{p+1-\ceiling{\frac{N}{q}}}.
\end{align*}
Suppose that there exists $i_0\in \{1,\ldots,p+1\}$ such that
$\sum_{j\ne i_0}a(\{i_0,j\})=q$.
Let 
$J_0=\{j~|~a(\{i_0,j\})\ge 1\}\subset \{1,\ldots,p+1\}$.
\begin{align*}
	I
	&=
		\sum_{\substack{\floor{2^ms}+1\le k_j\le \floor{2^mt}\\ 1\le j\le p+1, j\ne i_0}}
			\left(
				\prod_{\{l,l'\}\,\text{with}\, 1\le l,l' (\ne i_0)\le p+1}
					\rho(|k_l-k_{l'}|)^{a(\{l,l'\})}
			\right)\\
	&\qquad\qquad\qquad\qquad\qquad\qquad
			\times
			\left(
				\sum_{\floor{2^ms}+1\le k_{i_0}\le \floor{2^mt}}
					\prod_{j\in J_0}\rho(|k_{i_0}-k_j|)^{a(\{i_0,j\})}
			\right).
\end{align*}
We choose $j_0\in J_0$.
Because $0\le \rho(n)\le 1$, we have
\begin{align*}
 \sum_{\floor{2^ms}+1\le k_{i_0}\le \floor{2^mt}}
\prod_{j\in J_0}\rho(|k_{i_0}-k_j|)^{a(\{i_0,j\})}&\le
\sum_{\floor{2^ms}+1\le k_{i_0}\le \floor{2^mt}}
\rho(|k_{i_0}-k_{j_0}|)\le C.
\end{align*}
Therefore, we obtain
\begin{align*}
 I&\le
C \sum_{\substack{\floor{2^ms}+1\le k_l\le \floor{2^mt}\\
1\le l\le p+1, l\ne i_0
}}\prod_{\{l,l'\}\,\text{with}\, 1\le l,l' (\ne i_0)\le p+1}
\rho(|k_l-k_{l'}|)^{a(\{l,l'\})}=:C I'.
\end{align*}
Note that in the sum of $I'$, $l$ moves in the set
$\{1,\ldots,p+1\}\setminus \{i_0\}$, for which cardinality is $p$.
Therefore, applying the assumption of the induction to the term $I'$,
we get
\begin{align*}
 I'&\le C^{N-q}
\left(\floor{2^mt}-\floor{2^ms}\right)^{p-\ceiling{\frac{N-q}{q}}}
\le C^{N-q}
\left(\floor{2^mt}-\floor{2^ms}\right)^{p+1-\ceiling{\frac{N}{q}}}.
\end{align*}
Consequently, we have
\begin{align*}
 I&\le C^{N-q+1}
\left(\floor{2^mt}-\floor{2^ms}\right)^{p+1-\ceiling{\frac{N}{q}}}
\le C^N\left(\floor{2^mt}-\floor{2^ms}\right)^{p+1-\ceiling{\frac{N}{q}}},
\end{align*}
which proves the case $p+1$ holds.
\end{proof}

\begin{lemma}\label{estimate of integration by parts}
 Let $(F_t)\in \goodClass(\RR)$.
Let $M$ be a natural number and $0\le r\le p$.
\begin{enumerate}
	\item\label{estimate of integration by parts 1}
			Let $0\le k_1,\ldots,k_p\le 2^m$.
			Then it holds that 
			\begin{multline*}
				\sup_{0\le t_1,\ldots,t_M\le 1}
					\Big|
						D^{2r}(F_{t_1}\cdots F_{t_{M}})
							\Big[
								f(n)^{p,r}_{(\tau^m_{k_1-1},\tau^m_{k_1}),\ldots,(\tau^m_{k_p-1},\tau^m_{k_p})}
							\Big]
					\Big|\\
				+
					\sup_{0\le t_1,\ldots,t_M\le 1}
						\Big|
							D^{2r}(F_{t_1}\cdots F_{t_{M}})
								\Big[
									g^{p,r}
										\Big(
											[\tau^m_{k_1-1},\tau^m_{k_1}]\times\cdots \times [\tau^m_{k_p-1},\tau^m_{k_p}]
										\Big)
								\Big]
						\Big|\\
				\le
					C_{p,r,M}(B)(2^{-m})^{2H(p+r)}
					\sum_{(a,b)\in S_r}
					\sum_{\{I_i\}_{i=1}^a, \{J_j\}_{j=1}^b}
						\prod_{i=1}^a \rho_H(|k_{I_i^{+}}-k_{I^{-}_i}|)
						\prod_{j=1}^b\rho_H(|k_{J_j^{+}}-k_{J^{-}_j}|),
		\end{multline*}
		where $S_r$, $\{I_i\}_{i=1}^a$ and $\{J_j\}_{j=1}^b$ 
		are the same ones in the definition of $g^{p,r}$.
		Also $C_{p,r,M}(B)$ is a random variable satisfying $E[C_{p,r,M}(B)^p]<\infty$ for all $p\geq 1$.
\item\label{estimate of integration by parts 2}
	We have
	\begin{multline*}
		(2^m)^{2pH-\frac{p}{2}}
		\Biggl\{
			\sum_{k_1,\ldots,k_{p}=\floor{2^ms}+1}^{\floor{2^mt}}
				\sup_{0\le t_1,\ldots,t_M\le 1}
				\Big|
					D^{2r}(F_{t_1}\cdots F_{t_{M}})
						\Big[
							f(n)^{p,r}_{(\tau^m_{k_1-1},\tau^m_{k_1}),\ldots,(\tau^m_{k_p-1},\tau^m_{k_p})}
						\Big]
				\Big|\\
			+
			\sum_{k_1,\ldots,k_{p}=\floor{2^ms}+1}^{\floor{2^mt}}
				\sup_{0\le t_1,\ldots,t_M\le 1}
				\Big|
					D^{2r}(F_{t_1}\cdots F_{t_{M}})
						\Big[
								g^{p,r}
									\Big(
										[\tau^m_{k_1-1},\tau^m_{k_1}]\times\cdots\times [\tau^m_{k_p-1},\tau^m_{k_p}]
									\Big)
						\Big]
				\Big|
		\Biggr\}\\
		\le
			C_{p,r,M}(B)
			\left(\frac{1}{2^m}\right)^{\left(\frac{r}{2}\right)(4H-1)}
			\left(\frac{\floor{2^mt}-\floor{2^ms}}{2^m}\right)^{\frac{p+r}{2}}.
	\end{multline*}
\end{enumerate}
\end{lemma}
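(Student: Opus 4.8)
The plan is to prove part (1) directly by the integration-by-parts (duality) formula of Malliavin calculus and the structural estimates on $g^{p,r}$ already prepared, and then to derive part (2) from part (1) by summing and invoking Lemma~\ref{estimate of sum of rho}.

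First I would treat part (1). The key observation is that $D^{2r}(F_{t_1}\cdots F_{t_M})$ is a random element of $(\mathcal{H}^d)^{\otimes 2r}$, and by the Leibniz rule it is a finite sum of terms of the form $\mathcal{S}\big(D^{j_1}F_{t_1}\otimes\cdots\otimes D^{j_M}F_{t_M}\big)$ with $j_1+\cdots+j_M=2r$ and each $j_k\le k$-th available order. The pairing $D^{2r}(F_{t_1}\cdots F_{t_M})[g^{p,r}(\cdots)]$ is then an inner product in $(\mathcal{H}^d)^{\otimes 2r}$. Now $g^{p,r}$ is, by definition, a sum over $(a,b)\in S_r$ and over partitions of a scalar prefactor $\prod_i R(u_{I_i^-},u_{I_i^+})\prod_j R(v_{J_j^-},v_{J_j^+})$ times a symmetric tensor $\odot_{k\in K}\psi^\alpha_{u_k}\odot\odot_{l\in L}\psi^\beta_{v_l}$ of degree $2r$. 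Taking the rectangular increment over $[\tau^m_{k_i-1},\tau^m_{k_i}]$ replaces each $\psi^\alpha_{u_k}$ by an increment $\psi^\alpha_{\tau^m_{k_k-1},\tau^m_{k_k}}$ and each scalar factor $R(u_{I_i^-},u_{I_i^+})$ by a rectangular increment of $R$. For the scalar part I would use Lemma~\ref{properties of R}(4): each factor $R([\tau^m_{k_{I_i^-}-1},\tau^m_{k_{I_i^-}}]\times[\tau^m_{k_{I_i^+}-1},\tau^m_{k_{I_i^+}}])$ is bounded by $C\,2^{-2Hm}|\rho_H(k_{I_i^+}-k_{I_i^-})|$, producing the product of $\rho_H$'s on the right-hand side together with a factor $(2^{-m})^{2H\cdot 2(a+b)}=(2^{-m})^{2H\cdot 2(p-r)}$ wait — more carefully, there are $a+b$ scalar factors contributing $(2^{-m})^{2H(a+b)}$ hmm, each $R$-increment carries $2^{-2Hm}$, and $a+b=p-r$, so the scalar part contributes $(2^{-m})^{2H(p-r)}$. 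For the tensor part, the pairing of the $2r$ increments $\psi^{\alpha_\bullet}_{\tau^m_{\cdot-1},\tau^m_{\cdot}}$ against $D^{2r}(F_{t_1}\cdots F_{t_M})$ is controlled by Definition~\ref{def489140919490}(2): each such pairing contributes a factor $(t_i-s_i)^{2H}=(2^{-m})^{2H}$, and there are $2r$ increments, yielding $(2^{-m})^{2H\cdot 2r}$ together with a random constant $C'\in\cap_q L^q$. Combining $(2^{-m})^{2H(p-r)}\cdot(2^{-m})^{2H\cdot 2r}=(2^{-m})^{2H(p+r)}$ gives exactly the claimed power. The analogous treatment of $f(n)^{p,r}$ works verbatim since it is a sum of rectangular increments of the same $g^{p,r}$ over the refining mesh $\{t^i_l\}$; the bound is uniform in $n$ because the estimate on each sub-increment telescopes and the $1$-variation bounds of Lemma~\ref{properties of R} are mesh-independent.

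For part (2), I would simply insert the bound from part (1) into the double sum and factor the power of $2^{-m}$ out. The sum over $k_1,\dots,k_p$ of $\prod_i\rho_H(|k_{I_i^+}-k_{I_i^-}|)\prod_j\rho_H(|k_{J_j^+}-k_{J_j^-}|)$ is precisely of the form treated in Lemma~\ref{estimate of sum of rho}, with $\rho(n)=|\rho_H(n)|/\|\rho_H\|$ (normalized so $0\le\rho\le 1$ and $\sum\rho<\infty$, using $\sum_k|\rho_H(k)|<\infty$ from the Preliminaries), and with $q$ bounded by the maximal number of pairs meeting a fixed index, namely at most $1$ per pair so that $N=a+b=p-r$ and $\lceil N/q\rceil$ is controlled. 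Lemma~\ref{estimate of sum of rho} then bounds this sum by $C^N(\floor{2^mt}-\floor{2^ms})^{p-\lceil N/q\rceil}$; choosing $q$ and counting gives exponent $p-\lceil(p-r)/q\rceil$, and one checks this is at least $\tfrac{p+r}{2}$ so that, using $\floor{2^mt}-\floor{2^ms}\le 2^m$, the excess powers of $(\floor{2^mt}-\floor{2^ms})$ can be traded for the prefactor. Collecting the prefactor $(2^m)^{2pH-p/2}\cdot(2^{-m})^{2H(p+r)}$ against the $2^m$-normalization of the $(\floor{2^mt}-\floor{2^ms})^{(p+r)/2}$ term produces the stated $(2^{-m})^{(r/2)(4H-1)}$ together with $\big((\floor{2^mt}-\floor{2^ms})/2^m\big)^{(p+r)/2}$.

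The main obstacle I expect is the careful bookkeeping in part (1): one must verify that applying the rectangular-increment operator to $g^{p,r}$ genuinely distributes as claimed over the $R$-factors and the $\psi$-tensor (i.e.\ that the increment in each variable $u_i$ or $v_j$ lands on exactly the factor depending on it, which requires that the index sets $I_i,J_j,K,L$ partition $\{1,\dots,p\}$), and that the resulting tensor pairing is bounded uniformly in the free time-variables $t_1,\dots,t_M$ by Definition~\ref{def489140919490}(2) even though $D^{2r}(F_{t_1}\cdots F_{t_M})$ is a product rule expansion rather than a single $D^{2r}F_t$. Handling the product requires a Leibniz-type expansion and then applying the good-class estimate to each factor $D^{j_k}F_{t_k}$, together with the $L^q$-bound \eqref{Lq estimate for F} on the undifferentiated factors; the uniform-in-$n$ control of the $f(n)^{p,r}$ version is the other delicate point, resolved by the telescoping/superadditivity of $p$-variation established in Section~\ref{df app of iterated integrals}.
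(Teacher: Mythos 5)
Your overall strategy coincides with the paper's: expand the pairing $D^{2r}(F_{t_1}\cdots F_{t_M})[g^{p,r}]$ along the partition structure of $g^{p,r}$, bound each $R$-increment by Lemma~\ref{properties of R}~(4) to produce the $\rho_H$ factors and $(2^{-m})^{2H(a+b)}$, bound the pairing of the remaining degree-$2r$ symmetric tensor against the derivative via the rectangular-increment estimate of Definition~\ref{def489140919490}~(2) (after a Leibniz expansion of the product, exactly as you describe), and then sum using Lemma~\ref{estimate of sum of rho} with $q=2$. Two points need correction. First, in part~(2) you assert $p-\ceiling{(p-r)/q}\ge\frac{p+r}{2}$; with $q=2$ the inequality goes the other way, since $\ceiling{(p-r)/2}\ge(p-r)/2$ gives $p-\ceiling{(p-r)/2}\le\frac{p+r}{2}$. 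The correct (and simpler) step is $X^{p-\ceiling{(p-r)/2}}\le X^{(p+r)/2}$ for $X=\floor{2^mt}-\floor{2^ms}\ge 1$; no trading of excess powers against the prefactor is needed or available. Your final power count $(2^m)^{2pH-\frac{p}{2}}(2^{-m})^{2H(p+r)}(2^m)^{\frac{p+r}{2}}=(2^{-m})^{(\frac{r}{2})(4H-1)}$ is correct. Second, and more substantively, the uniform-in-$n$ bound for the $f(n)^{p,r}$ term is not a telescoping phenomenon: the sum over $l_1,\ldots,l_p$ of products of rectangular increments of $R$ and of $\phi_{K,L}$ is a genuine discrete multidimensional Young integral, and a term-by-term estimate blows up like $n^p$. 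The paper controls it with Proposition~\ref{estimate of fgh} together with Lemma~\ref{p-variation norm for product} and the $V_{(2H)^{-1}}$ bounds of Lemma~\ref{properties of R}, i.e.\ a Towghi-type sewing estimate in $2p$ variables; this is the main analytic input of assertion~(1) for the $f(n)$ part, and your sketch leaves it essentially unproved. Replacing the word ``telescoping'' by an appeal to those appendix results makes your argument match the paper's.
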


 \begin{proof}
We prove assertion~(\ref{estimate of integration by parts 1}).
By definition of $\goodClass(\RR)$, we have
\begin{multline*}
	D^{2r}(F_{t_1}\cdots F_{t_{M}})
		[
			g^{p,r}(u_1,\dots,u_p,v_1,\dots,v_p)
		]
\\
	=
	\sum_{(a,b)\in S_r}
	\sum_{\{\{I_i\}_{i=1}^a,K\}, \{\{J_j\}_{j=1}^b,L\}}
		\prod_{i=1}^aR(u_{I_i^{-}},u_{I_i^{+}})
		\prod_{j=1}^bR(v_{J_j^-},v_{J_j^{+}})
		\phi_{K,L}(u_k, v_l; k\in K, l\in L),
\end{multline*}
where 
$\phi_{K,L}$ satisfies the estimate
\begin{align*}
 |\phi_{K,L}([u_k,u_k'], [v_l,v_l']; k\in K, l\in L)|\le
C(B)\prod_{k\in K}(u_k'-u_k)^{2H}\prod_{l\in L}(v_l'-v_l)^{2H}.
\end{align*}
In addition, from Lemma~\ref{properties of R} we have
\begin{align*}
& V_{(2H)^{-1}}(R ; [\tau^m_{k_{I^{-}_i}-1}, \tau^m_{k_{I^{-}_i}}]
\times [\tau^m_{k_{I^{+}_i}-1}, \tau^m_{k_{I^{+}_i}}])\le
(2^{-m})^{2H}|\rho_H(|k_{I^{+}_i}-k_{I^{-}_i}|)|,\\
&V_{(2H)^{-1}}(R ; [\tau^m_{k_{J^{-}_i}-1}, \tau^m_{k_{J^{-}_i}}]
\times [\tau^m_{k_{J^{+}_i}-1}, \tau^m_{k_{J^{+}_i}}])\le
(2^{-m})^{2H}|\rho_H(|k_{J^{+}_i}-k_{J^{-}_i}|)|.
\end{align*}
By these estimates, we have
\begin{multline*}
	\Big|
		D^{2r}(F_{t_1}\cdots F_{t_{M}})
			\Big[
				g^{p,r}
					\Big(
						[\tau^m_{k_1-1},\tau^m_{k_1}]\times\cdots \times [\tau^m_{k_p-1},\tau^m_{k_p}]
					\Big)
			\Big]
	\Big|
	\le
		C_{p,r,M}(B)\\
		\times
		\sum_{(a,b)\in S_r}
			\Biggl\{
				\left(2^{-m}\right)^{2H(a+b)+2H(|K|+|L|)}
				\sum_{\{I_i\}_{i=1}^a, \{J_j\}_{j=1}^b}
					\prod_{i=1}^a \rho_H(|k_{I_i^{+}}-k_{I^{-}_i}|)
					\prod_{j=1}^b\rho_H(|k_{J_j^{+}}-k_{J^{-}_j}|)
			\Biggr\}.
\end{multline*}
Noting that $a+b+|K|+|L|=p+r$,
we obtained the desired estimates for the term containing the derivative
in direction to $g^{p,r}$.
For example note
$
	D^{2\cdot 2}(F_{t_1}\cdots F_{t_{M}})
	[
		g^{6,2}(u_1,\dots,u_6,v_1,\dots,v_6)
	]
$
contains the following term
\begin{align*}
	R(u_1,u_2)R(u_3,u_4)
	\cdot
	R(v_1,v_4)R(v_3,v_6)
	\cdot
	\phi_{\{5,6\},\{2,5\}}(u_5,u_6,v_2,v_5),
\end{align*}
which corresponds \eqref{eqExample002}.

For the derivative in direction to $f(n)^{p,r}$, first, note that
\begin{multline*}
	D^{2r}(F_{t_1}\cdots F_{t_{M}})
		\Big[
			f(n)^{p,r}_{(\tau^m_{k_1-1},\tau^m_{k_1}),\ldots,(\tau^m_{k_p-1},\tau^m_{k_p})}
		\Big]\\
	=
		\sum_{l_1,\ldots,l_p=1}^n
			D^{2r}(F_{t_1}\cdots F_{t_{M}})
				\left[
					g^{p,r}
						\left(
							\prod_{r=1}^p
								\Big[\tau^m_{k_r-1},t^{k_r}_{l_r-1}\Big]
							\times
							\prod_{r=1}^p
								\Big[t^{k_r}_{l_r-1},t^{k_r}_{l_r}\Big]
						\right)
				\right],
\end{multline*}
where $t^k_l=\tau^m_{k-1}+\frac{l}{n}\frac{1}{2^m}$.
This is a discrete multidimensional Young integral 
on $[\tau^m_{k_1-1},\tau^m_{k_1}]\times\cdots
\times [\tau^m_{k_p-1},\tau^m_{k_p}]$
with the equipartition.
By applying Lemma~\ref{properties of R},
Lemma~\ref{p-variation norm for product} 
and Proposition~\ref{estimate of fgh},
we see that
\begin{align*}
	\sup_{0\le t_1,\ldots,t_M\le 1}
		\Big|
			D^{2r}(F_{t_1}\cdots F_{t_{M}})
				\Big[
					f(n)^{p,r}_{(\tau^m_{k_1-1},\tau^m_{k_1}),\ldots,(\tau^m_{k_p-1},\tau^m_{k_p})}
				\Big]
		\Big|
\end{align*}
is bounded from above by a similar bound.
Readers might be aided in understanding the expressions above
by knowing that the term corresponding to \eqref{eqExample002} in
$
	D^{2\cdot 2}(F_{t_1}\cdots F_{t_{M}})
	[
		f(n)^{6,2}_{(\tau^m_{k_1-1},\tau^m_{k_1}),\ldots,(\tau^m_{k_6-1},\tau^m_{k_6})}
	]
$
is given by
\begin{align*}
	&
	\sum_{l_1,\ldots,l_6=1}^n
		R
			\big(
				\big[\tau^m_{k_1-1},t^{k_1}_{l_1-1}\big]
				\times
				\big[\tau^m_{k_2-1},t^{k_2}_{l_2-1}\big]
			\big)
		R
			\big(
				\big[\tau^m_{k_3-1},t^{k_3}_{l_3-1}\big]
				\times
				\big[\tau^m_{k_4-1},t^{k_4}_{l_4-1}\big]
			\big)\\
	&
	\qquad
	\qquad
	\qquad
		\times
		\phi_{\{5,6\},\{2,5\}}
			\big(
				\big[\tau^m_{k_5-1},t^{k_5}_{l_5-1}\big]
				\times
				\big[\tau^m_{k_6-1},t^{k_6}_{l_6-1}\big]
				\times
				\big[t^{k_2}_{l_2-1},t^{k_2}_{l_2}\big]
				\times
				\big[t^{k_5}_{l_5-1},t^{k_5}_{l_5}\big]
			\big)\\
	&
	\qquad
	\qquad
	\qquad
		\times
		R
			\big(
				\big[t^{k_1}_{l_1-1},t^{k_1}_{l_1}\big]
				\times
				\big[t^{k_4}_{l_4-1},t^{k_4}_{l_4}\big]
			\big)
		R
			\big(
				\big[t^{k_3}_{l_3-1},t^{k_3}_{l_3}\big]
				\times
				\big[t^{k_6}_{l_6-1},t^{k_6}_{l_6}\big]
			\big).
\end{align*}
Therefore, we complete the proof of assertion~(\ref{estimate of integration by parts 1}).

We prove assertion~(\ref{estimate of integration by parts 2}).
Denote by $S$ the left-hand side of the desired inequality.
By assertion~(\ref{estimate of integration by parts 1}) and 
applying Lemma~\ref{estimate of sum of rho} to the case where $q=2$
and $N=a+b$,
\begin{align*}
  	S
		&\le C_{p,r,M}(B) (2^m)^{2pH-\frac{p}{2}}
		\sum_{(a,b)\in S_r}
		\left(\frac{1}{2^m}\right)^{2H(p+r)}C^{a+b}
		\left(\floor{2^mt}-\floor{2^ms}\right)^{p-\ceiling{\frac{a+b}{2}}}
		\\
		&\le C_{p,r,M}(B)'
		\left(\frac{1}{2^m}\right)^{\frac{p}{2}(4H-1)-2H(p-r)+\frac{p-r}{2}}
		\left(\frac{\floor{2^mt}-\floor{2^ms}}{2^m}\right)^{p-\frac{a+b}{2}}
		\\
		&\le C_{p,r,M}(B)'
		\left(\frac{1}{2^m}\right)^{\left(\frac{r}{2}\right)(4H-1)}
		\left(\frac{\floor{2^mt}-\floor{2^ms}}{2^m}\right)^{\frac{p+r}{2}}.
\end{align*}
Because $r\ge 0$ and $4H-1>0$,
this completes the proof of assertion~(\ref{estimate of integration by parts 2}).
 \end{proof}

We are in a position to prove Theorem~\ref{moment estimate} and
Corollary~\ref{cor to moment estimate}.

\begin{proof}[Proof of Theorem $\ref{moment estimate}$]
For simplicity, we omit writing $F$ in the notation of $I^m_{s,t}(F)$ and $\tilde{I}^m_{s,t}(F)$.
We give the estimate of the moment of $I^m_{s,t}$.
The proof of the moment estimate of $\tilde{I}^m_{s,t}$ is similar to it.
We have
\begin{align*}
 E\left[\left((2^m)^{2H-\frac{1}{2}}I^m_{s,t}\right)^p\right]
 =
(2^m)^{2pH-\frac{p}{2}}
\sum_{k_1,\ldots,k_p=\floor{2^ms}+1}^{\floor{2^mt}}
E\left[\left(\prod_{i=1}^pF_{\tau^m_{k_i-1}}\right)
\left(
\prod_{i=1}^pB^{\alpha,\beta}_{\tau^m_{k_i-1},\tau^m_{k_i}}\right)\right].
\end{align*}

Using the Riemann sum approximation (\ref{tBabstn}),
Lemma~\ref{expansion formula of product}
and the integration by parts formula, we have
\begin{align*}
 E\left[\left(\prod_{i=1}^pF_{\tau^m_{k_i-1}}\right)
\left(
\prod_{i=1}^pB^{\alpha,\beta}_{\tau^m_{k_i-1},\tau^m_{k_i}}\right)\right]
&=
\lim_{n\to\infty}
E\left[\left(\prod_{i=1}^pF_{\tau^m_{k_i-1}}\right)
\left(\prod_{i=1}^p
\tilde{B}^{\alpha,\beta}_{\tau^m_{k_i-1},\tau^m_{k_i}}(n)\right)
\right]\\
&=
\lim_{n\to\infty}
\sum_{r=0}^{p}E\left[D^{2r}\left(\prod_{i=1}^pF_{\tau^m_{k_i-1}}\right)
\left[f(n)^{p,r}_{(\tau^m_{k_1-1},\tau^m_{k_1}),\ldots,
(\tau^m_{k_p-1},\tau^m_{k_p})}\right]\right].
\end{align*}
By Lemma~\ref{estimate of integration by parts},
the following estimate holds independently of $n$:
\begin{multline*}
	(2^m)^{2pH-\frac{p}{2}}
	\sum_{k_1,\ldots,k_p=\floor{2^ms}+1}^{\floor{2^mt}}
	\sum_{r=0}^{p}
		\left|
			E
				\left[
						D^{2r}\left(\prod_{i=1}^pF_{\tau^m_{k_{i}}}\right)
						\left[
							f(n)^{p,r}_{(\tau^m_{k_1-1},\tau^m_{k_1}),\ldots,(\tau^m_{k_p-1},\tau^m_{k_p})}
						\right]
				\right]
		\right|\\
	\le
		C_p
		\left(\frac{\floor{2^mt}-\floor{2^ms}}{2^m}\right)^{\frac{p}{2}}.
\end{multline*}
This completes the proof.
\end{proof}

\begin{proof}[Proof of Corollary~$\ref{cor to moment estimate}$]
	We will use an argument similar to those found in \cite{liu-tindel,aida-naganuma2023approach}.
	We will show the assertion for $I^m(F)$ only.
	In this proof, $C$ denotes a positive constant independent of $m$ and may change line by line.
	We consider the piecewise linear extension of $\{I^m_t(F)\}_{t\in\Dm}$
	and denote it by the same symbol.
	Set
	\begin{align*}
		G_{m,\theta}
		=
			\max_{s,t\in[0,1], s<t}
				\frac{|(2^m)^{2H-\frac{1}{2}}I^m_{s,t}(F)|}{|t-s|^\theta}.
	\end{align*}
	Let $p$ be a positive integer satisfying $p>1/(1-2\theta)$.
	Then the Garsia-Rodemich-Rumsey inequality (see \cite{nualart}) implies
	\begin{align*}
		|G_{m,\theta}|^{2p}
		\leq
			C
			\int_0^1
			\int_0^t
				\frac{|(2^m)^{2H-\frac{1}{2}}I^m_{s,t}(F)|^{2p}}{|t-s|^{2+2p\theta}}\,
				dsdt.
	\end{align*}
	From Theorem~\ref{moment estimate}, we have
	\begin{align*}
		E
			\big[
				\big\{
					(2^m)^{2H-\frac{1}{2}}
					I^m_{s,t}(F)
				\big\}^{2p}				
			\big]
	&\le
		3^{2p-1}
		C
		|t-s|^p.
	\end{align*}
	Therefore, we have
	\begin{align*}
		E[|G_{m,\alpha}|^{2p}]
		\leq
			2
			\int_0^1
			\int_0^t
				\frac{E\big[|(2^m)^{2H-\frac{1}{2}}I^m_{s,t}(F)|^{2p}\big]}{|t-s|^{2+2p\theta}}\,
				dsdt
		\leq
			C.
	\end{align*}
	Therefore, $\sup_{m\geq 1}\|G_{m,\theta}\|_{L^{2p}}<\infty$, which
	completes the proof.
\end{proof}

\begin{remark}\label{rem9410u09u13}
In our application \cite{aida-naganuma2023approach}, it is necessary to prove
\begin{align*}
	\|(2^m)^{2H-\frac{1}{2}}I^m(F)\|_{H^-}+
	\|(2^m)^{2H-\frac{1}{2}}\tilde{I}^m(F)\|_{H^-}<\infty,	
\end{align*}
where $\max\{\frac{1}{3},\frac{1}{2}(H+\frac{1}{4})\}<H^-<H$.
For this proof, we need the estimates in Theorem~\ref{moment estimate}
for large $p$.
When we apply our theorem to the case of $F\in \mathcal{I}(\RR^N)$,
this requires more differentiability of $\sigma, b$ 
than the assumption in Theorem~\ref{limit theorem of weighted Hermite variation processes}
and that in the previous results \cite{liu-tindel}.
\end{remark}

\section{Weak convergence of (weighted) sum processes of Wiener chaos of order 2}
\label{weak convergence}

In this section, $(B_t)$ stands for the fBm with the
Hurst parameter $\frac{1}{3}<H\leq \frac{1}{2}$.
The aim of this section is to prove
 Theorem~\ref{limit theorem of weighted Hermite variation processes} and
Theorem~\ref{Levy area variation} (an FCLT for ``weight-free'' sum processes).
In \cite{neuenkirch-tindel-unterberger2010}, a problem similar to Theorem~\ref{Levy area variation} was considered.
First, we show Theorem~\ref{limit theorem of weighted Hermite variation processes} 
using Theorem~\ref{Levy area variation}.

\begin{proof}[Proof of Theorem $\ref{limit theorem of weighted Hermite variation processes}$]
We prove this theorem by using Theorem~\ref{moment estimate} with the case $p=4$.
By the moment estimate, we see the relative compactness of the processes.
See \cite{BurdzySwanson2010}.
Therefore
it suffices to prove the weak convergence of the finite dimensional 
distributions of $(2^m)^{2H-\frac{1}{2}}I^m_t(F)$.
Let $0<t_1<\cdots<t_L=1$.
Let $m'$ be a positive integer and set
$
 F^{m'}_t=(F^{\alpha,\beta,m'}_t)_{\alpha,\beta}=
(F^{\alpha,\beta}_{[t]_{m'}^-})_{\alpha,\beta}.
$
Then by the stochastic continuity and the assumption (1) in Definition~\ref{def489140919490}, 
it holds that 
\begin{align}
 \lim_{m'\to\infty}\sup_{0\le t\le 1}
\|F^{\alpha,\beta}_t-F^{\alpha,\beta,m'}_t\|_{L^2}=0.\label{mm'}
\end{align}
Let us fix $\vep>0$.
We want to show that for sufficiently large $m'$ and any $m\ge m'$
\begin{align}
\label{integral mm'} \max_{1\le l\le L}
\|(2^m)^{2H-\frac{1}{2}}I^{m}_{t_l}(F^{m'})
-(2^m)^{2H-\frac{1}{2}}I^m_{t_l}(F)\|_{L^2}\le \vep.
\end{align}
Using $I^m_{t_l}(F^{m'})-I^m_{t_l}(F)=I^m_{t_l}(F^{m'}-F)$,
we can expand the integration of (\ref{integral mm'}) and estimate it using the expression in 
Lemma~\ref{estimate of integration by parts} (\ref{estimate of integration by parts 2}).
Note that the terms containing $D^{2r}(F^{m'}_t-F_t)$ with $r>0$
converge to $0$ due to the term $2^{-\frac{r}{2}(4H-1)m}$ as $m'\to \infty$.
The term corresponding to the case $r=0$ also converges to 0 because 
(\ref{mm'}) holds.
Consequently,
we see that (\ref{integral mm'}) holds for sufficiently large $m'$ and any $m(>m')$.
On the other hand, by Theorem~\ref{Levy area variation}, we obtain the weak convergence of the
finite dimensional distribution:
\begin{multline*}
\left((2^m)^{2H-\frac{1}{2}}
I^{m}_{t_1}(F^{m'}),\ldots, 
(2^m)^{2H-\frac{1}{2}}I^{m}_{t_L}(F^{m'})\right)\\
\Longrightarrow
\Bigg(\int_0^{t_1}\sum_{1\le \alpha,\beta\le d}
F^{\alpha,\beta}_{[s]^-_{m'}}dW^{\alpha,\beta}_s,
\ldots,
\int_0^{t_L}\sum_{1\le \alpha,\beta\le d}
F^{\alpha,\beta}_{[s]^-_{m'}}dW^{\alpha,\beta}_s
\Bigg)
\quad
\text{as\,\, $m\to\infty$}.
\end{multline*}
Note that the above $W$ is not the process defined in 
Theorem~\ref{limit theorem of weighted Hermite variation processes} but is
the Gaussian process defined in
Theorem~\ref{Levy area variation}.
Because $(B_t)$ and $(W^{\alpha,\beta}_t)$ are independent, using (\ref{mm'}), we get
\begin{align*}
\lim_{m'\to\infty}
\int_0^{t}\sum_{1\le \alpha,\beta\le d}
F^{\alpha,\beta}_{[s]^-_{m'}}dW^{\alpha,\beta}_s
=\int_0^t
\sum_{1\le \alpha,\beta\le d}
F^{\alpha,\beta}_{s}dW^{\alpha,\beta}_s
\end{align*}
in $L^2$.
Finally, taking the covariance constant $C$ into account,
this completes the proof.
\end{proof}

To state Theorem~\ref{Levy area variation},
we define $d\times d$-matrix valued discrete processes.
The components are given as follows: for $\alpha\neq \beta$, set
\begin{align*}
	\hat{Q}^{m,\alpha,\beta}_{\tmkm,\tmk}
	&=
		\frac{1}{2}
		B^{\alpha}_{\tmkm,\tmk}
		B^{\beta}_{\tmkm,\tmk},
	&
	\hat{Q}^{m,\alpha,\alpha}_{\tmkm,\tmk}
	&=
		0,\\
	\check{Q}^{m,\alpha,\beta}_{\tmkm,\tmk}
	&=
		0,
	&
	\check{Q}^{m,\alpha,\alpha}_{\tmkm,\tmk}
	&=
		\frac{1}{2}
		\left(
			(B^{\alpha}_{\tmkm,\tmk})^2-\Delta_m^{2H}
		\right),\\
	\tilde{Q}^{m,\alpha,\beta}_{\tmkm,\tmk}
	&=
		B^{\alpha,\beta}_{\tmkm,\tmk},
	&
	\tilde{Q}^{m,\alpha,\alpha}_{\tmkm,\tmk}
	&=
		0,\\
	Q^{m,\alpha,\beta}_{\tmkm,\tmk}
	&=
		\hat{Q}^{m,\alpha,\beta}_{\tmkm,\tmk}
		-
		\tilde{Q}^{m,\alpha,\beta}_{\tmkm,\tmk},
	&
	Q^{m,\alpha,\alpha}_{\tmkm,\tmk}
	&=
		0.
\end{align*}
As stated before, non-trivial components are given in terms of Wiener integrals as follows:
\begin{align*}
	\hat{Q}^{m,\alpha,\beta}_{\tmkm,\tmk}
	&=
		\frac{1}{2}
		I_2(\psi^{\alpha}_{\tmkm,\tmk}\odot\psi^{\beta}_{\tmkm,\tmk}),
	&
	\check{Q}^{m,\alpha,\alpha}_{\tmkm,\tmk}
	&=
		I_2\left(\frac{1}{2}(\psi_{\tmkm,\tmk}^\alpha)^{\odot 2}\right),\\
	\tilde{Q}^{m,\alpha,\beta}_{\tmkm,\tmk}
	&=
		I_2(\tilde{\psi}^{\alpha,\beta}_{\tmkm,\tmk}),
	&
	Q^{m,\alpha,\beta}_{\tmkm,\tmk}
	&=
		I_2
		\left(\frac{1}{2}
			\psi^{\alpha}_{\tmkm,\tmk}
			\odot
			\psi^{\beta}_{\tmkm,\tmk}
			-
			\tilde{\psi}^{\alpha,\beta}_{\tmkm,\tmk}
		\right).
\end{align*}
Here $\tilde{\psi}^{\alpha,\beta}_{\tmkm,\tmk}$ is given by \eqref{lim tpsin}.

Note that
$\hat{Q}^m_t=\sum_{i=1}^{\floor{2^mt}}\hat{Q}_{\tmim,\tmi}$,
$\check{Q}^m_t=\sum_{i=1}^{\floor{2^mt}}\check{Q}_{\tmim,\tmi}$,
and 
$Q^m_t=\sum_{i=1}^{\floor{2^mt}}{Q}^m_{\tmim,\tmi}$ 
are 
symmetric matrix-valued
diagonal matrix-valued,
and skew-symmetric matrix-valued discrete processes,
respectively.
Also, we define $\tilde{Q}_t=\sum_{i=1}^{\floor{2^mt}}
\tilde{Q}^{m}_{\tmim,\tmi}$.
We have the following limit theorem.

\begin{theorem}\label{Levy area variation}
 $\RR^d\times (\RR^d\otimes \RR^d)^4$-valued processes
\begin{align*}
\left\{
	\left(
		B_t,
		(2^m)^{2H-\frac{1}{2}}\hat{Q}^m_{[t]^-_m},
		(2^m)^{2H-\frac{1}{2}}\check{Q}^m_{[t]^-_m},
		(2^m)^{2H-\frac{1}{2}}\tilde{Q}^m_{[t]^-_m},
		(2^m)^{2H-\frac{1}{2}}Q^m_{[t]^-_m}
	\right)
\right\}_{0\le t\le 1}
\end{align*}
weakly converges in 
$D\left([0,1],\RR^d\times (\RR^d\otimes \RR^d)^4\right)$ to 
$
 \{(B_t, \hat{W}_t, \check{W}_t, \tilde{W}_t, W_t)\}_{0\le t\le 1},
$
where 
\begin{enumerate}
	\item	$
				\{(B_t, \hat{W}_t, \check{W}_t, \tilde{W}_t, W_t)\}_{0\le t\le 1},
			$
			is a continuous Gaussian process
			whose all means of the components are $0$.
			$\{(\hat{W}_t, \check{W}_t, \tilde{W}_t, W_t)\}_{0\le t\le 1}$
			are independent of $\{B_t\}_{0\le t\le 1}$.
			Moreover, all of their components which are not identically $0$
			are Brownian motions.
	\item	$\{\hat{W}_t\}$ is a symmetric matrix valued Gaussian process
			whose diagonal part is always 0.
	\item	$\{\check{W}_t\}$ and $\{W_t\}$ are
			diagonal matrix-valued and
			skew-symmetric matrix valued
			continuous Gaussian processes, respectively.
	\item	$\{\tilde{W}_t\}$ and $\{\check{W}_t\}$ are independent
			and $\{\hat{W}_t\}$ and $\{\check{W}_t\}$ are independent.
	\item	$\{W_t\}\stackrel{\mathrm{d}}{=}\{\hat{W}_t-\tilde{W}_t\}$  holds.
	\item	Let $\{Y_t\}, \{Z_t\}=\{\check{W}_t\}$ or $\{\tilde{W}_t\}$ or $\{\hat{W}_t\}$.
			Then $\{Y^{\alpha,\beta}_t\}$ and $\{Z^{\alpha',\beta'}_t\}$ are independent
			if $(\alpha,\beta)\ne(\alpha',\beta')$ with $\alpha\ge\beta$, $\alpha'\ge\beta'$.
	\item	All covariances of 
			$
				\{(B_t, \hat{W}_t, \check{W}_t, \tilde{W}_t, W_t)\}_{0\le t\le 1}
			$
			are calculated in the following lemma.
\end{enumerate}
\end{theorem}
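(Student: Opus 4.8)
The plan is to establish weak convergence in $D([0,1],\RR^d\times(\RR^d\otimes\RR^d)^4)$ by the standard two-step scheme: convergence of the finite-dimensional distributions together with relative compactness. Relative compactness is essentially already at hand. Each nontrivial off-diagonal scalar component of $\hat Q^m$, $\tilde Q^m$ and $Q^m$ is a weight-free sum $I^m$ or $\tilde I^m$ from Theorem~\ref{moment estimate} (with $F\equiv1\in\goodClass(\RR)$ and distinct indices), while the diagonal entries $\check Q^{m,\alpha,\alpha}$ form the classical quadratic Hermite variation obeying a moment bound of the same type. Hence Corollary~\ref{cor to moment estimate} supplies uniform moment estimates of all increments, from which relative compactness in the Skorokhod space follows as in \cite{BurdzySwanson2010}; since the candidate limit will turn out to be continuous, this yields tightness with a continuous limit. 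It remains to identify the limiting finite-dimensional distributions.

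For the finite-dimensional distributions I would fix $0=s_0<s_1<\cdots<s_L=1$ and assemble the $\RR^d$-increments $B_{s_{l-1},s_l}$ together with the normalized increments $(2^m)^{2H-\frac12}(\hat Q^m,\check Q^m,\tilde Q^m,Q^m)_{[s_{l-1}]^-_m,[s_l]^-_m}$ into one finite vector. The $B$-components lie in the first Wiener chaos, while every other component is a second It\^o--Wiener integral $I_2(\cdot)$ with explicit kernel $\frac12\psi^\alpha_{\tmkm,\tmk}\odot\psi^\beta_{\tmkm,\tmk}$, $\frac12(\psi^\alpha_{\tmkm,\tmk})^{\odot2}$, or $\tilde\psi^{\alpha,\beta}_{\tmkm,\tmk}$ from \eqref{lim tpsin}. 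I would then invoke the multidimensional fourth moment theorem of Peccati--Tudor \cite{PeccatiTudor2005, nourdin-peccati}: for a vector whose components each live in a fixed Wiener chaos, joint Gaussian convergence is equivalent to convergence of the covariance matrix plus, for each chaos-two component, the fourth-moment condition, equivalently the vanishing of the single contraction $\|f_m\otimes_1 f_m\|\to0$. Orthogonality of the first and second chaoses makes every covariance between a $B$-component and an $I_2$-component vanish identically, which forces the limit of $(\hat W,\check W,\tilde W,W)$ to be independent of $B$, giving item (1).

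The two inputs to the criterion are covariance convergence and the contraction estimates. By the scaling and stationarity in Lemma~\ref{properties of R}(1), each $B^\alpha_{\tmkm,\tmk}$ equals in law $(2^m)^{-H}$ times a unit stationary increment, so every normalized increment over $[s_{l-1},s_l]$ is a Breuer--Major sum of order $(2^m)^{-1/2}\sum_k(\cdots)$ over roughly $2^m(s_l-s_{l-1})$ blocks; summability of $\rho_H$ and of $\rho_H^2$ from Section~\ref{Wiener chaos} then gives convergence of all second moments to quantities linear in $s_l-s_{l-1}$, producing the constant $C$ of Theorem~\ref{limit theorem of weighted Hermite variation processes} and forcing independent increments in time. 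The same Gaussian (Wick) computations over a single block, using $E[B^\alpha_{\tmkm,\tmk}B^\beta_{\tmkm,\tmk}]=0$ for $\alpha\neq\beta$ and the independence of distinct coordinates, yield the symmetry/skew-symmetry and diagonal patterns of items (2)--(3), the relation $Q=\hat Q-\tilde Q$ together with $B^{\alpha,\beta}+B^{\beta,\alpha}=B^\alpha B^\beta$ giving item (5), and the vanishing cross covariances giving the independences of items (4) and (6).

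The main obstacle will be the contraction estimate for the L\'evy-area component $\tilde Q^{m,\alpha,\beta}=I_2(\tilde\psi^{\alpha,\beta})$, whose kernel is not a plain product but the Young-integral limit $\int\psi^\alpha\odot d\psi^\beta$ of Section~\ref{df app of iterated integrals}. Unlike the product kernels, controlling $\|(\sum_k\tilde\psi^{\alpha,\beta}_{\tmkm,\tmk})\otimes_1(\sum_k\tilde\psi^{\alpha,\beta}_{\tmkm,\tmk})\|_{(\mathcal{H}^d)^{\otimes2}}\to0$ and the associated cross covariances requires estimating the inner products $(\tilde\psi^{\alpha,\beta}_{\tmkm,\tmk},\tilde\psi^{\alpha,\beta}_{\tmlm,\tml})$, which unfold into two-dimensional Young integrals of $R$ against $R$; the relevant bounds are precisely the $V_{(2H)^{-1}}$-estimates of Lemma~\ref{properties of R}(4) and the multidimensional Young machinery of Section~\ref{appendix I}, with decay again governed by $\rho_H(k-l)$. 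Once these contraction norms are shown to vanish, the fourth-moment criterion applies to every component, the identified covariance structure determines a continuous Gaussian process with independent increments, hence the stated Brownian motions, and the proof is complete; the explicit covariances asserted in item (7) are recorded in the following lemma.
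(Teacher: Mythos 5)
Your proposal follows essentially the same route as the paper: the paper proves the theorem by reducing it, via the multidimensional fourth moment theorem of Peccati--Tudor, to Lemma~\ref{lemma for Levy area variation}, which records exactly the covariance limits (via summability of $\rho_H$ and $\trho_H$) and the vanishing of the one-contractions $\Psi^{\alpha,\beta}_{i,j}$, $\Phi^{\alpha,\beta}_{i,j}$, $\Theta^{\alpha,\beta}_{i,j}$, the latter handled through the finite-dimensional approximations $\tpsi^{\alpha,\beta}_{s,t}(n)$ and the multidimensional Young integral estimates of Lemma~\ref{properties of R}(4), Theorem~\ref{Towghi} and Lemma~\ref{p-variation norm for product}, with relative compactness obtained from the moment bounds as in \cite{BurdzySwanson2010}. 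You correctly identify the L\'evy-area kernel as the delicate contraction and the right tools for it, so the plan matches the paper's proof.
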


For the proof of this theorem, 
it is sufficient to prove the following lemma by the
fourth moment theorem.
We refer the readers for
the fourth moment theorem to \cite{nourdin-peccati}.

\begin{lemma}\label{lemma for Levy area variation}
Let $\trho_H(i)=\int_{0\le u\le 1,i\le v\le i+1}
R([0,u]\times [i,v])dR(u,v)$ $(i=0,1,2,\ldots)$.
\begin{enumerate}
	\item	Let $\alpha\ne\beta$.
			Let for $0\le i, j\le 2^m$, we have
			\begin{align*}
				E
					\left[
						\hat{Q}^{m,\alpha,\beta}_{\tmim,\tmi}
						\hat{Q}^{m,\alpha,\beta}_{\tmjm,\tmj}
					\right]
				&=
					\frac{1}{2}
					E
						\left[
							\check{Q}^{m,\alpha,\alpha}_{\tmim,\tmi}
							\check{Q}^{m,\alpha,\alpha}_{\tmjm,\tmj}
						\right]
				=
					\frac{1}{4}\left(\frac{1}{2^m}\right)^{4H}
					|\rho_H(j-i)|^2,\\
				E
					\left[
						\tilde{Q}^{m,\alpha,\beta}_{\tmim,\tmi}
						\tilde{Q}^{m,\alpha,\beta}_{\tmjm,\tmj}
					\right]
				&=
					\left(\frac{1}{2^m}\right)^{4H}
					\trho_H(j-i),\\
				E
					\left[
						\tilde{Q}^{m,\alpha,\beta}_{\tmim,\tmi}
						\hat{Q}^{m,\alpha,\beta}_{\tmjm,\tmj}
					\right]
				&=
					\frac{1}{4}\left(\frac{1}{2^m}\right)^{4H}
					|\rho_H(j-i)|^2.
			\end{align*}
		Particularly for $s,t\ge 0$, we have
		\begin{align*} 
			\lim_{m\to\infty}
				(2^m)^{4H-1}
				E
					\Big[
						\Big(
							\hat{Q}^{m,\alpha,\beta}_{[s]^-_m,[t]^-_m}
						\Big)^2
					\Big]
			&=
				\frac{1}{2}
				\lim_{m\to\infty}
					(2^m)^{4H-1}
					E
						\Big[
							\Big(
								\check{Q}^{m,\alpha,\alpha}_{[s]^-_m,[t]^-_m}
							\Big)^2
						\Big]
			=
				\frac{\sigma^2}{4}
				(t-s),\\
			\lim_{m\to\infty}
				(2^m)^{4H-1}
				E
					\Big[
						\Big(
							\tilde{Q}^{m,\alpha,\beta}_{[s]^-_m,[t]^-_m}
						\Big)^2
					\Big]
			&=
				\tilde{\sigma}^2
				(t-s),\\
			\lim_{m\to\infty}
			(2^m)^{4H-1}
			E
				\Big[
					\tilde{Q}^{m,\alpha,\beta}_{0,[t]^-_m}
					\hat{Q}^{m,\alpha,\beta}_{0,[s]^-_m}
				\Big]
			&=
				\frac{\sigma^2}{4}
				t\wedge s,
		\end{align*}
		where
		\begin{align*}
			\tilde{\sigma}^2
			&=
				\trho_H(0)+2\sum_{i=1}^{\infty}\trho_H(i),
			&
			\sigma^2
			&=
				\rho_H(0)^2+2\sum_{i=1}^{\infty}\rho_H(i)^2.
		\end{align*}
	\item	Let $1\le \alpha,\beta,\alpha',\beta'\le d$.
			For $0\le i,j\le 2^m$, we have
			\begin{align*} 
				E
					\left[
						\hat{Q}^{m,\alpha,\beta}_{\tmim,\tmi}
						\hat{Q}^{m,\alpha',\beta'}_{\tmjm,\tmj}
					\right]
				&=
					E
						\left[
							\tilde{Q}^{m,\alpha,\beta}_{\tmim,\tmi}
							\tilde{Q}^{m,\alpha',\beta'}_{\tmjm,\tmj}
						\right]
				=
					E
						\left[
							\hat{Q}^{m,\alpha,\beta}_{\tmim,\tmi}
							\tilde{Q}^{m,\alpha',\beta'}_{\tmjm,\tmj}
						\right]
				=
					0
				\quad
				(\{\alpha,\beta\}\ne \{\alpha',\beta'\}),\\
				E
					\left[
						\hat{Q}^{m,\alpha,\beta}_{\tmim,\tmi}
						\check{Q}^{m,\alpha',\beta'}_{\tmjm,\tmj}
					\right]
				&=
					E
						\left[
							\tilde{Q}^{m,\alpha,\beta}_{\tmim,\tmi}
							\check{Q}^{m,\alpha',\beta'}_{\tmjm,\tmj}
						\right]
				=
					0
				\quad 
				(\text{for all $(\alpha,\beta), (\alpha',\beta')$}),\\
				E
					\left[
						\check{Q}^{m,\alpha,\beta}_{\tmim,\tmi}
						\check{Q}^{m,\alpha',\beta'}_{\tmjm,\tmj}
					\right]
				&=
					0
				\qquad
				((\alpha,\beta)\ne (\alpha',\beta')).
			\end{align*}
			Particularly for $s,t\geq $, we have
			\begin{align*} 
				E
					\left[
						\hat{Q}^{m,\alpha,\beta}_{[t]^-_m}
						\hat{Q}^{m,\alpha',\beta'}_{[s]^-_m}
					\right]
				&=
					E
						\left[
							\tilde{Q}^{m,\alpha,\beta}_{[t]^-_m}
							\tilde{Q}^{m,\alpha',\beta'}_{[s]^-_m}
						\right]
				=
					E
						\left[
							\hat{Q}^{m,\alpha,\beta}_{[t]^-_m}
							\tilde{Q}^{m,\alpha',\beta'}_{[s]^-_m}
				\right]
				=
					0
				\quad
				(\{\alpha,\beta\}\ne \{\alpha',\beta'\}),\\
			E
				\left[
					\hat{Q}^{m,\alpha,\beta}_{[t]^-_m}
					\check{Q}^{m,\alpha',\beta'}_{[s]^-_m}
				\right]
			&=
				E
					\left[
						\tilde{Q}^{m,\alpha,\beta}_{[t]^-_m}
						\check{Q}^{m,\alpha',\beta'}_{[s]^-_m}
					\right]
			=
				0
			\quad 
				(\text{for all $(\alpha,\beta), (\alpha',\beta')$}),\\
			E
				\left[
					\check{Q}^{m,\alpha,\beta}_{[t]^-_m}
					\check{Q}^{m,\alpha',\beta'}_{[s]^-_m}
			\right]
			&=
				0
			\quad ((\alpha,\beta)\ne (\alpha',\beta')).
			\end{align*}

	\item	Let
			\begin{align*}
				\Psi^{\alpha,\beta}_{i,j}
				&=
					\tpsi^{\alpha,\beta}_{\tmim,\tmi}
					\mathop{\underset{1}{\tilde{\otimes}}}
					\tpsi^{\alpha,\beta}_{\tmjm,\tmj}\quad (\alpha\ne \beta),\\
				\Phi^{\alpha,\beta}_{i,j}
				&=
					\left(\psi^{\alpha}_{\tmim,\tmi}\odot \psi^{\beta}_{\tmim,\tmi}\right)
					\mathop{\underset{1}{\tilde{\otimes}}}
					\left(\psi^{\alpha}_{\tmjm,\tmj}\odot \psi^{\beta}_{\tmjm,\tmj}\right),\\
				\Theta^{\alpha,\beta}_{i,j}
				&=
					\left(\psi^{\alpha}_{\tmim,\tmi}\odot \psi^{\beta}_{\tmim,\tmi}\right)
					\mathop{\underset{1}{\tilde{\otimes}}}
					\tpsi^{\alpha,\beta}_{\tmjm,\tmj}\quad (\alpha\ne \beta).
			\end{align*}
			Then it holds that
			\begin{align*}
				\max
					\Bigg\{
							\Bigg\|\sum_{i,j=1}^L\Psi^{\alpha,\beta}_{i,j}\Bigg\|_{(\mathcal{H}^d)^{\otimes 2}}^2,
							\Bigg\|\sum_{i,j=1}^L\Phi^{\alpha,\beta}_{i,j}\Bigg\|_{(\mathcal{H}^d)^{\otimes 2}}^2,
							\Bigg\|\sum_{i,j=1}^L\Theta^{\alpha,\beta}_{i,j}\Bigg\|_{(\mathcal{H}^d)^{\otimes 2}}^2
					\Bigg\}
				\le 
					\frac{CL}{2^{8Hm}}
					\Bigg(\sum_{n=0}^{\infty}|\rho_H(n)|\Bigg)^3.
			\end{align*}
\end{enumerate}
\end{lemma}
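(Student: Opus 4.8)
The plan is to reduce every covariance in (1) and (2) to an inner product in $(\mathcal{H}^d)^{\odot 2}$ and every bound in (3) to a finite product of increments of $R$, and then to invoke the scaling of $R$ from Lemma~\ref{properties of R} together with the summability $\sum_n|\rho_H(n)|<\infty$. The starting point is that $\hat{Q}$, $\check{Q}$, $\tilde{Q}$ are all written as $I_2$ of the explicit symmetric tensors displayed above, so the isometry $E[I_2(f)I_2(g)]=2(f,g)_{(\mathcal{H}^d)^{\otimes 2}}$ together with the elementary rule $(x_1\odot x_2,y_1\odot y_2)=\tfrac12[(x_1,y_1)(x_2,y_2)+(x_1,y_2)(x_2,y_1)]$ turns each covariance into a sum of products of factors $(\psi^{\gamma}_{u},\psi^{\gamma'}_{v})_{\mathcal{H}^d}=\delta_{\gamma\gamma'}R(u,v)$. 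Statement (2) then follows at once: whenever the index multisets $\{\alpha,\beta\}$ and $\{\alpha',\beta'\}$ differ, or when an off-diagonal $\hat{Q},\tilde{Q}$ is paired with a diagonal $\check{Q}$, every surviving product of inner products contains at least one factor with two distinct component indices, which vanishes; summing over $i,j$ gives the corresponding statements for $[s]^-_m,[t]^-_m$.

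For (1) I would first rewrite the single-block increments via Lemma~\ref{properties of R}(1),(4) as $R([\tmim,\tmi]\times[\tmjm,\tmj])=(2^{-m})^{2H}\rho_H(j-i)$. Feeding this into the two displays above gives $E[\hat{Q}^{m,\alpha,\beta}_{\tmim,\tmi}\hat{Q}^{m,\alpha,\beta}_{\tmjm,\tmj}]=\tfrac14(2^{-m})^{4H}|\rho_H(j-i)|^2$ and, using $(x\odot x,y\odot y)=(x,y)^2$, twice this for $\check{Q}^{m,\alpha,\alpha}$. For $\tilde{Q}$ I would use the representation $\tpsi^{\alpha,\beta}_{s,t}=\int_s^t\psi^{\alpha}_{s,u}\odot d\psi^{\beta}_u$ of Section~\ref{df app of iterated integrals}; since $\alpha\ne\beta$ kills the cross terms, the inner product $(\tpsi^{\alpha,\beta}_{\tmim,\tmi},\tpsi^{\alpha,\beta}_{\tmjm,\tmj})$ collapses to $\tfrac12\int\!\!\int R([\tmim,u]\times[\tmjm,v])\,dR(u,v)$, and rescaling both $R$-factors produces $(2^{-m})^{4H}\trho_H(j-i)$. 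The mixed term $E[\tilde{Q}\hat{Q}]$ is handled the same way, the Riemann sum $\sum_k R([\tmim,u_{k-1}]\times[\tmjm,\tmj])\,R([u_{k-1},u_k]\times[\tmjm,\tmj])$ telescoping to $\tfrac12R([\tmim,\tmi]\times[\tmjm,\tmj])^2$, whence $\tfrac14(2^{-m})^{4H}|\rho_H(j-i)|^2$. The stated limits are then the routine reduction of a double sum to a single one: for a summable sequence $c$, $2^{-m}\sum_{i,j}c(j-i)\to(t-s)\sum_{k\in\ZZ}c(k)$ as $m\to\infty$ over an index range of length $\sim 2^m(t-s)$. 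Applying this with $c=|\rho_H|^2$ (summable since $\sum_n|\rho_H(n)|<\infty$) gives $\tfrac{\sigma^2}{4}(t-s)$ and $\tfrac{\sigma^2}{2}(t-s)$, with the overlap argument giving $\tfrac{\sigma^2}{4}(t\wedge s)$ for the cross term; applying it with $c=\trho_H$ gives $\tilde{\sigma}^2(t-s)$.

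The substantial part is (3). Here I would expand each $1$-contraction by Proposition~\ref{product formula}(1): $\Psi^{\alpha,\beta}_{i,j}$, $\Phi^{\alpha,\beta}_{i,j}$ and $\Theta^{\alpha,\beta}_{i,j}$ are contractions of two order-$2$ symmetric tensors and are therefore order-$2$ symmetric tensors whose coefficients are single factors $R([\cdot]\times[\cdot])$ between blocks $i$ and $j$ (the off-diagonal contractions vanish by component orthogonality). Writing $\|\sum_{i,j}\Psi^{\alpha,\beta}_{i,j}\|^2=\sum_{i,j,i',j'}(\Psi^{\alpha,\beta}_{i,j},\Psi^{\alpha,\beta}_{i',j'})$ and pairing off the two free slots, each summand becomes a product of exactly four increments $R([\cdot]\times[\cdot])$ forming a $4$-cycle on the vertices $i,j,i',j'$; each is bounded by $(2^{-m})^{2H}|\rho_H(\cdot)|$ via Lemma~\ref{properties of R}(4), producing the prefactor $(2^{-m})^{8H}=2^{-8Hm}$. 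The remaining four-fold sum $\sum_{i,j,i',j'}\prod_{\text{cycle}}|\rho_H(\cdot)|$ I would estimate directly rather than through the cruder Lemma~\ref{estimate of sum of rho}: fixing one vertex costs a factor $\sim L$, three of the edges are then summed freely to give $(\sum_n|\rho_H(n)|)^3$, and the remaining cycle-closing factor is absorbed using $|\rho_H|\le 1$, yielding the bound $CL(\sum_n|\rho_H(n)|)^3$. The terms $\Phi$ and $\Theta$ have the identical $4$-cycle structure and are estimated in the same way.

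The hard part will be the bookkeeping in (3) for the iterated-integral tensors $\tpsi^{\alpha,\beta}$. There the contracted ``inner legs'' are not plain increments but Young integrals over the blocks $i$ and $j$, so each of the four $R$-factors must be estimated through the multidimensional Young bounds of Section~\ref{appendix I} and Lemma~\ref{properties of R}(4) (using the finite-dimensional approximations of Corollary~\ref{finite dimensional approximation}) to recover the size $(2^{-m})^{2H}|\rho_H(\cdot)|$, and one must check that the resulting index couplings really form a $4$-cycle rather than a denser graph. Once this combinatorial structure and the per-factor size are in place, the power counting in $2^{-m}$ and the direct summation are routine, and the bound is exactly what the fourth moment theorem needs to force the contraction norms of the normalized sums to vanish.
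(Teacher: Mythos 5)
Your proposal is correct and follows essentially the same route as the paper: parts (1) and (2) via the It\^o--Wiener isometry, explicit inner products of the symmetric tensors, component orthogonality, and the stationarity/scaling of $R$ together with summability of $|\rho_H|$ and $|\trho_H|$; part (3) via finite-dimensional approximation, the orthogonal splitting of each contraction, the multidimensional Young/Towghi bounds reducing every pairing to a $4$-cycle of factors $(2^{-m})^{2H}|\rho_H(\cdot)|$, and the direct $L(\sum_n|\rho_H(n)|)^3$ summation. The only cosmetic differences are that the paper computes the $\check{Q}$ covariance through Hermite polynomials rather than the symmetric-tensor inner product formula, which changes nothing.
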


\begin{remark}
 Let $X^m_t$ and $Y^m_t$ be one of
$\tilde{Q}^{m,\alpha,\beta}_t, \check{Q}^{m,\alpha,\beta}_t, 
\hat{Q}^{m,\alpha,\beta}_t$ $(t\in D_m,\,\,1\le \alpha,\beta\le d)$.
Noting that
$\tilde{Q}^{m,\alpha,\beta}_t+\tilde{Q}^{m,\beta,\alpha}_t
=2\hat{Q}^{m,\alpha,\beta}_t$,
we see that the results in the above lemma gives all limits of
\[
 \lim_{m\to\infty}E[X^m_{[t]^-_m,[t']^-_m}
Y^m_{[s]^-_m,[s']^-_m}]=C_{X^m,Y^m}(t'\wedge s'-t'\wedge s-t\wedge s'+
t\wedge s)\quad  (0\le t<t',\,\,0\le s<s').
\]
That is, all covariances of $(B_t, \hat{W}_t, \check{W}_t, \tilde{W}_t, W_t)$
can be determined.
\end{remark}

\begin{proof}
The constants below are independent of $m$ and may change line by line.

(1)
First we consider $\hat{Q}^{m,\alpha,\beta}$.
By definition we have
\begin{align*}
	E
		\left[
			\hat{Q}^{m,\alpha,\beta}_{\tmim,\tmi}
			\hat{Q}^{m,\alpha,\beta}_{\tmjm,\tmj}
		\right]
	=
		\frac{1}{4}
		R\left([\tmim,\tmi]\times [\tmjm,\tmj]\right)^2
	=
		\frac{1}{4}
		\left(\frac{1}{2^m}\right)^{4H}
		R([i-1,i]\times [j-1,j])^2.
\end{align*}

We consider $\check{Q}^{m,\alpha,\beta}$.
Here, $(2^m)^{2H}(B^{\alpha}_{\tmim,\tmi})^2-1=H_2(2^{mH}B^{\alpha}_{\tmim,\tmi})$,
where $H_2$ is the Hermite polynomial of degree 2.
Therefore
\begin{align*}
E\left[\check{Q}^{m,\alpha,\alpha}_{\tmim,\tmi}
\check{Q}^{m,\alpha,\alpha}_{\tmjm,\tmj}\right]
&=
(2^m)^{-4H}\frac{1}{4}E\left[H_2(2^{mH}B^{\alpha}_{\tmim,\tmi})
H_2(2^{mH}B^{\alpha}_{\tmjm,\tmj})
\right]\nonumber\\
&=\frac{1}{2}E[B^{\alpha}_{\tmim,\tmi}B^{\alpha}_{\tmjm,\tmj}]^2\\
&=
\frac{1}{2}\frac{|\rho_H(i-j)|^2}{2^{4mH}}.
\end{align*}

Next we consider $\tilde{Q}^{m,\alpha,\beta}_{\tmim,\tmi}$.
Let $u_k=\tmim+\frac{k}{n}\frac{1}{2^m}$
and $v_l=\tmjm+\frac{l}{n}\frac{1}{2^m}$.
Recall that $\tilde{\psi}^{\alpha,\beta}_{s,t}(n)$ is 
a finite-dimensional approximation of
$\tilde{\psi}^{\alpha,\beta}_{s,t}$ which is defined in \eqref{tpsin}.
We have
\begin{align*}
	\left(
		\tpsi^{\alpha,\beta}_{\tmim,\tmi}(n),
		\tpsi^{\alpha,\beta}_{\tmjm,\tmj}(n)
	\right)_{(\mathcal{H}^d)^{\otimes 2}}
	&=
		\frac{1}{2}
		\sum_{k,l=1}^n
			R([\tmim,u_{k-1}]\times [\tmjm,v_{l-1}])
			R([u_{k-1},u_k]\times [v_{l-1},v_l]).
\end{align*}
Therefore,
\begin{align*}
	E
		\left[
			\tilde{Q}^{m,\alpha,\beta}_{\tmim,\tmi}
			\tilde{Q}^{m,\alpha,\beta}_{\tmjm,\tmj}
		\right]
	&=
		\lim_{n\to\infty}
		\sum_{k,l=1}^n
			R([\tmim,u_{k-1}]\times [\tmjm,v_{l-1}])
			R([u_{k-1},u_k]\times [v_{k'-1},v_l])\\
	&=
		\int_{\tmim\le u\le\tmi, \tmjm\le v\le \tmj}
			R\left([\tmim,u]\times[\tmjm,v]\right)
			dR(u,v)\\
	&=
		\left(\frac{1}{2^m}\right)^{4H}
		\int_{0\le u\le 1, j-i\le v\le j-i+1}
			R([0,u]\times [j-i,v])
			dR(u,v)\\
	&=
		\left(\frac{1}{2^m}\right)^{4H}\trho_H(j-i),
\end{align*}
where we have used the translation invariant property and
the scaling property of fBm (see Lemma~\ref{properties of R}).

Similarly, using finite dimensional approximation, we obtain
\begin{align*}
	E
		\left[
			\tilde{Q}^{m,\alpha,\beta}_{\tmim,\tmi}
			\hat{Q}^{m,\alpha,\beta}_{\tmjm,\tmj}
		\right]
	&=
		\frac{1}{2}
		\int_{\tmim}^{\tmi}
			R\left([\tmim,u]\times [\tmjm,\tmj]\right)
			dR\left([\tmim,u]\times [\tmjm,\tmj]\right)\\
	&=
		\frac{1}{2}
		\frac{1}{2^{4mH}}
		\int_{i-1}^iR\left([i-1,u]\times [j-1,j]\right)dR(u,[j-1,j])\\
	&=
		\frac{1}{4}
		\frac{1}{2^{4mH}}
		R([i-1,i]\times [j-1,j])^2.
\end{align*}

Noting
that $\sum_{n=0}^{\infty}|\rho_H(n)|<\infty$
and $\sum_{n=0}^{\infty}|\trho_H(n)|<\infty$,
all the proofs of the other identities in the assertion are elementary calculations.
We omit the proof.

(2) Assertions follow from the independence of $(B^{\alpha}_t)$ and $(B^{\beta}_t)$ $(\alpha\ne\beta)$.

(3) Let
$
	\Psi^{\alpha,\beta}_{i,j}(n)
	=
		\tpsi^{\alpha,\beta}_{\tmim,\tmi}(n)
		\mathop{\underset{1}{\tilde{\otimes}}}
		\tpsi^{\alpha,\beta}_{\tmjm,\tmj}(n)
	$.
By $\tpsi^{\alpha,\beta}_{\tmim,\tmi}=
\lim_{n\to\infty}\tpsi^{\alpha,\beta}_{\tmim,\tmi}(n)$ in 
$(\mathcal{H}^d)^{\odot 2}$ and the continuity of the contraction operation,
we have
$\lim_{n\to\infty}\Psi^{\alpha,\beta}_{i,j}(n)=\Psi^{\alpha,\beta}_{i,j}$
in $(\mathcal{H}^d)^{\odot 2}$.
Therefore it suffices to
give an estimate of
$\|\sum_{i,j=1}^L\Psi^{\alpha,\beta}_{i,j}(n)\|_{(\mathcal{H}^d)^{\otimes 2}}^2$
that is independent of $n$.
Here, we use the partition $\{u_k\}_{k=0}^n$ and $\{v_l\}_{l=0}^n$ of
$[\tmim,\tmi]$ and $[\tmjm,\tmj]$ in (1).
First, note that
$
	\Psi^{\alpha,\beta}_{i,j}(n)
	=
		\frac{1}{4}
		\{
			\Psi^{1,\alpha,\alpha}_{i,j}(n)
			+
			\Psi^{2,\beta,\beta}_{i,j}(n)
		\}
$,
where
\begin{align*}
	\Psi^{1,\alpha,\alpha}_{i,j}(n)
	&=
		\sum_{k,l=1}^n
			R([u_{k-1},u_k]\times [v_{l-1},v_l])
			\psi^{\alpha}_{\tmim,u_{k-1}}
			\odot
			\psi^{\alpha}_{\tmjm,v_{l-1}},\\
	\Psi^{2,\beta,\beta}_{i,j}(n)
	&=
		\sum_{k,l=1}^n
			R([\tmim,u_{k-1}]\times [\tmjm,v_{l-1}])
			\psi^{\beta}_{u_{k-1},u_k}
			\odot
			\psi^{\beta}_{v_{l-1},v_l}.
\end{align*}
Because $\Psi^{1,\alpha,\alpha}_{i,j}(n)$ and $\Psi^{2,\beta,\beta}_{i,j}(n)$
are orthogonal in $(\mathcal{H}^d)^{\otimes 2}$,
we see
\begin{align}
	\label{eq481901}
	\Bigg\|
		\sum_{i,j=1}^L\Psi^{\alpha,\beta}_{i,j}(n)
	\Bigg\|_{(\mathcal{H}^d)^{\otimes 2}}^2
	&=
		\frac{1}{4^2}
		\sum_{i,j,i',j'=1}^L
			\left(
				\Psi^{1,\alpha,\alpha}_{i,j}(n),
				\Psi^{1,\alpha,\alpha}_{i',j'}(n)
			\right)_{(\mathcal{H}^d)^{\otimes 2}}\\
	&\qquad\qquad
		+
		\frac{1}{4^2}
		\sum_{i,j,i',j'=1}^L
			\left(
				\Psi^{2,\beta,\beta}_{i,j}(n),
				\Psi^{2,\beta,\beta}_{i',j'}(n)
			\right)_{(\mathcal{H}^d)^{\otimes 2}}. \nonumber
\end{align}
Therefore, an estimate of
$\|\sum_{i,j=1}^L\Psi^{\alpha,\beta}_{i,j}(n)\|_{(\mathcal{H}^d)^{\otimes 2}}^2$
follows from those of
$
	(
		\Psi^{1,\alpha,\alpha}_{i,j}(n),
		\Psi^{1,\alpha,\alpha}_{i',j'}(n)
	)_{(\mathcal{H}^d)^{\otimes 2}}
$
and
$
	(
		\Psi^{2,\beta,\beta}_{i,j}(n),
		\Psi^{2,\beta,\beta}_{i',j'}(n)
	)_{(\mathcal{H}^d)^{\otimes 2}}
$.
Noting that
$
	2(\xi\odot\eta,\xi'\odot\eta')_{(\mathcal{H}^d)^{\otimes 2}}
	=
		(\xi,\xi')_{\mathcal{H}^d}
		(\eta,\eta')_{\mathcal{H}^d}
		+
		(\xi,\eta')_{\mathcal{H}^d}
		(\eta,\xi')_{\mathcal{H}^d}
$,
and
using 
$u'_{k'}=\tau^m_{i'-1}+\frac{k'}{n}\frac{1}{2^m}$
and $v'_{l'}=\tau^m_{j'-1}+\frac{l'}{n}\frac{1}{2^m}$,
we have
\begin{multline*}
	\left(
		\Psi^{1,\alpha,\alpha}_{i,j}(n),
		\Psi^{1,\alpha,\alpha}_{i',j'}(n)
	\right)_{(\mathcal{H}^d)^{\otimes 2}}
	=
		\frac{1}{2}
		\sum_{k,l,k',l'=1}^n
			R([u_{k-1},u_k]\times [v_{l-1},v_l])
			R([u'_{k'-1},u'_{k'}]\times [v'_{l'-1},v'_{l'}])\\
			\times
			\{
				R([\tmim,u_{k-1}]\times [\tau^m_{i'-1},u'_{k'-1}])
				R([\tmjm,v_{l-1}]\times [\tau^m_{j'-1},v'_{l'-1}])\\
				+
				R([\tmim,u_{k-1}]\times [\tau^m_{j'-1},v'_{l'-1}])
				R([\tmjm,v_{l-1}]\times [\tau^m_{i'-1},u'_{k'-1}])
			\}
\end{multline*}
and
\begin{multline*}
	\left(
		\Psi^{2,\beta,\beta}_{i,j}(n),
		\Psi^{2,\beta,\beta}_{i',j'}(n)
	\right)_{(\mathcal{H}^d)^{\otimes 2}}
	=
		\frac{1}{2}
		\sum_{k,l,k',l'=1}^n
			R([\tmim,u_{k-1}]\times [\tmjm,v_{l-1}])
			R([\tau^m_{i'-1},u'_{k'-1}]\times [\tau^m_{j'-1},v'_{l'-1}])\\
			\times
			\{
				R([u_{k-1},u_k]\times [u'_{k'-1},u'_{k'}])
				R([v_{l-1},v_l]\times [v'_{l'-1},v'_{l'}])\\
				+
				R([u_{k-1},u_k]\times [v'_{l'-1},v'_{l'}])
				R([v_{l-1},v_l]\times [u'_{k'-1},u'_{k'}])
			\}
\end{multline*}
From Lemma~\ref{properties of R} (4),
Theorem~\ref{Towghi} (1) and Lemma~\ref{p-variation norm for product} (1),
we arrive at
\begin{multline}
	\label{eq4519081}
	\Big|
		\left(
			\Psi^{1,\alpha,\alpha}_{i,j}(n),
			\Psi^{1,\alpha,\alpha}_{i',j'}(n)
		\right)_{(\mathcal{H}^d)^{\otimes 2}}
	\Big|
	+
	\Big|
		\left(
			\Psi^{2,\beta,\beta}_{i,j}(n),
			\Psi^{2,\beta,\beta}_{i',j'}(n)
		\right)_{(\mathcal{H}^d)^{\otimes 2}}
	\Big|\\
	\leq
		C
		\left(\frac{1}{2^m}\right)^{8H}
		|\rho_H(i-j)\rho_H(i'-j')|
		\{|\rho_H(i-i')\rho_H(j-j')|+|\rho_H(i-j')\rho_H(j-i')|\}.
\end{multline}
From \eqref{eq481901} and \eqref{eq4519081}, we see
\begin{align}
	\label{eq4819011}
	\Bigg\|
		\sum_{i,j=1}^L
			\Psi^{\alpha,\beta}_{i,j}(n)
	\Bigg\|_{(\mathcal{H}^d)^{\otimes 2}}^2
	&\le 
		CL
		\left(\frac{1}{2^m}\right)^{8H}
		\Bigg(\sum_{l=0}^{\infty}|\rho_H(l)|\Bigg)^3.
\end{align}

Next, we give an estimate of $\Theta^{\alpha,\beta}_{i,j}$.
Similarly, using the partition points
$\{v_l\}$ of $[\tmjm,\tmj]$,
we have
$
	\Theta^{\alpha,\beta}_{i,j}(n)
	=
		\frac{1}{4}
		\{
			\Theta^{1,\alpha,\alpha}_{i,j}(n)
			+
			\Theta^{2,\beta,\beta}_{i,j}(n)
		\}
$,
where
\begin{align*}
	\Theta^{1,\alpha,\alpha}_{i,j}(n)
	=
	\sum_{l=1}^n
		R([\tmim,\tmi]\times[v_{l-1},v_l])
		\psi^{\alpha}_{\tmim,\tmi}
		\odot
		\psi^{\alpha}_{\tmjm,v_{l-1}},\\
	\Theta^{2,\beta,\beta}_{i,j}(n)
	=
		\sum_{l=1}^n
			R([\tmim,\tmi]\times [\tmjm,v_{l-1}])
			\psi_{\tmim,\tmi}^{\beta}
			\odot
			\psi^{\beta}_{v_{l-1},v_l}.
\end{align*}
Therefore
\begin{multline*}
	\left(
		\Theta^{1,\alpha,\alpha}_{i,j}(n),
		\Theta^{1,\alpha,\alpha}_{i',j'}(n)
	\right)_{(\mathcal{H}^d)^{\otimes 2}}
	=
		\frac{1}{2}
		\sum_{l,l'=1}^n
			R([\tmim,\tmi]\times[v_{l-1},v_l])
			R([\tau^m_{i'-1},\tau^m_{i'}]\times[v'_{l'-1},v'_{l'}])\\
			\times
			\{
				R([\tmim,\tmi]\times[\tau^m_{i'-1},\tau^m_{i'}])
				R([\tmjm,v_{l-1}]\times[\tau^m_{j'-1},v'_{l'-1}])\\
				+
				R([\tmim,\tmi]\times[\tau^m_{j'-1},v'_{l'-1}])
				R([\tmjm,v_{l-1}]\times[\tau^m_{i'-1},\tau^m_{i'}])
			\}
\end{multline*}
and
\begin{multline*}
	\left(
		\Theta^{2,\beta,\beta}_{i,j}(n),
		\Theta^{2,\beta,\beta}_{i',j'}(n)
	\right)_{(\mathcal{H}^d)^{\otimes 2}}
	=
		\frac{1}{2}
		\sum_{l,l'=1}^n
			R([\tmim,\tmi]\times [\tmjm,v_{l-1}])
			R([\tau^m_{i'-1},\tau^m_{i'}]\times[\tau^m_{j'-1},v'_{l'}])\\
			\times
			\{
				R([\tmim,\tmi]\times[\tau^m_{i'-1},\tau^m_{i'}])
				R([v_{l-1},v_l]\times[v'_{l'-1},v'_{l'}])\\
				+
				R([\tmim,\tmi]\times[v'_{l'-1},v'_{l'}])
				R([v_{l-1},v_l]\times[\tau^m_{i'-1},\tau^m_{i'}])
			\}.
\end{multline*}
We deduce from these identities that the same estimate with \eqref{eq4519081} for
$
	(
		\Theta^{1,\alpha,\alpha}_{i,j}(n),
		\Theta^{1,\alpha,\alpha}_{i',j'}(n)
	)_{(\mathcal{H}^d)^{\otimes 2}}
$
and
$
	(
		\Theta^{2,\beta,\beta}_{i,j}(n),
		\Theta^{2,\beta,\beta}_{i',j'}(n)
	)_{(\mathcal{H}^d)^{\otimes 2}}
$
hold. Then we conclude \eqref{eq4819011}
being replaced $\Psi^{\alpha,\beta}_{i,j}(n)$ by $\Theta^{\alpha,\beta}_{i,j}(n)$.

Finally, we consider $\Phi^{\alpha,\beta}_{i,j}$.
Noting that
\begin{align*}
	\Phi^{\alpha,\beta}_{i,j}
	&=
		\frac{1}{4}
		R([\tmim,\tmi]\times[\tmjm,\tmj])
		\big\{
			\psi^{\beta}_{\tmim,\tmi}\odot\psi^{\beta}_{\tmjm,\tmj}
			+
			\psi^{\alpha}_{\tmim,\tmi}\odot\psi^{\alpha}_{\tmjm,\tmj}
		\big\},
\end{align*}
we have
\begin{multline*}
	\Bigg\|\sum_{i,j=1}^L\Phi^{\alpha,\beta}_{i,j}\Bigg\|_{(\mathcal{H}^d)^{\otimes 2}}^2
	=
		\sum_{i,j,i',j'=1}^L
			\left(\Phi^{\alpha,\beta}_{i,j},\Phi^{\alpha,\beta}_{i',j'}
			\right)_{(\mathcal{H}^d)^{\otimes 2}}\\
	\begin{aligned}
		&=
		\frac{1}{4^2}
		\frac{1}{2^{8mH}}
		\sum_{i,j,i',j'=1}^L
			|\rho_H(i-j)\rho_H(i'-j')|
			\big\{|\rho_H(i-i')\rho_H(j-j')|+|\rho_H(i-j')\rho_H(j-i')|\big\}\\
		&\le
			CL
			\left(\frac{1}{2^m}\right)^{8H}
			\left(\sum_{l=0}^{\infty}|\rho_H(l)|\right)^3.
	\end{aligned}
\end{multline*}
This completes the proof.
\end{proof}

\section{H\"older estimates of (weighted) sum processes of Wiener chaos of order $3$}
\label{Holder esitmates of sum processes of Wiener chaos of order 3}

Theorems~\ref{moment estimate} and \ref{limit theorem of weighted Hermite variation processes}
involve the weighted sum of elements in the Wiener chaos of order $2$.
Throughout this section, $(B_t)$ stands for the fBm with
Hurst parameter $\frac{1}{3}<H\leq\frac{1}{2}$.
In this section, we treat the weighted sums of elements in Wiener chaos of order $3$
similarly.
Set
\begin{align*}
	% \label{def K3m}
	\tilde{\mathcal{K}}^3_m
	=
		\left\{
			\{B^{\alpha,\beta,\gamma}_{\tmim,\tmi}\}_{i=1}^{2^m},
			\,
			\{B^{\alpha,\beta}_{\tmim,\tmi}B^{\gamma}_{\tmim,\tmi}\}_{i=1}^{2^m},
			\,
			\{B^{\alpha}_{\tmim,\tmi}B^{\beta}_{\tmim,\tmi}B^{\gamma}_{\tmim,\tmi}\}_{i=1}^{2^m}
			\, ~;~
			1\le \alpha,\beta,\gamma\le d
		\right\}.
\end{align*}
First, we denote elements of
$\tilde{\mathcal{K}}^3_m$ by $K^m=\{K^m_{\tmim,\tmi}\}_{i=1}^{2^m}$.
We write $K^m_t=\sum_{i=1}^{\floor{2^mt}}K^m_{\tmim,\tmi}$
and $K^m_0=0$ and we denote the all $\{K^m_t\}_{t\in D_m}$
by $\mathcal{K}^3_m$.
We will show the next proposition.
\begin{proposition}\label{estimate of third order}
	Let $\frac{1}{3}<H^-<H$.
	Let $K^{m}\in \mathcal{K}^3_{m}$.
	Assume that for every $m$,
	a discrete process $\{F^m_t\}_{t\in D_m}$ satisfies
	$|F^m_0|+\|F^m\|_{H^-}\le C$, 
	where $C$ is a random variable independent of $m$.
	Let
	\begin{align*}
		I^m_t(F^m)
		=
			\sum_{i=1}^{\floor{2^mt}}F^m_{\tmim}K^{m}_{\tmim,\tmi}.
	\end{align*}
	Then it holds that 
	\begin{align*}
		\|(2^m)^{2H-\frac{1}{2}}I^m(F^m)\|_{2H^-}
		&\le
			C
			(2^{-m})^\vep
			G_\vep.
	\end{align*}
	Here $\vep$ is the positive number and $G_\vep\in L^{\infty-}$ is a random variable 
	specified in Lemma~\ref{estimate of K}.
\end{proposition}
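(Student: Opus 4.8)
The plan is to reduce the weighted estimate to the ``weight-free'' estimate furnished by Lemma~\ref{estimate of K} by means of a discrete sewing (Young) argument, the point being that the H\"older exponent $H^-$ of the weight $F^m$ and the exponent $2H^-$ of the scaled weight-free sum add up to $3H^->1$.

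First I would record two preliminaries. Since $|F^m_0|+\|F^m\|_{H^-}\le C$, we have $|F^m_t|\le |F^m_0|+\|F^m\|_{H^-}t^{H^-}\le C$ for all $t\in D_m$, so $F^m$ is uniformly bounded by $C$. Next, abbreviating $\hat K^m_t=(2^m)^{2H-\frac12}K^m_t$ and $\hat K^m_{s,t}=\hat K^m_t-\hat K^m_s$ for $s,t\in D_m$, I would invoke Lemma~\ref{estimate of K} in the form $\|\hat K^m\|_{2H^-}\le (2^{-m})^\vep G_\vep$; this is the weight-free companion of the present statement, and it is here that the genuine decay factor $(2^{-m})^\vep$ enters.

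The core step is a discrete sewing estimate. For $s,t\in D_m$ set the germ $\Xi_{s,t}=F^m_s\hat K^m_{s,t}$, so that
\[
	(2^m)^{2H-\frac12}I^m_{s,t}(F^m)=\sum_{i=\floor{2^ms}+1}^{\floor{2^mt}}F^m_{\tmim}\hat K^m_{\tmim,\tmi}
\]
is exactly the finest Riemann-type sum of $\Xi$ over the dyadic grid of $[s,t]$. Writing $\delta\Xi_{s,u,t}=\Xi_{s,t}-\Xi_{s,u}-\Xi_{u,t}$ for dyadic $s<u<t$, a direct computation gives $\delta\Xi_{s,u,t}=-(F^m_u-F^m_s)\hat K^m_{u,t}$, whence
\[
	|\delta\Xi_{s,u,t}|\le \|F^m\|_{H^-}\,\|\hat K^m\|_{2H^-}\,|u-s|^{H^-}|t-u|^{2H^-}\le C\,(2^{-m})^\vep G_\vep\,|t-s|^{3H^-}.
\]
Because $H^->\frac13$ forces $3H^->1$, a standard discrete sewing estimate applies and yields
\[
	\Big|(2^m)^{2H-\frac12}I^m_{s,t}(F^m)-\Xi_{s,t}\Big|\le C_{H^-}\,C\,(2^{-m})^\vep G_\vep\,|t-s|^{3H^-},\qquad s,t\in D_m.
\]
Finally I would combine the two bounds. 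Using $|F^m_s|\le C$ and Lemma~\ref{estimate of K} we get $|\Xi_{s,t}|\le C(2^{-m})^\vep G_\vep|t-s|^{2H^-}$; together with the sewing estimate and $|t-s|\le 1$ (so that $|t-s|^{3H^-}\le |t-s|^{2H^-}$) this gives $|(2^m)^{2H-\frac12}I^m_{s,t}(F^m)|\le C(2^{-m})^\vep G_\vep|t-s|^{2H^-}$. Dividing by $|t-s|^{2H^-}$ and taking the supremum over $s\neq t$ in $D_m$ yields the asserted bound on $\|(2^m)^{2H-\frac12}I^m(F^m)\|_{2H^-}$.

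The main difficulty is really packaged into Lemma~\ref{estimate of K}: producing the weight-free H\"older estimate for the order-$3$ chaos sums together with the true decay factor $(2^{-m})^\vep$ requires the moment computations via the product formula and the multidimensional Young integral bounds of Section~\ref{appendix I}, in the spirit of Theorem~\ref{moment estimate}. Granting that, the delicate point in the present proposition is to organize the discrete sewing correctly on the dyadic grid and to check that the weight contributes only through its $H^-$-H\"older seminorm, so that the exponents add to $3H^->1$; this is precisely where the hypothesis $H>\frac13$ is essential and where the argument would fail for rougher driving noise.
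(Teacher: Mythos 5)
Your proposal is correct and follows essentially the same route as the paper: the paper's proof also reduces the weighted sum to the weight-free H\"older bound of Lemma~\ref{estimate of K} and then invokes the estimate for discrete Young integrals, which is exactly the discrete sewing argument with germ $F^m_s\hat K^m_{s,t}$ and defect $-(F^m_u-F^m_s)\hat K^m_{u,t}$ that you spell out, using $H^-+2H^->1$.
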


\subsection{Proof of Proposition \ref{estimate of third order}}

Accepting Lemma~\ref{covariance estimate of Km} below for the moment,
we show Proposition \ref{estimate of third order}.
We will show Lemma~\ref{covariance estimate of Km} in the next subsection.
\begin{lemma}
	\label{covariance estimate of Km}
	Let $(K^m_t)\in \tilde{\mathcal{K}}^3_m$.
	Then there exists a $C$ is a positive constant such that 
	\begin{align*}
		\big|
				E
					\big[
						K^{m}_{\tmim,\tmj}
						K^{m}_{\tmjm,\tmj}
					\big]
		\big|
		\le
			C(2^{-m})^{6H}\sum_{k=1}^3|\rho_H(i-j)|^k
		\qquad
		\text{for all $s,t\in D_m$ with $s<t$ and $m$.}
	\end{align*}
\end{lemma}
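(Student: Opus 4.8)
The plan is to compute the covariance through the Wiener chaos decomposition of each summand and to reduce every resulting inner product to a product of rectangular increments and (nested) two-dimensional Young integrals of the covariance $R$, which are then controlled by Lemma~\ref{properties of R} and the multidimensional Young integral estimates of Section~\ref{appendix I}. First I would represent each generator $K^m_{\tmim,\tmi}\in\tilde{\mathcal{K}}^3_m$ as a finite Wiener chaos expansion: using the finite dimensional approximations of Corollary~\ref{finite dimensional approximation} together with the product formula (Proposition~\ref{product formula}), each of $B^{\alpha,\beta,\gamma}_{\tmim,\tmi}$, $B^{\alpha,\beta}_{\tmim,\tmi}B^{\gamma}_{\tmim,\tmi}$ and $B^{\alpha}_{\tmim,\tmi}B^{\beta}_{\tmim,\tmi}B^{\gamma}_{\tmim,\tmi}$ is an $L^2$-limit of $I_3(f^{(3)}_{\tmim,\tmi})+I_1(f^{(1)}_{\tmim,\tmi})$ with $f^{(p)}_{\tmim,\tmi}\in(\mathcal{H}^d)^{\odot p}$. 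The crucial structural point is that these objects have odd total degree $3$ in the driving Gaussian field, so only the odd chaoses $p=1,3$ occur and the second chaos is absent. Moreover the chaos-one component $f^{(1)}_{\tmim,\tmi}$ is produced by one internal contraction and therefore carries an explicit factor $R([\tmim,\tmi]\times[\tmim,\tmi])=(2^{-m})^{2H}$, while $f^{(3)}_{\tmim,\tmi}$ carries none.

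By orthogonality of distinct Wiener chaoses we then have $E[K^m_{\tmim,\tmi}K^m_{\tmjm,\tmj}]=3!\,(f^{(3)}_{\tmim,\tmi},f^{(3)}_{\tmjm,\tmj})_{(\mathcal{H}^d)^{\odot 3}}+(f^{(1)}_{\tmim,\tmi},f^{(1)}_{\tmjm,\tmj})_{\mathcal{H}^d}$. Since $B^{\alpha}$ and $B^{\alpha'}$ are independent for $\alpha\neq\alpha'$, expanding each inner product leaves only contractions between legs carrying the same index, each of which produces either a self factor $R$ over $[\tmim,\tmi]^2$ (or $[\tmjm,\tmj]^2$), bounded by $(2^{-m})^{2H}\rho_H(0)$, or a cross factor controlled by $V_{(2H)^{-1}}(R;[\tmim,\tmi]\times[\tmjm,\tmj])\le C(2^{-m})^{2H}|\rho_H(i-j)|$ from Lemma~\ref{properties of R}(4). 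For the L\'evy-area and iterated-integral legs these contractions are genuine nested two-dimensional Young integrals, which I would estimate exactly as in the order-two computation of Lemma~\ref{lemma for Levy area variation}(3), invoking Theorem~\ref{Towghi}, Lemma~\ref{p-variation norm for product} and Proposition~\ref{estimate of fgh}; each Young layer again yields one factor $(2^{-m})^{2H}|\rho_H(i-j)|$. Counting the three paired legs, the third-chaos term (all three legs cross-contracted) is bounded by $C(2^{-m})^{6H}|\rho_H(i-j)|^3$, and the first-chaos term (two legs self-contracted, one cross-contracted) by $C(2^{-m})^{6H}|\rho_H(i-j)|$, whence the claimed estimate follows. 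The stated bound retains all three powers so as to give a single estimate uniform over $\tilde{\mathcal{K}}^3_m$; in fact, since an odd-degree covariance cannot be paired using exactly two cross-contractions, for each individual generator only the powers $1$ and $3$ actually occur.

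The main obstacle is the systematic bookkeeping forced by coincidences among $\alpha,\beta,\gamma$: when indices repeat, the iterated integrals $B^{\alpha,\beta,\gamma}_{\tmim,\tmi}$ acquire nonzero chaos-one parts, so that $B^{\alpha,\beta,\gamma}_{\tmim,\tmi}\neq I_3(\tpsi^{\alpha,\beta,\gamma}_{\tmim,\tmi})$ as noted after Corollary~\ref{finite dimensional approximation}, and the admissible contraction patterns depend on which of $\alpha,\beta,\gamma$ agree. I would organize the argument by the partition of $\{\alpha,\beta,\gamma\}$ into equal-index blocks, verify in each case that every surviving pattern consists of exactly three paired legs distributed either as \emph{three cross} or as \emph{one self in each block plus one cross}, and check that the associated nested Young integrals obey the per-layer bound above. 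Given the order-two template the estimates themselves are routine; the only delicate point is ensuring that no contraction pattern produces fewer than the full complement of three $(2^{-m})^{2H}$ factors, which is exactly what the degree-three leg count guarantees.
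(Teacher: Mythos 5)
Your proposal follows essentially the same route as the paper: decompose each $K^m_{\tmim,\tmi}$ into its first and third chaos components via finite-dimensional approximation, use orthogonality to reduce the covariance to inner products in $\mathcal{H}^d$ and $(\mathcal{H}^d)^{\odot 3}$, and bound the resulting (nested) discrete Young integrals by Lemma~\ref{properties of R}, Theorem~\ref{Towghi}, Lemma~\ref{cor to towghi} and Lemma~\ref{p-variation norm for product}, organized by a case analysis over coinciding indices. Your parity observation that only the powers $|\rho_H(i-j)|^1$ and $|\rho_H(i-j)|^3$ actually occur is correct and consistent with the paper's explicit computations, where the stated bound simply keeps all three powers for uniformity.
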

The next lemma follows from Lemma~\ref{covariance estimate of Km}.
\begin{lemma}\label{estimate of K}
	Let $(K^m_t)\in \mathcal{K}^3_m$.
	The following hold.
	\begin{enumerate}
		\item 	Let $p\geq2$. 
				Then there exists a $C$ is a positive constant depending only on $p$ such that
				\begin{align*}
					% \label{estimate of K2}
					E\big[\big|K^{m}_{s,t}\big|^p\big]
					\le
						C_p
						\left(2^{-m}\right)^{(3H-\frac{1}{2})p}
						(t-s)^{\frac{p}{2}}
					\qquad
					\text{for all $s,t\in D_m$ with $s<t$ and $m$.}
				\end{align*}
		\item	For any $\vep'>0$ and $m$, there exists a positive random variable $G_{m,\vep'}$
				such that
				\begin{align*}
					&\sup_m\|G_{m,\vep'}\|_{L^p}
					<
						\infty
					\qquad
					\text{ for all $p\ge 1$,}\\
					&|K^{m}_{s,t}|
					\le
						(2^{-m})^{3H-\frac{1}{2}}
						G_{m,\vep'}
						|t-s|^{\frac{1}{2}-\vep'}
					\qquad
					\text{for all $s,t\in D_m$ with $s<t$ and $m$.}
				\end{align*}
		\item	Let $\frac{1}{3}<H^-<H$.
                        There exists $\vep>0$ and a positive random variable
				$G_\vep\in L^{\infty-}$ such that
				\begin{align*}
					|(2^m)^{2H-\frac{1}{2}}K^{m}_{s,t}|
					\le
						(2^{-m})^\vep
						G_\vep
						|t-s|^{2H^{-}}
					\qquad
					\text{for all $s,t\in D_m$ with $s<t$ and $m$.}
				\end{align*}
	\end{enumerate}
\end{lemma}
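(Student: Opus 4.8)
The plan is to deduce all three statements from the covariance estimate of Lemma~\ref{covariance estimate of Km}, exploiting that each atom $K^m_{\tmim,\tmi}$ lies in the fixed finite sum of Wiener chaoses $\bigoplus_{k=0}^{3}\mathscr{H}_k$; hence so does every increment $K^m_{s,t}=\sum_{i=\floor{2^ms}+1}^{\floor{2^mt}}K^m_{\tmim,\tmi}$ for $s,t\in \Dm$. First I would prove (1). By hypercontractivity (equivalence of $L^p$-norms on a fixed sum of Wiener chaoses, see \cite{nualart}), for every $p\ge 2$ there is $C_p>0$ with $E[|K^m_{s,t}|^p]\le C_p\,(E[|K^m_{s,t}|^2])^{p/2}$, so it suffices to bound the second moment. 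Expanding the square and applying Lemma~\ref{covariance estimate of Km},
\begin{align*}
	E\big[|K^m_{s,t}|^2\big]
	=\sum_{i,j=\floor{2^ms}+1}^{\floor{2^mt}}
		E\big[K^m_{\tmim,\tmi}K^m_{\tmjm,\tmj}\big]
	\le C(2^{-m})^{6H}
		\sum_{i,j=\floor{2^ms}+1}^{\floor{2^mt}}\sum_{k=1}^{3}|\rho_H(i-j)|^{k}.
\end{align*}
Since $|\rho_H|\le 1$ and $\sum_{n\ge0}|\rho_H(n)|<\infty$, the inner double sum is at most $3\big(\sum_{n\ge0}|\rho_H(n)|\big)(\floor{2^mt}-\floor{2^ms})$; together with $\floor{2^mt}-\floor{2^ms}=2^m(t-s)$ for $s,t\in\Dm$ this gives $E[|K^m_{s,t}|^2]\le C'(2^{-m})^{6H-1}(t-s)$. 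Feeding this into the hypercontractivity bound produces exactly the exponent $(3H-\tfrac12)p$ and the factor $(t-s)^{p/2}$ claimed in (1), because $(6H-1)/2=3H-\tfrac12$.

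Next I would obtain (2) by a Garsia--Rodemich--Rumsey argument applied to the rescaled process $L^m_t:=(2^m)^{3H-\frac12}K^m_t$. By (1), $E[|L^m_{s,t}|^p]\le C_p|t-s|^{p/2}$ \emph{uniformly} in $m$. Extending $L^m$ to $[0,1]$ by piecewise linear interpolation on $\Dm$ and applying the GRR inequality exactly as in the proof of Corollary~\ref{cor to moment estimate}, with H\"older exponent $\theta=\tfrac12-\vep'$, one gets $\sup_m E[\|L^m\|_\theta^{q}]<\infty$ for every $q>1/\vep'$; by H\"older's inequality on the probability space this self-improves to $\sup_m\big\|\,\|L^m\|_\theta\big\|_{L^p}<\infty$ for all $p\ge1$. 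Setting $G_{m,\vep'}=\|L^m\|_\theta$ yields the random variable and the bound of (2).

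Finally, (3) follows from (2) by interpolating in the length variable. Multiplying the bound in (2) by $(2^m)^{2H-\frac12}$ and using $(2^m)^{2H-\frac12}(2^{-m})^{3H-\frac12}=(2^{-m})^{H}$ gives $(2^m)^{2H-\frac12}|K^m_{s,t}|\le (2^{-m})^{H}G_{m,\vep'}|t-s|^{1/2-\vep'}$. Because $H^->\tfrac13$ forces $2H^->\tfrac23>\tfrac12>\tfrac12-\vep'$, and because $|t-s|\ge 2^{-m}$ for distinct $s,t\in\Dm$, I would write $|t-s|^{1/2-\vep'}=|t-s|^{2H^-}|t-s|^{(1/2-\vep')-2H^-}\le |t-s|^{2H^-}(2^{-m})^{(1/2-\vep')-2H^-}$ (the exponent being negative), producing $(2^m)^{2H-\frac12}|K^m_{s,t}|\le (2^{-m})^{\vep_0}G_{m,\vep'}|t-s|^{2H^-}$ with $\vep_0=H+\tfrac12-\vep'-2H^-$. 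Since $H+\tfrac12-2H^->\tfrac12-H>0$, choosing $\vep'$ small makes $\vep_0>0$. To replace the $m$-dependent constant by a single random variable, I would split $\vep_0$ in half, put $\vep=\vep_0/2$ and $G_\vep=\sup_m(2^{-m})^{\vep_0/2}G_{m,\vep'}$; then $E[G_\vep^p]\le\sum_m(2^{-m})^{\vep_0 p/2}\sup_{m'}E[G_{m',\vep'}^p]<\infty$, so $G_\vep\in L^{\infty-}$ and the stated bound holds for all $m$ simultaneously.

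The genuinely hard input is Lemma~\ref{covariance estimate of Km} itself, which controls the third-order covariances and is deferred to the next subsection; once it is granted, the three assertions are essentially bookkeeping. Within the present lemma the only delicate point is the passage in (3) from the per-scale random constants $G_{m,\vep'}$ to a single $L^{\infty-}$ random variable, which is precisely what the geometric factor $(2^{-m})^{\vep_0/2}$ is used for.
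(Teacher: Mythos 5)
Your proposal is correct and follows essentially the same route as the paper: the second-moment bound via Lemma~\ref{covariance estimate of Km} combined with hypercontractivity for (1), the Garsia--Rodemich--Rumsey argument of Corollary~\ref{cor to moment estimate} for (2), and the interpolation based on $H+\tfrac12>2H^-$ together with $|t-s|\ge 2^{-m}$ for (3). Your extra step of absorbing the per-scale constants into a single $L^{\infty-}$ random variable via $G_\vep=\sup_m(2^{-m})^{\vep_0/2}G_{m,\vep'}$ is a detail the paper leaves implicit, and it is carried out correctly.
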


\begin{proof}
	We show assertion (1) for $p=2$.
	From Lemma~\ref{covariance estimate of Km}, we obtain the following for $s=\tmk<\tml=t$
	\begin{align*}
		E[(K^m_{s,t})^2]
		\le
			C\sum_{i,j=k+1}^l(2^{-m})^{6H}|\rho_H(i-j)|
		\le
			C'(2^{-m})^{6H}(l-k).
	\end{align*}
	Noting that $(2^{-m})^{6H}(l-k)=(2^{-m})^{6H-1}\frac{l-k}{2^m}=(2^{-m})^{6H-1}(t-s)$,
	we see assertion (1) for $p=2$.
	Combining the hypercontractivity of the Ornstein-Uhlenbeck semigroup and the case $p=2$,
	we obtain the case $p>2$.
We can prove
	assertion (2) using the assertion (1) and the Garsia-Rodemich-Rumsey inequality
 in a similar way to
the proof of Corollary~\ref{cor to moment estimate}.
	Noting that $\frac{1}{2}+H>2H^-$, we can prove assertion~(3).
\end{proof}

\begin{proof}[Proof of Proposition $\ref{estimate of third order}$]
	By the assumption on the H\"older norm of
	$F^m$ and Lemma~\ref{estimate of K} (2) and 
	using the estimate of discrete Young integral, we see that the assertion holds.
\end{proof}

\subsection{Covariance of Wiener chaos of order $3$}

Next, we prove Lemma~\ref{covariance estimate of Km}.
Because $K^{m}_{\tmim,\tmi}$ belongs to the Wiener chaos of order less than or equal to $3$,
one can write
\begin{align*}
	K^{m}_{\tmim,\tmi}
	=
		I_3(\Gamma_{\tmim,\tmi})+I_1(l_{\tmim,\tmi}),
\end{align*}
where $\Gamma_{{\tmim,\tmi}}\in (\mathcal{H}^d)^{\odot 3}$
and $l_{\tmim,\tmi}\in \mathcal{H}^d$.
From this, we have
\begin{align*}
	E[K^{m}_{\tmim,\tmi}K^{m}_{\tmjm,\tmj}]
	&=
		E[I_3(\Gamma_{\tmim,\tmi})I_3(\Gamma_{\tmjm,\tmj})]
		+
		E[I_1(l_{\tmim,\tmi})I_1(l_{\tmjm,\tmj})]\\
	&=
	(\Gamma_{\tmim,\tmi},\Gamma_{\tmjm,\tmj})_{(\mathcal{H}^d)^{\otimes 3}}
		+
		(l_{\tmim,\tmi},l_{\tmjm,\tmj})_{\mathcal{H}^d}.
\end{align*}
For each $K^{m}_{\tmim,\tmi}$,
we will specify $\Gamma_{\tmim,\tmi}$ and $l_{\tmim,\tmi}$
and estimate the covariance.
Estimates for the covariances can be given using
their finite-dimensional approximations
$\Gamma_{\tmim,\tmi}(n)$ and $l_{\tmim,\tmi}(n)$ of
$\Gamma_{\tmim,\tmi}$ and $l_{\tmim,\tmi}$ in the sense that
\[
 \lim_{n\to\infty}\Gamma_{\tmim,\tmi}(n)=\Gamma_{\tmim,\tmi} \quad
\text{in $(\mathcal{H}^d)^{\otimes 3}$},\quad
\lim_{n\to\infty}l_{\tmim,\tmi}(n)=l_{\tmim,\tmi}\quad 
\text{in $\mathcal{H}^d$}.
\]
In what follows, we will use
\begin{align}\label{eq_498311}
	 E[|K^m_{s,t}|^2]\le 2E[|K^{m,1}_{s,t}|^2]+2E[|K^{m,1}_{s,t}|^2]
\end{align}
for $K^m_t=K^{m,1}_{t}+K^{m,2}_{t}$ $(t\in D_m)$.

In the calculation below, we use
\begin{align}
 \left(x_1\odot\cdots\odot x_p,
y_1\odot\cdots\odot y_p\right)_{(\mathcal{H}^d)^{\otimes p}}&=
\frac{1}{p!}\sum_{\sigma\in \mathfrak{G}_p}
\prod_{i=1}^p\left(x_i,y_{\sigma(i)}\right)_{\mathcal{H}^d},
\end{align}
where $x_i,y_j\in \mathcal{H}^d$
and
\begin{align}
 	(\psi^{\alpha}_{\tmim,\tmi},\psi^{\beta}_{\tmjm,\tmj})_{\mathcal{H}^d}
	=
		R([\tmim,\tmi]\times [\tmjm,\tmj])\delta_{\alpha,\beta},
\end{align}
and Proposition~\ref{product formula} (2) 
and Corollary~\ref{finite dimensional approximation}.

\subsubsection{
	The case
	$
		K^{m}_{\tmim,\tmi}
		=
			B^{\alpha}_{\tmim,\tmi}B^{\beta}_{\tmim,\tmi}B^{\gamma}_{\tmim,\tmi}
	$
}
For different integers $\alpha,\beta,\gamma$,
set
\begin{align*}
	\Gamma^{1,\alpha,\alpha,\alpha}_{\tmim,\tmi}
	&=
		(\psi^{\alpha}_{\tmim,\tmi})^{\otimes 3},
	&
	l^{1,\alpha,\alpha,\alpha}_{\tmim,\tmi}
	&=
		3\cdot 2^{-2mH}\psi^{\alpha}_{\tmim,\tmi},\\
	\Gamma^{1,\alpha,\beta,\gamma}_{\tmim,\tmi}
	&=
		\psi^{\alpha}_{\tmim,\tmi}
		\odot
		\psi^{\beta}_{\tmim,\tmi}
		\odot
		\psi^{\gamma}_{\tmim,\tmi},
	&
	l^{1,\alpha,\beta,\gamma}_{\tmim,\tmi}
	&=
		0,\\
	\Gamma^{1,\alpha,\alpha,\beta}_{\tmim,\tmi}
	&=
		(\psi^{\alpha}_{\tmim,\tmi})^{\otimes 2}
		\odot
		\psi^{\beta}_{\tmim,\tmi},
	&
	l^{1,\alpha,\alpha,\beta}_{\tmim,\tmi}
	&=
		2^{-2mH}
		\psi^{\beta}_{\tmim,\tmi}.
\end{align*}
Then, for every $\alpha,\beta,\gamma$, which may be the same, we have
\begin{align*}
	B^{\alpha}_{\tmim,\tmi}B^{\beta}_{\tmim,\tmi}B^{\gamma}_{\tmim,\tmi}
	=
		I_3(\Gamma^{1,\alpha,\beta,\gamma}_{\tmim,\tmi})
		+I_1(l^{1,\alpha,\beta,\gamma}_{\tmim,\tmi}).
\end{align*}
For example, we see
\begin{align*}
	(B^{\alpha}_{\tmim,\tmi})^3
	&=
		2^{-3mH}
		\{
			(2^{mH}B^{\alpha}_{\tmim,\tmi})^3
			-
			3
			\cdot
			2^{mH}
			B^{\alpha}_{\tmim,\tmi}
	   \}
	   +
	   3
	   \cdot
	   2^{-2mH}B^{\alpha}_{\tmim,\tmi}\\
	&=
		2^{-3mH}I_3\big(\big(2^{mH}\psi^{\alpha}_{\tmim,\tmi}\big)^{\odot 3}\big)
		+3\cdot 2^{-2mH}I(\psi^{\alpha}_{\tmim,\tmi})\\
	&=
		I_3\big(\Gamma^{1,\alpha,\alpha,\alpha}_{\tmim,\tmi}\big)
		+I(l^{1,\alpha,\alpha,\alpha}_{\tmim,\tmi}).
\end{align*}
Furthermore, we have
\begin{align*}
	(
		\Gamma^{1,\alpha,\beta,\gamma}_{\tmim,\tmi},
		\Gamma^{1,\alpha,\beta,\gamma}_{\tmjm,\tmj}
	)_{(\mathcal{H}^d)^{\otimes 3}}
	&=
		C_{1,\alpha,\beta,\gamma}
		R([\tmim,\tmi]\times[\tmjm,\tmj])^3,\\
	(
		l^{1,\alpha,\beta,\gamma}_{\tmim,\tmi},
		l^{1,\alpha,\beta,\gamma}_{\tmjm,\tmj}
	)_{\mathcal{H}^d}
	&=
		C_{2,\alpha,\beta,\gamma}
		2^{-4mH}
		R([\tmim,\tmi]\times[\tmjm,\tmj]),
\end{align*}
where $C_{1,\alpha,\beta,\gamma}$ and $C_{2,\alpha,\beta,\gamma}$
are constants depending only on $\alpha,\beta,\gamma$.

\subsubsection{
The case $K^{m}_{\tmim,\tmi}=B^{\alpha,\beta}_{\tmim,\tmi}B^{\gamma}_{\tmim,\tmi}$
}

Because the cases
\begin{align*}
	B^{\alpha,\alpha}_{\tmim,\tmi}B^{\alpha}_{\tmim,\tmi}
	&=
		\frac{1}{2}(B^{\alpha}_{\tmim,\tmi})^3,
	&
	B^{\alpha,\alpha}_{\tmim,\tmi}B^{\beta}_{\tmim,\tmi}
	&=
		\frac{1}{2}
		(B^{\alpha}_{\tmim,\tmi})^2
		B^{\beta}_{\tmim,\tmi}
\end{align*}
have been considered and the identity
\begin{align*}
	B^{\beta,\alpha}_{\tmim,\tmi}B^{\alpha}_{\tmim,\tmi}
	&=
		-B^{\alpha,\beta}_{\tmim,\tmi}B^{\alpha}_{\tmim,\tmi}
		+(B^\alpha_{\tmim,\tmi})^2B^\beta_{\tmim,\tmi}
\end{align*}
holds, we consider the case of
\begin{align*}
	&B^{\alpha,\beta}_{\tmim,\tmi}B^{\alpha}_{\tmim,\tmi}, &
	&B^{\alpha,\beta}_{\tmim,\tmi}B^{\gamma}_{\tmim,\tmi}
\end{align*}
for different $\alpha,\beta,\gamma$.
Set
$
	\Gamma^{\alpha,\beta}_{s,t}
	=
		\int_s^t
			\psi^\alpha_{s,u}
			\odot
			d\psi^{\beta}_u
$.
Furthermore
\begin{align*}
	\Gamma^{2,\alpha,\beta,\gamma}_{\tmim,\tmi}
	&=
		\Gamma^{\alpha,\beta}_{\tmim,\tmi}
		\odot
		\psi^\gamma_{\tmim,\tmi},
	&
	l^{2,\alpha,\beta,\gamma}_{\tmim,\tmi}
	&=
		0,\\
	\Gamma^{2,\alpha,\beta,\alpha}_{\tmim,\tmi}
	&=
		\Gamma^{\alpha,\beta}_{\tmim,\tmi}
		\odot
		\psi^{\alpha}_{\tmim,\tmi},
	&
	l^{2,\alpha,\beta,\alpha}_{\tmim,\tmi}
	&=
		\int_{\tmim}^{\tmi}
			R([\tmim,u]	\times [\tmim,\tmi])
			d\psi^\beta_u.
\end{align*}
Then we have
\begin{align*}
	B^{\alpha,\beta}_{\tmim,\tmi}B^{\gamma}_{\tmim,\tmi}
&=
 I_3(\Gamma^{2,\alpha,\beta,\gamma}_{\tmim,\tmi}),\\
	B^{\alpha,\beta}_{\tmim,\tmi}B^{\alpha}_{\tmim,\tmi}
&=
 I_3(\Gamma^{\alpha,\beta}_{\tmim,\tmi}\odot \psi^\alpha_{\tmim,\tmi})
 +I_1(2\Gamma^{\alpha,\beta}_{\tmim,\tmi}\odot_1 \psi^\alpha_{\tmim,\tmi})\\
&=
 I_3(\Gamma^{2,\alpha,\beta,\alpha}_{\tmim,\tmi})
 +I_1(l^{2,\alpha,\beta,\alpha}_{\tmim,\tmi})
\end{align*}
and
\begin{align*}
	\Gamma^{\alpha,\beta}_{\tmim,\tmi}(n)
	&=
		\sum_{k=1}^n\psi^{\alpha}_{\tmim,u_{k-1}}
		\odot \psi^{\beta}_{u_{k-1},u_k},
	&
	\Gamma^{2,\alpha,\beta,\gamma}_{\tmim,\tmi}(n)
	&=
		\Gamma^{\alpha,\beta}_{\tmim,\tmi}(n)
		\odot
		\psi^\gamma_{\tmim,\tmi},\\
	\Gamma^{2,\alpha,\beta,\alpha}_{\tmim,\tmi}(n)
	&=
		\Gamma^{\alpha,\beta}_{\tmim,\tmi}(n)
		\odot
		\psi^{\alpha}_{\tmim,\tmi},
	&
	l^{2,\alpha,\beta,\alpha}_{\tmim,\tmi}(n)
	&=
		\sum_{k=1}^n
			R([\tmim,u_{k-1}]	\times [\tmim,\tmi])
			\psi^\beta_{u_{k-1},u_k},
\end{align*}
where $\{u_k\}_{k=0}^n$ is a partition of $[\tmim,\tmi]$.
Furthermore, we have
\begin{align*}
	(
		\Gamma^{2,\alpha,\beta,\gamma}_{\tmim,\tmi}(n),
		\Gamma^{2,\alpha,\beta,\gamma}_{\tmjm,\tmj}(n)
	)_{(\mathcal{H}^d)^{\otimes 3}}
	&=
		S_{\tmim,\tmi}(n),\\
	(
		\Gamma^{2,\alpha,\beta,\alpha}_{\tmim,\tmi}(n),
		\Gamma^{2,\alpha,\beta,\alpha}_{\tmjm,\tmj}(n)
	)_{(\mathcal{H}^d)^{\otimes 3}}
	&=
		S_{\tmim,\tmi}(n)
		+
		T_{\tmim,\tmi}(n),\\
	(
		l^{2,\alpha,\beta,\alpha}_{\tmim,\tmi}(n),
		l^{2,\alpha,\beta,\alpha}_{\tmjm,\tmj}(n)
	)_{\mathcal{H}^d}
	&=
		U_{\tmim,\tmi}(n).
\end{align*}
Here by letting $\{v_{k'}\}$ be a partition of $[\tmjm,\tmj]$, we set
\begin{align*}
	S_{\tmim,\tmi}(n)
	&=
		\frac{1}{6}
		R([\tmim,\tmi]\times[\tmjm,\tmj])\\
	&
	\qquad
	\qquad
		\times
		\sum_{k,k'=1}^n
			R([\tmim,u_{k-1}]\times[\tmjm,v_{k'-1}])
			R([u_{k-1},u_k]\times[v_{k'-1},v_{k'}]),\\
	T_{\tmim,\tmi}(n)
	&=
		\frac{1}{6}
		\sum_{k,k'=1}^n
			R([\tmjm,\tmj]\times[\tmim,u_{k-1}])
			R([\tmim,\tmi]\times[\tmjm,v_{k'-1}])\\
	&
	\qquad
	\qquad
	\qquad
	\qquad
			\times
			R([u_{k-1},u_k]\times[v_{k'-1},v_{k'}]),\\
	U_{\tmim,\tmi}(n)
	&=
			\sum_{k,k'=1}^n
			R([\tmim,\tmi]\times[\tmim,u_{k-1}])
			R([\tmjm,\tmj]\times[\tmjm,v_{k'-1}])\\
	&
	\qquad
	\qquad
	\qquad
	\qquad
			\times
			R([u_{k-1},u_k]\times[v_{k'-1},v_{k'}]).
\end{align*}

\subsubsection{The case $K^{m}_{\tmim,\tmi}=B^{\alpha,\beta,\gamma}_{\tmim,\tmi}$}
Let $\alpha,\beta,\gamma$ be three different integers.
Because the case
$
	B^{\alpha,\alpha,\alpha}_{\tmim,\tmi}
	=
		\frac{1}{6}(B^{\alpha}_{\tmim,\tmi})^3
$
has been considered
and the identities
\begin{align*}
B^{\alpha,\beta,\alpha}_{\tmim,\tmi}
   &=
	   B^{\alpha,\beta}_{\tmim,\tmi}B^{\alpha}_{\tmim,\tmi}
	   -2B^{\alpha,\alpha,\beta}_{\tmim,\tmi},\\
	B^{\beta,\alpha,\alpha}_{\tmim,\tmi}
	&=
		\frac{1}{2}
		(B^{\alpha}_{\tmim,\tmi})^2
		B^{\beta}_{\tmim,\tmi}
		-B^{\alpha,\beta}_{\tmim,\tmi}B^{\alpha}_{\tmim,\tmi}
		+B^{\alpha,\alpha,\beta}_{\tmim,\tmi}
\end{align*}
hold (use \eqref{eq_498311}), we consider other cases.
First, set
\begin{align*}
	\Gamma^{3,\alpha,\beta,\gamma}_{\tmim,\tmi}
	&=
	\int_{\tmim}^{\tmi}
		\left(
			\int_{\tmim}^v
				\psi^{\alpha}_{\tmim,u}
				\odot
				d\psi^{\beta}_u
		\right)
		\odot
		d\psi^{\gamma}_v,
	&
	l^{3,\alpha,\beta,\gamma}_{\tmim,\tmi}
	&=
		0,\\
	\Gamma^{3,\alpha,\alpha,\beta}_{\tmim,\tmi}
	&=
	\int_{\tmim}^{\tmi}
		\psi^{\alpha}_{\tmim,u}
		\odot
		\psi^{\alpha}_{\tmim,u}
		\odot
		d\psi^{\beta}_u,
	&
	l^{3,\alpha,\alpha,\beta}_{\tmim,\tmi}
	&=
		\frac{1}{2}
		\int_{\tmim}^{\tmi}
			(u-\tmim)^{2H}
			d\psi^\beta_u.
\end{align*}
Then we have
\begin{align*}
 B^{\alpha,\beta,\gamma}_{\tmim,\tmi}&=
I_3(\Gamma^{3,\alpha,\beta,\gamma}_{\tmim,\tmi}), \quad\quad
B^{\alpha,\alpha,\beta}_{\tmim,\tmi}=
I_3(\Gamma^{3,\alpha,\alpha,\beta}_{\tmim,\tmi})
+I_1(l^{3,\alpha,\alpha,\beta}_{\tmim,\tmi})
\end{align*}
and
\begin{align*}
 \Gamma^{3,\alpha,\beta,\gamma}_{\tmim,\tmi}(n)&=
\sum_{1\le k<l\le n}\psi^{\alpha}_{\tmim, u_{k-1}}\odot
\psi^{\beta}_{u_{k-1},u_k}\odot \psi^{\gamma}_{u_{l-1},u_l},\\
\Gamma^{3,\alpha,\alpha,\beta}_{\tmim,\tmi}(n)&=
\sum_{1\le k\le n}
\psi^{\alpha}_{\tmim,u_{k-1}}\odot\psi^{\alpha}_{\tmim,u_{k-1}}
\odot \psi^{\beta}_{u_{k-1},u_k},\\
l^{3,\alpha,\alpha,\beta}_{\tmim,\tmi}(n)&=
\sum_{1\le k\le n}
(u_{k-1}-\tmim)^{2H}\psi^{\beta}_{u_{k-1},u_k}.
\end{align*}
We have
\begin{equation}
\begin{aligned}
&
	(
		\Gamma^{3,\alpha,\beta,\gamma}_{\tmim,\tmi}(n),
		\Gamma^{3,\alpha,\beta,\gamma}_{\tmjm,\tmj}(n)
	)_{(\mathcal{H}^d)^{\otimes 3}}\\
&\qquad\qquad
	=
		\frac{1}{6}
		\sum_{\substack{1\le k<l\le n,\\1\le k'<l'\le n}}
			R([\tmim,u_{k-1}]\times[\tmjm,v_{k'-1}])\\
&\qquad\qquad\qquad\qquad\qquad\qquad\qquad
			\times
			R([u_{k-1},u_{k}]\times[v_{k'-1},v_{k'}])
			R([u_{l-1},u_{l}]\times[v_{l'-1},v_{l'}]),\label{double integral}\\
&
	(
		\Gamma^{3,\alpha,\alpha,\beta}_{\tmim,\tmi}(n),
		\Gamma^{3,\alpha,\alpha,\beta}_{\tmjm,\tmj}(n)
	)_{(\mathcal{H}^d)^{\otimes 3}}\\
&\qquad\qquad
  	=
		\frac{1}{3}
		\sum_{k,k'=1}^n
			R([\tmim,u_{k-1}]\times[\tmjm,v_{k'-1}])^2
			R([u_{k-1},u_k]\times[v_{k'-1},v_{k'}]),\\
&
	(
		l^{3,\alpha,\alpha,\beta}_{\tmim,\tmi}(n), 
		l^{3,\alpha,\alpha,\beta}_{\tmjm,\tmj}(n)
	)_{\mathcal{H}^d}\\
&\qquad\qquad
	=
		\sum_{k,k'=1}^n
			(u_{k-1}-\tmim)^{2H}
			(v_{k'-1}-\tmjm)^{2H}
			R([u_{k-1},u_k]\times[v_{k'-1},v_{k'}]).
\end{aligned}
\end{equation}

It is necessary to clarify why we are able to obtain the expansion formula
for the iterated integral
$B^{\alpha,\alpha,\beta}_{s,t}$ above.
By definition, we have
\begin{align}
 B^{\alpha,\alpha,\beta}_{s,t}&=\int_s^t\frac{1}{2}(B^\alpha_{s,u})^2
dB^{\beta}_u
=\lim_{n\to\infty}\left(\sum_{i=1}^n\frac{1}{2}(B^\alpha_{s,t_{i-1}})^2
B^{\beta}_{t_{i-1},t_i}+
\sum_{i=1}^nB^{\alpha}_{s,t_{i-1}}B^{\alpha,\beta}_{t_{i-1},t_i}\right),
\label{Balphaalphabeta}
\end{align}
where $\{t_i\}$ is a partition of $[s,t]$.
Also, we have
\begin{multline*}
	E
		\Bigg[
			\Bigg(
				\sum_{i=1}^n
					B^{\alpha}_{s,t_{i-1}}
					B^{\alpha,\beta}_{t_{i-1},t_i}
			\Bigg)^2
		\Bigg]
	=
		\sum_{i,j=1}^n
			\int_{t_{i-1}\le u\le t_i, t_{j-1}\le v\le t_j}
				E
					[
						B^{\alpha}_{s,t_{i-1}}
						B^{\alpha}_{s,t_{j-1}}
						B^{\alpha}_{t_{i-1},u}
						B^{\alpha}_{t_{j-1},v}
					]\\
					\times
					dR([t_{i-1},u]\times [t_{j-1},v]).
\end{multline*}
Using this, Lemma~\ref{properties of R} (2), Wick's formula for 
the expectation of the 
product of Gaussian random variables and 
by a similar calculation to (\ref{L2limit}),
we can prove 
\begin{align*}
  \lim_{n\to\infty}\sum_{i=1}^n
  B^{\alpha}_{s,t_{i-1}}B^{\alpha,\beta}_{t_{i-1},t_i}=0
  \qquad\qquad
  \text{in $L^2$.}
\end{align*}
Therefore, we need to consider the first term only in
(\ref{Balphaalphabeta}).
This leads to the expansion formula above.

\begin{proof}[Proof of Lemma \textup{\ref{covariance estimate of Km}}]
% We now explain how to obtain estimate (\ref{estimate of K2}).
By the identities obtained in this subsection,
using Theorem~\ref{Towghi}, Lemmas~\ref{cor to towghi} and~\ref{p-variation norm for product},
we see that \eqref{covariance estimate of Km} holds for any
$(K^{m}_t)\in \mathcal{K}^3_m$.
For example, we have
\begin{align*}
	|
		(
			\Gamma^{2,\alpha,\beta,\alpha}_{\tmim,\tmi}(n),
			\Gamma^{2,\alpha,\beta,\alpha}_{\tmjm,\tmj}(n)
		)_{(\mathcal{H}^d)^{\otimes 3}}
	|
	&\le
		C2^{-6mH}|\rho_H(i-j)|^3,\\
	|
		(
			\Gamma^{3,\alpha,\alpha,\beta}_{\tmim,\tmi}(n),
			\Gamma^{3,\alpha,\alpha,\beta}_{\tmjm,\tmj}(n)
		)_{(\mathcal{H}^d)^{\otimes 3}}
	|
	&\le
		C2^{-6mH}
		|\rho_H(i-j)|^3,\\
	|
		(
			\Gamma^{3,\alpha,\beta,\gamma}_{\tmim,\tmi}(n),
			\Gamma^{3,\alpha,\beta,\gamma}_{\tmjm,\tmj}(n)
		)_{(\mathcal{H}^d)^{\otimes 3}}
	|
	&\le
		C2^{-6mH}
		|\rho_H(i-j)|^3,\\
	|
		(
			l^{3,\alpha,\alpha,\beta}_{\tmim,\tmi}(n), 
			l^{3,\alpha,\alpha,\beta}_{\tmjm,\tmj}(n)
		)_{\mathcal{H}^d}
	|
	&\le
		C 2^{-6mH}|\rho_H(i-j)|.
\end{align*}
The estimates for other terms are similar to the above.
We should note that the sum appeared in
$
	(
		\Gamma^{3,\alpha,\beta,\gamma}_{\tmim,\tmi}(n),
		\Gamma^{3,\alpha,\beta,\gamma}_{\tmjm,\tmj}(n)
	)_{(\mathcal{H}^d)^{\otimes 3}}
$
is a double discrete Young integral
and different from other terms.
In the estimate of 
$
	(
		\Gamma^{3,\alpha,\alpha,\beta}_{\tmim,\tmi}(n),
		\Gamma^{3,\alpha,\alpha,\beta}_{\tmjm,\tmj}(n)
	)_{(\mathcal{H}^d)^{\otimes 3}}
$,
we apply Lemma~\ref{p-variation norm for product} (2).

 \end{proof}

\appendix

\section{Multidimensional Young integral}\label{appendix I}

First, we recall basic definitions and results concerning
multidimensional Young integrals.
We next explain some more auxiliary results for our study.

\subsection{Definitions and basic results}
Let $0\le s_r<t_r\le 1$ $(1\le r\le N)$ and
set $I=\prod_{r=1}^N[s_r,t_r]$.
We call $\cP=\cP_1\times\cdots\times \cP_N$ a grid-like partition of $I$,
where $\cP_r:s_r=t^r_{0}<\cdots<t^r_{m_r}=t_r$ is a partition of $[s_r,t_r]$
for every $1\le r\le N$.
We denote the all functions defined on the partition points
$(t^1_{i_1},\ldots,t^N_{i_N})$ of $\cP$ by $C(I_\cP)$.

Here we define notion for functions $f\in C(I_\cP)$.
For $u_i\in \cP_i$ ($1\le i\le k$) and
$u^1_i<u^2_i$ ($u^1_i,u^2_i\in \cP_i$, $k+1\le i\le N$),
we define
\begin{multline}
	\label{increment function}
	f
		\left(
			u_1,\ldots,u_k,
			[u^1_{k+1},u^2_{k+1}]\times\cdots\times[u^1_N,u^2_N]
		\right)\\
	=
		\sum_{\sigma_j=1,2,k+1\le j\le N}
			(-1)^{\sum_{j={k+1}}^N\sigma_j}
			f
				\left(
					u_1,\ldots,u_k,u^{\sigma_{k+1}}_{k+1},\ldots,u^{\sigma_N}_N
				\right).
\end{multline}
Let $\cP'_r$ be a partition of
$I$ whose all partition points are included in the partition points
of $\cP_r$.
We call the grid-like partition defined by
$\cP'=\cP_1'\times\cdots\times \cP_N'$ a sub-partition of $\cP$.
Note $f\in C(I_\cP)$ implies $f|_{I_{\cP'}}\in C(I_{\cP'})$.
For a grid-like partition $\cP=\cP_1\times\cdots\times \cP_N$, $p\ge 1$ and
$f\in C(I_{\cP})$,
we define
\begin{align*}
	\tilde{V}_p(f ; I_\cP)
	&=
		\left\{
			\sum_{i_1=1}^{m_1}
			\cdots
			\sum_{i_N=1}^{m_N}
				|f([t^1_{i_1-1},t^1_{i_1}]\times\cdots\times[t^N_{i_N-1},t^N_{i_N}])|^p
		\right\}^{1/p},\\
	V_p(f ; I_{\cP})
 	&=
		\max
			\big\{
				\tilde{V}_p(f|_{I_{\cP'}}; I_{\cP'})~\big|~
				\text{$\cP'$ moves in the set of all sub-partitions of $\cP$}
			\big\}.
\end{align*}
Let $A=\{n_1,\ldots,n_l\}$ ($n_1<\cdots<n_l$)
be a non-empty subset of $\{1,\ldots,N\}$.
Let us define a function $f(s_r;r\in A^c)$ on
$(\prod_{r\in A}[s_r,t_r])_{\prod_{r\in A}\cP_r}$ 
which is a product space $\prod_{r\in A}[s_r,t_r]$ with the partition
$\prod_{r\in A}\cP_r$ by
\begin{align}
	\label{eq489108901}
	f(s_r;r\in A^c)(u_{n_1},\ldots,u_{n_l})
	=
		f(u_1,\ldots,u_N)|_{u_r=s_r,\, r\in A^c}.	
\end{align}
We may write
\begin{align}
	\label{eq4789317891}
	f(s_r;r\in A^c)(u_{n_1},\ldots,u_{n_l})
	=
		f(s_r,u_a;r\in A^c,a\in A).
\end{align}
When $s_r=0$ for all $r\in A^c$, we write $f(0_r;r\in A^c)$.
When $N=4$ and $A=\{1,3\}$, we have
$f(s_r;r\in \{2,4\})(u_1,u_3)=f(u_1,s_2,u_3,s_4)$ for $(u_1,u_3)\in \cP_1\times\cP_3$.
We define
\begin{align*}
	\bar{V}_p(f; I_\cP)
	&=
		\sum_{A\subset \{1,\ldots,N\}}
			V_p\left(f(s_a;a\in A^c) ; \left(\prod_{r\in A}[s_r,t_r]\right)_{\prod_{r\in A}\cP_r}\right)
		+
		|f(s_1,\ldots,s_N)|.
\end{align*}

Next we define notion for continuous functions $f\in C(I)$.
For $u_i\in [s_i,t_i]$ ($1\le i\le k$) and
$u^1_i<u^2_i$ ($u^1_i,u^2_i\in[s_i,t_i]$, $k+1\le i\le N$),
we define
$
	f
	(
		u_1,\ldots,u_k,
		[u^1_{k+1},u^2_{k+1}]\times\cdots\times[u^1_N,u^2_N]
	)
$
similarly to \eqref{increment function}.
For a continuous function $f\in C(I)$,
the $p$-variation norm on $I$ is defined by
\begin{align*}
	V_p(f ; I)
	&=
		\sup
			\big\{
				\tilde{V}_p(f|_{I_\cP}; I_\cP)~\big|~
				\text{$\cP$ moves all grid-like partition of $I$}
			\big\},\\
	\bar{V}_p(f; I)
	&=
		\sum_{A\subset \{1,\ldots,N\}}
			V_p\left(f(s_a;a\in A^c) ; \prod_{r\in A}[s_r,t_r]\right)
		+
		|f(s_1,\ldots,s_N)|.
\end{align*}
Unlike the one-dimensional case, the functional
$I\mapsto V_p(f ; I)^p$ is not superadditive generally.
The controlled $p$-variation norm satisfies such a satisfactory property.
The controlled $p$-variation norm $\|f\|_{p\hyp var, I}$
of the continuous function $f$ on
$I$ is defined as follows.
\begin{align*}
 \|f\|_{p\hyp var, I}&=\sup\Biggl\{
\left(\sum_{k=1}^K|f(I_k)|^p\right)^{1/p}~\Bigg |~
\text{$I=\cup_{k=1}^KI_k$, where $I_k=\prod_{r=1}^N[s^k_r,t^k_r]\subset I$ 
and}\\
&\qquad\text{
$I_k\cap I_l$ $(k\ne l)$ is included in their boundaries and
$1\le K<\infty$}
\Biggr\}.
\end{align*}

The following theorem is important for clarifying the relation between 
the two norms above.
See Friz-Victoir~(\cite{friz-victoir2011}).

\begin{theorem}\label{FV}
 Let $I$ be a rectangle in $[0,1]^N$.
Then for any $p\ge 1$ and $\vep>0$, there exists $C_{\vep,p}$
such that
\begin{align*}
 C_{\vep,p}\|f\|_{(p+\vep)\hyp var, I}\le V_p(f ; I)
\le \|f\|_{p\hyp var}.
\end{align*}
\end{theorem}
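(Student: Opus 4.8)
The statement is the Friz--Victoir comparison theorem, so the plan is to prove the two inequalities separately: the right-hand one directly, and the left-hand one by combining a super-additive grid control with a dyadic decomposition across scales. First I would dispose of $V_p(f;I)\le\|f\|_{p\hyp var,I}$. Any grid-like partition $\cP$ of $I$ tiles $I$ by essentially disjoint sub-rectangles, and on each cell the rectangular increment is exactly a term entering $\tilde{V}_p(f|_{I_\cP};I_\cP)$. Hence each $\tilde{V}_p(f|_{I_\cP};I_\cP)$ is one of the sums competing in the supremum defining $\|f\|_{p\hyp var,I}$, and taking the supremum over grid-like $\cP$ gives the claim.

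For the harder inequality, set $\omega(J)=V_p(f;J)^p$ for sub-rectangles $J\subset I$. The engine is the grid super-additivity of $\omega$: if $J$ is split into $J'$ and $J''$ by a coordinate hyperplane, then choosing near-optimal grids of $J'$ and $J''$, refining them so that their transverse partitions agree on the shared face, and concatenating produces a grid of $J$ whose cells are exactly those of the two pieces; since the $\ell^p$-sums add, $\omega(J')+\omega(J'')\le\omega(J)$. Iterating gives $\sum_{\mathrm{cells}}\omega(\mathrm{cell})\le\omega(I)$ for every grid of $I$. I would also record the elementary facts $|f(J)|\le V_p(f;J)$ and, by extending a grid of $J$ to one of $I$, the monotonicity $V_p(f;J)\le V_p(f;I)$.

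Given an arbitrary essentially disjoint decomposition $I=\bigcup_k R_k$, I would group the rectangles dyadically by the size of their increments: for $j\ge 0$ let $B_j=\{k : 2^{-(j+1)}V_p(f;I)<|f(R_k)|\le 2^{-j}V_p(f;I)\}$, which exhausts all $k$ because $|f(R_k)|\le V_p(f;R_k)\le V_p(f;I)$. Then $\sum_k|f(R_k)|^{p+\vep}\le\sum_{j\ge 0}|B_j|\,(2^{-j}V_p(f;I))^{p+\vep}$, and if one has the per-scale count $|B_j|\le C\,2^{jp}$, the sum is bounded by $C\,V_p(f;I)^{p+\vep}\sum_{j\ge 0}2^{-j\vep}$, a geometric series that converges \emph{precisely because} $\vep>0$. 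This yields $\|f\|_{(p+\vep)\hyp var,I}\le C_{\vep,p}^{-1}V_p(f;I)$, and it is here that the loss of exponent is forced.

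The main obstacle is exactly the count $|B_j|\le C\,2^{jp}$, that is, the assertion that an essentially disjoint family of sub-rectangles each carrying grid $p$-variation above a threshold $\delta$ has cardinality $O(\omega(I)/\delta)$. For families coming from a genuine grid this is immediate from the super-additivity above, but a general rectangle decomposition (staircase or pinwheel patterns) is not grid-aligned, and naive super-additivity over disjoint rectangles must fail---otherwise one would obtain the equivalence at the same exponent $p$, which is known to be false. Controlling these non-grid families is the genuine combinatorial heart of the theorem; I would attack it by extending all faces of the $R_k$ to their grid closure, writing each $f(R_k)$ as an additive sum of cell increments, and running a hierarchical counting over the resulting cells. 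This is the content of Friz--Victoir~\cite{friz-victoir2011}, to which I refer for the remaining combinatorial details.
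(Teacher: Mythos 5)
The paper does not actually prove this theorem: it is imported verbatim from Friz--Victoir \cite{friz-victoir2011}, so there is no internal proof to compare against, and your treatment of the easy inequality $V_p(f;I)\le\|f\|_{p\hyp var,I}$ is correct as stated (every grid-like partition tiles $I$ by essentially disjoint rectangles, so its $\ell^p$-sum is one of the competitors in the controlled norm). The dyadic skeleton you set up for the reverse inequality --- group rectangles by the size of $|f(R_k)|$, count per scale, and sum a geometric series in $2^{-j\vep}$ --- is also the right shape, and correctly locates where the loss of exponent enters.

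The genuine gap is in your ``engine.'' You assert that $\omega(J)=V_p(f;J)^p$ is super-additive under a coordinate-hyperplane split by concatenating near-optimal grids of the two pieces. This is false, and the paper says so explicitly two lines above the theorem (``the functional $I\mapsto V_p(f;I)^p$ is not superadditive generally''). The concatenation step fails because you must first pass to a common refinement of the transverse partitions, and $\tilde{V}_p$ is \emph{not} monotone under refinement for $p>1$: splitting an increment $c$ into $c/2+c/2$ replaces $|c|^p$ by $2^{1-p}|c|^p$. A concrete counterexample with $p=2$: on $[0,2]\times[0,1]$ take $f$ bilinear on each of the four cells $[0,1]\times[0,\tfrac12]$, $[0,1]\times[\tfrac12,1]$, $[1,2]\times[0,\tfrac12]$, $[1,2]\times[\tfrac12,1]$ with rectangular increments $1,-1,1,1$; then $V_2(f;[0,1]\times[0,1])^2=2$, $V_2(f;[1,2]\times[0,1])^2=4$, but $V_2(f;[0,2]\times[0,1])^2=4<6$. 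Fortunately the statement you actually need for grid-aligned families does not require super-additivity at all: if the $R_k$ are the cells of a single grid $\cP$, then $\sum_k|f(R_k)|^p=\tilde{V}_p(f|_{I_\cP};I_\cP)^p\le V_p(f;I)^p$ directly from the definition. So that part can be repaired. What cannot be repaired within your sketch is the count $|B_j|\le C2^{jp}$ for an \emph{arbitrary} essentially disjoint family of rectangles (staircase and pinwheel configurations), which you correctly identify as the combinatorial heart and then defer entirely to \cite{friz-victoir2011}. Since that count \emph{is} the theorem --- everything else is bookkeeping --- the proposal as written does not constitute a proof, though it is no less complete than the paper's own citation-only treatment.
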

For $f, g\in C(I_\cP)$, we define
\begin{align*}
	\int_{I_\cP}f(u_1,\ldots,u_n)dg(u_1,\ldots,u_n)
	&=
		\sum_{i_1=1}^{m_1}
		\cdots
		\sum_{i_N=1}^{m_N}
			f(t^1_{i_1-1},\ldots,t^N_{i_N-1})
			g
				\left(
					[t^1_{i_1-1},t^1_{i_1}]
					\times
					\cdots
					\times
					[t^N_{i_N-1},t^N_{i_N}]
				\right).
\end{align*}

The following theorem is due to Towghi (\cite{towghi}).

\begin{theorem}\label{Towghi}
Let $p,q$ be positive numbers satisfying $\frac{1}{p}+\frac{1}{q}>1$.
Let $\cP=\cP_1\times\cdots\times \cP_N$ be a grid-like partition of $I$.
 Let $f\in C(I_{\cP})$ and $g\in C(I_{\cP})$.
The following constants $C$ depend only on $p,q$ and $N$.
\begin{enumerate}
 \item[$(1)$] It holds that
\begin{align*}
\left|\int_{I_\cP}
f(u_1,\ldots,u_N)dg(u_1,\ldots,u_N)\right|
&\le C\bar{V}_p(f ; I_\cP)
V_q(g ; I_\cP).
\end{align*}
\item[$(2)$] If $f(\cdots,s_r,\cdots)=0$ for all $1\le r\le N$,
then 
\begin{align*}
\left|\int_{I_\cP}
f(u_1,\ldots,u_N)dg(u_1,\ldots,u_N)\right|
&\le C V_p(f ; I_\cP)
V_q(g ; I_\cP).
\end{align*}
\end{enumerate}
\end{theorem}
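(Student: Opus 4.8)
The plan is to prove the estimate by induction on the dimension $N$, exploiting the fact that in the present discrete setting $\int_{I_\cP} f\,dg$ is literally the finite Riemann--Stieltjes sum $S(\cP)=\sum_{i_1,\ldots,i_N} f(t^1_{i_1-1},\ldots,t^N_{i_N-1})\,g([t^1_{i_1-1},t^1_{i_1}]\times\cdots\times[t^N_{i_N-1},t^N_{i_N}])$, so that no limiting procedure is required and everything reduces to bounding this sum. The base case $N=1$ is the classical one-dimensional Young--L\'oève inequality $|S(\cP)-f(s_1)\,g([s_1,t_1])|\le C\,V_p(f)V_q(g)$, itself proved by greedy point removal using that deleting an interior point $t^1_j$ alters the sum by exactly $(f(t^1_j)-f(t^1_{j-1}))\,g([t^1_j,t^1_{j+1}])$; combining this with $|f(s_1)g([s_1,t_1])|\le |f(s_1)|\,V_q(g)$ gives $|S(\cP)|\le C\,\bar V_p(f)V_q(g)$, and part (2) follows since $f(\cdots,s_1,\cdots)=0$ removes the $|f(s_1)|$ contribution.

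For the inductive step I would run a greedy hyperplane-removal scheme. Starting from $\cP$, I successively delete one interior hyperplane $\{u_r=t^r_j\}$ at a time, each deletion merging two adjacent slabs in direction $r$ and coarsening $\cP_r$, until the trivial partition $\{[s_r,t_r]\}$ is reached in every coordinate, where $S$ collapses to $f(s_1,\ldots,s_N)\,g(I)$, bounded by $|f(s_1,\ldots,s_N)|\,V_q(g)$. The key computation is that removing $\{u_r=t^r_j\}$ changes $S(\cP)$ by a quantity equal to an $(N-1)$-dimensional Riemann--Stieltjes sum in the remaining variables, whose coefficients are the direction-$r$ increments of $f$ across $[t^r_{j-1},t^r_j]$ and whose integrator increments are the rectangular increments of $g$ over the complementary boxes times $[t^r_j,t^r_{j+1}]$. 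Applying the induction hypothesis on the dimension to this $(N-1)$-dimensional sum bounds the change by $C\,\bar V_p(\,\cdot\,)V_q(\,\cdot\,)$ taken over an $(N-1)$-dimensional face, where the two factors are controls of the $r$-increments of $f$ and of $g$ respectively. To make the removals summable I would select, among the $m_r-1$ interior hyperplanes available in direction $r$, the one whose associated control product is smallest; choosing the controls to be genuinely superadditive, this greedy index yields a change of order $(m_r-1)^{-\theta}$ times the total controls, where $\theta=\tfrac1p+\tfrac1q>1$. Telescoping over all removals and all $N$ directions, the total contribution is dominated by $\big(\sum_{k\ge 1}k^{-\theta}\big)=\zeta(\theta)<\infty$ times $\bar V_p(f)V_q(g)$, plus the terminal term $|f(s_1,\ldots,s_N)|V_q(g)$, which produces part (1) with $C=C(p,q,N)$. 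For part (2) the hypothesis $f(\cdots,s_r,\cdots)=0$ annihilates exactly the lower-dimensional boundary restrictions that distinguish $\bar V_p(f)$ from $V_p(f)$ (the terms indexed by proper subsets $A\subsetneq\{1,\ldots,N\}$) together with the terminal term $f(s_1,\ldots,s_N)g(I)$, leaving only the top-dimensional factor $V_p(f)$.

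The main obstacle I anticipate is the failure of superadditivity of the increment functional $I\mapsto V_p(f;I)^p$ in dimension $N\ge 2$, flagged just before Theorem~\ref{FV}: this is precisely what blocks a naive greedy choice against $V_p$ itself. I would circumvent it by running the whole hyperplane-removal argument against the controlled $p$- and $q$-variation norms $\|\cdot\|_{p\hyp var}$ and $\|\cdot\|_{q\hyp var}$, which \emph{are} superadditive, and then converting back to $V_p,V_q$ at the very end by Theorem~\ref{FV}, absorbing the attendant $\vep$-loss in the exponent into the constant $C_{\vep,p}$. The secondary, purely bookkeeping difficulty is that taking direction-$r$ increments of $f$ repeatedly generates all the coordinate-boundary restrictions $f(s_a;a\in A^c)$; tracking these is exactly what forces the appearance of $\bar V_p$ (the sum over all subsets $A$) in the statement and requires the induction hypothesis to be carried for the full $\bar V_p$ quantity rather than for $V_p$ alone.
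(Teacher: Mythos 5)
The paper does not actually prove this statement: it is quoted from Towghi's article and used as a black box, so there is no internal proof to compare against. Your argument is essentially the standard (and correct) proof of the multidimensional Young inequality: induction on $N$, with the inductive step carried out by greedy removal of interior hyperplanes, the change under one removal being an $(N-1)$-dimensional discrete Young sum whose integrand is a direction-$r$ increment of $f$, and the summability of the removals coming from $\theta=\frac1p+\frac1q>1$ via a power-mean/H\"older argument, producing $\zeta(\theta)$-type constants. This is also exactly the technique the authors themselves use in the appendix to prove Proposition~\ref{a multidimensional young integral} (their functions $\tilde{\phi}_K$ implement your ``sum out one coordinate at a time'' scheme, and the $\zeta(\theta)^N$ constant appears there for the same reason). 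Your two flagged difficulties are the right ones and your fixes are sound: (i) replacing $V_p$ by the superadditive controlled norm $\|\cdot\|_{p\hyp var}$ and converting back by Theorem~\ref{FV} works, with the caveat that the $\vep$-losses accumulated over the $N$ nested applications must be organized into a finite, strictly decreasing ladder of exponents fixed in advance so that the Young condition $\frac{1}{p'}+\frac{1}{q'}>1$ survives to the last step (the paper does precisely this kind of bookkeeping elsewhere, e.g.\ the sequence $H^->H_1>\cdots>H_r$ in the proof of Lemma~\ref{iterated integral for CM}); (ii) the boundary restrictions $f(s_a;a\in A^c)$ generated by repeated differencing are what force $\bar{V}_p$ into the statement, and carrying the full $\bar{V}_p$ through the induction handles them. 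One small simplification for part (2): under the vanishing hypothesis every proper boundary restriction of $f$ is identically zero, so $\bar{V}_p(f;I_\cP)=V_p(f;I_\cP)$ and (2) is an immediate corollary of (1) rather than requiring a separate run of the induction.
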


\begin{remark}
	By applying the theorem presented above,
we see that for any $f$ and $g$ which satisfy
$\bar{V}_p(f ; I)<\infty$ and $V_q(g ; I)<\infty$ with
$\frac{1}{p}+\frac{1}{q}>1$
the limit
\begin{align*}
 \lim_{|\cP|\to 0}\int_{I_\cP}f(u_1,\ldots,u_n)dg(u_1,\ldots,u_n)
\end{align*}
exists and the limit is called the Young integral
of $f$ against $g$ and we denote the limit by
\begin{align*}
	\int_{I}f(u_1,\ldots,u_n)dg(u_1,\ldots,u_n).
\end{align*}
\end{remark}

\subsection{Auxiliary results}

Next, we collect necessary results used in this paper.
We apply the following lemma to estimate the sum of
(\ref{double integral}).

\begin{lemma}\label{cor to towghi}
	Let $I=[s_1,t_1]\times [s_2,t_2]\subset [0,1]^2$ and
	let $\cP=\cP_1\times \cP_2$ be a grid-like partition,
	where $\cP_1:s_1=u_0<\cdots<u_n=t_1$ and $\cP_2:s_2=v_0<\cdots<v_{m}=t_2$.
	We write $I_{i,j}=[u_{i-1},u_i]\times [v_{j-1},v_j]$ 
	for every $1\le i\le n$ and $1\le j\le m$.

Let $p,q,q'$ be non-negative numbers satisfying
$p>1$, $q'>q>1$ and
$\frac{1}{p}+\frac{1}{q'}>1$.
Let $f\in C(I_{\cP})$.
Let $g\in C(I)$ and suppose $V_q(g ; I)<\infty$.
We define $h\in C(I_\cP)$ by
$
	h(s_1,\cdot)
	=
		h(\cdot,s_2)
	=
		0
$
and
\begin{align*}
	h(u_i,v_j)
	=
		\sum_{k=1}^i
		\sum_{l=1}^j
			f(u_{k-1},v_{l-1})
			g(I_{k,l})
	\qquad
	\qquad
	\text{for \quad $1\le i\le n$\quad and\quad $1\le j\le m$}.
\end{align*}
Then we have
$
	V_{q'}(h ; I_{\cP})
	\le
		C \bar{V}_p(f; I_\cP)V_q(g; I)
$.
\end{lemma}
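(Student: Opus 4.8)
The plan is to recognize $h$ as the discrete indefinite Young integral of $f$ against $g$ and to reduce the assertion, via Theorem~\ref{FV}, to a \emph{superadditive} controlled-variation estimate carried out at slightly enlarged exponents.

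First I would observe that for any grid sub-rectangle $R=[u_a,u_b]\times[v_c,v_d]$ of $\cP$ the rectangular increment of $h$ is exactly the discrete Young integral
\begin{align*}
	h(R)=\sum_{k=a+1}^{b}\sum_{l=c+1}^{d}f(u_{k-1},v_{l-1})g(I_{k,l})=\int_{R_\cP}f\,dg,
\end{align*}
so $V_{q+\vep}(h;I_\cP)$ is governed by the sizes of these integrals over the cells of an arbitrary sub-partition. Fixing a small $\vep'>0$ with $\tfrac{1}{p+\vep'}+\tfrac{1}{q+\vep}>1$ (possible by the hypothesis $\tfrac{1}{p}+\tfrac{1}{q+\vep}>1$ and continuity), I would aim to prove the controlled-variation bound
\begin{align*}
	\|h\|_{(q+\vep)\hyp var, I}\le C\big(\|f\|_{\infty}+\|f\|_{(p+\vep')\hyp var, I}\big)\,\|g\|_{(q+\vep)\hyp var, I}.
\end{align*}
Granting this, Theorem~\ref{FV} (in its grid form) closes the argument: its right-hand inequality gives $V_{q+\vep}(h;I_\cP)\le\|h\|_{(q+\vep)\hyp var,I}$, while its left-hand inequality gives $\|f\|_{(p+\vep')\hyp var,I}\le C\,V_p(f;I_\cP)\le C\,\bar{V}_p(f;I_\cP)$ and $\|g\|_{(q+\vep)\hyp var,I}\le C\,V_q(g;I)$; combined with the elementary $\|f\|_{\infty}\le\bar{V}_p(f;I_\cP)$, this is precisely the claim.

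The controlled-variation bound itself I would establish by decomposing $f$ on each cell $R$ about its lower-left corner $\xi_R$, writing $f=f(\xi_R)+f^{u}+f^{v}+f^{\Box}$, where $f^{u},f^{v}$ are the edge increments and $f^{\Box}$ the rectangular increment. This splits $h(R)$ into a main term $f(\xi_R)g(R)$ and a remainder. For any grid-aligned partition $I=\cup_k R_k$ the main terms sum harmlessly, since the $g$-factor is a single increment:
\begin{align*}
	\sum_k\big|f(\xi_{R_k})g(R_k)\big|^{q+\vep}\le\|f\|_{\infty}^{q+\vep}\sum_k|g(R_k)|^{q+\vep}\le\|f\|_{\infty}^{q+\vep}\,\|g\|_{(q+\vep)\hyp var,I}^{q+\vep}.
\end{align*}
Each piece of the remainder has an integrand vanishing on a lower face, so Theorem~\ref{Towghi}~(2) (applied at the exponents $p+\vep'$ and $q+\vep$) together with the easy, no-loss direction of Theorem~\ref{FV} bounds it by $C\,\|f\|_{(p+\vep')\hyp var,R_k}\|g\|_{(q+\vep)\hyp var,R_k}$. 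The summation of this remainder is the heart of the matter: setting $\Phi(R)=\|f\|_{(p+\vep')\hyp var,R}^{\,p+\vep'}$ and $\Psi(R)=\|g\|_{(q+\vep)\hyp var,R}^{\,q+\vep}$, both superadditive in $R$, the contribution is $\sum_k\Phi(R_k)^{(q+\vep)/(p+\vep')}\Psi(R_k)$; since the exponents $\tfrac{q+\vep}{p+\vep'}$ and $1$ sum to more than $1$, the product $\Phi^{(q+\vep)/(p+\vep')}\Psi$ is again a control (a Hölder inequality for control functions), whence the sum is dominated by $\Phi(I)^{(q+\vep)/(p+\vep')}\Psi(I)=\big(\|f\|_{(p+\vep')\hyp var,I}\|g\|_{(q+\vep)\hyp var,I}\big)^{q+\vep}$.

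I expect the main obstacle to be exactly the failure of superadditivity of $V_p$ and $V_q$ flagged after the definition of $\bar{V}_p$: one cannot sum the cellwise Towghi estimates directly, and the non-shrinking corner value $f(\xi_R)$ must be separated off. The device that circumvents both difficulties is to run the entire summation with the superadditive \emph{controlled}-variation norms, isolating the corner contribution through the decomposition above, and then to pay for this by enlarging the exponents from $p,q$ to $p+\vep',q+\vep$. Theorem~\ref{FV} converts the enlarged controlled norms back into $V_p(f)$ and $V_q(g)$ at the two ends of the argument, and this conversion is the genuine source of the unavoidable $\vep$-loss in the exponent of the conclusion.
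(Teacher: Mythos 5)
Your overall architecture --- viewing $h(R)$ as a discrete Young integral over each cell, estimating cellwise, summing via superadditive controlled-variation norms at slightly enlarged exponents, and converting back with Theorem~\ref{FV} --- is the same skeleton as the paper's proof. But there is a genuine gap in the cellwise estimate of the remainder. After splitting $f=f(\xi_R)+f^{u}+f^{v}+f^{\Box}$ on a cell $R=[u_a,u_b]\times[v_c,v_d]$, the edge pieces do \emph{not} satisfy the hypotheses of Theorem~\ref{Towghi}~(2): that theorem requires the integrand to vanish on \emph{every} lower face, whereas $f^{u}(u,v)=f([u_a,u],v_c)$ vanishes only at $u=u_a$ and is constant in $v$. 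That piece is really a one-dimensional Young sum $\sum_k f([u_a,u_{k-1}],v_c)\,g([u_{k-1},u_k]\times[v_c,v_d])$, whose natural bound involves the one-dimensional $p$-variation of $f$ along the line $v=v_c$. This quantity is \emph{not} controlled by $\|f\|_{(p+\vep')\hyp var,R}$ (the two-dimensional controlled variation on $R$), so your claimed cellwise bound $C\|f\|_{(p+\vep')\hyp var,R_k}\|g\|_{(q+\vep)\hyp var,R_k}$ fails, and with it your intermediate inequality $\|h\|_{(q+\vep)\hyp var,I}\le C(\|f\|_\infty+\|f\|_{(p+\vep')\hyp var,I})\|g\|_{(q+\vep)\hyp var,I}$. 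Concretely, take $f(u,v)=\phi(u)$ with $|\phi|\le\delta$ but $\phi$ alternating on the grid, and $g(u,v)=\gamma(u)(v-s_2)/(t_2-s_2)$ with $\gamma$ alternating in sync with $\phi$ by increments $\pm\eta$: then every rectangular increment of $f$ vanishes, so $\|f\|_{(p+\vep')\hyp var,I}=0$ and $\|f\|_\infty=\delta$, while $|h(I)|=\sum_k\phi(u_{k-1})\gamma([u_{k-1},u_k])\approx\delta\eta n$ and $\|g\|_{(q+\vep)\hyp var,I}\approx\eta n^{1/(q+\vep)}$, so the proposed bound fails for large $n$. The lemma is saved only because $\bar V_p(f;I_\cP)$ also contains the one-dimensional variations $V_p(f(\cdot,s_2))$ and $V_p(f(s_1,\cdot))$, which is exactly the ingredient your estimate discards.

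The paper's proof repairs precisely this point in a simpler way: it applies Towghi's inequality in the form with $\bar V_p$ on each cell, shows that $\bar V_p\bigl(f;([u_a,u_{a'}]\times[v_b,v_{b'}])|_{\cQ}\bigr)\le C\,\bar V_p(f;I_\cP)$ \emph{uniformly} over cells (using decompositions such as $f(u_a,[v_{l-1},v_l])=f([u_0,u_a]\times[v_{l-1},v_l])+f(s_1,[v_{l-1},v_l])$ to trade edge variations on interior lines for the global $\bar V_p$), and then sums only the $g$-factor, for which $\sum_k\|g\|_{(q+\vep)\hyp var,R_k}^{q+\vep}\le\|g\|_{(q+\vep)\hyp var,I}^{q+\vep}\le C\,V_q(g;I)^{q+\vep}$ by superadditivity and Theorem~\ref{FV}. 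Because the $f$-factor is taken out in sup norm, no H\"older inequality for products of controls is needed. Your argument becomes correct if, for the edge pieces, you replace the local $\|f\|_{(p+\vep')\hyp var,R_k}$ by the global $\bar V_p(f;I_\cP)$ and sum only the $g$-factor --- at which point it coincides with the paper's proof. The treatment of the main term by $\|f\|_\infty$ and of the $f^{\Box}$ piece by Towghi~(2) is fine as you wrote it.
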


\begin{proof}
	Let $0\le a<a'\le n$ and $0\le b<b'\le m$.
	Write $J=[u_a,u_{a'}]\times[v_b,v_{b'}]$.
	Consider
	\begin{align*}
		h(J)
		=
			\sum_{k=a+1}^{a'}
			\sum_{l=b+1}^{b'}
				f(u_{k-1},v_{l-1})
				g(I_{k,l}).
	\end{align*}
	Then the right-hand side is the discrete Young integral,
	and Theorem~\ref{Towghi} (1) implies
	\begin{align*}
        |h(J)|
        &\le
            C
            \bar{V}_p(f;J_\cP)
            V_{q'}(g;J_\cP).
	\end{align*}
	Using
	$
		\bar{V}_p(f;J_\cP)
		\le
		C
			\bar{V}_p(f;I_\cP)
	$, which we will show after,
	and 
	$
		V_{q'}(g;J_\cP)
		\leq
			V_{q'}(g;J)
		\leq
		\|g\|_{q'\hyp var,J}
	$, which follows from Theorem~\ref{FV},
	we have
	\begin{align*}
		|h(J)|
        &\le
            C
            \bar{V}_p(f;I_\cP)
            \|g\|_{q'\hyp var,J}.
	\end{align*}
	Using this inequality and by applying Theorem~\ref{FV},
	we arrived at the assertion.

	Here we show that
	$
		\bar{V}_p(f;J_\cP)
		\le
		C
			\bar{V}_p(f;I_\cP)
	$.
	Let $b=\beta_0<\dots<\beta_N=b'$
	and consider a partition $v_b=v_{\beta_0}<\cdots<v_{\beta_N}=v_{b'}$ of 
	$[v_b,v_{b'}]$.
	Because
	\begin{align*}
		|f(u_a, [v_{\beta_{i-1}},v_{\beta_i}])|^p
		&=
			|f([u_0,u_a]\times[v_{\beta_{i-1}},v_{\beta_i}])+f(s_1,[v_{\beta_{i-1}},v_{\beta_i}])|^p\\
		&\leq
			2^{p-1}
			\{|f([u_0,u_a]\times[v_{\beta_{i-1}},v_{\beta_i}])|^p+|f(s_1,[v_{\beta_{i-1}},v_{\beta_i}])|^p\},
	\end{align*}
	we have
	\begin{align*}
		\sum_{i=1}^N
			|f(u_a, [v_{\beta_{i-1}},v_{\beta_i}])|^p
		\leq
			2^{p-1}
			\{
				V_p(f;I_\cP)^p
				+
				V_p(f(s_1,\cdot);[s_2,t_2]|_{\cP_2})^p
			\}
		\leq
			C\bar{V}_p(f;I_\cP)^p,
	\end{align*}
	which implies
	\begin{align*}
		V_p(f(u_a,\bullet);[v_b,v_{b'}]|_{\cP_2})
		\leq
			C\bar{V}_p(f;I_\cP)^p.
	\end{align*}
	Since $|f(u_a,v_b)|$, 
	$V_p(f(\bullet,v_b);[u_a,u_{a'}]|_{\cP_1})$,
	and $V_p(f;J|_{\cP})$
	have similar bounds, we see the assertion.
    This completes the proof.
\end{proof}

The following lemma is used in Lemma~\ref{psi^{(l)}}.
\begin{lemma}\label{for psi^{(l)}}
	Let $I=[s_1,t_1]\times [s_2,t_2]\subset [0,1]^2$ and
	let $\cP=\cP_1\times \cP_2$ be a grid-like partition,
	where $\cP_1:s_1=u_0<\cdots<u_n=t_1$ and $\cP_2:s_2=v_0<\cdots<v_{m}=t_2$.
	We write $I_{i,j}=[u_{i-1},u_i]\times [v_{j-1},v_j]$ 
	for every $1\le i\le n$ and $1\le j\le m$.

	Let $p,q$ be non-negative numbers satisfying $p>1$, $q>1$ and $\frac{1}{p}+\frac{1}{q}>1$.
	Let $f,g\in C(I)$ satisfy
$V_p(f; I)<\infty$,
$f(s_1,\cdot)=f(\cdot,s_2)=0$ and $V_q(g ; I)<\infty$.
Let $\tf\in C(I_{\cP})$ with
$\tf(s_1,\cdot)=\tf(\cdot,s_2)=0$.
We define $h,\tilde{h}\in C(I_{\cP})$ by
$
	h(s_1,\cdot)
	=
		h(\cdot,s_2)
	=
		\tilde{h}(s_1,\cdot)
	=
		\tilde{h}(\cdot,s_2)
	=
		0
$
and 
\begin{align*}
	h(u_i,v_j)
	&=
		\sum_{k=1}^i
		\sum_{l=1}^j
			f(u_{k-1},v_{l-1})
			g(I_{k,l}),
	&
	\tilde{h}(u_i,v_{j})
	&=
		\sum_{k=1}^i
		\sum_{l=1}^j
			\tf(u_{k-1},v_{l-1})
			g(I_{k,l})
\end{align*}
for $1\le i\le n$ and $1\le j\le m$.

Suppose $p'>p$ and $q'>q$ satisfy $\frac{1}{p'}+\frac{1}{q'}>1$.
Set $\theta'=\frac{1}{p'}+\frac{1}{q'}$.
Then, for $q<q''<q'$, we have
\begin{align}
	\label{h-youngintegral}
	V_{q'}\left(h-\int_{[s_1,\cdot]\times [s_2,\cdot]}f(u,v)dg(u,v) ;I_{\cP}\right)
	\le
		C
		\mathfrak{V}_{p'}(f,I_{\cP})^{1-\frac{1}{\theta'}}
		V_p(f ; I)^{\frac{1}{\theta'}}
		V_{q''}(g ; I),
\end{align}
and
\begin{align}
	\label{h-tildeh}
	V_{q'}(h-\tilde{h} ; I_{\cP})
	\le
		C
		V_p(f-\tf ; I_{\cP})
		V_{q''}(g ; I).
\end{align}
Here
\begin{align*}
	\mathfrak{V}_{p'}(f,I_{\cP})
	=
		\max_{\substack{1\leq k\leq n,\\1\leq l\leq m}}
			\big\{
				V_{p'}(f; I_{k,l})
				+V_{p'}(f(\cdot, v_{l-1}); [u_{k-1},u_k])
				+V_{p'}(f(u_{k-1},\cdot); [v_{l-1},v_l])
			\big\}
\end{align*}

\end{lemma}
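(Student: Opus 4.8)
The plan is to prove both bounds by the increment-and-summation method already used for Lemma~\ref{cor to towghi}. For a grid-aligned subrectangle $A=[u_a,u_{a'}]\times[v_b,v_{b'}]$ of $\cP$ (a union of cells, which is all that enters $V_{q'}(\cdot;I_\cP)$) I would estimate the rectangular increment of the left-hand side by a product of an $f$-dependent factor and the controlled variation $\|g\|_{q''\hyp var,A}$, and then sum over an arbitrary subpartition: since $q''<q'$ and $\|g\|_{q''\hyp var,\cdot}^{q''}$ is superadditive, one has $\sum_A\|g\|_{q''\hyp var,A}^{q'}\le\|g\|_{q''\hyp var,I}^{q'}$, after which Theorem~\ref{FV} passes from the controlled $q''$-variation of $g$ to the variation norm appearing in the statement. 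The exponents $p'>p$, $q<q''<q'$ and $\vep\in(0,1)$ are fixed at the end so that $\frac1{p'}+\frac1{q''}>1$; this is available because $\frac1p+\frac1{q'}>1$ and everything depends continuously on the exponents.

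For \eqref{h-tildeh} the increment $(h-\tilde h)(A)=\sum_{k=a+1}^{a'}\sum_{l=b+1}^{b'}(f-\tf)(u_{k-1},v_{l-1})g(I_{k,l})$ is the discrete Young integral of $f-\tf$ against $g$ on $A$, so Theorem~\ref{Towghi}(1) gives $|(h-\tilde h)(A)|\le C\bar{V}_p(f-\tf;A)\,V_{q''}(g;A)$. Using the hypothesis $(f-\tf)(s_1,\cdot)=(f-\tf)(\cdot,s_2)=0$ exactly as in Lemma~\ref{cor to towghi}, the edge and corner terms of $\bar{V}_p$ based at the corner $(u_a,v_b)$ are rectangular increments of $f-\tf$ issuing from the global corner (for instance $(f-\tf)(u_a,v)=(f-\tf)([s_1,u_a]\times[s_2,v])$), whence $\bar{V}_p(f-\tf;A)\le CV_p(f-\tf;I_\cP)$. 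Bounding $V_{q''}(g;A)\le\|g\|_{q''\hyp var,A}$ and summing as above yields \eqref{h-tildeh}; no interpolation is needed here.

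The substantive estimate is \eqref{h-youngintegral}. Since the discrete sum equals the Young integral against $g$ of the step function $\bar f$ taking the value $f(u_{k-1},v_{l-1})$ on each cell $[u_{k-1},u_k)\times[v_{l-1},v_l)$, additivity of the Young integral over the grid gives, for grid-aligned $A$, that the error increment is $E(A)=\int_A(\bar f-f)\,dg$. I would estimate this single Young integral by Theorem~\ref{Towghi}(1), $|E(A)|\le C\bar{V}_{p'}(\bar f-f;A)\,V_{q''}(g;A)$, and control the oscillation $\bar f-f$ by interpolating between two bounds on its $p'$-variation: a local one, in which increments lying inside a single cell reduce to increments of $f$ on that cell (where $\bar f$ is constant) and the broken increments to the cell-edge variations appearing in $\mathfrak{V}_{p'}(f,I_\cP)$, providing the mesh-gain, and a global one bounded by $V_p(f;I)$ via the boundary vanishing $f(s_1,\cdot)=f(\cdot,s_2)=0$. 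The elementary inequality relating $p'$- and $p$-variation for $p'>p$ then produces a factor $\mathfrak{V}_{p'}(f,I_\cP)^{1-\vep}V_p(f;I)^{\vep}$; with $V_{q''}(g;A)\le\|g\|_{q''\hyp var,A}$ the summation scheme of the first paragraph completes the argument.

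The hard part will be the uniform control of the $p'$-variation of the oscillation $\bar f-f$ over $A$. Increments over rectangles contained in a single cell are genuine single-cell increments of $f$, hence dominated by $\mathfrak{V}_{p'}(f,I_\cP)$; but an increment over a rectangle straddling several cells equals, after snapping its corners to the grid, a difference $f(R')-f(R)$ of increments of $f$ over the actual and the rounded rectangles, and decomposing this produces increments of $f$ over strips that are thin in one coordinate yet span many cells in the other. These straddling contributions cannot be bounded cell-by-cell by a naive triangle inequality — doing so destroys the cancellation between neighbouring cells and reintroduces the number of cells — so the estimate must retain the Young/sewing structure of $E(A)=\int_A(\bar f-f)\,dg$, and the interpolation against the global quantity $V_p(f;I)$ is precisely what absorbs the partial-strip increments. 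Reconciling the exponents $p'>p$, $q<q''<q'$, $\vep\in(0,1)$ subject to $\frac1{p'}+\frac1{q''}>1$ while keeping both the mesh-gain $\mathfrak{V}_{p'}(f,I_\cP)^{1-\vep}$ and a superadditive $g$-factor is the core technical point; everything else follows from Theorem~\ref{Towghi}, Theorem~\ref{FV}, and the superadditivity of the controlled variation, as in Lemma~\ref{cor to towghi}.
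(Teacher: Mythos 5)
Your treatment of \eqref{h-tildeh} is fine and matches the paper's: since $f-\tf$ vanishes on the two lower edges, Theorem~\ref{Towghi} applies directly on each grid-aligned rectangle and the superadditivity of the controlled variation of $g$ together with Theorem~\ref{FV} finishes the sum. The problem is \eqref{h-youngintegral}. You reduce it to the single Young integral $E(A)=\int_A(\bar f-f)\,dg$ of the global oscillation against $g$, and then the entire burden falls on an interpolation bound for $\bar V_{p'}(\bar f-f;A)$ of the form $\mathfrak{V}_{p'}(f,I_\cP)^{1-\vep}V_p(f;I)^{\vep}$. You yourself flag that increments of $\bar f-f$ over rectangles straddling several cells are the obstruction, but the resolution you offer ("the estimate must retain the Young/sewing structure\dots the interpolation against the global quantity $V_p(f;I)$ is precisely what absorbs the partial-strip increments") is not an argument. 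Concretely: for a straddling rectangle $R$, $(\bar f-f)(R)=f(R_{\mathrm{snap}})-f(R)$, and decomposing this difference produces increments of $f$ over strips that are thin in one coordinate but span many cells in the other. The $p'$-variation sums of these strip increments are controlled only by \emph{column} or \emph{row} variations of $f$ (e.g.\ $\|f\|_{p\hyp var,[u_{k-1},u_k]\times[s_2,t_2]}$), which are neither small in the mesh nor dominated by the purely cell-local quantity $\mathfrak{V}_{p'}(f,I_\cP)$ appearing in the statement. So the key step of your plan is both unproved and, as formulated, apparently false; this is a genuine gap.

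The paper avoids forming the global step function altogether. It writes $F(J)=\sum_{k,l}\int_{I_{k,l}}\{f(u,v)-f(u_{k-1},v_{l-1})\}\,dg(u,v)$ and applies Theorem~\ref{Towghi} \emph{on each cell separately}; there the shifted integrand vanishes at the cell's lower-left edges, so its $\bar V_p$ over the cell is exactly the three cell-local variations that define $\mathfrak{V}$. The mesh gain then comes not from any variation estimate on an oscillation function but from a H\"older inequality across the double sum: one factor $V_p(f;I_{k,l})^{1-1/\theta'}$ is bounded by $\mathfrak{V}_{p'}(f,I_\cP)^{1-1/\theta'}$, and the remaining sum is closed by the superadditivity of $\|f\|_{p\hyp var}^{p}$ and $\|g\|_{q'\hyp var}^{q'}$ over the cells, followed by Theorem~\ref{FV}. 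If you want to salvage your write-up, you should abandon the single-integral reduction and carry out this cell-by-cell estimate; the exponent bookkeeping you describe in your first paragraph is then exactly what is needed.
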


\begin{proof}
	We prove (\ref{h-youngintegral}).
	Let $0\le a<a'\le n$ and $0\le b<b'\le m$.
	Write $J=[u_a,u_{a'}]\times[v_b,v_{b'}]$.
	Set
	\begin{align*}
		F(J)
		=
			\int_J f(u,v)dg(u,v)
			-
			h(J)
		=
			\sum_{k=a+1}^{a'}
			\sum_{l=b+1}^{b'}
				\int_{I_{k,l}}
					\{f(u,v)-f(u_{k-1},v_{l-1})\}
					dg(u,v).
	\end{align*}
	In the following, we will show
	\begin{align}
		\label{eq391091381}
		|F(J)|
		\leq
			C
			\mathfrak{V}_{p'}(f,I_{\cP})^{1-\frac{1}{\theta'}}
			\|f\|_{p'\hyp var,I}^{\frac{1}{\theta'}}
			\|g\|_{q'\hyp var;J}.
	\end{align}
	Using this inequality and by applying Theorem~\ref{FV},
	we obtain \eqref{h-youngintegral}.

	First, Theorem~\ref{Towghi} (1) implies that
	\begin{align*}
		|F(J)|
		&\leq
			C 
			\sum_{k=a+1}^{a'}
			\sum_{l=b+1}^{b'}
				\{
					V_{p'}(f;I_{k,l})
					+V_{p'}(f(u_{k-1},\cdot);[v_{l-1,l},v_l])\\
		&
		\qquad
		\qquad
		\qquad
		\qquad
		\qquad
		\qquad
		\qquad
					+V_{p'}(f(\cdot,v_{l-1});[u_{k-1,l},u_k])
				\}
				V_{q'}(g; I_{k,l}).
	\end{align*}
	Next we give estimates of the summations.
	The H{\"o}lder inequality for the summation with respect to $k$ and $l$ implies
	\begin{multline*}
		\sum_{k=a+1}^{a'}
		\sum_{l=b+1}^{b'}
			V_{p'}(f;I_{k,l})
			V_{q'}(g; I_{k,l})
		\leq
			\mathfrak{V}_{p'}(f,I_{\cP})^{1-\frac{1}{\theta'}}
			\sum_{k=a+1}^{a'}
			\sum_{l=b+1}^{b'}
				V_{p'}(f;I_{k,l})^{\frac{1}{\theta'}}
				V_{q'}(g; I_{k,l})\\
		\begin{aligned}
			&\leq
				\mathfrak{V}_{p'}(f,I_{\cP})^{1-\frac{1}{\theta'}}
				\Bigg\{
					\sum_{k=a+1}^{a'}
					\sum_{l=b+1}^{b'}
						\|f\|_{p'\hyp var,I_{k,l}}^{p'}
				\Bigg\}^{\frac{1}{p\theta'}}
				\Bigg\{
					\sum_{k=a+1}^{a'}
					\sum_{l=b+1}^{b'}
						\|g\|_{q'\hyp var; I_{k,l}}^{q'\theta'}
				\Bigg\}^{\frac{1}{q'\theta'}}\\
			&\leq
				\mathfrak{V}_{p'}(f,I_{\cP})^{1-\frac{1}{\theta'}}
				\|f\|_{p'\hyp var,I}^{\frac{1}{\theta'}}
				\|g\|_{q'\hyp var,J}.
		\end{aligned}
	\end{multline*}
	Here, we should note
	\begin{align*}
		V_{p'}(f(u_{k-1},\cdot);[v_{l-1},v_l])
		\leq
			V_{p'}(f,[s_1,t_1]\times [v_{l-1},v_l])
		\leq
			\|f\|_{p'\hyp var,[s_1,t_1]\times [v_{l-1},v_l]},
	\end{align*}
	which follows from
	\begin{align*}
		\sum_{j=1}^M
			|f(u_{k-1},[\eta_{j-1},\eta_j])|^{p'}
		=
			\sum_{j=1}^M
				|f([s_1,u_{k-1}]\times[\eta_{j-1},\eta_j])|^{p'}
		\leq
			V_{p'}(f,[s_1,u_{k-1}]\times [v_{l-1},v_l])
	\end{align*}
	for $v_{l-1}=\eta_0<\dots<\eta_M=v_l$.
	The H{\"o}lder inequality for the summation with respect to $l$ implies
	\begin{multline*}
		\sum_{k=a+1}^{a'}
		\sum_{l=b+1}^{b'}
			V_{p'}(f(u_{k-1},\cdot);[v_{l-1},v_l])
			V_{q'}(g; I_{k,l})\\
		\begin{aligned}
			&\leq
				\mathfrak{V}_{p'}(f,I_{\cP})^{1-\frac{1}{\theta'}}
				\sum_{k=a+1}^{a'}
					\Bigg\{
						\sum_{l=b+1}^{b'}
							\|f\|_{p'\hyp var,[s_1,t_1]\times [v_{l-1},v_l]}^{p'}
					\Bigg\}^{\frac{1}{p\theta'}}
					\Bigg\{
						\sum_{l=b+1}^{b'}
							\|g\|_{q'\hyp var; I_{k,l}}^{q'\theta'}
					\Bigg\}^{\frac{1}{q'\theta'}}\\
			&\leq
				\mathfrak{V}_{p'}(f,I_{\cP})^{1-\frac{1}{\theta'}}
				\sum_{k=a+1}^{a'}
					\|f\|_{p'\hyp var,I}^{\frac{1}{\theta'}}
					\|g\|_{q'\hyp var;[u_{k-1},u_k]\times[v_b,v_{b'}]}\\
			&\leq
				\mathfrak{V}_{p'}(f,I_{\cP})^{1-\frac{1}{\theta'}}
				\|f\|_{p'\hyp var,I}^{\frac{1}{\theta'}}
				\|g\|_{q'\hyp var;J}
		\end{aligned}
	\end{multline*}
	The summand of $V_{p'}(f(\cdot,v_{l-1});[u_{k-1,l},u_k])V_{q'}(g; I_{k,l})$ has the same bound.
	Therefore \eqref{eq391091381} is shown.

	We see \eqref{h-tildeh} follows from Theorem~\ref{Towghi} (2) and Theorem~\ref{FV}.
\end{proof}

Next, we prepare some more Propositions.
We apply Proposition~\ref{a multidimensional young integral} and 
Proposition~\ref{estimate of fgh} to the estimate of the
Malliavin derivatives of
the functional of $Y_t,J_t,J^{-1}_t$ in 
Section~\ref{weighted hermite variation}.

\begin{proposition}\label{a multidimensional young integral}
Let $w=w(s,t)$ $0\le s\le t\le 1$ be a control function.
Let $p,q$ be positive numbers satisfying 
$\theta:=\frac{1}{p}+\frac{1}{q}>1.$
Let $I=\prod_{r=1}^N [s_r,t_r]\times \prod_{r=1}^N [s_r,t_r]\subset [0,1]^{2N}$ and
$\cP=\cP_1\times\cdots\times \cP_N\times\cP_1\times\cdots\times \cP_N$ be a grid-like partition of $I$,
where $\cP_r\colon s_r=t^r_0<\dots<t^r_{m_r}=t_r$.
Furthermore, assume $\phi\in C(I_\cP)$ satisfies that there exists a positive constant
$C$ such that the following condition holds:
	\begin{align*}
		\left|
			\phi
				\left(
					\prod_{r=1}^N
						[u_r,u'_r]
					\times
					\prod_{r=1}^N
						[v_r,v'_r]
				\right)
		\right|
		\leq
			C
			\prod_{r=1}^N
				\{
					w(u_r,u'_r)^{\frac{1}{p}}
					w(v_r,v'_r)^\frac{1}{q}
				\}
	\end{align*}
	for all $u_r, u'_r, v_r,v'_r\in \cP_r$ with
$u_r<u'_r$ and $v_r<v'_r$ $(1\leq r\leq N).$

Then we have
	\begin{align}
		\label{eq8940181}
		\left|
			\sum_{i_1=1}^{m_1}
			\dots
			\sum_{i_N=1}^{m_N}
				\phi
					\left(
						\prod_{r=1}^N
							[s_r,t^r_{i_r-1}]
						\times
						\prod_{r=1}^N
							[t^r_{i_r-1},t^r_{i_r}]
					\right)
		\right|
		\leq
			C
			\zeta(\theta)^N
			\prod_{r=1}^N
				w(s_r,t_r)^\theta,
	\end{align}
	where $C$ is the same constant as the one appearing in the assumption on
$\phi$,
	$\zeta$ is the zeta function.

\end{proposition}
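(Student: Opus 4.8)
The plan is to prove the estimate by induction on the dimension $N$, reducing the full $N$-dimensional half-nested sum to an $(N-1)$-dimensional one of exactly the same shape by coarsening the partition in a single coordinate; the base case $N=1$ (equivalently the empty case $N=0$, where the bound is just $|\phi|\le C$) is the classical one-dimensional Young estimate. Throughout I would use that $w$ is a control function, hence superadditive and therefore monotone, so that $w(u,u')^{1/p}w(v,v')^{1/q}\le w(a,b)^{\theta}$ whenever $[u,u'],[v,v']\subset[a,b]$, and that the first summand in each coordinate is a degenerate (hence vanishing) rectangular increment because $t^r_0=s_r$.

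For the one-dimensional coarsening step, fix a coordinate and consider $\sum_{i}\phi([s,t_{i-1}]\times[t_{i-1},t_i])$ over a partition $s=t_0<\cdots<t_n=t$. First I would observe that removing an interior point $t_j$ merges the two consecutive summands into one, and that, by additivity of the rectangular increment of $\phi$ separately in its big-block variable and in its increment-block variable, the net change equals exactly the corner increment $\phi([t_{j-1},t_j]\times[t_j,t_{j+1}])$; by the hypothesis on $\phi$ this is bounded by $Cw(t_{j-1},t_j)^{1/p}w(t_j,t_{j+1})^{1/q}\le Cw(t_{j-1},t_{j+1})^{\theta}$. Splitting the interior indices into even and odd and using superadditivity gives $\sum_j w(t_{j-1},t_{j+1})\le 2w(s,t)$, so at a stage with $k$ intervals there is always a point whose deletion costs $O(k^{-\theta})\,w(s,t)^{\theta}$. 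Deleting points greedily until only the vanishing degenerate term survives and summing over $k$ produces a factor proportional to $\sum_{k\ge1}k^{-\theta}=\zeta(\theta)$, i.e. the bound $C\zeta(\theta)w(s,t)^{\theta}$.

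For the inductive step I would coarsen only the first coordinate while keeping the remaining $2(N-1)$ coordinates as prescribed rectangular increments. Each one-step change in the first coordinate is a corner increment over $[t^1_{j-1},t^1_j]\times[t^1_j,t^1_{j+1}]$ tensored with the fixed intervals in the other coordinates; for a fixed such corner the remaining sum over $i_2,\dots,i_N$ is again a half-nested $(N-1)$-dimensional sum of the function $\tilde\phi$ obtained from $\phi$ by freezing its first-coordinate increments, and $\tilde\phi$ satisfies the product hypothesis with constant $Cw(t^1_{j-1},t^1_j)^{1/p}w(t^1_j,t^1_{j+1})^{1/q}$. Applying the induction hypothesis bounds this inner sum by that constant times $\zeta(\theta)^{N-1}\prod_{r\ge2}w(s_r,t_r)^{\theta}$; summing over the first-coordinate coarsening stages exactly as in the base case then contributes the final factor proportional to $\zeta(\theta)w(s_1,t_1)^{\theta}$, yielding $C\zeta(\theta)^{N}\prod_{r}w(s_r,t_r)^{\theta}$.

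The point I expect to be delicate is that cancellation must be retained in the coordinates that have not yet been coarsened. If one were to bound $|\sum_{i_1}\phi|$ first and then sum absolute values over $i_2,\dots,i_N$, the resulting scalar sums $\sum_i w(s_r,t^r_{i-1})^{1/p}w(t^r_{i-1},t^r_i)^{1/q}$ would diverge as the mesh refines, so the naive factorization fails; the induction avoids this precisely because each frozen-corner inner object is treated as a \emph{signed} $(N-1)$-dimensional Young sum, to which the induction hypothesis (not a triangle inequality) is applied. The main technical obstacle is therefore the multilinear bookkeeping that identifies the one-step coarsening change with a single corner increment of $\phi$ carrying the remaining coordinates as their prescribed increments, together with the mesh-uniform control of the coarsening series so that the per-coordinate constants accumulate to $\zeta(\theta)^{N}$.
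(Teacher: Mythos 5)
Your proposal is correct and follows essentially the same route as the paper: the one-dimensional case is Young's point-removal argument (the change under deleting a partition point is exactly the corner increment $\phi([t_{j-1},t_j]\times[t_j,t_{j+1}])$, and an optimal choice of point plus superadditivity of $w$ yields the $\zeta(\theta)$ factor), and the higher-dimensional case is handled by the same induction in which the partially summed (or corner-frozen) object is shown to satisfy the product increment hypothesis with an augmented constant, so that cancellation is retained in the not-yet-summed coordinates. The only cosmetic difference is organizational (the paper sums out coordinates one at a time via the functions $\tilde\phi_K$, while you peel off the first coordinate and apply the induction hypothesis to the $(N-1)$-dimensional corner sums) and your even/odd selection of the deleted point gives a slightly larger numerical constant than the paper's H\"older-based selection.
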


\begin{proof}
We first prove the case where $N=1$.
We write
\begin{align*}
 \cP&=\{s_1=t^1_0<\cdots<t^1_{m_1}=t_1\},\\
\cP\setminus\{i\}&=\{s_1=t_0<\cdots<t^1_{i-1}<t^1_{i+1}<
\cdots<t^1_{m_1}=t_r\},\quad 1\le i\le m_1-1,
\end{align*}
and set
\begin{align*}
 I(\cP)&=\sum_{i=1}^{m_1}\phi\left([s_1,t^1_{i-1}]
\times [t^1_{i-1},t^1_i]\right).
\end{align*}
Note that $I(\{s,t\})=0$ by definition because $[s_1,t^1_0]=\{s_1\}$.
Then using the assumption on $\phi$,
\begin{align*}
	|I(\cP)-I(\cP\setminus\{i\})|
	&=
		|
			\phi([s_1,t^1_{i-1}]\times [t^1_{i-1},t^1_i])
			+\phi([s_1,t^1_{i}]\times [t^1_{i},t^1_{i+1}])
			-\phi([s_1,t^1_{i-1}]\times [t^1_{i-1},t^1_{i+1}])
		|\\
	&=
		|\phi([t^1_{i-1},t^1_i]\times [t^1_{i},t^1_{i+1}])|\\
	&\le
		Cw(t^1_{i-1},t^1_{i})^{1/p}w(t^1_i,t^1_{i+1})^{1/q}.
\end{align*}
Therefore, using the H\"older inequality and the property of 
the control function, we obtain
\begin{align*}
 \sum_{i=1}^{m_1-1}\left|I(\cP)-I(\cP\setminus\{i\})\right|^{1/\theta}
&\le C^{1/\theta}\sum_{i=1}^{m_1-1}
w(t^1_{i-1},t^1_i)^{1/p\theta}w(t^1_i,t^1_{i+1})^{1/q\theta}\\
&\le C^{1/\theta}\left(\sum_{i=1}^{m_1-1}
w(t^1_{i-1},t^1_i)\right)^{1/p\theta}
\left(\sum_{i=1}^{m_1-1}w(t^1_i,t^1_{i+1})\right)^{1/q\theta}\\
&\le C^{1/\theta} w(s_1,t_1).
\end{align*}
It is readily apparent that there exists $i$ such that
\begin{align*}
 \left|I(\cP)-I(\cP\setminus\{i\})\right|&\le
C\left(\frac{1}{m_1-1}\right)^{\theta}w(s_1,t_1)^{\theta}.
\end{align*}
Repeating this procedure, we arrive at
\[
 |I(\cP)|\le C\sum_{k=1}^{m_1-1}\frac{1}{k^{\theta}}
w(s_1,t_1)^{\theta}\le
C\zeta(\theta)w(s_1,t_1)^{\theta},
\]
which implies the desired estimate.

Next, we prove the general case.
Let $K$ be a natural number such that $0\le K\le N$.
Also, let
\begin{multline*}
	\tilde{\phi}_K(u_{K+1},\ldots,u_N,v_{K+1},\ldots, v_N)\\
	=
		\sum_{i_1=1}^{m_1}
		\cdots
		\sum_{i_K=1}^{m_K}
			\phi
				\left(
					\prod_{r=1}^K
						[s_r,t^r_{i_r-1}],
					u_{K+1},\ldots,u_N,
					\prod_{r=1}^K [t^r_{i_r-1},t^r_{i_r}],
					v_{K+1},\ldots,v_N
				\right).
\end{multline*}
Note $\tilde{\phi}_0=\phi$ and
$\tilde{\phi}_N$ is a constant function and equal to the left-hand side of \eqref{eq8940181}.
Here, we prove that, 
for all $0\le K\le N$ and $u_k,u'_k, v_k, v_k'\in\cP_k$ with $u_k<u'_k$ and $v_k<v_k'$
($K+1\leq k\leq N$),
\begin{align*}
	\left|
		\tilde{\phi}_K
			\left(
				\prod_{k=K+1}^N[u_{k},u'_{k}]\times \prod_{k=K+1}^N[v_k,v'_k]
			\right)
	\right|
	\le
		C\zeta(\theta)^K
		\prod_{r=K+1}^N
			\{
				w(u_r,u'_r)^{\frac{1}{p}}
				w(v_r,v'_r)^\frac{1}{q}
			\}
		\prod_{r=1}^Kw(s_r,t_r)^\theta,
\end{align*}
by induction on $K$.
The case $K=N$ is our conclusion.
The case $K=0$ is the assumption.
Next, we assume the case of $K$ and show the case of $K+1$.
In this case, we consider the function
\begin{align*}
	(u_{K+1},v_{K+1})
	\mapsto
		\varphi(u_{K+1},v_{K+1})
		=
			\tilde{\phi}_K
				\left(
					u_{K+1},
					\prod_{r=K+2}^N[u_{r},u'_{r}],\,
					v_{K+1},
					\prod_{r=K+2}^N[v_{r},v_{r}']
				\right).
\end{align*}
Then by the assumption of the induction, we have
\begin{multline*}
	\left|
		\varphi
		\left(
			[u_{K+1},u'_{K+1}]\times [v_{K+1},v'_{K+1}]
		\right)
	\right|
	\le
		C\zeta(\theta)^K
		\left(
			\prod_{r=K+2}^N
			\{
				w(u_r,u'_r)^{\frac{1}{p}}
				w(v_r,v'_r)^\frac{1}{q}
			\}
			\prod_{r=1}^Kw(s_r,t_r)^\theta
		\right)\\
		\times
		w(u_{K+1},u'_{K+1})^{1/p}w(v_{K+1},v'_{K+1})^{1/q}.
\end{multline*}
Hence, by using the case $N=1$,
we can complete the proof of the case $K$.
\end{proof}

We use the following Proposition in the proof of
Lemma~\ref{estimate of integration by parts}.

\begin{proposition}\label{estimate of fgh}
	Assume that $1<p,q<\infty$ satisfy
$\frac{1}{p}+\frac{1}{q}>1$ and set $\frac{1}{p}+\frac{1}{q}=\theta$.
	Let $N\in\mathbb{N}$.
	Let $K, L$ be non-negative integers such that
\[
 0\leq K, L\leq N,\quad 0\le M\le \min(K,L),\quad L-M\le N-K.
\]
Let $f\colon [0,1]^{N-K} \to \RR$
and $g\colon [0,1]^{N-L}\to \RR$ be continuous functions
satisfying 
\[
  V_p(f ; [0,1]^{N-K})<\infty,\quad
V_q(g; [0,1]^{N-L})<\infty.
\]
Furthermore, let
 $\phi\colon [0,1]^{K+L}\to\RR$ 
be continuous functions satisfying
that there exists a positive constant
$C$ such that the following condition holds:
there exist a positive constant $C$ and	a control function $w$ such that
	\begin{align*}
		\left|
			\phi
			\left(
				\prod_{k=1}^K
					[u_k,u'_k]
				\times
				\prod_{l=1}^L
					[v_l,v'_l]
			\right)
		\right|
		\leq
			 C
\prod_{k=1}^Kw(u_k,u'_k)^{\frac{1}{p}}
\prod_{l=1}^Lw(v_l,v'_l)^{\frac{1}{q}}
	\end{align*}
for all $0\le u_r<v_r\le 1, 0\le u'_r<v'_r\le 1$
$(1\leq r\leq K)$.
$0\leq v_l<v'_l\leq 1$ $(l\in L)$.
Let 
	\begin{align*}
\Phi(u_1,\dots,u_N,v_1,\dots,v_N)
		&=
f(u_{M+1},\dots,u_L, u_{L+1},\ldots, u_{M+N-K})\nonumber\\
&\quad \times \phi(u_{1},\ldots,u_M,u_{M+N-K+1},\ldots,u_{N},
v_{1},\dots, v_M, v_{M+1},\ldots, v_L)\nonumber\\
&\quad \times g(v_{L+1},\dots, v_{M+N-K},v_{M+N-K+1},\cdots,v_{N}).
	\end{align*}
	For each $0\leq r\leq N$, let us consider
a partition 
\[
 \cP_r : 0\le s_r=t^r_0<t^r_1<\dots<t^r_{m_r}=t_r\leq 1.
\]
	Then we have
\begin{multline*}
	\left|
		\sum_{i_1=1}^{m_1}
		\dots
		\sum_{i_N=1}^{m_N}
			\Phi
			\left(
				\prod_{r=1}^N
					[s_r,t^r_{i_r-1}]
			\times	\prod_{r=1}^N
					[t^r_{i_r-1},t^r_{i_r}]
			\right)
	\right|
	\le	
		C_{N,p,q}
		C
		V_p\left(f ; \prod_{r=M+1}^{M+N-K}[s_{r},t_r]\right)\\
	\times
		V_q\left(g ; \prod_{r=L+1}^N[s_r,t_r]\right)
		\prod_{r=1}^M w(s_r,t_r)^{\theta}
		\prod_{r=M+N-K+1}^N w(s_r,t_r)^{\frac{1}{p}}
		\prod_{r=M+1}^L w(s_r,t_r)^{\frac{1}{q}},
\end{multline*}
where $C_{N,p,q}$ is a positive constant which depends only on $N, p, q$.
\end{proposition}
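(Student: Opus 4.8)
The plan is to reduce this multidimensional estimate to Proposition~\ref{a multidimensional young integral} and to Towghi's theorem (Theorem~\ref{Towghi}) by exploiting the product structure of $\Phi$. First I would record that, since $\Phi$ is a product of $f$, $\phi$, $g$ in pairwise disjoint sets of variables, the rectangular increment of $\Phi$ over $\prod_r[s_r,t^r_{i_r-1}]\times\prod_r[t^r_{i_r-1},t^r_{i_r}]$ factors as the product of the corresponding increments of $f$ (in its left-endpoint variables), of $\phi$ (in its mixed variables), and of $g$ (in its increment variables). Writing $u_r$ for the ``left'' variable over $[s_r,t^r_{i_r-1}]$ and $v_r$ for the ``increment'' variable over $[t^r_{i_r-1},t^r_{i_r}]$, each index $r\in\{1,\dots,N\}$ falls into exactly one of four regions according to where $u_r$ and $v_r$ are sent: $A=\{1,\dots,M\}$ (both in $\phi$), $B=\{M+1,\dots,L\}$ ($u_r\in f$, $v_r\in\phi$), $C=\{L+1,\dots,M+N-K\}$ ($u_r\in f$, $v_r\in g$), and $D=\{M+N-K+1,\dots,N\}$ ($u_r\in\phi$, $v_r\in g$). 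The hypotheses $M\le\min(K,L)$ and $L-M\le N-K$ guarantee that $|A|=M$, $|B|=L-M$, $|C|=M+N-K-L$, $|D|=K-M$ are all nonnegative, and these regions match the four groups of factors on the right-hand side ($w^\theta$ on $A$, $w^{1/q}$ on $B$, the variation norms on $B\cup C$ and $C\cup D$, and $w^{1/p}$ on $D$).

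\textbf{Region $A$.} Since $f$ and $g$ do not depend on the $A$-indices, I would sum over them first. The control bound on $\phi$ factorizes across variables, so for fixed $B,C,D$-data the partial sum $\tilde{\phi}:=\sum_{i_r,\,r\in A}\phi$ is governed by Proposition~\ref{a multidimensional young integral} applied in the $M$ diagonal $A$-variables, with the $B$- and $D$-increment factors absorbed into its constant. This shows that $\tilde{\phi}$, now a function of the $D$-left and $B$-increment variables only, again satisfies a product control bound, with exponent $1/p$ in each $D$-variable and $1/q$ in each $B$-variable, no diagonal pairs, and constant $C\zeta(\theta)^M\prod_{r\in A}w(s_r,t_r)^\theta$. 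The decisive point is that I keep $\tilde{\phi}$ as a genuine function rather than replacing $f,g$ by their absolute values, so that the cancellation needed for the Young estimates survives. This reduces the claim to the case $M=0$ of the same Proposition (one checks that $L-M\le N-K$ and the cardinalities are preserved).

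\textbf{Core ($M=0$).} Here $\phi$ has only $D$-type variables ($u$, integrand-like, with $p$-variation $\le w^{1/p}$) and $B$-type variables ($v$, integrator-like, with $q$-variation $\le w^{1/q}$). I would argue by induction on the number $|B|+|D|$ of $\phi$-variables. When $\phi$ is constant ($B=D=\varnothing$, i.e.\ $K=L=0$) the sum is a pure $f$-against-$g$ multidimensional Young integral over $C$, bounded by Towghi's theorem. For the inductive step I peel off one coordinate: a $D$-coordinate is removed by a one-dimensional Young integral pairing the $\phi$-slice (as integrand, whose $p$-variation the control bound bounds by $w(s_r,t_r)^{1/p}$) against the $g$-slice (integrator), extracting $w(s_r,t_r)^{1/p}$ and, by Lemma~\ref{cor to towghi}, leaving a reduced integrator of controlled variation; a $B$-coordinate is removed symmetrically, pairing $f$ against the $\phi$-slice (now integrator, $q$-variation $\le w(s_r,t_r)^{1/q}$) and extracting $w(s_r,t_r)^{1/q}$; a $C$-coordinate, when reached, is a pure Young pairing of $f$ against $g$. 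Lemma~\ref{p-variation norm for product} and the elementary bound $V_p(w(s_r,\cdot)^{1/p};[s_r,t_r])\le w(s_r,t_r)^{1/p}$ (from monotonicity of the control function together with $\sum a_j^p\le(\sum a_j)^p$) recombine the accumulated slices into $V_p(f)$ and $V_q(g)$ over the prescribed rectangles, while Theorem~\ref{FV} converts between the $p$-variation and controlled $p$-variation norms where needed.

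\textbf{Main obstacle.} The essential difficulty is that $\phi$ is \emph{mixed}: its increments simultaneously involve integrand-type (left-endpoint) variables on $D$ and integrator-type (increment) variables on $B$, so the sum neither factors into independent pieces nor can $\phi$ be bounded absolutely without destroying the cancellation in $f$ and $g$. The remedy is to read the product control bound on $\phi$ as furnishing one-sided, controlled variation in each of its variables separately, which lets $\phi$ act as integrand in its $D$-variables and as integrator in its $B$-variables within a \emph{nested} sequence of Young integrals. The technical heart is then the bookkeeping: verifying that each peeling step preserves the required variation estimates (via Lemma~\ref{cor to towghi}), keeping the conjugate exponents in the Young range $\frac1{p'}+\frac1{q'}>1$ by allowing a controlled degradation through a slightly decreasing sequence of exponents over the finitely many steps (as in the proof of Lemma~\ref{iterated integral for CM}), and matching the exponents $1/p$ versus $1/q$ so that the extracted $w$-factors land on the correct regions.
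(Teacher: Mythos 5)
Your decomposition of the index set into the four regions $A=\{1,\dots,M\}$, $B=\{M+1,\dots,L\}$, $C=\{L+1,\dots,M+N-K\}$, $D=\{M+N-K+1,\dots,N\}$ is exactly the structure the paper exploits, and your treatment of region $A$ — summing out the $M$ diagonal $\phi$-variables first via Proposition~\ref{a multidimensional young integral} while keeping the partial sum $\hat\phi$ as a genuine function with a product control bound — coincides with the paper's first step (the function the paper calls $\hat\phi$ and the estimate \eqref{estimate of phihat}). Where you diverge is the core: you peel off the remaining $\phi$-coordinates one at a time by iterated one-dimensional Young integrals, whereas the paper performs exactly two nested \emph{block-wise} multidimensional Young integrals — first $f$ against $\hat\phi$ over all of the $B$-coordinates simultaneously (producing the function $\Psi$), then $\Psi$ against $g$ over all of $C\cup D$ at once — applying Theorem~\ref{Towghi} at each of the two levels, bounding $V_{p+\vep}(\Psi)$ by applying Towghi again to each rectangular increment of $\Psi$, and using Theorem~\ref{FV} once to pass between $V_p$ and the controlled variation norm. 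The block scheme buys two things your route does not get for free: the multidimensional norms $V_p(f;\prod_{r=M+1}^{M+N-K}[s_r,t_r])$ and $V_q(g;\prod_{r=L+1}^{N}[s_r,t_r])$ come out of Towghi's theorem in a single application rather than having to be reassembled from coordinate slices, and only one exponent perturbation $p\mapsto p+\vep$ is needed rather than a decreasing chain over up to $N$ peels. Your route is viable, but be aware that each peel requires controlling the \emph{joint} variation of the partial Young sum in all remaining coordinates, i.e.\ a multidimensional analogue of Lemma~\ref{cor to towghi}; the paper only proves that lemma in dimension two, so you would have to supply the higher-dimensional version (the argument is the same — superadditivity of the controlled variation norm plus Theorem~\ref{FV} — but it is not in the paper, and the "recombination of accumulated slices" you invoke is precisely where it is needed). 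With that supplied, your plan closes; without it, the inductive step is not justified as written.
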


\begin{proof}
We restrict the variables take values in the partition points as follows:
\[
  u_i\in \cP_i,\quad v_i\in \cP_i, \quad 1\le i\le N.
\]

For each $1\le i\le M$, let
\begin{multline*}
	\hat{\phi}(u_{M+N-K+1},\ldots,u_{N},v_{M+1},\ldots, v_L)
	=
		\sum_{\substack{1\le i_r\le m_r,\\1\le r\le M}}
			\phi
				\Bigl(
					t^1_{i_1-1},\ldots,t^M_{i_M-1},
					u_{M+N-K+1},\ldots,u_N,\\
					[t^1_{i_1-1},t^1_{i_1}]
					\times
					\cdots
					\times
					[t^M_{i_M-1},t^M_{i_M}],
					v_{M+1},\ldots,v_L
				\Bigr).
\end{multline*}
By 
Proposition~\ref{a multidimensional young integral} (1),
we have
\begin{multline}
	\label{estimate of phihat}
		\left|
			\hat{\phi}
				\left(
					\prod_{r=M+N-K+1}^N[u_r,u'_r]
					\times
					\prod_{r=M+1}^L[v_r,v'_r]
				\right)
		\right| \\
		\le
			C\zeta(\theta)^M
			\prod_{r=1}^M w(s_r,t_r)^{\theta}
			\prod_{r=M+N-K+1}^N w(u_r,u'_r)^{\frac{1}{p}}
			\prod_{r=M+1}^L w(v_r,v'_r)^{\frac{1}{q}}.
\end{multline}
Next, we set 
\begin{align*}
 	\hat{\Phi}(u_{M+1},\dots,u_N,v_{M+1},\dots,v_N)
	&=
		f(u_{M+1},\dots,u_L, u_{L+1},\ldots, u_{M+N-K})\\
	&\phantom{=}\qquad
		\times
		\hat{\phi}(u_{M+N-K+1},\ldots,u_{N},v_{M+1},\ldots, v_L)
		g(v_{L+1},\dots, v_{N}).
\end{align*}
Therefore, it holds that
\begin{multline}
	\sum_{i_1=1}^{m_1}
	\dots
	\sum_{i_N=1}^{m_N}
		\Phi
			\left(
				\prod_{r=1}^N
					[s_r,t^r_{i_r-1}]
			\times	\prod_{r=1}^N
					[t^r_{i_r-1},t^r_{i_r}]
			\right)\\
	=
		\sum_{\substack{1\leq i_r\leq m_r,\\ M+1\le r\le N}}
			\hat{\Phi}
			\left(
				\prod_{r=M+1}^N
					[s_r,t^r_{i_r-1}]
			\times	\prod_{r=M+1}^N
					[t^r_{i_r-1},t^r_{i_r}]
			\right).
\label{identity for phi}
\end{multline}
It is sufficient to estimate the quantity on the right-hand side in the equation
above.
Let
\begin{align*}
\Psi(u_{L+1},\ldots,u_N)&=
 \sum_{\substack{1\le i_r\le m_r,\\M+1\le r \le L}}
f\left([s_{M+1},t^{M+1}_{i_{M+1}-1}]\times\cdots\times [s_{L},t^L_{i_L-1}]
,u_{L+1},\ldots,u_{M+N-K}\right)\\
&\quad\quad \times
\hat{\phi}\left(u_{M+N-K+1},\ldots,u_N,[t^{M+1}_{i_{M+1}-1},t^M_{i_M}]
\times\cdots\times [t^L_{i_L-1},t^L_{i_L}]\right).
\end{align*}
Then we can rewrite 
\begin{align*}
& \text{The right-hand side of (\ref{identity for phi})}\\
&=
\sum_{\substack{1\le i_{r}\le m_r,\\L+1\le r\le N}}
\Psi\left([s_{L+1},t^{L+1}_{i_{L+1}-1}]\times\cdots\times
[s_N,t^N_{i_N-1}]\right)
g\left(
[t^{L+1}_{i_{L+1}-1},t^{L+1}_{i_L}]\times\cdots\times
[t^N_{i_N-1},t^N_{i_N}]
\right).
\end{align*}
Let $\vep$ be a positive number such that
$\frac{1}{p+\vep}+\frac{1}{q}>1$.
By Theorem~\ref{Towghi},
\begin{align*}
 \left|\text{The right-hand side of (\ref{identity for phi})}\right|
&\le
CV_{p+\vep}\Bigg(\Psi ; \left(\prod_{r=L+1}^N[s_r,t_r]\right)_{\cP}
\Bigg)
V_q\left(g ; \prod_{r=L+1}^N[s_r,t_r]\right).
\end{align*}
Therefore, we estimate of the norm of $\Psi$.
For $u_r<u'_r, u_r\in \cP_r$ $(L+1\le r\le N)$,
we have
\begin{align*}
& \left|\Psi\left([u_{L+1},u'_{L+1}]\times
\cdots\times[u_N,u_N']\right)\right|\\
&\le
C V_{p+\vep}\left(f\left(\cdots,
[u_{L+1},u'_{L+1}]\times\cdots\times
[u_{M+N-K},u_{M+N-K}']
\right); \prod_{r=M+1}^L[s_r,t_r]\right)\\
&\quad\times V_q\Bigg(\hat{\phi}
\left([u_{M+N-K+1},u_{M+N-K+1}']\times
\cdots\times[u_N,u_N'],\cdots\right);
\left(\prod_{r=M+1}^L[s_r,t_r]\right)_{\cP}\Bigg)
\end{align*}
and
\begin{multline*}
	V_q
		\Bigg(
			\hat{\phi}
				(
					[u_{M+N-K+1},u_{M+N-K+1}']
					\times
					\cdots
					\times
					[u_N,u_N'],
					\dots
				);
			\left(\prod_{r=M+1}^L[s_r,t_r]\right)_{\cP}
		\Bigg)\\
\le
C\zeta(\theta)^M
\prod_{r=1}^M w(s_r,t_r)^{\theta}
\prod_{r=M+N-K+1}^N w(u_r,u'_r)^{\frac{1}{p}}
\prod_{r=M+1}^L w(s_r,t_r)^{\frac{1}{q}},
\end{multline*}
where we have used (\ref{estimate of phihat}).
Therefore, we obtain
\begin{multline*}
	V_{p+\vep}\left(\varphi ; \prod_{r=L+1}^N[s_r,t_r]\right)
	\le
		C\zeta(\theta)^M
		\|f\|_{(p+\vep)\hyp var; \prod_{r=M+1}^{M+N-K}[s_r,t_r]}
		\prod_{r=1}^M w(s_r,t_r)^{\theta} \\
		\times
		\prod_{r=M+N-K+1}^N w(s_r,t_r)^{\frac{1}{p}}
		\prod_{r=M+1}^L w(s_r,t_r)^{\theta}
		\prod_{r=M+N-K+1}^N w(s_r,t_r)^{\frac{1}{p}}
		\prod_{r=M+1}^L w(s_r,t_r)^{\frac{1}{q}},
\end{multline*}
where we have used Theorem~\ref{FV} in the first inequality
and we complete the proof.
\end{proof}

The following lemma is used in the proof of Lemma \ref{covariance estimate of Km}.
The beginning of this section presents the notation used in this lemma and its proof, 
particularly \eqref{eq489108901} and \eqref{eq4789317891}.
\begin{lemma}\label{p-variation norm for product}
Let $q>p\ge 1$. Let $f$ and $g$ be real-valued continuous
functions on $[0,1]^n$.
Then the following estimates hold.
\begin{enumerate}
	\item	If there are no common variables of $f$ and $g$,
			for instance, $f=f(t_1,\ldots,t_k)$ and $g=g(t_{k+1},\ldots,t_n)$ $(1\le k\le n)$,
			then it holds that
			\begin{align}
				V_p(fg ; [0,1]^n)
				&\le
					V_p(f ; [0,1]^k)
					V_p(g ; [0,1]^{n-k}).
				\label{p-variation of fg1}
			\end{align}
	\item	In general, we have
			\begin{align}
				\label{p-variation of fg2}
				V_q(fg ; [0,1]^n)
				&\le
					C
					\bar{V}_p(f,[0,1]^n)
					\bar{V}_p(g,[0,1]^n),\\
				\label{p-variation of fg3}
				\bar{V}_q(fg ; [0,1]^n)
				&\le
					C
					\bar{V}_p(f;[0,1]^n)
					\bar{V}_p(g;[0,1]^n).
			\end{align}
\end{enumerate}
\end{lemma}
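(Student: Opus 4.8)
The plan is to treat the two assertions separately: \eqref{p-variation of fg1} is an exact factorization of rectangular increments, whereas \eqref{p-variation of fg2} and \eqref{p-variation of fg3} require a discrete Leibniz expansion fed into the Young-integral machinery of Theorems~\ref{Towghi} and~\ref{FV}. For \eqref{p-variation of fg1}, I would first observe that when $f=f(t_1,\dots,t_k)$ and $g=g(t_{k+1},\dots,t_n)$ depend on disjoint variables, the alternating corner sum \eqref{increment function} splits as a product, so for any grid-like partition $\cP$ and box $R=\prod_{r=1}^n[t^r_{i_r-1},t^r_{i_r}]$ one has $(fg)(R)=f(R_{1:k})\,g(R_{k+1:n})$, where $R_{1:k}$ and $R_{k+1:n}$ denote the projections of $R$ onto the first $k$ and last $n-k$ coordinates. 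Hence $\tilde{V}_p(fg;\cP)^p=\tilde{V}_p(f;\cP_{1:k})^p\,\tilde{V}_p(g;\cP_{k+1:n})^p$, and since a grid-like partition of $[0,1]^n$ is exactly a pair of independent grid-like partitions of $[0,1]^k$ and $[0,1]^{n-k}$, taking suprema over sub-partitions and over all grids factorizes as well, giving \eqref{p-variation of fg1}.

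For \eqref{p-variation of fg2}, the algebraic input is the $n$-fold discrete Leibniz rule: applying $\Delta_r(\phi\psi)=(\Delta_r\phi)E_r^R\psi+(E_r^L\phi)(\Delta_r\psi)$ in each coordinate and expanding, the increment of $fg$ over a box splits into $2^n$ terms indexed by subsets $S\subset\{1,\dots,n\}$, where in the $S$-term the mixed difference acts on $f$ in the directions of $S$ (with the remaining coordinates frozen at the box's left endpoints) and on $g$ in the directions of $S^c$ (with the coordinates of $S$ frozen at the right endpoints). I would then telescope each frozen endpoint back to the global corner $(s_1,\dots,s_n)$; this rewrites every factor as a sum of a genuine trace of $f$ or $g$ of the form occurring in the $\bar{V}_p$ norms, plus a cumulative mixed difference over a prefix box. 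After this reduction, each term is either a product of lower-dimensional traces, controlled by \eqref{p-variation of fg1}, or a discrete multidimensional Young integral of a cumulative increment of $f$ against increments of $g$ — exactly the situation covered by Theorem~\ref{Towghi}. Passing between $V_p$ and the superadditive controlled norm $\|\cdot\|_{p\hyp var}$ via Theorem~\ref{FV} then lets me sum the contributions with exponent $q$, the strict gap $q>p$ being what absorbs the $\varepsilon$-loss in Theorem~\ref{FV} and guarantees convergence. Collecting the $2^n$ terms yields \eqref{p-variation of fg2}.

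Finally, \eqref{p-variation of fg3} follows by applying \eqref{p-variation of fg2} not only to $fg$ on $[0,1]^n$ but to every endpoint trace. Because the trace of a product is the product of the traces, $(fg)(0_r;r\in A^c)=f(0_r;r\in A^c)\,g(0_r;r\in A^c)$, so \eqref{p-variation of fg2} applied on the face $\prod_{r\in A}[0,1]$ bounds $V_q$ of each trace by $C\bar{V}_p(f)\,\bar{V}_p(g)$; summing over $A\subset\{1,\dots,n\}$ and adding the corner value $|(fg)(0,\dots,0)|$ gives \eqref{p-variation of fg3}.

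I expect the main obstacle to lie in the bilinear grid sums arising in \eqref{p-variation of fg2}: after the Leibniz split both factors depend on all coordinates, so a naive H\"older estimate fails, losing a factor that grows with the number of intervals (a uniform ``diagonal'' configuration of increments would appear to blow up the sum). The point is that large single increments force sign cancellations, which in turn tame the cumulative factors, and this interplay is captured precisely by the Young estimate of Theorem~\ref{Towghi} together with the superadditivity supplied by Theorem~\ref{FV}. The remaining care is purely combinatorial bookkeeping, ensuring that each of the $2^n$ trace terms is absorbed into $\bar{V}_p(f)\,\bar{V}_p(g)$.
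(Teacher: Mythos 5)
Your part (1) and your derivation of \eqref{p-variation of fg3} from \eqref{p-variation of fg2} match the paper (which dismisses (1) as trivial by definition and deduces \eqref{p-variation of fg3} in one line), and your Leibniz expansion of $(fg)$ over a box into $2^n$ terms indexed by $A\subset\{1,\dots,n\}$, followed by telescoping the frozen evaluation points back to the corner, is exactly the paper's starting point for \eqref{p-variation of fg2}. The gap is in how you sum these terms over a grid. You propose to treat the resulting bilinear grid sums as discrete multidimensional Young integrals and invoke Theorem~\ref{Towghi}, arguing that ``sign cancellations'' tame the cumulative factors. But the quantity to be bounded is $V_q(fg)$, i.e.\ a supremum over sub-partitions of $\sum_{\text{boxes}}|(fg)(\text{box})|^q$ — a sum of \emph{absolute values} of increments raised to the $q$-th power, not a signed Riemann--Stieltjes sum. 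Theorem~\ref{Towghi} estimates $|\int_{I_\cP} f\,dg|$ and relies essentially on the signed, telescoping structure of that sum; it says nothing about $\ell^q$-sums of $|f(\cdot)|^q|g(\cdot)|^q$, so no cancellation between boxes is available and the appeal to Towghi is a category error.

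The mechanism that actually closes the argument (and that the paper uses) is combinatorial, not oscillatory: in the Leibniz term indexed by $A$, the $f$-factor carries increments only in the directions $A^c$ and the $g$-factor only in the directions $A$. After bounding the dependence on the evaluation points $t^{k_a}_a$ (resp.\ $t^{k_r}_r$) by controlled $q$-variation norms of the traces $f(0_a;a\in A\setminus B)$ over the whole cube in those directions, each factor in the grid sum depends on a \emph{disjoint} set of summation indices; the double sum therefore factorizes, and each factor is controlled by the superadditivity of $\|\cdot\|_{q\hyp var}$. Theorem~\ref{FV} and the strict gap $q>p$ then convert $\|\cdot\|_{q\hyp var}$ back to $V_p$, yielding $C\,\bar{V}_p(f)\bar{V}_p(g)$. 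So your worry that ``a naive H\"older estimate fails'' is a misdiagnosis — no H\"older or Young estimate is needed once one notices the complementarity of the increment directions — and as written your proof of \eqref{p-variation of fg2} does not go through.
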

\begin{proof}
	The estimate of (\ref{p-variation of fg1}) is trivial by definition.
	We prove (\ref{p-variation of fg2}).
	(\ref{p-variation of fg3}) follows from (\ref{p-variation of fg2}).
	Let $0\le t_i^1<t_i^2\le 1$ ($1\le i\le n$).
	Noting that
	\begin{multline*}
		(fg)(t_1^{a_1},\ldots, t_{n-1}^{a_{n-1}}, t_n^2)
		-
		(fg)(t_1^{a_1},\ldots, t_{n-1}^{a_{n-1}}, t_n^1)\\
		=
			f(t_1^{a_1},\ldots, t_{n-1}^{a_{n-1}},t^2_n)
			g(t_1^{a_1},\ldots, t_{n-1}^{a_{n-1}},[t_n^1,t_n^2])
			+
			f(t_1^{a_1},\ldots, t_{n-1}^{a_{n-1}},[t^1_n,t^2_n])
			g(t_1^{a_1},\ldots, t_{n-1}^{a_{n-1}}, t_n^1),
	\end{multline*}
	we have
\begin{multline*}
	(fg)\Bigg(\prod_{i=1}^n[t_i^{1},t_i^{2}]\Bigg)
	=
		\sum_{a_i=1,2 ; 1\le i\le n}
			(-1)^{\sum_{i=1}^na_i}(fg)(t_1^{a_1},\ldots,t_n^{a_n})\\
	\begin{aligned}
		&=
			\sum_{a_i=1,2; 1\le i\le n-1}
				(-1)^{\sum_{i=1}^{n-1}a_i}
					\left(
						(fg)(t_1^{a_1},\ldots, t_{n-1}^{a_{n-1}}, t_n^2)
						-
						(fg)(t_1^{a_1},\ldots, t_{n-1}^{a_{n-1}}, t_n^1)
					\right)\\
		&=
			\sum_{a_i=1,2; 1\le i\le n-1}
					(-1)^{\sum_{i=1}^{n-1}a_i}
					f(t_1^{a_1},\ldots, t_{n-1}^{a_{n-1}},t^2_n)
					g(t_1^{a_1},\ldots, t_{n-1}^{a_{n-1}},[t_n^1,t_n^2]) \\
		&
		\qquad
		\qquad
		 \qquad
			+
					\sum_{a_i=1,2; 1\le i\le n-1}
					(-1)^{\sum_{i=1}^{n-1}a_i}
					f(t_1^{a_1},\ldots, t_{n-1}^{a_{n-1}},[t^1_n,t^2_n])
					g(t_1^{a_1},\ldots, t_{n-1}^{a_{n-1}}, t_n^1)\\
		&=
			(f(\cdot,t^2_n)g(\cdot,[t_n^1,t_n^2]))
				\Bigg(\prod_{i=1}^{n-1}[t_i^{1},t_i^{2}]\Bigg)
			+
			(f(\cdot,[t^1_n,t^2_n])g(\cdot,t_n^1))
				\Bigg(\prod_{i=1}^{n-1}[t_i^{1},t_i^{2}]\Bigg).
	\end{aligned}
\end{multline*}
Then, iterating this calculation, we have
\begin{align*}
	(fg)\Bigg(\prod_{i=1}^n[t_i^{1},t_i^{2}]\Bigg)
	&=
		\sum_{A\subset \{1,\ldots,n\}}
			f(t^2_a, [t^1_r,t^2_r] ; a\in A, r\in A^c)
			g(t^1_r, [t^1_a,t^2_a] ; r\in A^c, a\in A).
\end{align*}
Here and hereafter, $A$ can be an empty set or $\{1,\ldots,n\}$.
This implies 
\begin{align*}
	\left|
		(fg)\Bigg(\prod_{i=1}^n[t_i^{1},t_i^{2}]\Bigg)
	\right|^q
	&\leq
		C
		\sum_{A\subset \{1,\ldots,n\}}
			|f(t^2_a, [t^1_r,t^2_r] ; a\in A, r\in A^c)|^q
			|g(t^1_r, [t^1_a,t^2_a] ; r\in A^c, a\in A)|^q.
\end{align*}

Here we have
\begin{align*}
	|f(t^2_a, [t^1_r,t^2_r];a\in A, r\in A^c)|
	&\leq
		\sum_{B\subset A}
			\|f(0_a; a\in A\setminus B)\|_{q\hyp var;[0,1]^{|B|}\times \prod_{r\in A^c}[t^1_r,t^2_r]},
\end{align*}
where $f(0_a; a\in A\setminus B)$ is a $(|A^c|+|B|)$-variables function defined by
\begin{align*}
	f(0_a; a\in A\setminus B)(u_b,v_r;b\in B,r\in A^c)
	&=
		f(0_a,u_b, v_r;a\in A\setminus B, b\in B,r\in A^c ),
\end{align*}
which is similar notation with \eqref{eq489108901}.
The estimate above follows from
\begin{align*}
	f(t^2_a, [t^1_r,t^2_r];a\in A, r\in A^c)
	&=
		\sum_{B\subset A}
			f(0_a, [0,t^2_b], [t^1_r,t^2_r] ; a\in A\setminus B, b\in B, r\in A^c)\\
	&=
		\sum_{B\subset A}
			f(0_a ; a\in A\setminus B)([0,t^2_b],[t^1_r,t^2_r];b\in B, r\in A^c).
\end{align*}

Let us consider a grid-like partition $\cP=\cP_1\times\dots\times\cP_n$,
where $\cP_i\colon 0=t_i^0<\cdots<t_i^{m_i}=1$ ($1\le i\le n$).
By the above and the definition, for $A\subset\{1,\dots,n\}$, we have
\begin{multline*}
	\sum_{\substack{1\le k_i\le m_i,\\1\le i\le n}}
		|f(t^{k_a}_a, [t^{k_r-1}_r,t^{k_r}_r] ; a\in A, r\in A^c)|^q
		|g(t^{k_r}_r, [t^{k_a-1}_a,t^{k_a-1}_a] ; r\in A^c, a\in A)|^q\\
	\begin{aligned}
		&\leq
			\sum_{\substack{1\le k_i\le m_i,\\1\le i\le n}}
				\sum_{B\subset A}
					\|f(0_a; a\in A\setminus B)\|_{q\hyp var;[0,1]^{|B|}\times \prod_{r\in A^c}[t^{k_r-1}_r,t^{k_r}_r]}^q\\
		&\phantom{=}
				\qquad
				\qquad
				\qquad
				\qquad
				\qquad
		\times
				\sum_{C\subset A^c}
					\|g(0_r; r\in A^c\setminus C)\|_{q\hyp var;[0,1]^{|C|}\times \prod_{a\in A}[t^{k_a-1}_a,t^{k_a}_a]}^q\\
		&\leq
			\sum_{B\subset A}
				\|f(0_a; a\in A\setminus B)\|_{q\hyp var;[0,1]^{|B|}\times[0,1]^{|A^c|}}^q
			\sum_{C\subset A^c}
				\|g(0_r; r\in A^c\setminus C)\|_{q\hyp var;[0,1]^{|C|}\times[0,1]^{|A|}}^q\\
		&\leq
			C
			\bar{V}_p(f,[0,1]^n)^q
			\bar{V}_p(g,[0,1]^n)^q.
	\end{aligned}
\end{multline*}
Here noting $q>p$, we used Theorem~\ref{FV}.
This implies
\begin{align*}
	V_q((fg)|_{\cP};[0,1]^n_{\cP})^q
		&\leq
			C
			\sum_{A\subset \{1,\ldots,n\}}
			\sum_{\substack{1\le k_i\le m_i,\\1\le i\le n}}
				|f(t^{k_a}_a, [t^{k_r-1}_r,t^{k_r}_r] ; a\in A, r\in A^c)|^q\\
		&\phantom{=}
				\qquad
				\qquad
				\qquad
				\qquad
				\qquad
				\times
				|g(t^{k_r}_r, [t^{k_a-1}_a,t^{k_a-1}_a] ; r\in A^c, a\in A)|^q\\
		&\le
			C
			\bar{V}_p(f,[0,1]^n)^q
			\bar{V}_p(g,[0,1]^n)^q.
\end{align*}
The proof is completed.
\end{proof}

\address{
Shigeki Aida\\
Graduate School of Mathematical Sciences,\\
The University of Tokyo,\\
Meguro-ku, Tokyo, 153-8914, Japan}
{aida@ms.u-tokyo.ac.jp}

\address{
Nobuaki Naganuma\\
Faculty of Advanced Science and Technology,\\
Kumamoto University,\\
Kumamoto city, Kumamoto, 860-8555, Japan}
{naganuma@kumamoto-u.ac.jp}

\end{document}